\definecolor{emerald}{rgb}{0.31, 0.78, 0.47}
\definecolor{bleudefrance}{rgb}{0.19, 0.55, 0.91}
\definecolor{brandeisblue}{rgb}{0.0, 0.44, 1.0}
\newcommand{\p}{\mathbb{P}}
\newcommand{\e}{\mathbb{E}}
\newcommand{\eps}{\varepsilon}
\newcommand{\bra}[1]{\left(#1\right)}
\newcommand{\abs}[1]{\left\lvert#1\right\rvert}
\newcommand{\op}[1]{\operatorname{#1}}
\newcommand{\Aut}{\operatorname{Aut}}
\newcommand{\Z}{\mathbb{Z}}
\newcommand{\cU}{\mathcal U}
\newcommand{\be}{\begin{equation}}
\newcommand{\ee}{\end{equation}}
\newcommand{\cG}{\mathcal G}
\DeclareFontFamily{OMX}{MnSymbolE}{}
\DeclareSymbolFont{MnLargeSymbols}{OMX}{MnSymbolE}{m}{n}
\DeclareFontShape{OMX}{MnSymbolE}{m}{n}{
    <-6>  MnSymbolE5
   <6-7>  MnSymbolE6
   <7-8>  MnSymbolE7
   <8-9>  MnSymbolE8
   <9-10> MnSymbolE9
  <10-12> MnSymbolE10
  <12->   MnSymbolE12
}{}
\DeclareFontShape{OMX}{MnSymbolE}{b}{n}{
    <-6>  MnSymbolE-Bold5
   <6-7>  MnSymbolE-Bold6
   <7-8>  MnSymbolE-Bold7
   <8-9>  MnSymbolE-Bold8
   <9-10> MnSymbolE-Bold9
  <10-12> MnSymbolE-Bold10
  <12->   MnSymbolE-Bold12
}{}
\let\llangle\@undefined
\let\rrangle\@undefined
\DeclareMathDelimiter{\llangle}{\mathopen}%
                     {MnLargeSymbols}{'164}{MnLargeSymbols}{'164}
\DeclareMathDelimiter{\rrangle}{\mathclose}%
                     {MnLargeSymbols}{'171}{MnLargeSymbols}{'171}
\tikzset{nomorepostaction/.code=\let\tikz@postactions\pgfutil@empty}
\newcommand\nxleftrightarrow[2][]{%
  \mathrel{\tikz[baseline=-.7ex] \path node[slash underlined,draw,<->,anchor=south] {\(\scriptstyle #2\)} node[anchor=north] {\(\scriptstyle #1\)};}}
\newcommand\upsc[1]{\text{\textup{\textsc{#1}}}}
\newcommand\burnin{\op{Burn}}
\newcommand\sprinkle{\op{Spr}}
\crefname{thm}{Theorem}{Theorems}
\crefname{lem}{Lemma}{Lemmas}
\crefname{clm}{Claim}{Claims}
\crefname{rk}{Remark}{Remarks}
\crefname{prop}{Proposition}{Propositions}
\crefname{defn}{Definition}{Definitions}
\crefname{cor}{Corollary}{Corollaries}
\crefname{conj}{Conjecture}{Conjectures}
\crefname{question}{Question}{Questions}
\crefname{section}{Section}{Sections}
\crefname{ineq}{inequality}{inequalities}
\theoremstyle{plain}
\newtheorem{thm}{Theorem}[section]
\newtheorem*{thm*}{Theorem}
\newtheorem{lem}[thm]{Lemma}
\newtheorem*{lem*}{Lemma}
\newtheorem*{clm*}{Claim}
\newtheorem{cor}[thm]{Corollary}
\newtheorem*{cor*}{Corollary}
\newtheorem{prop}[thm]{Proposition}
\newtheorem*{prop*}{Proposition}
\newtheorem{conj}[thm]{Conjecture}
\newtheorem*{conj*}{Conjecture}
\newtheorem{problem}[thm]{Problem}
\theoremstyle{definition}
\newtheorem{defn}[thm]{Definition}
\newtheorem{defn*}{Definition}
\theoremstyle{remark}
\newtheorem{rk}{Remark}[section]
\newtheorem*{rk*}{Remark}
\renewenvironment{abstract}
 {\par\noindent\textbf{\abstractname.}\ \ignorespaces}
 {\par\medskip}
\title{\bf The critical percolation probability is local}
\author{{\bf Philip Easo and Tom Hutchcroft }}
\date{\small{\today}}
\begin{document}
\maketitle
%abstract
\begin{abstract}
We prove Schramm's locality conjecture for Bernoulli bond percolation on transitive graphs: If $(G_n)_{n\geq 1}$ is a sequence of infinite vertex-transitive graphs converging locally to a vertex-transitive graph $G$ and $p_c(G_n) \neq 1$ for every $n \geq 1$ then $\lim_{n\to\infty} p_c(G_n)=p_c(G)$. Equivalently, the critical probability $p_c$ defines a continuous function on the space $\mathcal{G}^*$ of infinite vertex-transitive graphs that are not one-dimensional.  
 As a corollary of the proof, we obtain a new proof that $p_c(G)<1$ for every infinite vertex-transitive graph that is not one-dimensional.
\end{abstract}

\newgeometry{margin=1.2in}

\newpage

\tableofcontents

\newgeometry{margin=0.88in}

\newpage

\section{Introduction}

In \textbf{Bernoulli bond percolation}, the edges of a connected, locally finite graph $G$ are chosen to be either retained (\textbf{open}) or deleted (\textbf{closed}) independently at random, with probability $p\in [0,1]$ of retention. The law of the resulting random subgraph is denoted $\mathbb{P}_p=\mathbb{P}_p^G$. Percolation theory is concerned primarily with the geometry of the connected components of this random subgraph, which are known as \textbf{clusters}. Much of the interest in the model arises from the fact that it undergoes a \emph{phase transition}: If we define the \textbf{critical probability}
\[
p_c(G) = \inf\bigl\{p\in [0,1]: \text{there exists an infinite cluster $\mathbb{P}_p$-almost surely}\bigr\}
\]
then ``most'' interesting graphs have $p_c(G)$ strictly between $0$ and $1$, so that there is a non-trivial phase where infinite clusters do not exist followed by non-trivial phase where they do exist.
 We will be primarily interested  in percolation on \textbf{(vertex-)transitive} graphs, i.e., graphs for which any vertex can be mapped to any other vertex by an automorphism of the graph. We allow our graphs to contain loops and multiple edges, and make the implicit assumption throughout the paper that all transitive graphs are connected and locally finite (i.e.\ have finite vertex degrees).

\medskip

Many interesting features of percolation on an infinite transitive graph at and near the critical point are expected to be \emph{universal}, meaning that they depend only on the graph's \emph{large-scale} geometry and not its microscopic structure. For example, the \emph{critical exponents} governing the power-law behaviour of various interesting quantities at and near criticality are believed to depend only on the volume-growth dimension of the graph, and should therefore take the same values on e.g.\ the square and triangular lattice.
% are a good example: it is predicted that for any $d$-dimensional lattice with $d \geq 2$, a number of quantities associated to percolation around the critical point should follow power laws whose exponents depend only on $d$. 
In contrast, Schramm conjectured around 2008 \cite[Conjecture 1.2]{MR2773031} that the \emph{value} of the critical probability $p_c(G)$ should be entirely determined by the \emph{local} (microscopic) geometry of the graph, \emph{subject to the global constraint that $p_c(G)<1$}. 
More precisely, he conjectured that if $G_n$ is a sequence of infinite transitive graphs converging to an infinite transitive graph $G$ in the local topology (defined below) and $\limsup_{n\to\infty} p_c(G_n)<1$ then $p_c(G_n)\to p_c(G)$ as $n\to\infty$. 
The assumption that $\limsup_{n\to\infty} p_c(G) <1$ is needed to rule out degenerate one-dimensional examples such as the cylinder $\mathbb{Z} \times (\mathbb{Z}/n\mathbb{Z})$ (which converges to the square grid $\mathbb{Z}^2$ but which has $p_c(\mathbb{Z} \times (\mathbb{Z}/n\mathbb{Z}))=1 \nrightarrow p_c(\mathbb{Z}^2)=1/2$), and is now known to be equivalent to the graphs $G_n$ having superlinear volume growth for all sufficiently large $n$ \cite{MR4181032,hutchcroft2021nontriviality}. The fact that $p_c$ is \emph{lower} semi-continuous 
% (i.e., that $\liminf_{n\to\infty}p_c(G_n) \geq p_c(G)$ whenever the graphs $(G_n)_{n\geq 1}$ are transitive and converge locally to $G$)
 follows straightforwardly from standard facts about percolation on transitive graphs as observed in \cite[\S 14.2]{Gabor} and \cite[p.4]{duminil2016new} and does not require the assumption that the graphs are not one-dimensional; the difficult part of the conjecture is to prove \emph{upper} semi-continuity. 

\medskip

 The locality conjecture has inspired a great deal of subsequent work, including both partial progress on the original conjecture \cite{MR2773031,MR3630298,hutchcroft2019locality,hermon2021no,MR4529920}, which we review in detail below, and analogous results in other settings including self-avoiding walk \cite{grimmett2017connective,grimmett2018locality}, the random cluster model \cite{duminil2019note}, finite random graphs \cite{MR2773031,krivelevich2020asymptotics,sarkar2021note,van2021giant,ren2022locality,alimohammadi2022algorithms,borgs2023locality,alimohammadi2023locality}, and geometric random graphs \cite{hansen2021poisson,lichev2023first}.

% So, for example, if a two-dimensional graph and a three-dimensional lattice look the same up to some large finite scale, then their critical points should be close. As is typical in the study of percolation on infinite transitive graphs, the exception is one-dimensional graphs. These always have their critical point at $1$, regardless of the local geometry. In some sense this means that percolation on these graphs does not undergo a phase transition at all. Here is our main theorem, which verifies this conjecture of Schramm.

% \medskip
\newpage

In this paper we give a complete proof of Schramm's locality conjecture. 
% (The formulation we give here is slightly stronger than the original conjecture.)

\begin{thm} \label{thm:main}
      Let $\mathcal G^*$ be the set of all infinite transitive graphs that are not one-dimensional, endowed with the local topology. Then the function $p_c : \mathcal G^* \to (0,1)$ is continuous. 
\end{thm}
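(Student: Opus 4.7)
Lower semi-continuity of $p_c$ on the full space of infinite transitive graphs is already standard, so the content of the theorem is upper semi-continuity on $\mathcal{G}^*$. The plan is to fix $G \in \mathcal{G}^*$, a sequence $G_n \to G$ locally with each $G_n \in \mathcal{G}^*$, and a parameter $p > p_c(G)$, and to deduce that $p_c(G_n) \leq p$ for all large $n$. If the construction can be made to depend only on the non-one-dimensionality of $G$ rather than on the a priori existence of some $p < 1$ with $\theta_p^G(o) > 0$, then the same scheme applied with $p$ close to $1$ will yield the corollary $p_c(G) < 1$ as a byproduct.

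My approach is to exhibit a \emph{finite-volume certificate of supercriticality}: a finite radius $r$, an event $\mathcal{A}_r$ measurable with respect to the edges inside the ball $B(o, r)$, and a threshold $\alpha \in (0,1)$, such that $\mathbb{P}_p^G(\mathcal{A}_r) \geq \alpha$ and such that, for any infinite transitive graph $H$ with $B_H(o_H, r) \cong B_G(o, r)$, the inequality $\mathbb{P}_p^H(\mathcal{A}_r) \geq \alpha$ forces $p_c(H) \leq p$. Local convergence implies the isomorphism $B_{G_n}(o_n, r) \cong B_G(o, r)$ for all large $n$, hence $\mathbb{P}_p^{G_n}(\mathcal{A}_r) = \mathbb{P}_p^G(\mathcal{A}_r) \geq \alpha$, and the desired inequality $p_c(G_n) \leq p$ follows immediately. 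The event $\mathcal{A}_r$ itself will be engineered so that its occurrence implies the existence of an open cluster inside $B(o, r)$ that touches a prescribed family of boundary ``portals'', arranged so that a coarse block renormalisation of $H$ at scale $r$ --- with blocks declared \emph{good} when $\mathcal{A}_r$ occurs inside them --- dominates Bernoulli percolation on a renormalised skeleton graph via Liggett--Schonmann--Stacey. When $\alpha$ is close enough to $1$ and the skeleton is itself supercritical, an infinite cluster of good blocks can then be threaded through the portals to produce an infinite cluster of $H$. The lower bound $\mathbb{P}_p^G(\mathcal{A}_r) \geq \alpha$ should come from a quantitative sharpness argument in the spirit of Grimmett--Marstrand or Duminil-Copin--Tassion, promoting the qualitative input $\theta_p^G(o) > 0$ into near-certain occurrence of macroscopic connections in a sufficiently large ball.

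The core obstacle is constructing the skeleton and proving it supercritical under only the non-one-dimensional hypothesis. In the nonamenable case, boundary expansion furnishes abundant disjoint candidate paths; in the polynomial-growth case, Gromov-style structure theorems reduce matters to a $\mathbb{Z}^d$-like lattice with $d \geq 2$. The difficult range lies in between --- amenable transitive graphs of exponential or intermediate growth --- where the large-scale geometry is not controlled by either of these structural inputs. The main task is to translate non-one-dimensionality into a uniform quantitative statement, for instance that every ball of radius $R$ in any $G \in \mathcal{G}^*$ contains a pair of suitably separated paths along which blocks can be aligned, with constants depending only on the local data. This is precisely where the hypothesis $G \in \mathcal{G}^*$ must be used non-trivially: degenerate examples like $\mathbb{Z} \times (\mathbb{Z}/m\mathbb{Z})$ possess no such skeleton at scales below $m$, and since $m$ cannot be detected locally, any valid argument must necessarily exclude them.
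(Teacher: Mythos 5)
Your proposal correctly identifies the right \emph{shape} of the problem --- upper semi-continuity is the hard direction, and it is equivalent to a finite-size criterion of the kind you describe (this is essentially Reformulation~\ref{prop:locality_phrased_as_FSC} in the paper, except that the paper's criterion crucially allows sprinkling: a high crossing probability at parameter $p$ only forces $p_c \leq p+\eps$). But the proposal then reduces the theorem to two steps, each of which is precisely where all of the difficulty lives, and neither is carried out. First, the ``quantitative sharpness argument in the spirit of Grimmett--Marstrand'' that is supposed to promote $\theta_p^G(o)>0$ to $\mathbb{P}_p^G(\mathcal{A}_r)\geq\alpha$ with $\alpha$ near $1$ does not exist for general transitive graphs: Grimmett--Marstrand relies on the reflection/rotation symmetries and scale-invariance of $\mathbb{Z}^d$, and extending it beyond the polynomial-growth setting (where \cite{contreras2022supercritical,MR4529920} do it) is open. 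Second, the static block renormalization with Liggett--Schonmann--Stacey requires both a supercritical bounded-degree skeleton at a locally detectable scale \emph{and} a gluing mechanism ensuring that good blocks in adjacent positions have their crossing clusters connected. The gluing step fails outright on ``large'' graphs --- on a $3$-regular tree two crossings of a ball are typically in distinct clusters, and no portal structure fixes this --- so any such scheme must be restricted to low-growth scales, where one then needs quantitative connectivity of annuli; meanwhile the existence of the skeleton in the amenable intermediate/exponential-growth range is exactly the ``unknown enemies'' problem. You flag this as ``the main task,'' which is an honest assessment, but it means the proposal is a restatement of the theorem in renormalization language rather than a proof.

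For comparison, the paper does not use static renormalization at a single scale at all. It proves the sprinkled finite-size criterion by a multi-scale induction in which connection lower bounds at scale $n_i$ are bootstrapped to scale $n_{i+1}$ after sprinkling by $\delta_i$ with $\sum_i\delta_i$ small (\cref{prop:complicated_induction_statement}). The induction step dichotomizes according to whether the current scale has quasi-polynomial growth: at high-growth scales it uses a ``snowballing'' argument combining Talagrand's sharp-threshold inequality with the two-ghost (Aizenman--Kesten--Newman type) inequality, which becomes \emph{more} efficient as the growth increases and needs no geometric skeleton; at quasi-polynomial-growth scales it proves a deterministic ``polylog-plentiful tubes'' condition (via finitary structure theory in the low-dimensional case and coupled random walks in the high-dimensional case) and then runs an orange-peeling uniqueness argument inside tubes. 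The role your skeleton would play is thus split between a volume-based sharp-threshold mechanism and a tube-based chaining mechanism, harmonized so that a graph may oscillate between the two regimes across scales. Your proposal contains no substitute for either mechanism, so the gap is genuine.
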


Here, the \textbf{local topology} (a.k.a.\ the \textbf{Benjamini-Schramm topology}) on the space of transitive graphs, denoted by $\mathcal{G}$, is defined so that $(G_n)_{n=1}^{\infty}$ converges to $G$ if and only if, for each $r\geq 1$, the balls of radius $r$ in $G_n$ and $G$ are isomorphic as rooted graphs for all sufficiently large $n$. We say that an infinite transitive graph is \textbf{one-dimensional} if it has linear volume growth (i.e., if its balls $B_n$ satisfy $|B_n|=O(n)$ as $n\to \infty$);
it follows from (a simple special case of) the structure theory of transitive graphs of polynomial growth that 
an infinite transitive graph is one-dimensional if and only if it is rough-isometric to $\mathbb{Z}$ \cite{MR3573922}, while the results of \cite{MR4181032} imply that an infinite transitive graph has $p_c<1$ if and only if it is not one-dimensional. (In fact the proof of \cref{thm:main} also yields a new proof of this theorem as we discuss in detail in \cref{sec:induction_step}.)

\begin{rk}
In our forthcoming paper \cite{easo2021supercritical2}, we prove related results on the locality of the \emph{density} of the infinite cluster, implying in particular that the percolation probability $\theta(p,G)=\mathbb{P}^G_p(o \leftrightarrow \infty)$ is a continuous function of $(G,p)$ in the supercritical set $\{(G,p): G\in \mathcal G^*, p>p_c(G)\}$. (\cref{thm:main} implies that this set is open.) 
% The methods we develop to prove \cref{thm:main} can also be used to prove 
An alternative proof of this result using the methods developed in the present paper is sketched in \cref{sec:continuity_of_density}.
 % We do not give a full proof of this in the present paper since we prove a more general theorem via different techniques in our forthcoming work [ref], but give a sketch of how it can be proven using the methods of this paper in [ref].
\end{rk}

\subsection{Previous work}

In this section we overview previous work on locality, the $p_c<1$ problem, and the structure theory of transitive graphs of polynomial growth. Our proof will employ many ideas and methods from these earlier papers, including most notably the works \cite{hutchcroft2019locality,hutchcroft2020nonuniqueness,contreras2022supercritical} and the structure theory developed in \cite{breuillard2011structure,MR4253426}.

\medskip

\noindent \textbf{Euclidean lattices.} Well before the formulation of Schramm's conjecture, the first significant work on locality was carried out in the seminal work of Grimmett and Marstrand \cite{MR1068308}, who proved that the critical probability for percolation on a ``slab'' $\Z^{d-k} \times \{0,\ldots,n\}^k$ converges to $p_c(\Z^d)$ as $n\to\infty$ provided that $d-k \geq 2$. (In this context $\mathbb Z^d$ and $\Z^{d-k} \times \{0,\ldots,n\}^k$ refer to the Cayley graphs of these groups \emph{with their standard generating sets}.) This theorem and the methods developed to prove it are of central importance to the study of supercritical percolation in three and more dimensions. (Moreover, one of the motivations for the work of Grimmett and Marstrand was to get closer to proving that the percolation phase transition on $\mathbb Z^d$ is continuous, and indeed similar methods were used in \cite{MR1124831} to prove continuity for half-spaces.)  Although it is not strictly an instance of Schramm's conjecture since slabs are not transitive, the Grimmett--Marstrand theorem trivially implies that the analogous statement holds for ``slabs with periodic boundary conditions'' (i.e., $\Z^{d-k} \times (\Z/n\Z)^k$ with its standard generating set), which are transitive. The proof of the Grimmett--Marstrand theorem relies heavily on renormalization techniques exploiting the full symmetries of $\Z^d$ and scale-invariance of Euclidean space $\mathbb{R}^d$, and does not readily generalize to other transitive graphs. Let us also mention that a quantitative version of the Grimmett--Marstand theorem was proven in the more recent work of Duminil-Copin, Kozma, and Tassion \cite{duminil2021upper}  which was very influential in both our work and \cite{contreras2022supercritical}.
 % as will become clear in \cref{sec:snowballing}.

\begin{rk}
A further classical Euclidean result in the spirit of the locality conjecture was established by Kesten \cite{kesten1990asymptotics}, who proved that $p_c(\Z^d)\sim 1/(2d-1)$ as $d\to \infty$. See \cite{alon2004percolation} for a simple proof and \cite{slade2006lace} for more refined results. While not strictly an instance of the locality conjecture since $\Z^d$ does not converge in the local topology, the intuitive reason for this result to hold is that $\Z^d$ is ``locally tree-like'' when $d\to \infty$ in the sense that small cycles have a negligible effect on the behaviour of the percolation model, so that one can define a kind of ``local limit'' of $\Z^d$ as $d\to\infty$ (in a different technical sense to the one we consider here) in terms of Aldous's \emph{Poisson weighted infinite tree} (PWIT) \cite{aldous2004objective}. 
\end{rk}

\noindent \textbf{Progress on locality.} Previous works on the locality conjecture can be divided into two strains, with completely different set of methods and domains of application associated to them: The first strain concerns graphs that satisfy various strong, ``infinite-dimensional'' expansion conditions, while the second concerns ``finite-dimensional'' graphs (i.e., graphs of polynomial volume growth) where one can hope to develop appropriate generalizations of Grimmett--Marstrand theory.
  The first strain splits further into two cases according to whether or not the graphs in question are \emph{unimodular}, a technical condition\footnote{Here is the definition: A transitive graph $G=(V,E)$ is unimodular if it satisfies the \textbf{mass-transport principle}, meaning that $\sum_x F(o,x) = \sum_x F(x,o)$ for every $F:V^2\to [0,\infty]$ that is diagonally invariant in the sense that $F(x,y)=F(\gamma x, \gamma y)$ for every automorphism $\gamma$ of $G$. We will not directly engage with unimodularity in this paper, but it will appear as a hypothesis in many of our intermediate results since it is needed to apply the two-ghost inequality of \cite{hutchcroft2019locality}.} that holds for most familiar examples of transitive graphs including every Cayley graph and every amenable transitive graph \cite{soardi1988amenability}.  Although nonunimodular graphs are often considered to be ``pathological'' compared to their unimodular cousins, it turns out that \emph{nonunimodularity is actually a very helpful assumption}: in \cite{hutchcroft2020nonuniqueness} the second author carried out a very detailed analysis of critical percolation on nonunimodular transitive graphs, which he then used to prove the nonunimodular case of locality in \cite{hutchcroft2019locality}. Moreover, it was proven in \cite[Corollary 5.5]{hutchcroft2019locality} that the set of nonunimodular transitive graphs is both closed and open in $\cG$, so that to prove \cref{thm:main} it now suffices to consider the case that all graphs in question are unimodular.

Let us now discuss previous results for unimodular graphs in the ``infinite-dimensional'' setting.
The first result in this direction was due to Benjamini, Nachmias, and Peres \cite{MR2773031}, who proved the conjecture for nonamenable graph sequences satisfying a certain high girth condition (e.g., uniformly nonamenable graph sequences of divergent girth; unimodularity is not required).
More recently, the second author \cite{hutchcroft2019locality} proved the conjecture for  graph sequences of uniform exponential growth (meaning that the balls of radius $r$ in the graphs $G_n$ all have volume lower-bounded by $e^{cr}$ for some constant $c$ independent of $n$ and $r$), and Hermon and the second author \cite{hermon2021no} proved the conjecture for sequences of graphs satisfying a certain uniform stretched-exponential heat kernel upper bound, a class that includes certain examples of intermediate volume growth (i.e., volume growth that is superpolynomial but subexponential; note however that the spectral condition of \cite{hermon2021no} is not implied by any growth condition). The works \cite{hutchcroft2019locality,hutchcroft2020nonuniqueness,hermon2021no} all establish locality for the families of graphs they consider by proving quantitative tail estimates on critical percolation clusters that hold uniformly for all graphs in the family, yielding much more than just locality. (In particular, they also imply that the graphs in question have continuous percolation phase transitions.) 

\begin{rk}
Although the techniques developed in \cite{hutchcroft2019locality,hermon2021no} have yet to be made to work for \emph{nearest-neighbour} percolation models in finite dimension, versions of these arguments have been used in \cite{hutchcroft2021power} to analyze certain \emph{long-range} percolation models in finite-dimensional spaces. 
% including models that are believed to belong to the same universality class as nearest-neighbour percolation on $\Z^d$ for each $d\geq 2$.
 The results of \cite{hutchcroft2021power} can be used to prove versions of the locality conjecture for certain large families of long-range percolation models on unimodular transitive graphs (under the assumption that the long-range edge kernel has a sufficiently heavy tail uniformly throughout the sequence). Further results on locality for long-range percolation can be found in \cite{MR1405960,MR1896880}.
\end{rk}

\noindent
\textbf{Polynomial growth and structure theory.}
We now discuss the second strain of results, concerning graphs of polynomial volume growth. Let us first briefly review the \emph{structure theory} of transitive graphs of polynomial growth, which plays an important role in these developments.
Recall that $\mathcal G$ is the space of all infinite transitive graphs and that $G \in \mathcal G$ is said to have \textbf{polynomial growth} if for some positive reals $C$ and $d$, the number of vertices contained in a ball of radius $n$, denoted $\op{Gr}(n) := \abs{B_n(o)}$, satisfies $\op{Gr}(n) \leq Cn^d $ for all $n\geq 1$. The geometry of such graphs is highly constrained: it is a consequence of Gromov's theorem \cite{MR623534} and Trofimov's theorem \cite{MR735714} that every $G \in \mathcal G$ with polynomial growth is necessarily quasi-isometric to the Cayley graph of a nilpotent group. In particular, for every such graph $G$, there is a positive real $C$ and a unique positive integer $d$ such that $C^{-1} n^d \leq \op{Gr}(n) \leq Cn^d$ for all $n \geq 1$. The integer $d$ is called the (volume growth) \textbf{dimension} of $G$; it coincides with the \emph{isoperimetric dimension} and \emph{spectral dimension} of $G$ by a theorem of Coulhon and Saloff-Coste \cite{MR1232845}. These results are often used in the study of probability on transitive graphs as part of a ``structure vs.\ expansion dichotomy'', wherein each graph either satisfies a high-dimensional isoperimetric inequality (which is often a helpful assumption) or else is quasi-isometric to a nilpotent group of bounded step and rank (which is useful because these graphs are highly explicit and well-behaved); a detailed overview of the structure theory of transitive graphs of polynomial growth and its applications to probability is given in the introduction to \cite{EHStructure}.

More recently, \emph{finitary} versions of these results have been established, first for groups in the landmark work of Breuillard, Green, and Tao \cite{breuillard2011structure}, then for transitive graphs by Tessera and Tointon~\cite{MR4253426}. These results imply, for instance, that for each constant $K <\infty$ there exists $N<\infty$ such that if we observe that $\op{Gr}(3n) \leq K \op{Gr}(n)$ for some $n \geq N$, then $G$ is $(1,Cn)$-quasi isometric the Cayley graph of a virtually nilpotent group where the constant $C$ along with the rank, step, and index of the nilpotent subgroup are all bounded above by some function of $K$ (see \cref{subsection:structure_theory} for further details). 
This finitary structure theory is extremely useful in applications to problems such as locality in which one wishes to argue in a way that is uniform over some family of graphs. For example, 
it follows from \cite[Corollary 1.5]{MR4253426} that for each $d\geq 1$ the set of transitive graphs of polynomial growth with dimension at most $d$ is an open subset of $\cG$, and moreover that 
  if $G_n \to G$ with $G$ of polynomial growth of dimension $d$ then there exists $n_0<\infty$ and a constant $C$ such that the ball of radius $r$ in $G_n$ has volume at most $Cr^d$ for every $n\geq n_0$, with constants independent of $n$. As such, to prove locality in the case that the limit has polynomial growth, it suffices to consider the case that all graphs in the sequence satisfy a \emph{uniform} polynomial upper bound on their growth as well as various other forms of strong uniform control on their geometry.

 Besides the original work of Grimmett and Marstrand, the first result on locality for graphs of polynomial growth was due to Martineau and Tassion \cite{MR3630298}, who proved that locality holds for Cayley graphs of abelian groups.
 % \footnote{For an interesting example that is highly distinct from periodic slabs, one can consider the Cayley graph of $\Z^2$ generated by $\{(1,0),(0,1),x\}$, which converges to the standard \emph{three-dimensional} hypercubic lattice as $x\to \infty$.}.
  Their proof employs a variation on the Grimmett--Marstrand argument, overcoming significant technical difficulties arising due to the loss of rotational and reflection symmetry. This result was greatly extended in the recent work of Contreras, Martineau, and Tassion, who developed a version of Grimmett--Marstrand theory for transitive graphs of polynomial growth in \cite{contreras2022supercritical} and used this theory together with the finitary structure theory discussed above to deduce the polynomial growth case of the locality conjecture in \cite{MR4529920}. As with the aforementioned works in the infinite-dimensional setting, the works \cite{contreras2022supercritical,MR4529920} establish not just locality but also many further strong quantitative results about  percolation  on the classes of graphs they consider. However, while \cite{hutchcroft2019locality,hermon2021no} established quantitative estimates on finite clusters in \emph{critical} percolation, \cite{contreras2022supercritical,MR4529920} instead establish strong results about the geometry of the infinite cluster in \emph{supercritical} percolation. This reflects a fundamental distinction between the two approaches, with the analysis of the polynomial growth case involving estimates on uniqueness of annuli crossings etc.\ that are simply not true for ``big'' graphs like the 3-regular tree.

\medskip
\noindent \textbf{What challenges remain?} Given the previous results discussed above, it appears that there are two main cases of the locality conjecture left to consider: arbitrary sequences of superpolynomial growth transitive graphs converging to a superpolynomial growth graph, and the ``diagonal'' case in which a sequence of polynomial-growth graphs converges to a graph of superpolynomial growth. Moreover, the second case might be split further according to whether the graphs in the sequence have bounded or divergent dimension. (As we will soon explain, our proof will in fact work through a different and less obvious kind of case analysis.) In the first case, a key difficulty is that the geometry of transitive graphs of superpolynomial growth can be highly arbitrary, with the space of all such graphs being an ineffably complex object in some senses: the challenge is precisely that we need  an argument that is robust enough to work for all possible transitive graphs, for which there is nothing like a general classification.
 Moreover, the best known uniform lower bound on the growth of groups of superpolynomial growth, due to Shalom and Tao \cite{shalom2010finitary}, is of the form $n^{(\log \log n)^c}$ for a small constant $c>0$. This growth lower bound (which has not yet been proven for transitive graphs that are not Cayley) is \emph{vastly} weaker\footnote{While a well-known conjecture of Grigorchuk \cite{grigorchuk2014gap} states that every superpolynomial growth transitive graph has growth at least $\exp[c n^{1/2}]$, this conjecture is completely open, somewhat controversial, and in any case would still require a significant advance on the methods of \cite{hutchcroft2019locality,hermon2021no} to be applicable towards the locality conjecture.} than the assumptions used in the works \cite{MR2773031,hutchcroft2019locality,hermon2021no} discussed above.  In the diagonal case, one must contend with these same difficulties again together with the total incompatibility of the methods that have thus far been used to handle the high-growth and polynomial growth cases. As such, while the main difficulties in the locality conjecture arise from ``unknown enemies'' hiding deep within the unknowable expanse of the space of all transitive graphs, there are also explicit examples that seem difficult to handle within existing frameworks. (One such example is the standard Cayley graph of the free step-$s$ nilpotent group on two generators, which converges to a $4$-regular tree as $s\to \infty$ but has finite dimension for each finite $s$.)
 % , and seems difficult to treat within any of the existing frameworks.

\medskip

\noindent \textbf{Parallels with $p_c<1$.} Before we begin to describe our proof of the locality conjecture, let us first discuss how its history closely parallels the (older) history of the $p_c<1$ problem. It follows from the classical work of Peierls \cite{peierls1936ising} that $p_c(\Z^d)<1$ for every $d\geq 2$. In their highly influential work \cite{MR1423907}, Benjamini and Schramm conjectured that $p_c<1$ for every transitive graph that is not one-dimensional. Benjamini and Schramm also proved in the same paper that $p_c<1$ for every (not necessarily transitive) nonamenable graph, while earlier results of Lyons \cite{lyons1995random} implied that exponential growth suffices in the transitive case. On the other hand, it is a simple consequence of the structure theory that every transitive graph of polynomial volume growth that is not one-dimensional contains a subgraph that is quasi-isometric to $\Z^2$, which easily implies that every such graph has $p_c<1$ (see e.g. \cite[Section 3.4]{hutchcroft2021nontriviality} for details). As such, for many years the problem remained open only for groups of intermediate growth. Even in this case the problem was solved for most ``known'' examples of graphs of intermediate growth, such as the Grigorchuk group \cite{muchnik2001percolation,raoufi2017indicable}, with the main remaining difficulty coming from ``unknown enemies'' as discussed above. See the introduction of \cite{MR4181032} for a detailed account of this partial progress including several further references.

The $p_c<1$ problem was eventually solved in full generality by Duminil-Copin, Goswami, Raoufi, Severo, and Yadin \cite{MR4181032}. More precisely, they established that $p_c<1$ for any (not necessarily transitive) bounded degree graph satisfying a $(4+\eps)$-dimensional isoperimetric inequality, with the structure theory and classical results above handling all remaining transitive graphs. Their proof uses a comparison between percolation and the Gaussian free field which works only for values of $p$ very close to $1$, making their methods unsuitable for the locality problem. A finitary version of the results of \cite{MR4181032} was developed by the second author and Tointon in \cite{hutchcroft2021nontriviality}, who proved in particular that sequences of \emph{finite} transitive graphs have a non-trivial phase in which a \emph{giant} cluster exists provided that they are ``not one dimensional'' in an appropriate quantitative sense. This finitary approach also allowed them to prove a \emph{uniform} version of the main result of \cite{MR4181032}, stating that for each $d\geq 1$ there exists $\eps>0$ such that every infinite transitive graph that has degree at most $d$ and is not one-dimensional has $p_c<1-\eps$. (For Cayley graphs it is now known that $\eps$ can be taken independently of the degree \cite{panagiotis2021gap}; see \cref{sec:pc_gap} for some related conjectures.) 

\medskip

\noindent 
\textbf{Can we do something similar?}
Continuing to follow the path set by this previous work on the $p_c<1$ problem, one might hope to prove locality via a similar dichotomy, finding some method that handles all graphs that are ``high-dimensional'' in some sense, then using the structure theory to separately analyze the remaining ``low-dimensional'' examples. One technical problem with this approach, which was already a major hurdle in \cite{hutchcroft2021nontriviality}, is that one is forced to consider sequences of graphs that may look high-dimensional up to some divergently large scale then switch to looking low-dimensional, meaning that one must find a way to ``patch together'' the outputs of the two different case analyses at the crossover scale. A more fundamental problem, however, is that to date there have simply been no viable approaches to prove locality under the assumption that the graphs are high dimensional. 

As we will see, our proof will instead follow a more subtle approach in which we first dichotomize into two much less obvious cases according to whether or not the graph has \emph{quasi-polynomial growth} on the relevant scale. (Here, a function is said to have \emph{quasi-polynomial growth} if it is bounded by a function of the form $\exp[(\log n)^{O(1)}]$.)  In the low growth case, we then employ a second, subordinate dichotomization according to whether the \emph{rate of growth} on the relevant scale is low-dimensional or high-dimensional; it is in this second dichotomy that we can make use of the structure theory.
% \emph{trichotomize} into three cases: a low-dimensional case, a high-dimensional-but-quasi-polynomial growth case, and a super-quasi-polynomial growth case.
% 
 % 
% 
% 
A key technical difficulty when arguing this way is that (as far as we know with the current structure theory), the same graph might oscillate between the quasi-polynomial and super-quasi-polynomial regimes infinitely many times as we go up the scales, so that any ``case analysis'' we do via this dichotomy must be able to handle this oscillation. Moreover, the delicate nature of our proof leads us to engage with the structure theory literature in a deeper way than had previously been necessary in applications to probability. Indeed, our proof relies in part on our structure-theoretic companion paper \cite{EHStructure} in which we prove a ``uniform finite presentation'' theorem for groups of polynomial volume growth which we use to make some of the arguments from \cite{contreras2022supercritical} finitary. Besides this, we must also contend with the fact that the assumption of quasi-polynomial growth is highly non-standard, so that we must spend a significant amount of the paper studying the deterministic geometry of transitive graphs of quasi-polynomial growth (at some scale) with a view to eventually generalizing the methods of \cite{contreras2022supercritical} from polynomial to quasi-polynomial growth.

% \medskip

Interestingly, our proof of locality yields as an immediate corollary a new proof that $p_c<1$ for transitive graphs that are not one-dimensional, recovering the main result of \cite{MR4181032}. This proof works directly with Bernoulli percolation and does not rely on the comparison to the GFF in any way. (On the other hand it is also \emph{much} more complicated than the original proof!)

\subsection{About the proof}

In this section we give an overview of our proof. Let us first establish some relevant notation that will be used throughout the paper. Recall that $\cG$  denotes the space of all (vertex-)transitive graphs (which we always take to be connected and locally finite) and that $\cG^* \subseteq \cG$ is the space of infinite transitive graphs that are not one-dimensional. We also write $\cU \subseteq \cG$ for the space of all unimodular transitive graphs and write $\cU^*=\cU \cap \cG^*$. 
Given $d\in \mathbb{N}$, we write $\cG_d$, $\cG_d^*$, and $\cU^*_d$ for the subsets of these spaces in which every graph has degree $d$. Given sets of vertices $A$ and $B$ in a graph, we write $\{A \leftrightarrow B\}$ for the event that there is a path from $A$ to $B$ in the given configuration. We also use the notation $A \leftrightarrow \infty$ to mean that there is an infinite cluster that intersects $A$. 
When $A = \{u\}$ and $B = \{v\}$ are singletons, we may simply write $u \leftrightarrow v$ and $u \leftrightarrow \infty$ instead.
 For each transitive graph $G$ we will write $o$ for an arbitrarily chosen root vertex of $G$ which we will refer to as \emph{the origin}. We write $B_n$ for the graph-distance ball of radius $n$ around $o$ in $G$ and write $S_n$ for the set of vertices at distance exactly $n$ from $o$.

\medskip

\noindent \textbf{Uniform estimates and finite-size criteria.}
Recall that percolation is said to undergo a \emph{continuous phase transition} on an infinite graph $G$ if the function $\theta_G: p \mapsto \p_p^G(o \leftrightarrow \infty)$ is continuous (it is a theorem of Schonmann \cite{MR1676831} that $\theta_G$ is always continuous at every point other than $p_c$). One of the best-known conjectures in the study of percolation on general infinite transitive graphs is that percolation should undergo a continuous phase transition on every $G \in \mathcal G^*$ \cite[Conjecture 4]{MR1423907}; this is famously still open when $G$ is the three-dimensional cubic lattice. Although it might not be obvious from their statements, the locality conjecture and the continuity conjecture are very closely related. To see why, let us give two equivalent formulations of \cref{thm:main}, which will inform the remainder of our analysis. (For notational convenience we define $\p_{p} := \p_1$ when $p > 1$.)

\theoremstyle{plain}
\newtheorem{reform}{Reformulation of the locality conjecture}

\begin{reform} \label{prop:locality_phrased_as_FSC}
      For each $d \in \mathbb N$ and $\eps,\delta >0$, there exists $r \in \mathbb N$ such that 
      \begin{equation} \label{eq:main_locality_as_fsc}
            \p_p^G( o \leftrightarrow S_r ) \geq \delta \quad \implies \quad \p_{p+\eps}^G( o \leftrightarrow \infty) > 0
      \end{equation}
      for every $G \in \mathcal G_d^*$ and $p \in [0,1]$.
\end{reform}

\begin{reform} \label{prop:locality_phrased_as_FSC2}
      For each $d \in \mathbb N$ and $\eps >0$, there exists a function $h_\eps=h_{d,\eps}:\mathbb N\to (0,\infty)$ with $h_\eps(r)\to 0$ as $r\to\infty$ such that
       % $n \in \mathbb N$ such that for all $G \in \mathcal G_d^*$ and all $p \in [0,1]$,
      \begin{equation} \label{eq:main_locality_as_fsc2}
            \p_{p_c-\eps}^G( o \leftrightarrow S_r ) \leq h_\eps(r)
             % \quad \implies \quad \p_{p+\delta}^G( o \leftrightarrow \infty) > 0.
      \end{equation}
      for every $G\in \mathcal G_d^*$ and $r\geq 1$.
\end{reform}

These two reformulations are trivially equivalent to one another (the difference amounts to considering either $h_\eps(n)$ or its inverse), but we find the two different viewpoints on the same statement to be illuminating: the first is formulated in terms of a ``finite-size criterion for not being very subcritical'' while the second is formulated in terms of ``uniform estimates on subcritical percolation''. 
In either formulation, the $\eps=0$ analogue of the same statement would imply both the locality conjecture and a strengthened, ``uniform in $G$'' version of the conjecture concerning the continuity of the phase transition; this is precisely what the arguments of \cite{hutchcroft2019locality,hermon2021no} establish for the classes of graphs they consider. (In fact this uniform version of continuity is implied by locality together with the non-uniform version of continuity by a simple compactness argument.) This suggests a close connection between the two problems, while the freedom to use ``sprinkling'' (i.e., to increase $p$ by small amounts in an appropriate manner) may make locality significantly more tractable.

Let us now briefly explain why Reformulation~\ref{prop:locality_phrased_as_FSC} is equivalent to the locality conjecture (or more accurately to the upper semi-continuity of $p_c$, which is the difficult part of locality). In one direction, suppose that $G_n \to G$ and that $p>p_c(G)$. Since $p>p_c(G)$, the connection probability $\p_{p}^G( o \leftrightarrow S_r)$ does not decay as $r\to \infty$. Since for each fixed $r$ we also have that $\p_{p}^{G_n}( o \leftrightarrow S_r)=\p_{p}^G( o \leftrightarrow S_r)$ for every sufficiently large $n$ by the definition of local convergence, we may apply Reformulation~\ref{prop:locality_phrased_as_FSC} with $\eps = \p_{p}^G( o \leftrightarrow \infty)>0$ and $\delta$ an arbitrary positive number to deduce that $p_c(G_n)\leq p+\delta$ for all sufficiently large $n$. This implies the desired upper semi-continuity since $p>p_c(G)$ and $\delta>0$ were arbitrary. In the other direction, suppose that Reformulation~\ref{prop:locality_phrased_as_FSC} is \emph{false}, so that there exists $d \in \mathbb N$, $\eps>0$, and $\delta>0$ such that for each $r\geq 1$ there exists $G_r\in \cG_d^*$ and $p_r\in [0,1]$ with $p_c(G_r)\geq p_r+\eps$ and $\mathbb P_p^{G_r}(o\leftrightarrow S_r) \geq \delta$. Since $\cG_d^*$ is compact, there exists a subsequence along which $G_r$ converges locally to some transitive graph $G\in \cG_d^*$ and $p_r$ converges to some $p\in [0,1]$, which must satisfy $\mathbb P_p^G(o\leftrightarrow r)\geq \delta$ for every $r\geq 1$ and hence that $p_c(G)\leq p$. Since the liminf of $p_c(G_r)$ along this subsequence is at least $p+\eps$ we deduce that $p_c$ is \emph{not} continuous on $\cG^*$, completing the proof of the equivalence.
 % of \cref{thm:main} and Reformulation~\ref{prop:locality_phrased_as_FSC}. 

\begin{rk}
The lower semi-continuity of $p_c$ follows by a very similar argument to the deduction of upper semi-continuity from Reformulation~\ref{prop:locality_phrased_as_FSC} that we just gave, but where the relevant finite-size criteria or uniform bounds follow easily from standard facts about the sharpness of the phase transition: The argument of \cite{duminil2016new} is formulated in terms of finite-size criteria for subcriticality, while that of \cite{Gabor} is formulated in terms of uniform bounds on supercritical percolation.
\end{rk}

\begin{rk}
\label{rk:we_dont_actually_prove_uniform}
Although the perspective on the locality problem we have just discussed is highly influential on our approach, we will in fact follow a slightly different approach in order to circumvent some technical problems related to estimating the ``burn-in'', i.e., the amount of sprinkling needed to perform the base case of our multi-scale induction scheme. As such, we do not obtain an explicit function $h_\eps$ as in Reformulation~\ref{prop:locality_phrased_as_FSC2} in this paper. The additional steps required to make our argument completely quantitative and obtain an explicit bound on the function $h_\eps$ will be carried out in a forthcoming companion paper \cite{EH_quantitative}.
\end{rk}

% The analogous result for $\delta = 0$ would be a uniform version of the continuity conjecture. We explore the relationship between these conjectures in more detail in the final section. We will show that locality of the critical point, continuity of the phase transition, and a third ingredient - locality of the supercritical infinite cluster density, can be viewed as parts of a single overarching conjecture: that $\theta$ viewed as a function $\mathcal G^* \times (0,1) \to (0,1)$ sending $(G,p) \mapsto \p_p^G(o \leftrightarrow \infty)$ is itself continuous as a function of two variables. By using our ghost-field technology from Section 2, we will also establish locality of the supercritical infinite cluster density. Together with \cref{thm:main}, this yields the following result of independent interest.

% \begin{thm} \label{thm:pointwise_convergence}
%       Let $(G_n)$ be a sequence in $\mathcal G^*$ that converges locally to some graph $G$. As $n \to \infty$,  
%       \[
%             \theta_{G_n} \to \theta_{G} \quad \text{pointwise on } \quad [0,1] \backslash \{p_c(G) \}.
%       \]
% \end{thm}

% This leaves the continuity (of the phase transition) conjecture as the only ingredient remaining. In particular, the continuity conjecture is equivalent to this general bi-continuity of $\theta$, and hence to the (a priori stronger) uniform version of the continuity conjecture that corresponds to \cref{prop:locality_phrased_as_FSC} with $\delta =0$.

% \paragraph{}

\medskip

\noindent\textbf{A non-trivial reformulation.}
Let us now begin to go into more detail about our methods of proof. As discussed above, the results of \cite{hutchcroft2019locality,hutchcroft2020nonuniqueness} completely resolve the nonunimodular case of the conjecture, and since the space $\cU^*$ is both closed and open in $\cG^*$ we may restrict from now on to the case that all graphs are unimodular. 
In this case, the sharpness of the phase transition \cite{MR852458,MR874906,duminil2016new} together with the methods of \cite{hutchcroft2019locality} allow us to further reformulate \cref{thm:main} as follows:

\begin{reform}
\label{reform3}
For all $d \in \mathbb N$ and all $\eps,\delta >0$, there exists $n \in \mathbb N$ such that 
\[
      \min_{u,v \in B_n} \p_p( u \leftrightarrow v) \geq \eps \quad \implies \quad \lim_{m \to \infty} \frac{1}{m} \log \p_{p+\delta}(o \leftrightarrow S_m) = 0
\]
for every $G \in \mathcal U_d^*$ and $p \in [0,1]$,
\end{reform}

The fact that we can replace the statement that $\theta(p+\delta)>0$ appearing on the right hand side of Reformulation~\ref{prop:locality_phrased_as_FSC} with the statement that the radius has a subexponential tail appearing on the right hand side of Reformulation~\ref{reform3} follows directly from the sharpness of the phase transition \cite{MR852458,MR874906,duminil2016new}, which implies that the radius has an exponential tail whenever $p<p_c$. (This does not require unimodularity.) The fact that we can replace the lower bound on the tail of the radius appearing on the left hand side of Reformulation~\ref{prop:locality_phrased_as_FSC} with the lower bound on point-to-point connection probabilities appearing on the left hand side of Reformulation~\ref{reform3} is a much less obvious\footnote{Indeed, unlike the tail of the radius, it is possible that point-to-point connection probabilities continue to decay in the supercritical regime, as is the case on the $3$-regular tree. This breaks the argument that Reformulation~\ref{prop:locality_phrased_as_FSC} implies \cref{thm:main} that we gave above.}  fact and is the main content of \cite{hutchcroft2019locality}. (Indeed, in the exponential growth setting studied in \cite{hutchcroft2019locality}, a uniform upper bound on critical point-to-point connection probabilities was already established in \cite{hutchcroft2016critical}.) The argument needed to see that Reformulation~\ref{reform3} implies \cref{thm:main} is explained in more detail in \cref{sec:AKN,sec:induction_step}.

\medskip
\noindent \textbf{Sprinkled renormalization of the two-point function.} 
We now explain our unconditional proof\footnote{As discussed in \cref{rk:we_dont_actually_prove_uniform}, we do not quite proceed via Reformulation~\ref{reform3} in order to circumvent certain technical obstacles. Despite these caveats, we still think of our argument to be best understood as ``morally'' going via Reformulation~\ref{reform3}, which in any case is implied by \cref{thm:main}.} of Reformulation~\ref{reform3}, which occupies the bulk of the paper. At a very high level, we will use a ``sprinkled multi-scale induction argument'', in which we start with estimates concerning percolation on some scale and deduce similar estimates at a much larger scale after increasing $p$ by some appropriately small amount; if we can do this efficiently enough, so that the total sprinkling is small when we start at a large scale, we can carry the induction up to infinitely many scales and (hopefully) prove that the resulting slightly larger parameter is supercritical (or at least that it is not subcritical).

\medskip

Since our actual induction hypothesis is rather complicated, let us first illustrate how such an argument might work in principle. Let $d\geq 1$ be fixed and suppose that we were able to prove an implication of the form
% By splitting the sprinkling amount $\delta$ into an infinite series and arguing by induction, it would actually be enough to prove an implication like
\begin{equation} \label{eq:simple_eg_of_locality_implication}
      \min_{u,v \in B_{n}} \p_p^G ( u \leftrightarrow v ) \geq \delta(n) \quad \implies \quad \min_{u,v \in B_{\phi(n)}} \p_{p+\eps(n)}^G ( u \leftrightarrow v ) \geq \eta(n)
\end{equation}
held for all $G\in \cU^*_d$, $p\in [0,1]$, and $n\geq 1$, where $\phi:\mathbb N\to \mathbb N$ is strictly increasing and $\eps,\delta,\eta:\mathbb N\to (0,1]$ are decreasing. We claim that such an implication would suffice to prove locality \emph{provided that sufficiently strong quantitative relationships hold between the various functions that appear.} For example, the argument would work provided that $\eta(n)\geq \delta(\phi(n))$, that $\delta(n)$ is subexponentially small as a function of $n$, and that $\sum_{k=0}^\infty \eps(\phi^k(n))<\infty$, where $\phi^k$ denotes the $k$-fold convolution of $\phi$. (Consider for example $\phi(n)=(n+1)^2$, $\delta(n)=(\log n)^{-1}$, $\eta(n)=(2\log n)^{-1}$, and $\eps(n)=(\log \log n)^{-2}$.)
% then Reformulation~\ref{reform3} would follow.
 % (The specific functions appearing here are purely illustrative, but there are of course some constraints that must be placed on these functions for the argument to work.)
  To see this, note that if we define the sequence $(n_k)_{k\geq 1}$ by $n_0=1$ and $n_{k+1}=\phi(n_k)$ for each $k\geq 1$ then, under this assumption, the implication \eqref{eq:simple_eg_of_locality_implication} implies that
  % we define $n_k=2^{2^k}$ then \eqref{eq:simple_eg_of_locality_implication} would imply that
\begin{equation*}
 % \label{eq:simple_eg_of_locality_implication2}
      \min_{u,v \in B_{n_k}} \p_p^G ( u \leftrightarrow v ) \geq \delta(n_k) \quad \implies \quad \min_{u,v \in B_{n_{k+1}}} \p_{p+\eps(n_k)}^G ( u \leftrightarrow v ) \geq \delta(n_{k+1})
\end{equation*}
and hence by induction that
\begin{multline*} 
% \label{eq:simple_eg_of_locality_implication2}
      \min_{u,v \in B_{n_k}} \p_p^G ( u \leftrightarrow v ) \geq \delta(n_k) \quad \text{ for some $k\geq 1$}\\ \; \implies \; \min_{u,v \in B_{n_{i}}} \p_{p+\sum_{j=k}^{i-1} \eps(n_j) }^G ( u \leftrightarrow v ) \geq \delta(n_{k+1}) \quad \text{ for every $i\geq k$}.
\end{multline*}
Since the conclusion on the right hand side implies that connection probability are subexponential in the distance at $p+\sum_{j=k}^\infty \eps(n_j)$, it would follow from \eqref{eq:simple_eg_of_locality_implication} that if $\min_{u,v \in B_{n_k}} \p_p^G ( u \leftrightarrow v ) \geq \delta(n_k)$ for some $k$ then $p_c\leq p+\sum_{j=k}^\infty \eps(n_j)$. 
The assumption that $\sum_{j=0}^\infty \eps(n_j)$ ensures that the tail sum appearing here is small, verifying Reformulation~\ref{reform3}.

\medskip

Following this approach, we are led to the problem of how to extend point-to-point connection lower bounds from one scale to a much larger scale after sprinkling by a small amount. More specifically, we want to do this \emph{as efficiently as possible}, with the hope of obtaining an inductive statement that is sufficiently strong to imply locality. 

\medskip

\noindent \textbf{Snowballing.}
In \cref{sec:snowballing}, we develop a new method based on Talagrand's theory of sharp thresholds \cite{MR1303654} and ``cluster repulsion'' inequalities inspired by the work of Aizenman-Kesten-Newman \cite{aizenman1987uniqueness} that allows us to prove a bootstrapping implication of the form \eqref{eq:simple_eg_of_locality_implication}, where the function $\phi$ depends on the volume of the ball of radius $n$. While the basic idea of using Talagrand together with Aizenman-Kesten-Newman is already present in \cite{duminil2021upper,contreras2022supercritical} (both in the polynomial growth case), we find a new way of both implementing and applying\footnote{In particular, the argument we use to efficiently extend two-point estimates to a higher scale given the outputs of this sharp threshold argument is also novel.} this argument using \emph{ghost fields} that works directly in infinite volume and uses the \emph{two-ghost inequality} of \cite{hutchcroft2019locality} rather than the classical Aizenman-Kesten-Newman inequality. We call this the \textbf{snowballing} method. While the methods of \cite{duminil2021upper,contreras2022supercritical} needed upper bounds on the growth to work, our method actually becomes \emph{more efficient} as the growth gets larger; if the growth is large enough, the bootstrapping implication we obtain (which is of the form \eqref{eq:simple_eg_of_locality_implication}) is strong enough to prove locality by the argument outlined above.
 Optimizing the snowballing argument as much as we could (see \cref{remark:weaker_growth_assumptions}), we found that this method could prove locality for graph sequences satisfying a uniform growth lower bound of the form $n^{c(\log\log n)^C}$ for some universal constant $C$ and any $c>0$.

\medskip

 For Cayley graphs, the hypothesis needed for this argument to work is of course frustratingly close to the Shalom-Tao bound $\op{Gr}(n) \geq \exp(c\log n (\log\log n)^c)$ \cite{shalom2010finitary}, where $c$ is a \emph{small} universal constant, which holds for every Cayley graph of superpolynomial growth. Thus, even a modest improvement to Shalom-Tao or the snowballing argument would allow us to prove locality for all sequences of superpolynomial growth via this method. Since we were not able to improve either argument to the required extent (and in any case must also deal with the ``diagonal'' case of locality), we will instead attack the locality conjecture in a ``pincer movement'', where we push the polynomial growth methods of \cite{contreras2022supercritical} to handle all growths that are too slow to be treated by the snowballing argument. In fact we will push these methods to handle all graphs of \emph{quasi-polynomial} growth $\op{Gr}(n)\leq \exp((\log n)^C)$, so that there is a considerable overlap in the growth regimes handled by the two methods. As mentioned above, a key technical difficulty is that (as far as we know) the same graph may oscillate between the two growth regimes infinitely often, so that the two methods must be harmonized in some way to allow for this.

\medskip

\noindent \textbf{Chaining via orange peeling.}
A well-known general approach to efficiently extend connection lower bounds from one scale to another is by a method we will call \emph{chaining}. Consider the event $\mathscr E_R$ that $S_R \leftrightarrow S_{10R}$ and the event $\mathscr U_R$ that there is at most one cluster intersecting both $S_{2R}$ and $S_{5R}$. Suppose we knew that for some large $R$ and small $\eps > 0$,
\begin{equation} \label{eq:vague_E_and_U_whp}
      \p_p(\mathscr E) \geq 1-\eps \quad \text{and} \quad \p_p(\mathscr U) \geq 1-\eps.
\end{equation}
Then, by Harris' inequality and a union bound, we could deduce that any pair of vertices $u$ and $v$ at distance $k R$ satisfy
\begin{equation} \label{eq:vague_extended_bound}
      \p_p( u  \leftrightarrow v ) \geq  \min_{u',v' \in B_{10R}} \p_p( u' \leftrightarrow v' )^2 \cdot [1 - k \eps] - k\eps.
\end{equation}
If we also had a way to use \eqref{eq:vague_extended_bound} to deduce a version of \eqref{eq:vague_E_and_U_whp} at scale $kR$ in place of $R$ (possibly after a small increase of $p$), \emph{and this argument is sufficiently efficient quantitatively}, we might be able to formulate an inductive argument yielding both connection lower bounds and uniqueness of annuli crossings at \emph{all} scales. (Of course such an argument cannot work on e.g.\ the $3$-regular tree.)

This is roughly what is done in \cite{contreras2022supercritical} (although they consider supercritical percolation, so that crossing probabilities do not decay \emph{a priori}), who establish that $\mathscr U_R$ holds with high probability when $G$ has polynomial growth and $R\to \infty$.
   % The difficulty is typically in establishing that $\mathcal U$ holds with high probability.
Their method, which (following \cite{MR1707339}) they refer to as \emph{orange peeling}\footnote{We do not completely understand the metaphor; perhaps onion peeling would be more appropriate?} and is inspired by the earlier work \cite{MR3634283}, relies on knowing a two-point lower bound within annuli of the form $B_{R + \Delta} \backslash B_{R}$ for all large $R$ and some appropriate $\Delta(R) \ll R$. This information needs to be proven using information at lower scales, so that the argument is a kind of multi-scale induction or coarse-grained renormalization.
%it is called this right? also... why? oranges only have one layer. why isn't it onion peeeling?
The idea is to argue that as two clusters cross the thick annulus $B_{5R} \backslash B_{2R}$, they are very likely to become connected to each other because they have to cross many disjoint annuli of the form $B_{R' + \Delta} \backslash B_{R'}$, and in each such annulus they have a good probability to become connected to each other after sprinkling. If there are enough opportunities for any two crossing clusters to merge then all crossing clusters will have merged by the time they reach the outer sphere. The full argument of \cite{contreras2022supercritical} is highly technical and sophisticated, with the discussion in this paragraph presenting only a simplified cartoon version of some selected parts of their argument.

\medskip

\noindent \textbf{Working with quasi-polynomial growth.}
To run the something like the above ``orange peeling'' method, it is helpful to know that annuli $B_{R_2} \backslash B_{R_1}$ are in some sense well-connected. For example, in the setting of \cite{contreras2022supercritical}, the authors used the fact that for any pair of vertices $u,v$ in the \emph{exposed sphere} $S_{R}^\infty$, which is a certain subset of the usual sphere $S_{R}$, there is a path from $u$ to $v$ that stays within $B_{R+\Delta} \backslash B_{R-\Delta}$. To prove this they used the structure theory of graphs of polynomial growth (i.e.\ the fact that such graphs are finitely presented and are one-ended when they are not one-dimensional). As such, this method does not easily generalize to other graphs. (Indeed, any reasonable connectivity-of-annuli statement cannot hold in complete generality - annuli in regular trees are as poorly connected as sets at a given distance can possibly be.) We will prove that a weak connectivity property of annuli does hold if we assume that a quasi-polynomial growth upper bound $\op{Gr}(R) \leq \exp((\log R)^C)$ holds around the relevant scales. This statement, which we call the \emph{polylog-plentiful tubes condition}, says that for any two sets of vertices $A$ and $B$ crossing a thick annulus $B_{3R} \backslash B_{R}$, we can find many (i.e., at least $(\log R)^{\Omega(1)}$) paths from $A$ to $B$ that are not too long (i.e., have length at most $R (\log R)^{O(1)}$) and that are well-separated from each other (i.e., any two paths in the set have distance at least $(\log R)^{\Omega(1)}$). The proof of this polylog-plentiful tubes condition will exploit a structure vs. randomness dichotomy, where we use the structure theory of \cite{EHStructure} to handle the low-dimensional case and handle the high-dimensional case by building the required disjoint tubes using coupled families of random walks. Once this polylog-plentiful tubes condition is verified, we then show it can be used to push the methods of \cite{contreras2022supercritical} to handle graphs of quasi-polynomial growth. (This requires significant technical changes throughout their entire argument, with the proofs of some intermediate steps being completely different.)

% Our argument will therefore split into two cases: When $\op{Gr}(R) \leq f(R)$, we will prove the existence of controlled tubes then run a chaining argument similar to that of \cite{contreras2022supercritical}. When $\op{Gr}(R) \geq f(R)$, we will use a completely different and new approach for extending connection lower bounds. Note that we are extending the existing methods in two directions: we are pushing a chaining approach \emph{up} from the setting of (uniform) polynomial growth \cite{contreras2022supercritical}, and we are extending \emph{down} from the setting of (uniform) exponential growth in \cite{hutchcroft2019locality}. Compared to these existing works, we also have the added difficulty of \emph{oscilliting growth}: our graph may look high-growth on one scale then low-growth on another and so forth. This means that our low- and high-growth arguments each have to efficiently harvest the current growth rate and yield their output within a small interval of scales. Similarly to in \cite{MR4181032}, we do not have a specific transitive graph in mind that we are trying to analyse. The challenge is precisely that we need  an argument that is robust enough to work for all possible transitive graphs, for which there is nothing like a general classification. That said, one can construct specific sequences of graphs for which \cref{eq:very_main} was previously unknown, such as a sequence of nilpotent groups with diverging step that converges to a tree.

We might hope that the low- and high-growth arguments both imply a common statement similar to \eqref{eq:simple_eg_of_locality_implication}. Unfortunately our induction statement is more involved than this. The issue is that our low-growth argument works with point-to-point connections \emph{within finite sets} such as balls and tubes, while the high-growth argument works directly in infinite volume, and our induction hypothesis must be able to handle oscillation between these two cases. Our actual induction statement, which we explain in detail in \cref{sec:induction_step}, therefore has two parts: A lower bound on the full-space (infinite volume) two-point function, with no restriction on the geometry of the graph at the relevant scale, together with a lower-bound on connection probabilities \emph{inside tubes} that holds only when the \emph{thickness} of the tube happens to belong to a quasi-polynomial growth scale. (The \emph{length} of these tubes are permitted to be quasi-polynomial in their thickness, so that this estimate tells us not just about quasi-polynomial growth scales but also many subsequent larger scales.) Thus, we are able to formulate a multi-scale inductive implication that holds \emph{in all growth regimes} and is strong enough to imply \cref{thm:main}.

\subsection{Organization and overview}

% \noindent \textbf{The key idea in each section}

We now outline the structure of the rest of the paper.
 % including a brief discussion of the main idea presented in each section.

\medskip

\noindent \textbf{Section 2:} In this section we review both the classical Aizenman-Kesten-Newman inequality \cite{aizenman1987uniqueness} and its consequences for the ``uniqueness zone'' as derived in \cite{contreras2022supercritical} as well as the \emph{two-ghost inequality} of \cite{hutchcroft2019locality}, which is a kind of infinite-volume version of Aizenman-Kesten-Newman analyzing volumes of clusters rather than their diameters. We also state a useful lemma of \cite{hutchcroft2019locality} that applies this inequality and that leads to Reformulation~\ref{reform3} above.

\medskip

\noindent \textbf{Section 3:} In this section we formulate the multi-scale induction framework used to prove locality, stating the induction step as \cref{prop:complicated_induction_statement}. We then explain how this technical proposition both implies \cref{thm:main} and leads to a new proof of $p_c<1$ for transitive graphs that are not one-dimensional as originally proven in \cite{MR4181032}.

\medskip

\noindent \textbf{Section 4}:
In this section we develop a new method of deducing two-point function lower bounds at a large scale from lower bounds at a smaller scale, which we call \textbf{snowballing}. This is based in part on an idea already present in \cite{duminil2021upper,contreras2022supercritical}, in which one uses Aizenman-Kesten-Newman bounds as an input to Talagrand's sharp threshold theorem  \cite{MR1303654}. Unlike those works, we use the two-ghost formulation of Aizenman-Kesten-Newman from \cite{hutchcroft2019locality} to work directly with volumes of clusters and prove bounds that hold in arbitrary unimodular transitive graphs; this causes the inequalities we derive to become vastly more efficient as the growth of the graph gets larger. The proof of the main snowballing proposition also involves a novel
% 
% A well-known theorem of Talagrand states, quantitatively, that an increasing event to undergo a sharp threshold as the percolation parameter increases if each particular edge is unlikely to be pivotal for the event. An edge is always unlikely to have its endpoints in distinct large clusters. In terms of ghost fields, this can be reinterpreted as saying that an edge is unlikely to be pivotal for the event that two low-intensity ghost fields are connected to each other. This lets us deduce that ghost-to-ghost connection probabilities undergo sharp thresholds. In Section 2.1 we use this together with a
 ghost-based chaining argument to convert this sharp threshold statement into a statement about extending point-to-point connection probabilities. All the arguments in this section work more efficiently as the growth gets larger, but still have content in the polynomial growth case. In Section 2.2, we explain how the snowballing method implies part of the main multi-scale induction step and also easily implies the full locality conjecture for graphs satisfying a mild superpolynomial growth lower bound. (Here the growth assumption is needed only to ensure that the total amount of sprinkling is small when we start at a large scale and induct up to infinity; no further geometric properties of graphs of high growth are used.)
  % Cayley graphs of superpolynomial growth if one assumes a very weak version of Grigorchuck's \emph{gap conjecture} \cite{grigorchuk2014gap}.
   The tools we develop in Section 2.1 also play an indispensable role in our low-growth arguments in \cref{sec:low_growth_multiscale}. 

\medskip

\noindent \textbf{Section 5}: In this section we establish deterministic geometric features of transitive graphs of quasi-polynomial growth (at some scale) that will be used to analyze percolation on these graphs in \cref{sec:low_growth_multiscale}. The main question addressed is as follows: How can one harness low  -- but not necessarily polynomial -- volume growth to establish that annuli are well-connected? Here, the ``well-connectedness'' of annuli is made precise via what we call the \emph{polylog-plentiful tubes condition}.
% We prove the existence of controlled tubes at any scale $r$ where $\op{Gr}(r') \leq e^{[\log r']^D}$ for all scales $r'$ close to $r$, where $D$ is an arbitrary constant. To prove this we split into two cases according to the rate of change of graphs, i.e.\! how rapidly $\op{Gr}(r')$ increases for $r'$ close to $r$. Together these provide an answer to the following general question:
The proof that the polylog-plentiful tubes condition holds for graphs of quasi-polynomial growth uses a ``structure vs. expansion'' dichotomy according to whether the \emph{rate of growth} on the relevant scale is low or high dimensional; in the low-dimensional case we employ the structure theory of \cite{breuillard2011structure,MR4253426,EHStructure} while in the high-dimensional case we construct large families of disjoint tubes using certain coupled random walks; the quasi-polynomial growth assumption is used to ensure that we can couple two walks started at distance $n$ to coalesce within distance $n(\log n)^{O(1)}$. (The low dimensional case of the analysis is the only place where we directly use the fact that our graphs are not one-dimensional, where it is needed to ensure that ``exposed spheres'' are well-connected in a certain sense.)

% In the slow rate of growth case, we use the structure theory result from our companion paper to deduce that for a large interval of scales, our graph looks like the Cayley graph of a finitely presented group whose relations are short words. This lets us deduce the same connectivity of exposed spheres result as we mentioned was available to \cite{contreras2022supercritical}, which is stronger than the existence of controlled tubes. As in \cite{contreras2022supercritical}, this relies on the structure theory of approximate groups \cite{breuillard2011structure} and its extension to transitive graphs \cite{MR4253426}. The additional challenge in our setting is that we need to obtain the required geometric control quickly - at scales not much larger than the first scale where we observe slow rate of change of growth. This is the content of Section 3.1.

\medskip

% In the fast rate of growth case, we build the required paths by using random walk trajectories. Our growth upper bound trivially implies an upper bound on the entropy of random walks after a certain number of steps. By \cite{MR3395463}, this guarantees that random walks started within some distance of each other can be coupled in such a way that they coalesce within a short period of time with high probability. By having one random walk start in A and the other in B, where A and B are the given annulus-crossing clusters, the union of the trajectories of the two random walks up to the moment they meet gives a path from A to B with controlled length. The fast rate of growth then ensures that if we take many such pairs, started at well-separated vertices, the resulting trajectories will typically be well-separated too. This idea of using random walk trajectories to establish geometric properties for use in the analysis of percolation is completely new. This is the content of Section 3.2.

\noindent \textbf{Section 6}: Using the polylog-plentiful tubes condition, we run a chaining argument for graphs of quasi-polynomial growth (on some scale) that is inspired by \cite{contreras2022supercritical}. Many changes to their argument are required to deal with this new geometric setting, and our arguments in this section also make use of the snowballing method developed in \cref{sec:snowballing}. We then use the outputs of this analysis to complete the proof of the induction step and hence of \cref{thm:main}.
% Their paper was actually about percolation in the \emph{supercritical} phase. In our setting we replace this with the hypothesis of a point-to-point connection lower bound at the current scale. Many parts of their argument break down in our setting, even once we have the existence of controlled tubes. Some of these issues are patched by using the output from Section 2. 

\medskip

% \noindent \textbf{Section 5}: We combine the results from Sections 2 and 4 to prove \cref{prop:complicated_induction_statement}, establishing \cref{thm:main}.

\noindent \textbf{Section 7}: In this section we first briefly sketch how the methods of \cref{sec:snowballing} can be used to prove locality of the \emph{density} of the infinite cluster in the supercritical phase; a significant generalization of this result is proven in full detail (via a different method) in our forthcoming paper \cite{easo2021supercritical2}. We finish by discussing some open problems in \cref{sec:pc_gap}.

\section{Cluster repulsion and the \emph{a priori} uniqueness zone}
\label{sec:AKN}

In this section we review various bounds on the probability that two large clusters either meet at a single edge or both intersect a small ball, together with some important consequences of these inequalities.
These inequalities originate inexplicitly in the work of Aizenman, Kesten, and Newman \cite{aizenman1987uniqueness} and were brought to the wider attention of the community in the work of Cerf \cite{Cerf_2015}. 
These inequalities come in two flavours: finite-volume estimates that work with the \emph{radii} of clusters, and infinite-volume estimates that work with the \emph{volume} of clusters. The first kind of inequality, which is closer to the original vision of \cite{aizenman1987uniqueness,Cerf_2015}, works best under a low-growth assumption and plays an important role in Contreras, Martineau, and Tassion's analysis of percolation on transitive graphs of polynomial growth \cite{contreras2022supercritical,MR4529920}, while the second kind of inequality, introduced in \cite{hutchcroft2019locality} and known as \emph{two-ghost inequalities}, become \emph{more} useful the faster the graph grows. We will also review the argument of \cite{hutchcroft2019locality} which allows us to deduce locality of $p_c$ from uniform bounds on the two-point function using the two-ghost inequality.

\subsection{Aizenman-Kesten-Newman and the \emph{a priori} uniqueness zone}
\label{subsec:AKN}

We now review the finite-volume Aizenman-Kesten-Newman inequality as presented in \cite{Cerf_2015,contreras2022supercritical}.

\medskip

\noindent
\textbf{Finite-volume two-arm estimates.}
Given $m,n \in (0,\infty)$ with $m \leq n$, we define $\op{Piv}[m,n]$ to be the event that there exist two distinct clusters in $\omega \cap B_n$ that each intersect both spheres $S_n$ and $S_m$. (That is, $\op{Piv}[m,n]$ is the event that there is more than one cluster crossing the annulus from $m$ to $n$.) The following lemma is a minor variation on \cite[Proposition 4.1]{contreras2022supercritical}.

% Given $m,n \in (0,\infty)$ with $m \leq n$ and $p,q \in [0,1]$ with $p \leq q$, let $\op{Piv}_{p,q}[m,n]$ be the event that there are at least two distinct $\omega_p$-clusters that each intersect both $B_m$ and $B_n$ but that are not connected to each other by any path in $B_n \cap \omega_{q}$.

% Before proving \cref{prop:low_growth_main}, let us record how growth upper bounds can be used to control pivotality events. This is the content of the next two lemmas. The first is elementary and the second follows immediately from the first by
 % applying the martingale argument used to prove \cite[Proposition 4.1]{contreras2022supercritical}.

\begin{prop} \label{prop:two_arm_bound_in_box}
For each $0<\eps < 1/2$, $0 < \eta < 1$, and $d\geq 1$ there exists a constant $C=C(\eps,\eta,d)$ such that if $G$ is a connected, transitive graph of vertex degree $d$ then
      % Let $G$ be a transitive graph with vertex degree $d$. Let $\eta > 0$ and pick $p \in [\eta,1]$. Fix $\eps \in (0,\frac{1}{2})$. For all $n \in \mathbb N$,
      \[
            \p_{p} \bra{ \op{Piv}[1,n] } \leq C \left[\frac{\log \op{Gr}(n)}{n}\right]^{\frac{1}{2}-\eps}.
      \]
      for every $p \in [\eta,1]$ and $n \geq 1$.
\end{prop}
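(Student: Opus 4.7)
This is an Aizenman--Kesten--Newman type two-arm bound stated as a minor variation of \cite[Proposition 4.1]{contreras2022supercritical}. I plan to follow their proof almost verbatim; the main tasks are (i) a reduction to a genuine two-arm event and (ii) application of Cerf's martingale inequality \cite{Cerf_2015} to that two-arm event, both of which work in any transitive graph with the factor $\op{Gr}(n)$ entering the bound only through a union-bound / combinatorial cost.

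First, I would reduce to a two-arm event. On $\op{Piv}[1,n]$, two distinct clusters of $\omega\cap B_n$ each meet both $S_1$ and $S_n$. Picking a representative vertex in $S_1$ from each such cluster produces distinct $u,v\in S_1$ that are connected to $S_n$ inside $B_n$ but not connected to one another in $B_n$. Since $|S_1|\leq d$, a union bound over the at most $d^2$ pairs $(u,v)$ reduces the proposition to showing the two-arm estimate
\[
      \p_p\bigl(\{u\leftrightarrow S_n\text{ in }B_n\}\cap\{v\leftrightarrow S_n\text{ in }B_n\}\cap\{u\not\leftrightarrow v\text{ in }B_n\}\bigr) \leq C'\left[\frac{\log \op{Gr}(n)}{n}\right]^{\frac{1}{2}-\eps}
\]
for any fixed pair of neighbours $u,v$ of $o$.

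For the two-arm estimate itself I would run Cerf's martingale argument as in \cite{contreras2022supercritical}: construct a Doob martingale by revealing the edges of $B_n$ in a deterministic order, tracking a suitable bounded functional of the configuration (a truncated count of crossing clusters, or a similar quantity). Each edge flip changes this functional by at most $1$, and the two edge-disjoint open paths witnessing the two-arm event together occupy $\Omega(n)$ edges, so an entropy-based concentration inequality (which is what inserts the log-volume factor rather than the raw volume) yields the scale $(\log \op{Gr}(n)/n)^{1/2}$, with the exponent loss $\eps$ absorbing technical factors in the entropy argument. The assumption $p\in[\eta,1]$ is needed to keep the martingale non-degenerate and produces the $\eta$-dependence of $C$.

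The main obstacle is confirming that the constants in Cerf's argument, as presented in \cite{contreras2022supercritical} for polynomial growth, depend only on $d$, $\eps$, and $\eta$ and pick up the graph only through the entropy bound $\log \op{Gr}(n)$. Once this bookkeeping is done, the proposition follows directly.
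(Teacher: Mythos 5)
Your overall plan — cite \cite[Proposition 4.1]{contreras2022supercritical} and follow it essentially verbatim, after the harmless reduction from $\op{Piv}[1,n]$ to a two-arm event at one of the at most $d^2$ pairs of vertices in $S_1$ — is exactly what the paper does. However, you have misdiagnosed the one place where the argument actually needs to be adapted, and your proposed "bookkeeping" would fail there. The factor $\log\op{Gr}(n)$ does \emph{not} enter through an entropy or union-bound cost inside Cerf's martingale argument. It enters through the isoperimetry of balls: the AKN/Cerf bound is of the form $\bigl(\abs{\partial B_m}/\abs{B_m}\bigr)^{1/2-\eps}$ for a well-chosen radius $m$, and in \cite{contreras2022supercritical} the choice of $m$ is made via their Lemma 4.2, which is stated and proved only for graphs of polynomial growth. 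That lemma is the single ingredient that must be replaced.

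The replacement is \cref{lem:pigeonhole_for_volume_growth}: since $\abs{\partial B_m}/\abs{B_m}\leq d(\op{Gr}(m+1)/\op{Gr}(m)-1)\leq dC_d\log\bigl(\op{Gr}(m+1)/\op{Gr}(m)\bigr)$, summing over $m\in[n,2n)$ telescopes to $dC_d\log\bigl(\op{Gr}(2n)/\op{Gr}(n)\bigr)$, so by pigeonhole some $m\in[n,2n)$ satisfies $\abs{\partial B_m}/\abs{B_m}\leq C_d n^{-1}\log\op{Gr}(2n)$. Feeding this scale into the unchanged Cerf/CMT argument produces the stated bound, with all other constants depending only on $d$, $\eps$, $\eta$. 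Without isolating this telescoping pigeonhole step, your deferral to "confirming that the constants \dots pick up the graph only through the entropy bound" skips over the only point at which the polynomial-growth hypothesis of the cited proof is actually used.
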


To deduce this proposition from the proof of \cite[Proposition 4.1]{contreras2022supercritical}, we will require the following elementary fact about the growth of balls. Here $\partial B_m(o)$ denotes the set of edges that have one endpoint in $B_m(o)$ and the other endpoint in the complement of $B_m(o)$.

\begin{lem} \label{lem:pigeonhole_for_volume_growth}
For each $d\geq 1$ there exists a constant $C_d$ such that the following holds. Let $G$ be a graph of maximum vertex degree at most $d$, and let $o$ be a vertex of $G$. For each integer $n \geq 1$  there exists an integer $n \leq m \leq 2n-1$ such that
      \[
            \frac{\abs{\partial B_m(o)}}{\abs{B_m(o)}} \leq  \frac{C_d}{n} \log \abs{B_{2n}(o)}.
      \]
\end{lem}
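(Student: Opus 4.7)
My plan is to establish the lemma by a straightforward telescoping-pigeonhole argument, contradicting the assumption that the edge-boundary-to-volume ratio is large on every scale in the window $[n, 2n-1]$.

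The key deterministic input is that each edge in $\partial B_m(o)$ has its outer endpoint in $B_{m+1}(o) \setminus B_m(o)$, and each such vertex is incident to at most $d$ edges. This gives the chain inequality
\[
|B_{m+1}(o)| - |B_m(o)| \;\geq\; \frac{|\partial B_m(o)|}{d},
\]
so that $|B_{m+1}(o)|/|B_m(o)| \geq 1 + |\partial B_m(o)|/(d\,|B_m(o)|)$. I will also use the trivial bound $|\partial B_m(o)| \leq d\,|B_m(o)|$, which handles the (uninteresting) case where the target bound $C_d \log|B_{2n}(o)|/n$ is itself larger than $d$: in that regime any $m$ works.

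The main case is when $\lambda := C_d \log |B_{2n}(o)|/n < d$. I will argue by contradiction: suppose $|\partial B_m(o)|/|B_m(o)| > \lambda$ for every integer $m$ with $n \leq m \leq 2n-1$. Then by the growth inequality above, $|B_{m+1}(o)|/|B_m(o)| > 1 + \lambda/d$ for each such $m$, so telescoping over the $n$ values of $m$ in the window yields
\[
\frac{|B_{2n}(o)|}{|B_n(o)|} \;>\; \Bigl(1 + \frac{\lambda}{d}\Bigr)^{n}.
\]
Taking logarithms and using $\log(1+x) \geq x \log 2$ for $0 \leq x \leq 1$ (which applies since $\lambda/d < 1$), I obtain
\[
\log |B_{2n}(o)| \;>\; \log |B_n(o)| + \frac{n \lambda \log 2}{d} \;\geq\; \frac{n\lambda \log 2}{d}.
\]
Rearranging gives $\lambda < \frac{d}{\log 2} \cdot \frac{\log |B_{2n}(o)|}{n}$, which contradicts the definition of $\lambda$ provided the constant $C_d$ is chosen at least $d/\log 2$. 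Taking $C_d := d/\log 2 + 1$ (or any slightly larger constant, to also dominate the trivial case $\lambda \geq d$) completes the proof.

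There is no real obstacle here: the argument is purely combinatorial and the quantitative relationship between $C_d$ and $d$ is transparent from the calculation. The only thing to keep straight is that the gain $1 + \lambda/d$ per shell has to be linearised via $\log(1+x) \gtrsim x$, which is why it is important to ensure $\lambda/d$ is bounded by a constant (handled by the separate trivial case).
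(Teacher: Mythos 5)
Your proof is correct and is essentially the paper's argument in multiplicative/contrapositive form: both rest on the same key inequality $\abs{\partial B_m(o)}\leq d(\abs{B_{m+1}(o)}-\abs{B_m(o)})$, both telescope the ratios $\abs{B_{m+1}}/\abs{B_m}$ over $[n,2n-1]$ to compare against $\log\abs{B_{2n}}$, and your bound $\log(1+x)\geq x\log 2$ on $[0,1]$ plays exactly the role of the paper's $x-1\leq C_d\log x$ for $x\in[1,d]$ (which is why both need the ratio, equivalently $\lambda/d$, bounded). The bookkeeping, including the trivial case $\lambda\geq d$, checks out.
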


\begin{proof}[Proof of \cref{lem:pigeonhole_for_volume_growth}]
Write $\op{Gr}(m) := \abs{B_m(o)}$ for every $m \in \mathbb N$. Since $G$ has vertex degrees bounded above by $d$, we have that $\abs{\partial B_m(o)} \leq d(\op{Gr}(m+1)  - \op{Gr}(m))$ and hence that $\abs{\partial B_m(o)}/\op{Gr}(m) \leq d( \op{Gr}(m+1)/\op{Gr}(m) - 1)$. It follows that
\[
\sum_{m=n}^{2n-1} \frac{\abs{\partial B_m(o)}}{|B_m(o)|} \leq d \sum_{m=n}^{2n-1}\bra{ \frac{\op{Gr}(m+1)}{\op{Gr}(m)} - 1}.
\]
Now, since we also have that $\frac{\op{Gr}(m+1)}{\op{Gr}(m)} \leq d$, there exists a constant $C=C_d$ such that 
\[
\frac{\op{Gr}(m+1)}{\op{Gr}(m)} - 1 \leq C_d \log \frac{\op{Gr}(m+1)}{\op{Gr}(m)}
\]
for every $m\geq 0$. As such, it follows that
\[
\sum_{m=n}^{2n-1} \frac{\abs{\partial B_m(o)}}{|B_m(o)|} \leq d C_d \sum_{m=n}^{2n-1}  \log \frac{\op{Gr}(m+1)}{\op{Gr}(m)} = dC_d \log \frac{\op{Gr}(2n)}{\op{Gr}(n)}
\]
which is easily seen to imply the claim.
% 
%  So it suffices to prove that
% \[
%       r := \min_{n\leq m < 2n} \op{Gr}(m+1)/\op{Gr}(m) - 1 \lesssim_d \frac{\log \op{Gr}(2n)}{n}.
% \]
% By minimality of $r$, every $m \in \{n,\ldots,2n-1\}$ satisfies
% \[
%       \log \op{Gr}(m+1) - \log \op{Gr}(m) \leq \log(1+r) \leq r.
% \]
% Therefore,
% \begin{align*}
%       \log \op{Gr}(2n) &= \log \op{Gr}(n) + \sum_{m=n}^{2n-1} \left[\log \op{Gr}(m+1) - \log \op{Gr}(m)\right] \\
%       &\geq \log \op{Gr}(n) + nr \geq n r.
% \end{align*}
% So $r \leq \frac{\log \op{Gr}(2n)}{n}$, as required.
\end{proof}

\begin{proof}[Proof of \cref{prop:two_arm_bound_in_box}]
This follows by exactly the same proof as \cite[Proposition 4.1]{contreras2022supercritical} except that we use our \cref{lem:pigeonhole_for_volume_growth} instead of their Lemma 4.2 (which is the same estimate specialized to the polynomial growth setting).
   % We may assume without loss of generality that $n \geq 4$, small values of $n$ being handled trivially by increasing the relevant constants if necessary.
   %    Let $N := \lfloor n/4 \rfloor$.
   %    The proof of \cite[Proposition 4.1]{contreras2022supercritical} shows that for every $m \in \{N,\ldots,2N-1\}$,
   %    \[
   %          \p_{p} \bra{ \op{Piv}[1,4N ] } \lesssim_{d,\eta,\eps} \left[ \frac{\abs{\partial B_m}}{\abs{B_m}} \right]^{\frac{1}{2}-\eps}.
   %    \]
   %    The claim follows immediately from this and \cref{lem:pigeonhole_for_volume_growth} since $\p_{p} \bra{ \op{Piv}[1,4N ] }$ is decreasing in $N$.
      % So by \cref{lem:pigeonhole_for_volume_growth}
      % \[
      %       \p_p \left( \op{Piv}[1,4N] \right) \lesssim_{d,\eta,\eps} \left[ \frac{ \log \op{Gr}(2N)}{N} \right]^{1/2-\eps}.
      % \]
      % The result now follows by monotonicity.
\end{proof}

\noindent \textbf{The \emph{a priori} uniqueness zone.} We now discuss how two-arm bounds at a single edge can be used to deduce bounds on the probability of having multiple clusters crossing an annulus. The following lemma, essentially due to Cerf \cite{Cerf_2015}, lets us apply \cref{prop:two_arm_bound_in_box} to bound the probability that there are two distinct crossings of an annulus.

\begin{lem} [{\cite[Lemma 6.2]{contreras2022supercritical}}] \label{lem:nearby_clusters_using_two_point}
Let $G$ be a connected transitive graph. Then
\begin{equation}
\label{eq:Cerf}
      \p_p\bigl( \op{Piv}[r,n] \bigr) \leq \p_p\bigl( \op{Piv}[1,n/2] \bigr) \cdot \frac{\abs{S_r}^2 \cdot \op{Gr}(m) }{ \min_{a,b \in S_r} \p_p \bigl( a \xleftrightarrow{\,B_{m}\;} b \bigr) }
\end{equation}
for every $r,m,n \in (1,\infty)$ with $ r \leq m \leq n/2$ and every $p\in (0,1)$.
\end{lem}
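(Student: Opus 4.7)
I would prove this following the Cerf--Aizenman-Kesten-Newman strategy developed in \cite{contreras2022supercritical}. The proof combines a union bound over cluster representatives with a product-space exchange argument that exploits the transitivity of $G$.

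First, on the event $\op{Piv}[r, n]$ there is a canonical choice of two vertices $a, b \in S_r$ lying in the two distinct crossing clusters (say, the lexicographically smallest vertex of $S_r$ in each cluster). A union bound then reduces the problem to showing, for each pair $(a, b) \in S_r \times S_r$,
\[
\p_p(\op{Piv}[r, n]) \leq |S_r|^2 \cdot \max_{a, b \in S_r} \p_p(F_{a, b}),
\]
where $F_{a, b}$ is the event that $a$ and $b$ lie in distinct clusters of $\omega \cap B_n$, each reaching $S_n$. This accounts for the factor $|S_r|^2$ in the bound.

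The heart of the argument is the pointwise inequality
\[
\p_p(F_{a, b}) \cdot \p_p\bigl(a \xleftrightarrow{B_m} b\bigr) \leq \op{Gr}(m) \cdot \p_p\bigl(\op{Piv}[1, n/2]\bigr),
\]
which should be proved by a product-space exchange. Consider two independent configurations $\omega, \omega'$ at parameter $p$, so that the left-hand side is the probability of the joint event $\{F_{a, b}(\omega)\} \cap \{a \xleftrightarrow{B_m} b \text{ in } \omega'\}$. On this joint event, pick a vertex $v \in B_m$ on a simple $a$--$b$ path inside $\omega' \cap B_m$. The two neighbors of $v$ along this path are connected through $\omega'$ to $a$ and $b$, and hence to the two distinct clusters of $\omega \cap B_n$ containing $a$ and $b$, each of which reaches $S_n(o)$. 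Because $v \in B_m$ and $m \leq n/2$, these clusters seen from $v$ extend to distance at least $n - m \geq n/2$, which (after a measure-theoretic reorganization) witnesses $\op{Piv}_v[1, n/2]$, the event $\op{Piv}[1, n/2]$ centered at $v$. Using transitivity of $G$ to identify $\p_p(\op{Piv}_v[1, n/2]) = \p_p(\op{Piv}[1, n/2])$ for every $v$ and summing over the $\op{Gr}(m)$ possible locations of $v \in B_m$ then yields the claimed inequality.

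The main obstacle is that the naive overlay $\omega \cup \omega'$ lives at the higher parameter $1 - (1-p)^2$, so care is needed to keep the bound valid at parameter $p$. This is handled by a conditioning argument based on the spatial Markov property: one conditions on the configuration outside $B_m$ (where much of $F_{a,b}$ is already encoded by the presence of two disjoint arms from $\partial B_m$ to $S_n$) and uses the conditional independence of edges inside $B_m$ to carry out the exchange rigorously without increasing the parameter. This is essentially the argument of \cite[Lemma 6.2]{contreras2022supercritical}.
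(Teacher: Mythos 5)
The paper does not actually prove this lemma---it imports it from \cite[Lemma 6.2]{contreras2022supercritical} and only remarks that the argument there is insensitive to the polynomial-growth hypothesis---so the question is whether your sketch reconstructs a valid proof, and I don't think it does as written. Your scaffolding is right: the union bound over pairs $(a,b)\in S_r\times S_r$ giving the factor $\abs{S_r}^2$, the union bound over a vertex of $B_m$ giving the factor $\op{Gr}(m)$, and the reduction to a pointwise inequality of the form $\p_p(F_{a,b})\cdot\p_p(a\xleftrightarrow{B_m}b)\leq\op{Gr}(m)\,\p_p(\op{Piv}[1,n/2])$ are all correct. The gap is in the central step. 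The event $\op{Piv}_v[1,n/2]$ requires two \emph{distinct} clusters of the configuration restricted to $B_{n/2}(v)$, each meeting $B_1(v)$ and $S_{n/2}(v)$. In your construction the two ``arms'' at $v$ are, by design, joined to each other: one neighbour of $v$ is connected to $a$ through the initial segment of the $a$--$b$ path $\gamma$ and the other to $b$ through its final segment, and $\gamma$ passes through $v$ itself, so in any single configuration in which both arms are open they form one cluster and the two-arm event fails. No choice of $v$ along $\gamma$ and no ``measure-theoretic reorganization'' fixes this, because the whole difficulty of the lemma is to locate a vertex at which two long clusters come within distance one of each other \emph{without} being connected. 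Your fallback (condition on the configuration outside $B_m$) does not help either: $F_{a,b}$ is not measurable with respect to those edges (two arms that are disjoint outside $B_m$ can still be joined through $B_m$), and splicing $\omega'$ into $B_m$ destroys the connections of $a$ and $b$ to their arms.

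The mechanism that works conditions on a \emph{cluster} rather than on a region. Explore $C:=K_a(\omega\cap B_n)$; given $\{K_a(\omega\cap B_n)=C\}$ with $C\cap S_n\neq\emptyset$ and $b\notin C$, the edges of $B_n$ not touching $C$ remain i.i.d.\ Bernoulli($p$), and the two events ``$b$ is connected to $S_n$ within $B_n$ avoiding $C$'' and ``$b$ is connected within $B_m$, avoiding $C$, to a vertex adjacent to $C$'' are both increasing in these edges, so Harris applies to the conditional measure. The second event contains $\{a\xleftrightarrow{B_m}b\}$ (follow an open $a$--$b$ path from $b$ until it first meets $C$), which is where the factor $\min_{a,b\in S_r}\p_p(a\xleftrightarrow{B_m}b)$ enters; on the intersection of the two events both witnessing structures lie in the single cluster $K_b(\omega\cap B_n)$, which is distinct from $C$, reaches $S_n$, and meets $B_1(u)$ for some $u\in B_m$ adjacent to $C$, yielding $\op{Piv}_u[1,n/2]$ in $\omega$ itself at parameter $p$. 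Summing over $C$ and over $u\in B_m$ gives the stated bound. The key structural difference from your sketch is that the connection event is used only to certify that $K_b$ \emph{touches} $C$, never to actually join $a$ to $b$.
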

\begin{rk}
The authors of \cite{contreras2022supercritical} stated this lemma in their context of infinite graphs with polynomial growth. However, their proof works exactly the same for arbitrary connected transitive graphs. (The statement given in \cite{contreras2022supercritical} has $B_{2m}$ in place of $B_{m}$ -- which is slightly stronger -- but this appears to be a typo.) % looks to me like a typo... is it?
\end{rk}

Since any geodesics from $o$ to two vertices $a,b\in B_m$ are both open with probability at least  $p^{2m}$ and the growth satisfies the trivial upper bound $\op{Gr}(m)\leq d^{m+1}$, the quantity multiplying the probability $\p_p\bigl( \op{Piv}[1,n/2] \bigr)$ on the right hand side of \eqref{eq:Cerf} is at most exponential in $m$ when $p$ is bounded away from~$0$.  \cref{prop:two_arm_bound_in_box,lem:nearby_clusters_using_two_point} therefore have the following immediate corollary.

% Here are three short lemmas that we will use. The third lemma is taken from \cite[Lemma 6.2]{contreras2022supercritical}, which is directly adapted from \cite[Lemma 7.2]{Cerf_2015}.

\begin{cor}[Trivial \emph{a priori} uniqueness zone] \label{cor:trivial_a_priori_uniqueness_zone}
      Let $G$ be a connected, unimodular transitive graph with vertex degree $d$ and let $\eps,\eta \in (0,1)$. There exist positive constants $c=c(d,\eta,\eps)$, $C=C(d,\eta,\eps)$, and $n_0=n_0(d,\eta,\eps)$ such that
       % Assuming that $n$ is sufficiently large with respect to $d,\eta,\eps$,
      \[
            \p_p\bigl( \op{Piv}[ c\log n , n ] \bigr) \leq C \left[ \frac{\log \op{Gr}(n)}{n} \right]^{1/2-\eps}
      \]
      for every $n\geq n_0$ and $p\in [\eta,1]$.
\end{cor}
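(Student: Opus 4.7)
The plan is to apply \cref{lem:nearby_clusters_using_two_point} with $r = m = \lceil c\log n \rceil$ for a sufficiently small constant $c = c(d,\eta,\eps) > 0$, and then bound the resulting $\p_p(\op{Piv}[1, n/2])$ factor using \cref{prop:two_arm_bound_in_box}. The constant $c$ will be chosen small enough that the multiplicative overhead from \cref{lem:nearby_clusters_using_two_point}, which is exponential in $m$, only becomes a slowly-growing power of $n$.

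First, I would estimate the multiplicative factor $\abs{S_r}^2 \op{Gr}(m)/\min_{a,b\in S_r}\p_p(a \xleftrightarrow{B_m} b)$ from \cref{lem:nearby_clusters_using_two_point}. Using the trivial degree bounds $\abs{S_r} \leq d^{r}$ and $\op{Gr}(m) \leq d^{m+1}$, together with the connection lower bound $\min_{a,b \in S_r}\p_p(a \xleftrightarrow{B_m} b) \geq p^{2r}$ (which follows by opening a pair of geodesics from $o$ to $a$ and from $o$ to $b$, each of length at most $r$ and both lying inside $B_r \subseteq B_m$), this factor is at most $d^{2r + m + 1}/p^{2r}$. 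Setting $r = m = \lceil c\log n \rceil$ and using $p \geq \eta$, this simplifies to at most a constant multiple (depending on $d, \eta$) of $n^{cK}$, where $K := 3\log d + 2\log(1/\eta)$.

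Next, I would apply \cref{prop:two_arm_bound_in_box} at scale $n/2$ with a suitably chosen exponent $\eps' \in (0, \eps)$, which gives
\[
\p_p(\op{Piv}[1,n/2]) \leq C_{\eps'}\left[\frac{\log \op{Gr}(n/2)}{n/2}\right]^{1/2-\eps'} \leq C'_{\eps'}\left[\frac{\log \op{Gr}(n)}{n}\right]^{1/2-\eps'},
\]
where I have used $\op{Gr}(n/2) \leq \op{Gr}(n)$ and $\log \op{Gr}(n/2)/(n/2) \leq 2\log \op{Gr}(n)/n$. Combining with the first step yields $\p_p(\op{Piv}[c\log n, n]) \leq C_2 \cdot n^{cK}\left[\log \op{Gr}(n)/n\right]^{1/2-\eps'}$. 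The final task is to absorb the polynomial-in-$n$ factor $n^{cK}$ into the claimed target bound $C\left[\log \op{Gr}(n)/n\right]^{1/2-\eps}$. The hardest part will be to perform this absorption uniformly across all growth regimes: in the regime where $\log \op{Gr}(n)/n$ is bounded below by a positive constant (graphs of near-exponential growth at scale $n$), the claimed right-hand side is itself bounded below by a constant depending on $d, \eta, \eps$, so the bound holds trivially provided $C$ is taken large enough; in the complementary regime of slower growth, the extra decay in $\left[\log \op{Gr}(n)/n\right]^{\eps-\eps'}$ will dominate $n^{cK}$ once $c$ is chosen sufficiently small in terms of $\eps - \eps'$ and $K$, and $n \geq n_0(d,\eta,\eps)$ is large enough.
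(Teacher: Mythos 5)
Your first two steps are correct and are exactly the paper's intended route: the corollary is presented there as an immediate consequence of \cref{lem:nearby_clusters_using_two_point} (with $r=m\asymp\log n$, the geodesic lower bound $p^{2r}\geq\eta^{2r}$, and $\op{Gr}(m)\leq d^{m+1}$) together with \cref{prop:two_arm_bound_in_box} applied with a slightly smaller exponent. The problem is the final absorption step, which you rightly flag as the crux but then dispatch with a two-case dichotomy that does not cover all growth regimes. Your first case (trivial bound $1\leq C[\log\op{Gr}(n)/n]^{1/2-\eps}$) only applies when $\log\op{Gr}(n)\geq C^{-2/(1-2\eps)}\,n$, i.e.\ when the growth at scale $n$ is genuinely exponential with a rate depending on $C$. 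Your second case requires $n^{cK}\leq C\,[n/\log\op{Gr}(n)]^{\eps-\eps'}$, i.e.\ $\log\op{Gr}(n)\lesssim n^{1-cK/(\eps-\eps')}$. For any fixed $c>0$ and all large $n$ these two conditions leave an uncovered window $n^{1-cK/(\eps-\eps')}\ll\log\op{Gr}(n)\ll n$: for instance, if $\log\op{Gr}(n)\asymp n^{1-\gamma}$ with $0<\gamma<cK/(\eps-\eps')$, or $\log\op{Gr}(n)\asymp n/\log n$, then $[\log\op{Gr}(n)/n]^{\eps-\eps'}\geq n^{-\gamma(\eps-\eps')}$ does \emph{not} dominate $n^{cK}$, while the target right-hand side tends to $0$ so the trivial bound also fails. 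The phrase ``the complementary regime of slower growth'' conflates ``$\log\op{Gr}(n)/n\to 0$'' with the much stronger ``$\log\op{Gr}(n)/n\leq n^{-\beta}$ for some fixed $\beta>cK/(\eps-\eps')$''; only the latter permits the absorption. Since there exist transitive graphs of near-exponential but subexponential growth (and since in any case your argument does not exclude them), this is a genuine gap: no choice of the constants $c,C,n_0$ makes your two cases exhaustive.

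In fairness, the paper's own justification is a single sentence and glosses over exactly the same point; the corollary is only ever invoked there at scales $n$ with $\log\op{Gr}(n)\leq(\log n)^{O(1)}$ (quasi-polynomial growth scales), where the absorption $n^{cK}\leq C[n/\log\op{Gr}(n)]^{\eps-\eps'}$ is immediate for $cK<\eps-\eps'$. To make your write-up honest you should either (a) add the hypothesis $\log\op{Gr}(n)\leq n^{1-\gamma}$ and let $c$ depend on $\gamma$ as well (which covers every application in the paper), or (b) supply a separate argument for the window of near-exponential growth — e.g.\ one can bypass \cref{prop:two_arm_bound_in_box} entirely by observing that on $\op{Piv}[c\log n,n]$ there are two vertices of $B_{c\log n}$ in distinct clusters of $\omega\cap B_n$, each of volume at least $n/2$, and then invoking the volume-based two-ghost bound of \cref{lem:nearby_clusters_pigeonhole}; a union bound over $\op{Gr}(c\log n)^2\leq n^{2c\log d}$ pairs gives $\p_p(\op{Piv}[c\log n,n])\leq C n^{O(c)}(\log n)\,n^{-1/2}$, which is at most $C[\log\op{Gr}(n)/n]^{1/2-\eps}$ in \emph{every} growth regime since $\log\op{Gr}(n)\geq 1$ (though one must then handle the caveats in that lemma about $p<p_c$ and finiteness of clusters, and this is presumably why unimodularity appears in the hypotheses). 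As it stands, the proposal does not establish the corollary in the stated generality.
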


In other words, if $G$ has subexponential growth then the probability of having two distinct crossings of the annulus from $c\log n$ to $n$ is always small for an appropriately small constant $c$. (The restriction that $p$ is small could be removed by noting that it is very unlikely for there to be \emph{any} crossings of the annulus when $p\leq 1/d$.)

\subsection{The two-ghost inequality}
\label{subsec:two_ghost}

We now recall the \emph{two-ghost inequality} of \cite{hutchcroft2019locality}, a form of the Aizenman-Kesten-Newman bound that holds for any unimodular transitive graph, without any growth assumptions. (This version of the bound does \emph{not} imply uniqueness of the infinite cluster since it requires at least one of the clusters to be finite.)
% 
% 
 % Let $G=(V,E)$ be a connected, locally finite graph, and let $p\in (0,1)$ and $h>0$. Let $\omega_p\in \{0,1\}^E$ be Bernoulli-$p$ bond percolation on $G$. Independently of $\omega_p$, let $\mathcal{G}_h \in \{0,1\}^E$  be a random subset of $E$ where each edge $e$ of $E$ is included in $\cG_h$ independently at random, with probability $1-e^{-h}$ of being included. (It is more standard to put the ghosts on the vertices, but putting them on the edges is convenient for the calculations we do here.) Following \cite{aizenman1987sharpness}, we call $\cG_h$ the \textbf{ghost field} and call an edge \textbf{green} if it is included in $\cG_h$. We write $\bP_{p,h}$ for the joint law of $\omega_p$ and $\cG_h$, and define $\sT_e$ to be the event that $e$ is closed and that the endpoints of $e$ are in distinct clusters, each of which touches some green edge, and at least one of which is finite.
% 
% 
% \begin{thm}
% \label{thm:twoghostunimod}
%  Let $G$ be a unimodular transitive graph of degree $d$. Then 
% \begin{equation}
% \label{eq:twoghostunimod}
% \bP_{p,h}\bigl(\sT_{e}\bigr) \leq 33 \cdot d\left[\frac{1-p}{p}h\right]^{1/2}
% % 4(\sqrt{2}+1) \sqrt{\frac{1-p}{p}h}+\frac{1-p}{p}h
% \end{equation}
% for every $e\in E$, $p\in (0,1]$ and $h>0$.
% \end{thm}
% 
% The bound of \cref{thm:twoghostunimod} can easily be converted into a bound on a two-arm type event that does not refer to the ghost field. 
% 
Let $G=(V,E)$ be a graph. For each edge $e$ of $G$ and $n\geq 1$ we define $\mathscr{S}_{e,n}$ to be the event that $e$ is closed and that the endpoints of $e$ are in distinct clusters, each of which has volume at least $n$ and at least one of which is finite.

\begin{thm}[Two-ghost inequality]
\label{thm:two_ghost}{}
 Let $G$ be a unimodular transitive graph of degree $d$. There exists a constant $C_d$ such that
\begin{equation}
\label{eq:twobodyunimod}
\p_p(\mathscr{S}_{e,n})\leq C_d \left[\frac{1-p}{p n}\right]^{1/2}
\end{equation}
for every $e\in E(G)$, $p\in(0,1]$ and $n\geq 1$.
\end{thm}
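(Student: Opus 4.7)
The plan is to deduce the two-ghost inequality via a ``ghost field'' device, the standard edge-flip identity, and a mass-transport argument exploiting the unimodularity of $G$. Fix a parameter $h\in(0,1]$ to be chosen later, introduce an independent Bernoulli$(h)$ vertex field $\mathscr{G}$, and call a cluster \emph{ghosted} if it contains at least one vertex of $\mathscr{G}$. Define
\[
\mathscr{S}_e^h = \bigl\{e \text{ is closed; its endpoints lie in distinct ghosted clusters; at least one of these clusters is finite}\bigr\},
\]
and let $\mathscr{T}_e^h$ be the companion event obtained by opening $e$, so that on $\mathscr{T}_e^h$ the edge $e$ is open, is a bridge of its own cluster, and the two pieces obtained by removing $e$ are each ghosted with at least one being finite. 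Since a cluster of volume $\geq n$ is ghosted with probability at least $1-(1-h)^n\geq 1-e^{-hn}$, conditioning on $\omega$ immediately yields $\p_p(\mathscr{S}_e^h)\geq(1-e^{-hn})^2\,\p_p(\mathscr{S}_{e,n})$, while the standard edge-flip identity gives $\p_p(\mathscr{T}_e^h)=\tfrac{p}{1-p}\,\p_p(\mathscr{S}_e^h)$.

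The heart of the proof is then a mass-transport bound of the form $\p_p(\mathscr{T}_e^h)\leq C_d\, h$. I would establish this by summing the indicator of $\mathscr{T}_e^h$ over edges incident to a fixed vertex and applying a diagonally-invariant transport rule that sends mass from each ghost on the \emph{finite} side of a bridge $e$ to the endpoint of $e$ on that side; the mass-transport principle (available because $G$ is unimodular) then identifies the outgoing and incoming totals. The finiteness of one side is crucial: it provides a well-defined tree-of-bridges rooted at the boundary over which a bounded mass can be accounted for, while the single factor of $h$ on the right arises because the transported mass is generated by the Bernoulli intensity of the ghost field.

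Combining the three estimates gives
\[
\p_p(\mathscr{S}_{e,n}) \;\leq\; \frac{C_d\, h\,(1-p)}{p\,(1-e^{-hn})^2}.
\]
The stated inequality is trivial when $(1-p)/(pn)\geq 1$; in the remaining regime I take $h=\sqrt{p/((1-p)n)}$, so that $hn=\sqrt{np/(1-p)}\geq 1$ and $(1-e^{-hn})^2$ is bounded below by a universal constant. Substituting yields $\p_p(\mathscr{S}_{e,n})\leq C'_d\sqrt{(1-p)/(pn)}$, as required.

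The main obstacle in this plan is the mass-transport step: the transport has to be arranged so that exactly one power of $h$ emerges, rather than a constant (which would be useless) or $h$ multiplied by an uncontrolled finite susceptibility (which would fail to be uniform in $p$ and $G$). It is precisely here that finiteness of one side of the cut, together with unimodularity of $G$, enters in an essential way; without these the mass transported at a ghost could be unbounded and one would recover only a susceptibility-dependent estimate far weaker than~\eqref{eq:twobodyunimod}.
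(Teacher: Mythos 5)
Your reduction steps are fine: the lower bound $\p_p(\mathscr{S}_e^h)\geq(1-e^{-hn})^2\,\p_p(\mathscr{S}_{e,n})$ and the edge-flip identity $\p_p(\mathscr{T}_e^h)=\tfrac{p}{1-p}\p_p(\mathscr{S}_e^h)$ are both correct, and they are indeed how one passes from a ghost-field inequality to \eqref{eq:twobodyunimod}. (For what it is worth, the paper does not prove this theorem at all; it is quoted verbatim from \cite{hutchcroft2019locality}, whose proof does follow your ghost-field/mass-transport architecture.) The problem is your central claim $\p_p(\mathscr{T}_e^h)\leq C_d\,h$, which is \emph{false}, not merely unproved. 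The correct statement, and what \cite{hutchcroft2019locality} actually establishes, carries a square root: $\p_p(\mathscr{S}_e^h)\leq C_d\sqrt{(1-p)h/p}$. To see that your linear-in-$h$ bound cannot hold, take $G=\mathbb{Z}^2$ at $p=p_c=1/2$. The event $\mathscr{S}_e^h$ contains the event that the two endpoints of $e$ lie in distinct clusters each of volume at least $1/h$ (an alternating four-arm event out to radius $r\asymp h^{-48/91}$, of probability $r^{-5/4+o(1)}=h^{60/91+o(1)}$) intersected with the event that both such clusters are ghosted (conditional probability bounded below). Hence $\p_{p_c}(\mathscr{S}_e^h)\geq h^{60/91+o(1)}\gg h$, contradicting your claim; note $h^{60/91}\leq\sqrt{h}$, so the true $\sqrt{h}$ bound survives this test.

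The failure is visible already in your mass-transport step. In the transport you describe, the mass emitted by a single ghost $g$ is the number of bridges $e$ of its cluster for which $g$ lies on the finite side; this count is the graph distance from $g$ to the boundary of its cluster in the tree of bridges and is unbounded, so the first moment of the outgoing mass is $h$ times an unbounded (indeed, near criticality, divergent) susceptibility-type quantity --- exactly the failure mode you flag at the end of your proposal, but which your sketch does not actually avoid. The genuine proof cannot be a first-moment transport: the $\tfrac{1}{2}$ in the exponent arises from a fluctuation argument (an $L^2$/maximal-inequality bound on an exploration martingale attached to the finite side of the bridge, combined with mass transport), in the same way that the $\tfrac{1}{2}$ in the Aizenman--Kesten--Newman two-arm bound arises from CLT-scale fluctuations of the open-edge count. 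Finally, note that the weaker (and correct) $\sqrt{h}$ inequality still yields the theorem: it gives $\p_p(\mathscr{S}_{e,n})\leq C_d\sqrt{(1-p)h/p}\,(1-e^{-hn})^{-2}$, and the choice $h=1/n$ produces exactly $C_d'[(1-p)/(pn)]^{1/2}$, so no optimization beyond $h=1/n$ is needed.
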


\begin{proof}
This is an immediate consequence of \cite[Corollary 1.7]{hutchcroft2019locality}: The statement given there concerns the edge volume rather than the vertex volume, but this only makes the statement stronger.
\end{proof}

\cref{thm:two_ghost} has the following useful consequence concerning the probability that two large distinct clusters come close to one another; this  lets us convert bounds on the two-point function into bounds on the tail of the volume and underlies the fact that Reformulation~\ref{reform3} implies the unimodular case of \cref{thm:main}.

%also works with finite graphs, for all p
\begin{lem} \label{lem:nearby_clusters_pigeonhole}
      Let $G$ be an infinite, connected, unimodular transitive graph with vertex degree $d$. There exists a constant $C_d$ such that
      \[
            \p_p( |K_u|\geq n \text{ and } |K_v|\geq n \text{ but } u \nleftrightarrow{v} ) \leq C_d \cdot d(u,v) p^{-d(u,v)-1} n^{-1/2}.
      \]
      for every $u,v\in V(G)$, $n\geq 1$, and $p < p_c(G)$.
\end{lem}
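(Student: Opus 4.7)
The plan is to reduce the claim to the two-ghost inequality (\cref{thm:two_ghost}) by identifying a suitable closed edge along a geodesic from $u$ to $v$ whose two endpoints see disjoint clusters of volume at least $n$, then paying a Radon--Nikodym cost for a local modification of the configuration. Fix a geodesic $u = u_0, u_1, \ldots, u_k = v$ with $k = d(u,v)$, and write $e_j = \{u_{j-1}, u_j\}$ for its edges. Let $B$ denote the event whose probability we wish to bound. Since $p < p_c(G)$, every cluster is almost surely finite, so the ``at least one finite'' hypothesis appearing in the definition of $\mathscr S_{e,n}$ will be automatic throughout.

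For each $\omega \in B$, the cluster $K_u(\omega)$ contains $u_0 = u$ but not $u_k = v$, so there is a largest index $j^\star(\omega) \in \{0, 1, \ldots, k-1\}$ with $u_{j^\star} \in K_u(\omega)$. This partitions $B = \bigsqcup_{j=0}^{k-1} B_j$ according to the value of $j^\star$. Given $\omega \in B_j$, let $\Phi_j(\omega) = \omega'$ be the configuration obtained by opening the edges $e_{j+2}, \ldots, e_k$ and leaving every other edge unchanged; in particular $e_{j+1}$ remains closed, as it must be by the maximality in the definition of $j^\star$. I claim that $\omega' \in \mathscr S_{e_{j+1}, n}$. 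All modified edges have both endpoints in $\{u_{j+1}, \ldots, u_k\}$, which is disjoint from $K_u(\omega)$, so $K_u$ is unaffected by the modification; hence $|K_u(\omega')| \geq n$ and $v \notin K_u(\omega')$. Meanwhile, in $\omega'$ the vertex $u_{j+1}$ is connected to $v$ along the now-open edges $e_{j+2}, \ldots, e_k$, so its cluster in $\omega'$ contains $K_v(\omega)$ and therefore has volume at least $n$.

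It remains to convert this pointwise assignment into a probability inequality. The preimage $\Phi_j^{-1}(\omega')$ consists of the $2^{k-j-1}$ configurations agreeing with $\omega'$ outside $\{e_{j+2}, \ldots, e_k\}$, and summing their Bernoulli weights gives
\[
    \sum_{\omega \in \Phi_j^{-1}(\omega')} \p_p(\omega) = \p_p(\omega') \cdot \Bigl(1 + \tfrac{1-p}{p}\Bigr)^{k-j-1} = \p_p(\omega') \cdot p^{-(k-j-1)},
\]
so $\p_p(B_j) \leq p^{-(k-j-1)}\, \p_p(\mathscr S_{e_{j+1}, n})$. Summing over $j \in \{0, \ldots, k-1\}$ and applying the two-ghost inequality yields $\p_p(B) \leq C_d \cdot k \cdot p^{-(k-1)} \sqrt{(1-p)/(pn)}$, which (since $p \leq 1$) implies the stated bound.

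The main subtlety I expect is arranging for the modification $\Phi_j$ to actually land in $\mathscr S_{e_{j+1}, n}$: one needs the modified clusters $K_u(\omega')$ and $K_{u_{j+1}}(\omega')$ to be distinct. This is ensured precisely by choosing $j^\star$ to be the \emph{last} vertex of the geodesic lying in $K_u(\omega)$, so that every edge touched by the modification has both endpoints outside $K_u(\omega)$; in particular the modification only merges clusters inside $V \setminus K_u(\omega)$ and therefore cannot connect $u$ to $v$.
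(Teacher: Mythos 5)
Your proof is correct and is essentially the argument the paper intends: the paper simply cites the proof of equation (4.2) of \cite{hutchcroft2019locality}, which is exactly this geodesic pigeonhole over the last vertex of the geodesic lying in $K_u$, followed by a finite-energy modification opening the remaining geodesic edges to land in $\mathscr{S}_{e,n}$ and an application of the two-ghost inequality. Your sum $\sum_{j=0}^{k-1}p^{-(k-j-1)}=\sum_{i=1}^{d(u,v)}p^{-i+1}$ is precisely the quantity the paper says gives rise to the prefactor $d(u,v)p^{-d(u,v)-1}$.
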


\begin{proof}
This follows  from \cref{thm:two_ghost} by the same argument used to prove equation (4.2) of \cite{hutchcroft2019locality}. (Again, the only difference is that we are using vertex volumes rather than edge volumes.) The quantity $d(u,v) p^{-d(u,v)-1}$ arises as a simple upper bound on $\left((1-p)/p\right)^{1/2}\sum_{i=1}^{d(u,v)}p^{-i+1}$.
\end{proof}

\section{The multi-scale induction step}
\label{sec:induction_step}

In this section we state our key technical proposition, \cref{prop:complicated_induction_statement}, which encapsulates the multi-scale induction used to prove \cref{thm:main}. We introduce relevant definitions in \cref{subsec:induction_definitions}, give the statement of the proposition in \cref{subsec:induction_statement}, and explain how it implies our main theorem in \cref{subsec:induction_deduction}. In \cref{subsec:induction_deduction} we also explain how \cref{prop:complicated_induction_statement} yields a new proof of the fact that $p_c < 1$ for all infinite, connected, transitive graphs that are not one-dimensional. 

\subsection{Definitions}
\label{subsec:induction_definitions}

In this section we establish the notation necessary to state the main multi-scale induction proposition in \cref{subsec:induction_statement}.

\medskip

\noindent
\textbf{Natural coordinates for sprinkling.} We define the \textbf{sprinkling function} $\sprinkle:(0,1) \times \mathbb{R} \to(0,1)$ by
\[
\sprinkle(p;\lambda)=1-(1-p)^{e^\lambda} \qquad \text{so that} \qquad (1-\sprinkle(p;\lambda))=(1-p)^{e^\lambda}.
\]
The sprinkling functions $(\sprinkle(\,\cdot\,;\lambda))_{\lambda \in \mathbb{R}}$ form a semigroup in the sense that 
\[\sprinkle(p;\lambda+\mu)=\sprinkle(\sprinkle(p;\lambda);\mu)\]
for every $\lambda,\mu\in \mathbb{R}$ and $p\in (0,1)$.
For each $0<p,q <1$ we define
\[
\delta(p,q) = \log\left[\frac{\log (1-\max\{p,q\})}{\log (1-\min\{p,q\})}\right], \qquad \text{so that}\qquad \max\{p,q\}=\sprinkle(\min\{p,q\};\delta(p,q)).
% \delta(p,q):=\left| \int_p^q \frac{1}{\xi(1-\xi)\log \frac{2}{\xi(1-\xi)}} \mathrm{d} \xi \right|,
\]
% so that if $0<p \leq q<1$ then $(1-q) = (1-p)^{e^{\delta(p,q)}}$. Note that $\delta$ trivially satisfies the identity
% \[
% \delta(p_1,p_3)=\delta(p_1,p_2)+\delta(p_2,p_3)
% \]
% for every $0<p_1 \leq p_2 \leq p_3 < 1$.
Note that if $p\geq 1/d$, as we will assume throughout most of the paper, then for each $D<\infty$ there exists a positive constant $c=c(d,D)$ such that
\begin{equation} \label{eq:natural_sprinkling_lower_bound}
(1-\sprinkle(p;\delta))=(1-p)^{e^\delta-1}(1-p) \leq \left(\frac{2d-1}{2d}\right)^\delta (1-p) \leq (1-c \delta) (1-p)
\end{equation}
for every $0 \leq \delta \leq D$, so that Bernoulli bond percolation with parameter $\sprinkle(p;\delta)$ stochastically dominates the independent union of a Bernoulli-$p$ bond percolation configuration and a Bernoulli-$(c\delta)$ bond percolation configuration. (Note however that  $1-\sprinkle(p;\delta)$ is much smaller than $(1-c\delta)(1-p)$ when $p$ is close to $1$.)

\medskip

\noindent
\textbf{The corridor function.}
Given a path $\gamma$, we write $\op{len}(\gamma)$ for its length. Given a path $\gamma$ and some $r \in (0,\infty)$, we define $B_{r}(\gamma) := \bigcup_{i = 0}^{\op{len}(\gamma)} B_r(\gamma_i)$, and call a set of this form a \textbf{tube}. We refer to $\op{len}(\gamma)$ and $r$ as the \textbf{length} and \textbf{thickness} of the tube respectively. (Note that these parameters depend on the choice of representation of the tube $B_{r}(\gamma)$, and are not  determined by the tube as a set of vertices.) Following \cite{contreras2022supercritical}, we define the \textbf{corridor function} by
\[
      \kappa_p(m,n) := \inf_{\gamma : \op{len}(\gamma) \leq m } \p_p\Bigl( \gamma_0 \xleftrightarrow{B_n(\gamma)} \gamma_{\op{len}(\gamma) } \Bigr)
\]
for each $p \in (0,1)$ and $n,m\geq 1$, so that $\kappa_p(m,n)$ measures the difficulty of connecting points within tubes of thickness $n$ and length at most $m$. We may also take $n=\infty$ in the definition of the corridor function, where the restriction for connections to lie in a tube disappears and we have simply that
\[
\kappa_p(m,\infty) = \kappa_p(m)= \inf\{ \p_p(x \leftrightarrow y) : d(x,y) \leq m\}.
\]
Note that the corridor function $\kappa_p(m,n)$ is increasing in $p$ and $n$ and decreasing in $m$.
% Notice that if $p' \geq p$ and $[n',m'] \subseteq [n,m]$ then $\kappa_{p}(m,n) \leq \kappa_{p'}(m',n')$.

\medskip

\noindent 
\textbf{Low growth scales.} 
Given a transitive graph $G$ and a parameter $D>0$,  we define the set of \textbf{low growth scales} to be
\[
\mathscr{L}(G,D) = \Bigl\{n \geq 1: \log \op{Gr}(m) \leq (\log m)^{D} \text{ for all } m \in [n^{1/3} , n]\Bigr\},
\]
so that
\[
\Bigl\{n \geq 1: \log \op{Gr}(n) \leq 3^{-D}(\log n)^{D}\Bigr\} \subseteq \mathscr{L}(G,D) \subseteq \Bigl\{n \geq 1: \log \op{Gr}(n) \leq (\log n)^{D}\Bigr\}.
\]
(We will sometimes call these \textbf{quasi-polynomial growth scales} since the function $\exp[(\log x)^C]$ is sometimes known as a quasi-polynomial.)
For the purposes of the proof of the main theorem, we will apply this definition only with the (somewhat arbitrary) choice of constant $D=20$.

\medskip

\noindent
\textbf{The burn-in sprinkle.} The first step of our induction will have a different form to the others, which necessitates a possibly larger amount of sprinkling. We now introduce notation describing this initial amount of sprinkling. Given a transitive graph $G$, $p\in (0,1)$, and $m\geq 1$ define 
\[
b(m) = b(m,p)=\max\Bigl\{b \in \mathbb{N} : 1 \leq b \leq \frac{1}{8}m^{1/3} \text{ and } \p_{p}(\op{Piv}[4b,m^{1/3}]) \leq (\log m)^{-1}\Bigr\},\]
setting $b(m)=0$ if the set being maximized over is empty, 
% 
% \begin{rk}
% We stress that the $i=0$ case of \eqref{implication:corridor} means simply that
% \[
% \Bigl[\text{{\textsc{Full-Space}}}(0) \Bigr] \implies \Bigl[  \text{{\textsc{Corridor}}}(1) \vee (p_{1} \geq p_c) \Bigr].
% \]
% The different form of this first step of the induction underlies the different definition of $\delta_0$ compared to $\delta_i$ for $i\geq 1$. We define the \textbf{burn-in cost} to be
and define the \textbf{burn-in} to be
\begin{multline*}
\burnin(n,p)=\burnin(G,n,p) \\=\max\Biggl\{\left(\frac{ \log \log m}{\min\{\log m,\log \op{Gr}(b(m))\}}\right)^{1/4} : m \in \mathscr{L}(G,20) \cap \Bigl[ (\log n)^{1/2} , n \Bigr] \Biggr\},
\end{multline*}
setting $\burnin(n,p)=0$ if the set being maximized over is empty and setting $\burnin(n,p)=\infty$ if there exists $m$ belonging to the set for which $b(m)\leq 1$. (If $b(m)>1$ then $\log \op{Gr}(b(m))$ and $\log m$ are both positive.) Note that $\burnin(G,n,p)$ is determined by the ball of radius $n$ in $G$, so that two graphs whose balls of this radius are isomorphic have the same value of $\burnin(n,p)$ for each $p\in (0,1)$.
% where $b(m)$ is as defined in \cref{prop:complicated_induction_statement},

% \medskip

\subsection{Statement of the induction step}
\label{subsec:induction_statement}

We are now ready to state our main multi-scale induction proposition. We recall that, when applied to logical propositions, the symbol ``$\vee$'' means ``or'' while the symbol ``$\wedge$'' means ``and''. The condition $n_0 \geq 16$ appearing in the proposition ensures that $\log \log n_0 \geq 1$.
% {\color{red} [Need to avoid dividing by zero if $n$ or $b(m)$ are less than $16$ or so.]}

\begin{prop}[The main multi-scale induction step] \label{prop:complicated_induction_statement}
For each $d\in \mathbb{N}$ there exist constants $K=K(d)$ and $N=N(d)\geq 16$ such that the following holds.
Let $G$ be an infinite, connected, unimodular transitive graph with vertex degree $d$ that is not one-dimensional, let $p_0 \in (0,1)$, and let $n_0 \geq 16$.
 % and define $n_{-1} := [\log n_0]^{1/2}$. 
% For each $m \geq 8$ let
% \[
% b(m) = b(m,p_0)=\max\Bigl\{1 \leq b \leq \frac{1}{2}m^{1/3}: \p_{p_0}(\op{Piv}[4b,m^{1/3}]) \leq (\log m)^{-1}\Bigr\},\]
% let
Let $n_{-1}=(\log n_0)^{1/2}$, let 
\[
% \qquad \text{ and } \qquad
 \delta_0 = \frac{1}{(\log \log n_0)^{1/2}} + K \cdot \burnin(n_0,p_0),
 % \max\Biggl\{\left(\frac{K \log \log m}{\max\{(\log m)^{1/10},\log \op{Gr}(b(m))\}}\right)^{1/5} : m \in \mathscr{L}_{20}(G) \cap [n_0,n_1] \Biggr\}\Biggr\},
\]
% setting $b(m)=0$ and $\delta_0=\infty$ if the set being maximized over in the definition of $b(m)$ is empty.
% Let $\eta \in (0,\frac{1}{2}) \cap (0,p_c)$ and $\eps \in (0,1)$. Pick $p_0 \in (\eta,1-\eta) \cap (0,p_c)$ and $\delta_0 \in (0,\frac{1-p}{2})$.
define sequences $(n_i)_{i\geq 1}$ and $(\delta_i)_{i\geq1}$ recursively by
\[
      n_i := \exp((\log n_{i-1})^9)=  \exp^{\circ 3}\left( \log^{\circ 3}(n_0) + i \log 9 \right) \quad \text{and} \quad \delta_i := (\log \log n_i)^{-1/2} = 3^{-i}(\log \log n_0)^{-1/2},
      % =3^{-i} [\log n_0]^{-1/2}, 
      % \quad \text{ and } \quad p_i := \sprinkle(p_{i-1};\delta_i).
       % \quad \text{ and} \quad \tau_i:= e^{-[\log \log n_{i-1}]^{1/2}}.
\]
and let $(p_i)_{i\geq 1}$ be an increasing sequence of probabilities satisfying $p_{i+1}\geq \sprinkle(p_i;\delta_i)$ for each $i\geq 0$.
% For each integer $i \geq 0$ define the statement
% Let $\upsc{Full-Space}(0)$ be the statement that $\p_{p_0}(u \leftrightarrow v) \geq e^{-\frac{1}{3}(\log \log n_0)^{1/2}}$ for every $u,v\in B_{n_0}$ and for each $i\geq 1$  define the statements
For each $i\geq 0$ define the statement
\begin{align*}\text{\emph{\textsc{Full-Space}}}(i) &= \Biggl(\p_{p_i} ( u \leftrightarrow v ) \geq \exp\left[-(\log \log n_{i})^{1/2}\right] \text{ for all $u,v \in B_{n_i}$}\Biggr) 
% \hfill \text{ and }\\
\intertext{and for each $i\geq 1$ define the statement}
% \intertext{for each integer $i\geq 0$ and define the statement}
\text{\emph{\textsc{Corridor}}}(i) &= \Biggl(\kappa_{p_{i}}( e^{[\log m]^{10}}, m ) \geq \exp\left[-(\log \log n_{i})^{1/2}\right] \text{ for every $m \in \mathscr{L}(G,20) \cap [n_{i-2},n_{i-1}]$}\Biggr).
\end{align*}
% \intertext{We also define the statement $\upsc{Corridor}(1)$ by}
% &= \Biggl(\kappa_{p_{i}}( e^{[\log m]^{10}}, m ) \geq \exp\left[-\frac{1}{3}(\log \log n_{i})^{1/2}\right] \text{ for every $m \in \mathscr{L}(G,20) \cap [n_{i-2},n_{i-1}]$}\Biggr).
% for each integer $i\geq 1$.
If $n_0 \geq N$, $p_0 \geq 1/d$, and $\delta_0 \leq 1$ then the implications
      \begin{align}
      % \[
\upsc{Full-Space}(0)  &\implies \Bigl[  \upsc{Corridor}(1) \vee (p_{1} \geq p_c) \Bigr],
\tag{C$_0$}\label{implication:corridor0}\\
            \Bigl[\text{\emph{\textsc{Full-Space}}}(i) \,\wedge\, \bigwedge _{k =1}^{i} \text{\emph{\textsc{Corridor}}}(k) \Bigr] &\implies \Bigl[  \text{\emph{\textsc{Corridor}}}(i+1) \vee (p_{i+1} \geq p_c) \Bigr], \quad \text{and} \label{implication:corridor} \tag{C}\\
             \Bigl[ \text{\emph{\textsc{Full-Space}}}(j) \wedge \text{\emph{\textsc{Corridor}}}(j+1) \Bigr] &\implies \Bigl[ \text{\emph{\textsc{Full-Space}}}(j+1) \vee (p_{j+1} \geq p_c) \Bigr] \label{implication:full-space} \tag{F}
      \end{align}
hold for every $i\geq 1$ and $j \geq 0$.
\end{prop}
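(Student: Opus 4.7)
The plan is to prove the three implications using two complementary methods developed in the paper: the snowballing method of \cref{sec:snowballing}, which becomes more efficient as the growth rate increases, and the chaining (``orange peeling'') method of \cref{sec:low_growth_multiscale}, which exploits the polylog-plentiful tubes condition that holds on quasi-polynomial growth scales. Roughly, the full-space implication \eqref{implication:full-space} will be driven by snowballing, while the corridor implications \eqref{implication:corridor0} and \eqref{implication:corridor} will be driven by chaining inside quasi-polynomial scales. The sprinkling budget $\delta_i = 3^{-i}(\log \log n_0)^{-1/2}$ is a geometric series, so the total sprinkling along the whole induction is $O(\delta_0)$, which is at most $1$ by hypothesis.

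For \eqref{implication:full-space}, we are handed $\upsc{Full-Space}(j)$ and $\upsc{Corridor}(j+1)$ and must inflate a two-point bound of $\exp[-(\log\log n_j)^{1/2}]$ on $B_{n_j}$ to the same type of bound on $B_{n_{j+1}}$ after sprinkling by $\delta_{j+1}$, where $n_{j+1} = \exp((\log n_j)^9)$. The snowballing proposition feeds the input two-point bound into a Talagrand sharp-threshold argument, using the two-ghost inequality \cref{thm:two_ghost} to control pivotal influences; its dichotomous output is that after a tiny sprinkle, either the point-to-point connection probability at the doubled scale improves by a definite multiplicative factor, or else $p_{j+1} \geq p_c$. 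We then iterate this bootstrap through intermediate scales between $n_j$ and $n_{j+1}$. On scales outside $\mathscr L(G,20)$, snowballing alone gains enough per step; on scales lying inside $\mathscr L(G,20) \cap [n_{j-1}, n_j]$, where snowballing is weak, we short-circuit the step by invoking $\upsc{Corridor}(j+1)$ to chain within a tube of length $e^{[\log m]^{10}}$. This gives us enough flexibility to handle arbitrary oscillation between high- and low-growth regimes.

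For \eqref{implication:corridor}, fix $m \in \mathscr L(G,20) \cap [n_{i-1}, n_i]$ and a path $\gamma$ of length at most $e^{[\log m]^{10}}$; we must connect $\gamma_0$ and $\gamma_{\op{len}(\gamma)}$ inside $B_m(\gamma)$ with probability $\exp[-(\log\log n_{i+1})^{1/2}]$ after sprinkling by $\delta_{i+1}$. The idea is to subdivide $\gamma$ at a smaller scale $m' \in \mathscr L(G, 20)$ with $m' \ll m$ (which exists by the definition of $\mathscr L(G,20)$), apply the polylog-plentiful tubes condition from \cref{sec:low_growth_multiscale} to produce many well-separated candidate sub-tubes of thickness $\sim m'$ crossing each sub-annulus, and argue that at least one connects using the corridor lower bound at scale $m'$ supplied by one of the earlier $\upsc{Corridor}(k)$'s. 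The \emph{a priori} uniqueness zone \cref{cor:trivial_a_priori_uniqueness_zone}, together with the sprinkle $\delta_i$ and \cref{prop:two_arm_bound_in_box}, forces any two competing crossings to merge, so taking the disjunction over the $(\log m)^{\Omega(1)}$ sub-tubes at each stage amplifies connection probability to $1 - (\log m)^{-\Omega(1)}$. Multiplying these $\leq e^{[\log m]^{10}}/m'$ factors along the chain of sub-paths preserves the required corridor lower bound because of the slack between the exponents $10$ and the implicit growth bound $(\log m)^{20}$.

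The base case \eqref{implication:corridor0} runs exactly as \eqref{implication:corridor} except that at the smallest quasi-polynomial scale $m' \leq m^{1/3}$ there is no previous $\upsc{Corridor}(k)$ to invoke as an input; this is precisely the role of the burn-in term $K \cdot \burnin(n_0, p_0)$ in $\delta_0$. By the definition of $\burnin$ and \eqref{eq:natural_sprinkling_lower_bound}, the initial sprinkle is large enough that \cref{prop:two_arm_bound_in_box} combined with \cref{lem:nearby_clusters_using_two_point} directly yields uniqueness of crossings and hence corridor-type connections at the smallest scale, with no induction hypothesis needed. The main obstacle across the whole argument is exactly this handoff between the two methods: $\upsc{Full-Space}$ and $\upsc{Corridor}$ are split so that neither mechanism is asked to operate outside its growth regime, but the chain of implications must be arranged so that the high-growth steps and low-growth steps can be composed freely even when the relevant regime flips back and forth. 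The scale ratio $n_{i+1} = \exp((\log n_i)^9)$ and the choice $D=20$ in $\mathscr L(G,20)$ are calibrated to give exactly the slack required for this composition.
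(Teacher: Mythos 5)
Your overall architecture---snowballing for the high-growth step \eqref{implication:full-space}, tube-chaining at quasi-polynomial scales for the corridor steps, the burn-in absorbing the missing base-case input, and the geometric sprinkling budget---matches the paper, and your account of \eqref{implication:full-space} is essentially the paper's \cref{prop:implication_F} (with the caveat that the paper does not iterate through intermediate scales: when $n_j \notin \mathscr{L}(G,20)$ a \emph{single} application of \cref{prop:snowballing} chains $\sim n_{j+1}$ balls of radius $n_j$ along a geodesic, and when $n_j \in \mathscr{L}(G,20)$ the statement $\upsc{Corridor}(j+1)$ already covers all of $B_{n_{j+1}}$ outright because $e^{(\log n_j)^{10}} \geq 2e^{(\log n_j)^{9}} = 2n_{j+1}$).

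Your treatment of \eqref{implication:corridor0} and \eqref{implication:corridor} has genuine gaps. First, the arithmetic of the amplification fails: a per-annulus success probability of $1-(\log m)^{-\Omega(1)}$ cannot be multiplied over the $\approx e^{[\log m]^{10}}$ steps of the chain---the product is astronomically small, not $\geq \exp[-(\log\log n_{i+1})^{1/2}]$. Each crossing and uniqueness event along the corridor must fail with probability at most about $e^{-(\log m)^{10}}$; the paper achieves this by a union bound over $(\log n)^{c_1\lambda}$ \emph{disjoint} plentiful tubes, each succeeding with probability $(\log n)^{-1}$, with $\lambda$ chosen so large that $c_1\lambda > 10$. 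Second, \cref{cor:trivial_a_priori_uniqueness_zone} does not ``force competing crossings to merge'': it bounds the probability of two distinct crossings of an annulus by a power of $(\log \op{Gr}(n))/n$, which is both far too weak to survive the union bound over the chain and says nothing about merging after sprinkling. The merging step is the core difficulty and rests on a dichotomy you omit entirely, namely whether the two-point zone $\op{tz}(m)$ is large at some intermediate scale (the statement $\mathcal{B}$ of \cref{sec:low_growth_multiscale}): in the positive case crossing clusters are glued through plentiful annular tubes via a snowballing-inside-tubes estimate (\cref{lem:can_always_cross_tz_tubes}); in the negative case one extracts a large well-separated, poorly-connected set (\cref{lem:existence_of_well-sep_set}), feeds it to the Hamming-sprinkling inequality (\cref{prop:hamming}) to obtain sphere-to-sphere connection with probability $1-e^{-(\log n)^{\Omega(K)}}$, and then runs the iterative orange-peeling merging argument (\cref{lem:merging}). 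Finally, you misidentify how the earlier $\upsc{Corridor}(k)$'s enter: they are not used to make connections inside sub-tubes at an intermediate scale, but rather---via \cref{lem:nearby_clusters_using_two_point} and \cref{prop:two_arm_bound_in_box}---to verify the a priori two-arm hypothesis $\p_{p_i}(\op{Piv}[4b,n^{1/3}]) \leq (\log n)^{-1}$ at a tiny scale $b$, which is the actual input required by \cref{prop:low_growth_main}; the burn-in in $\delta_0$ quantifies the extra sprinkling needed when no previous corridor bound is available to produce such a $b$.
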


\begin{rk}
Note that $\upsc{Corridor}(1)$ has a significantly different form than $\upsc{Corridor}(i)$ for $i\geq 2$ since $n_{-1}$ is much smaller than the natural extrapolation of the sequence $(n_i)_{i\geq 0}$ to $i=-1$. %This together with the fact that there is no lower-scale corridor statement to assume in \eqref{implication:corridor0} leads the base case of the induction to require a significantly different approach to the remaining cases.
\end{rk}

% \begin{rk}
% The scales $n_i$ are related to each other by the relatively simple recursive relation
% \[
% n_{i+1} = \exp^{\circ 3}(\log\log\log(n_i)+\log 9) = \exp^{\circ 3}(\log9\log\log(n_i))=\exp^{\circ 3}(\log(\log(\log n_i^9))) = e^{(\log n_i)^9}.
% \]
% Moreover, the parameter $\delta_i$ may be expressed simply as
% \[
% \delta_i = 3^{-i} (\log \log n_0)^{-1/2}
% \]
% for each $i\geq 1$. 
% \end{rk}

\begin{rk}\label{remark:p_lower_bound_redundant}
The condition $p\geq 1/d$ appearing in the hypotheses of \cref{prop:complicated_induction_statement} is in fact redundant: An elementary path counting argument yields that 
\begin{multline*}\p_p(u \leftrightarrow v) \leq \p_p(\text{there is a simple open path of length at least $d(u,v)$ starting from $u$}) 
\\\leq \frac{d}{d-1} \cdot \frac{1}{1-p(d-1)} \cdot (p(d-1))^{d(u,v)}\end{multline*}
for every $p<1/(d-1)$ and $u,v\in V(G)$, so that if $n_0$ is sufficiently large and $\upsc{Full-Space}(0)$ holds then $p\geq 1/d$. We include this redundant assumption anyway to clarify the structure of the proof.
\end{rk}

\begin{rk}
In the statement $\upsc{Corridor}(i)$, the tubes that arise in the relevant  corridor function $\kappa_{p_{i}}( e^{[\log m]^{10}}, m )$ have \emph{thickness} given by the low-growth scale $m$, but can have length equal to the much larger value $e^{[\log m]^{10}}$. As such, the statement $\upsc{Corridor}(i)$ gives us strong control of percolation not just at low-growth scales but at a large range of scales above each low-growth scale. In particular, provided $n_i$ is sufficiently large that $e^{[\log (n_i^{1/3})]^{10}}\geq e^{(\log n_i)^{9}} = n_{i+1}$, 
the implication \eqref{implication:full-space} holds trivially whenever $n_i \in \mathscr{L}(G,20)$.
\end{rk}

\begin{rk}
The ``$\vee (p_{i+1}\geq p_c)$'' that appears on the right hand side of the implications \eqref{implication:corridor0}, \eqref{implication:corridor}, and \eqref{implication:full-space} can be removed if one assumes that $G$ is amenable, or if one works with "wired" connections as discussed in \cref{sec:continuity_of_density}. This would be useful if one wished to use (a modification of) our methods to study the geometry of the infinite cluster in graphs of low growth, extending the results of \cite{contreras2022supercritical,hutchcroft2023transience} to this setting. 
% To make this interesting one would ideally also significantly weaken our low growth assumption, since it is plausible that all transitive graphs with $\log \op{Gr}(n) =(\log n)^{O(1)}$ have polynomial volume growth as discussed in \cref{rmk:gap_conjecture}. 
\end{rk}

Most of the paper is dedicated to proving \cref{prop:complicated_induction_statement}: 
The implication \eqref{implication:full-space} is proven in \cref{sec:snowballing} while the implications \eqref{implication:corridor0} and \eqref{implication:corridor} are proven in \cref{sec:tubes,sec:low_growth_multiscale}.

\subsection{Deduction of the main theorem from the induction step}
\label{subsec:induction_deduction}

In this section we deduce our main theorem, \cref{thm:main}, from \cref{prop:complicated_induction_statement}.
We write $p_\infty=\lim_{i\to\infty}p_i$ for the limit of the parameters $(p_i)_{i\geq 0}$ defined recursively by $p_{i+1}=\sprinkle(p_i;\delta_i)$, so that
\[
p_\infty = \sprinkle \Bigl(p_0;\sum_{i=0}^\infty \delta_i\Bigr) \leq \sprinkle\Bigl(p_0;\;2(\log \log n_0)^{-1/2}+K\cdot \burnin(n_0,p_0)\Bigr).
\]
We will apply \cref{prop:complicated_induction_statement} via the following corollary, which is a consequence of \cref{prop:complicated_induction_statement} together with the sharpness of the phase transition.

\begin{cor}
\label{cor:carrying_out_induction}
Let $d\geq 1$ and let $K=K(d)$ and $N=N(d)$ be the constants from \cref{prop:complicated_induction_statement}. Let $G$ be an infinite, connected, unimodular transitive graph with vertex degree $d$ that is not one-dimensional. 
% , let $n_0$ be an integer, let $p_0 \in (0,1)$, and let the sequences $(n_i)_{i\geq 0}$ and $(p_i)_{i\geq 0}$ be defined as in \cref{prop:complicated_induction_statement}. 
 Then the implication
\begin{multline*}
\left(\left[\p_{p}(u \leftrightarrow v) \geq e^{-(\log \log n)^{1/2}}  \text{ for every $u,v \in B_{n}$} \right]\wedge \left[ \delta_0 \leq 1 \right]\right)
\\
\implies \left[p_c(G) \leq \sprinkle\Bigl(p;\;2(\log \log n)^{-1/2}+K\cdot \burnin(n,p)\Bigr) \right].
\end{multline*}
holds for every $n\geq N$ and $p \geq 1/d$.
% then $p_c(G) \leq p_\infty := \lim_{i\to\infty} p_i$.
 % \emph{\textsc{Full-Space}}$(0)$ holds then $p_c(G) \leq p_\infty$.
\end{cor}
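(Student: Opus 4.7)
My plan is to apply \cref{prop:complicated_induction_statement} repeatedly and then invoke sharpness of the phase transition. First, set $n_0 := n$ and $p_0 := p$, define $(n_i)_{i\geq 1}$ and $(\delta_i)_{i\geq 0}$ as in the proposition, and take $p_{i+1} := \sprinkle(p_i;\delta_i)$ for concreteness; the hypotheses $n_0 \geq N$, $p_0 \geq 1/d$, and $\delta_0 \leq 1$ are exactly the ones given. The semigroup identity $\sprinkle(\sprinkle(p;\lambda);\mu) = \sprinkle(p;\lambda+\mu)$ together with $\sum_{i=1}^\infty \delta_i = \tfrac{1}{2}(\log\log n)^{-1/2}$ yields
\[
p_\infty := \lim_{i\to\infty} p_i \;=\; \sprinkle\Bigl(p;\,\tfrac{3}{2}(\log\log n)^{-1/2} + K\cdot\burnin(n,p)\Bigr),
\]
which is at most $\sprinkle\bigl(p;\,2(\log\log n)^{-1/2} + K\cdot\burnin(n,p)\bigr)$. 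It therefore suffices to prove $p_c(G) \leq p_\infty$.

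The second step is a proof by contradiction. Suppose $p_c(G) > p_\infty$; since $p_{i+1} \leq p_\infty < p_c$ for every $i \geq 0$, the disjuncts ``$p_{i+1}\geq p_c$'' in the implications of \cref{prop:complicated_induction_statement} can never be realised. Starting from the hypothesis $\upsc{Full-Space}(0)$ and inducting on $i$, I would apply \eqref{implication:corridor0} at the base step and, for the inductive step $i \to i+1$, apply \eqref{implication:corridor} to obtain $\upsc{Corridor}(i+1)$ and then \eqref{implication:full-space} to obtain $\upsc{Full-Space}(i+1)$. This shows that $\upsc{Full-Space}(i)$ holds for every $i \geq 0$ and $\upsc{Corridor}(i)$ holds for every $i \geq 1$.

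The final step extracts a contradiction using sharpness of the phase transition \cite{MR852458,MR874906,duminil2016new}. Since $p_i \leq p_\infty$, monotonicity combined with $\upsc{Full-Space}(i)$ gives
\[
\p_{p_\infty}(o\leftrightarrow S_{n_i}) \;\geq\; \p_{p_i}(o\leftrightarrow v) \;\geq\; \exp\bigl[-(\log\log n_i)^{1/2}\bigr]
\]
for every $v \in S_{n_i}$ and every $i \geq 0$. On the other hand, if $p_\infty < p_c(G)$ then sharpness provides a constant $c=c(G,p_\infty)>0$ with $\p_{p_\infty}(o\leftrightarrow S_r) \leq e^{-cr}$ for all $r\geq 1$; evaluating this at $r = n_i$ and letting $i\to\infty$ is incompatible with the nearly-constant lower bound above, yielding the contradiction and hence $p_c(G) \leq p_\infty$.

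All the real work lies inside \cref{prop:complicated_induction_statement} itself; the corollary is essentially bookkeeping. The only mildly delicate points are (i) checking that the tail $\sum_{i\geq 1}\delta_i$ is geometrically summable and fits within the allotted sprinkling budget, and (ii) observing that the exponential upper bound from sharpness decays vastly faster than the $\exp[-(\log\log n_i)^{1/2}]$ lower bound, so the contradiction goes through with enormous room to spare.
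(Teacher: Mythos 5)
Your proposal is correct and follows essentially the same route as the paper: run the induction of \cref{prop:complicated_induction_statement} with $p_{i+1}=\sprinkle(p_i;\delta_i)$, observe that either some $p_i\geq p_c$ (done trivially) or all the $\upsc{Full-Space}(i)$ statements hold, and in the latter case contradict the exponential decay guaranteed by sharpness of the phase transition below $p_c$. Your explicit evaluation $\sum_{i\geq 1}\delta_i=\tfrac12(\log\log n)^{-1/2}$ and the use of the radius tail rather than point-to-point probabilities in the final step are cosmetic differences only.
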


\begin{proof}[Proof of \cref{cor:carrying_out_induction} given \cref{prop:complicated_induction_statement}]
It suffices to prove that $p_c\leq p_\infty$ whenever $n\geq N$ and $p\geq 1/d$ are such that
% Suppose that $n\geq N$ and $p\geq 1/2d$ are such that
 $\p_{p}(u \leftrightarrow v) \geq e^{-(\log \log n)^{1/2}}$  for every $u,v \in B_{n}$ and $\delta_0 \leq 1$. Fix one such $n$ and~$p$. Set $n_0=n$ and define $(n_i)_{i\geq 0}$ as in \cref{prop:complicated_induction_statement}. Since $\upsc{Full-Space}(0)$ holds by assumption, it follows from \cref{prop:complicated_induction_statement} that either $p_i \geq p_c$ for some $i\geq 1$ or $\upsc{Full-Space}(i)$ and $\upsc{Corridor}(i)$ hold for every $i\geq 1$. In the former case we may trivially conclude that $p_c \leq p_\infty$, while in the latter case we have that
\begin{equation}
\label{eq:p_infty_subexponential}
\p_{p_\infty}(u\leftrightarrow v) \geq e^{-(\log \log n_i)^{1/2}} \qquad \text{for every $i\geq 0$ and every $u,v\in B_{n_i}$.}
\end{equation}
On the other hand, it follows from the sharpness of the phase transition \cite{MR852458,MR874906} that for each $p<p_c$ there exists a positive constant $c_p$ such that $\p_p(u \leftrightarrow v) \leq e^{-c_p d(u,v)}$ for every $u,v\in V(G)$. This is incompatible with the (very) subexponential lower bound \eqref{eq:p_infty_subexponential}, so that  $p_\infty \geq p_c$ in this case also.
\end{proof}

We now apply \cref{cor:carrying_out_induction} to prove \cref{thm:main}. The proof we give here will rely on the results of both \cite{hutchcroft2019locality,hutchcroft2020nonuniqueness} (to deal with the nonunimodular case) and \cite{contreras2022supercritical,MR4529920} (to deal with the case that the limit has polynomial growth). We remark that the quantitative proof of the theorem given in \cite{EH_quantitative} yields a completely self-contained and ``uniform in the graph'' deduction of locality from \cref{prop:complicated_induction_statement} in the unimodular case that does not rely on the results\footnote{Of course many of the proof techniques remain closely inspired by these works!} of \cite{contreras2022supercritical,MR4529920}. (Doing this requires non-trivial bounds on the burn-in which can be avoided in the case that the limit has superpolynomial growth as we will see below.)

\begin{proof}[Proof of \cref{thm:main} given \cref{prop:complicated_induction_statement}]
Let $(G_m)_{m\geq 1}$ be a sequence of infinite, connected, transitive graphs converging locally to some infinite transitive graph $G$, and suppose that the graphs $G_m$ all have superlinear growth. We want to prove that $p_c(G_m)\to p_c(G)$. If $G$ is nonunimodular the claim follows from the results of \cite{hutchcroft2019locality,hutchcroft2020nonuniqueness} (specifically \cite[Theorem 5.6]{hutchcroft2019locality}), while if $G$ has polynomial growth the result follows from the main result of \cite{MR4529920}.  Thus, we may assume that $G$ is unimodular and has superpolynomial growth. Since the set of nonunimodular graphs is both closed and open in $\mathcal{G}$ \cite[Corollary 5.5]{hutchcroft2019locality}, we may assume that the graphs $(G_n)_{n\geq 1}$ are all unimodular. We may also assume that the graphs $G_n$ and $G$ all have the same vertex degree $d$. Let $N_1=N_1(d)$ and $K=K(d)$ be the constants from \cref{prop:complicated_induction_statement}.

\medskip

Suppose for contradiction that $p_c(G_m)$ does not converge to $p_c(G)$.
Since $p_c$ is lower semi-continuous (\cite[\S 14.2]{Gabor} and \cite[p.4]{duminil2016new}), we have that $\liminf_{m\to\infty} p_c(G_m) \geq p_c(G)$. Thus, by taking a subsequence, we may assume that $\inf_{m\geq 1} p_c(G_m) = p_* > p_c(G)$. Let $p_0=(p_c(G)+p_*)/2$ so that $p_c(G)<p_0<p_*$. For each $n\geq 1$, let $m(n)$ be minimal such that the balls of radius $n$ are isomorphic in $G_m$ and $G$ for all $m\geq m(n)$. Let $c > 0$ be the constant from \cref{cor:trivial_a_priori_uniqueness_zone}. Since $G$ has superpolynomial growth, we have by \cite{MR623534,MR735714} that
\[
\lim_{n\to\infty} \frac{\log \log n}{\log \op{Gr}(c\log n;G)} = 0,
\] 
where $\op{Gr}(m;G)$ denotes the volume of the ball of radius $m$ in $G$, and it follows from \cref{cor:trivial_a_priori_uniqueness_zone} that
\begin{equation} \label{eq:uniform_convergence_of_burnin}
\lim_{n\to\infty}\sup_{p\in [1/d,1]} \sup_{m \geq m(n)} \burnin(G_m,n,p) = 0.
\end{equation}
In particular, there exists $N_2\geq N_1$ (depending on the superpolynomial graph $G$ and the sequence $(G_m)$) such that if $n_0 \geq N_2$ then $(\log \log n_0)^{-1/2}+K \cdot \burnin(G_m,n_0,p_0) \leq 1$ and
\[
\sprinkle(p_0;\,2(\log \log n_0)^{-1/2}+K \cdot \burnin(G_m,n_0,p_0)) < p_*
\]
for every $m\geq m(n_0)$. Thus, it follows from \cref{cor:carrying_out_induction} that for every $n_0 \geq N_2$ and $m\geq m(n_0)$ there exist vertices $u$ and $v$ in the ball of radius $n_0$ in $G_m$ such that
\[
\p_{p_0}^{G_m}(u \leftrightarrow v) < \exp\left[-(\log \log n_0)^{1/2}\right].
\]
Applying \cref{lem:nearby_clusters_pigeonhole}, we deduce that there exists a constant $C_1$ such that 
\[
\p_{p_0}^{G_m}(|K|\geq k)^2 \leq C_1 n_0 p_0^{-2n_0} k^{-1/2} +  \exp\left[-(\log \log n_0)^{1/2}\right]
\]
for every $n_0 \geq N_2$, $m\geq m(n_0)$, and $k \geq 1$.
Taking $n_0= \lceil c_2 \log k \rceil$ for an appropriately small constant $c_2$ (which makes the first term $O(k^{-1/4})$, say, and hence of lower order than the second term), it follows that there exist positive constants $C_2$ and $c_3$ such that
\begin{equation}
\label{eq:almost_done_non_quantitative_locality}
\p_{p_0}^{G_m}(|K|\geq k) \leq C_2 \exp\left[-c_3 (\log \log \log k)^{1/2}\right]
\end{equation}
for every $n_0\geq N_2$ and $m\geq m(n_0)$. Since $p_0 > p_c(G)$, the probability $\p_{p_0}^G(o\leftrightarrow \infty)$ is positive and it follows from \eqref{eq:almost_done_non_quantitative_locality} there exist $k_0$ and $m_0$ such that
\begin{equation}
\label{eq:almost_done_non_quantitative_locality2}
\p_{p_0}^{G_m}(|K|\geq k_0) \leq \frac{1}{2}\p_{p_0}^G(o \leftrightarrow \infty)
\end{equation}
for every $m\geq m_0$. On the other hand, if $m$ is sufficiently large that balls of radius $k_0$ are isomorphic in $G_m$ and $G$ then
\[
\p_{p_0}^{G_m}(|K|\geq k_0) \geq 
\p_{p_0}^{G_m}(o \leftrightarrow B_{k_0}^c) = \p_{p_0}^{G}(o \leftrightarrow B_{k_0}^c) \geq
\p_{p_0}^{G}(o \leftrightarrow \infty),
\]
which contradicts the upper bound of \eqref{eq:almost_done_non_quantitative_locality2}.
\end{proof}

Let us now explain how \cref{prop:complicated_induction_statement} yields a new proof of the $p_c<1$ theorem as originally established in \cite{MR4181032}. Recall that $\mathcal G^*$ is the space of all infinite, connected, transitive graphs that are not one-dimensional.

\begin{thm}\label{thm:p_c<1}
    Every graph $G \in \mathcal G^*$ satisfies $p_c(G) < 1$.
\end{thm}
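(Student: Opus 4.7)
My plan is to apply \cref{cor:carrying_out_induction} with $p$ chosen extremely close to $1$, so that both hypotheses of the corollary hold for trivial reasons. First I would reduce to the unimodular case: every nonunimodular transitive graph is nonamenable by Soardi's theorem \cite{soardi1988amenability}, and hence satisfies $p_c<1$ by Benjamini--Schramm \cite{MR1423907}. We may therefore assume $G\in\cG^*\cap\cU$ has vertex degree $d$.

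Fix $n\geq N(d)$ (to be chosen large) and set $p:=1-(2d)^{-2n}$, which automatically satisfies $p\geq 1/d$. For the connection hypothesis, any $u,v\in B_n$ lie at graph distance at most $2n$, so requiring a fixed $u$-to-$v$ geodesic to be fully open yields
\[
\min_{u,v\in B_n}\p_p(u\leftrightarrow v)\;\geq\; p^{2n}\;\geq\; 1-2n(2d)^{-2n}\;\geq\; e^{-(\log\log n)^{1/2}}
\]
for all sufficiently large $n$. For the burn-in, I would exploit that $B_{n^{1/3}}$ contains at most $d^{n^{1/3}+1}$ edges, so a union bound gives
\[
\p_p\bra{\text{every edge of }B_{m^{1/3}}\text{ is open}}\;\geq\; 1-d^{n^{1/3}+1}(2d)^{-2n}\;\geq\; 1-(\log m)^{-1}
\]
for every $m\in[(\log n)^{1/2},n]$ once $n$ is large. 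On this event the subgraph induced on $B_{m^{1/3}}$ is a single open cluster, so $\op{Piv}[4b,m^{1/3}]$ fails for every $b\leq m^{1/3}/8$. It follows that $b(m)=\lfloor m^{1/3}/8\rfloor\geq 2$ for every $m\in\mathscr{L}(G,20)\cap[(\log n)^{1/2},n]$ once $n$ is large, which makes $\burnin(n,p)<\infty$. Since $\log\op{Gr}(b(m))\geq\log(m^{1/3}/8)\geq\tfrac{1}{4}\log m$ for $m$ large, one then estimates
\[
\burnin(n,p)\;\leq\; C\bra{\frac{\log\log\log n}{\log\log n}}^{1/4}\;\xrightarrow{n\to\infty}\; 0.
\]

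For all sufficiently large $n$, $\delta_0=(\log\log n)^{-1/2}+K\burnin(n,p)\leq 1$, and \cref{cor:carrying_out_induction} therefore gives
\[
p_c(G)\;\leq\;\sprinkle\bra{p;\;2(\log\log n)^{-1/2}+K\burnin(n,p)}.
\]
Since $p<1$ is fixed and the second argument of $\sprinkle$ stays bounded (indeed tends to $0$) as $n\to\infty$, the right hand side is strictly less than $1$, completing the proof. I do not expect a real obstacle at this stage: the content of the theorem is entirely absorbed into \cref{prop:complicated_induction_statement}, and the $p_c<1$ conclusion is recovered by the elementary device of pushing $p$ close enough to $1$ that every hypothesis of the corollary becomes trivial, using no information about $G$ beyond the fact that it is not one-dimensional.
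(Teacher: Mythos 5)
Your proof is correct, and while it runs on the same engine as the paper's own argument --- everything is reduced to \cref{cor:carrying_out_induction}, hence to \cref{prop:complicated_induction_statement} --- your verification of the hypotheses is genuinely different and in some ways cleaner. The paper proceeds by a growth trichotomy: exponential growth is handled by sharpness (which also disposes of the nonunimodular case), polynomial growth is outsourced to the classical argument via $\mathbb{Z}^2$-subgraphs and structure theory, and only the unimodular superpolynomial case goes through \cref{cor:carrying_out_induction}, with $p_0$ taken only polynomially close to $1$ and the burn-in controlled by combining \cref{cor:trivial_a_priori_uniqueness_zone} (to force $b(m) \gtrsim \log m$) with the superpolynomial growth lower bound. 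By instead taking $1-p$ exponentially small in $n$, you make every edge of $B_{m^{1/3}}$ open except with probability $o((\log m)^{-1})$; since $B_{m^{1/3}}$ induces a connected subgraph, $\op{Piv}[4b,m^{1/3}]$ then fails for every $b$, so $b(m,p)$ takes its maximal value $\lfloor \tfrac{1}{8}m^{1/3}\rfloor$ and the bound $\log \op{Gr}(b(m)) \geq \log b(m) \gtrsim \log m \gg \log\log m$ needs nothing beyond $\op{Gr}(r)\geq r$. This makes the burn-in tend to $0$ uniformly over all of $\cU_d^*$, absorbs the polynomial-growth case into the main argument, and eliminates the use of \cref{cor:trivial_a_priori_uniqueness_zone}; the only remaining external input is the nonunimodular reduction, where your route (nonunimodular $\Rightarrow$ nonamenable by \cite{soardi1988amenability} $\Rightarrow$ $p_c<1$ by \cite{MR1423907}) is interchangeable with the paper's. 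Note that the proposition's constants are deliberately uniform in how close $p_1$ is to $1$ (see the remark after \cref{lem:ghost_in_box_transitivity}), so nothing breaks at your choice of $p$; the only cost is quantitative, since the resulting bound $p_c \leq \sprinkle\bigl(1-(2d)^{-2n_*};2\bigr)$ has $1-p_c$ exponentially small in the (inexplicit) threshold scale $n_*(d)$, whereas the paper's choice of $p_0$ is closer in spirit to the uniform gap statements of \cite{hutchcroft2021nontriviality,panagiotis2021gap}. For the qualitative statement of \cref{thm:p_c<1} this is irrelevant, and your argument is complete.
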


\begin{proof}[Proof of \cref{thm:p_c<1} given \cref{prop:complicated_induction_statement}]
% {\color{red}[Deal with nonunimodular case. Easy because of uniform exponential growth.]}
If $G$ has exponential growth then sharpness of the phase transition easily implies that $p_c \leq (\lim_{r\to\infty} \op{Gr}(r)^{-1/r})<1$ (see also \cite{hutchcroft2016critical,lyons1995random}). Since every nonunimodular transitive graph is nonamenable and therefore has exponential growth, it suffices to consider the case that $G$ is unimodular. On the other hand, if $G$ has polynomial growth, then it is well-known that $p_c(G) < 1$ follows from the fact that $p_c(\mathbb Z^2) < 1$ and the structure theory of transitive graphs of polynomial growth as explained in detail in \cite[Section 3.4]{hutchcroft2021nontriviality}.

\medskip

We now consider the case that $G$ is unimodular and has superpolynomial growth.
 % (also known as "intermediate" growth). The fact that $G$ does not have exponential growth trivially implies that $G$ is unimodular.
  Let $d$ denote the vertex degree of $G$ and let $N_1=N_1(d)$ and $K=K(d)$ be the constants from \cref{prop:complicated_induction_statement}. It follows by the same reasoning as we gave for \eqref{eq:uniform_convergence_of_burnin} that for every $\eta > 0$ there exists a constant $M(d,\eta)$ such that $b(m,p) \leq \eta$ 
for every $m\in \mathscr{L}(G,20)$ with $m \geq M$ and every $p \geq 1/d$. 
% 
% \medskip
% 
Since we also have trivially that $\op{Gr}(n)\geq n$ for every $n\geq 1$ since $G$ is infinite, it follows that there exists a constant $N_2=N_2(d)$ such that
\begin{equation}
\label{eq:finite_burn}
(\log \log n)^{-1/2} + K \cdot \burnin(n,p) \leq 1 \qquad \text{ for every $p \geq 1/d$ and $n \geq N_2$.}
\end{equation}
Let $n_0=n_0(d)=N_1\vee N_2$. Since $\p_p(u \leftrightarrow v) \geq p^{d(u,v)}$ for every $u,v\in V$ and $p\in [0,1]$, there exists a constant 
\[
p_0 = p_0(d)=\frac{1}{d} \vee \exp\left(-\frac{(\log \log n_0)^{1/2}}{ n_0} \right)
\]
satisfying
$1/d \leq p_0<1$ such that $\p_{p_0}(u \leftrightarrow v) \geq \exp(- (\log \log n_0)^{1/2})$ for every $u,v\in B_{n_0}$. Since $n_0 = N_1 \vee N_2$, it follows from \cref{cor:carrying_out_induction} and \eqref{eq:finite_burn} that 
\[
p_c(G) \leq \sprinkle\Bigl(p_0;2(\log \log n_0)^{-1/2}+ K \burnin(n_0,p_0)\Bigr) \leq \sprinkle\bigl(p_0;2 \bigr).
\]
The claim follows since the right hand side is strictly less than one.% and depends only on $d$.
\end{proof}

% \begin{lem}
% Let $G$ be an infinite, connected, locally finite, vertex-transitive graph that is not one-dimensional. If $p>p_c(G)$ then
% \[
% \lim_{n\to \infty} \burnin(G,n,p) =0.
% \]
% \end{lem}
% For $p$ close to $1$, $\mathcal{T}_0$ and $\mathcal{F}_0$ hold trivially provided that $p^n \geq e^{-(\log \log \log n)^{1/2}}$, which holds if $p=1-1/n$. $\log (1-p)\approx \log n$.

%[Proof of \cref{prop:locality_phrased_as_FSC} given \cref{prop:complicated_induction_statement}] ?

\section{Making connections via sharp threshold theory}
\label{sec:snowballing}

In this section we describe a powerful new way to extend point-to-point connection lower bounds from one scale to another, which we call the ``snowballing method''. We develop this method in \cref{subsec:snowballing}, then apply it to prove the implication \eqref{implication:full-space} of \cref{prop:complicated_induction_statement} in \cref{subsec:high_growth_proof}. In \cref{subsec:high_growth_proof} we will also explain how the method allows us to conclude the proof of locality for unimodular graphs satisfying a mild uniform superpolynomial growth assumption.

% In this section we discuss a key proposition \cref{prop:snowballing} that will be used in our proof of \cref{thm:main}. The statement and proof of \cref{prop:snowballing} are given in \cref{subsec:snowballing}. In \cref{subsec:high_growth_proof} we explain how \cref{prop:snowballing} can be used to give a easy proof of \cref{thm:main} in the case of unimodular graphs satisfying a uniform slightly-superpolynomial growth lower bound.

\subsection{Snowballing}
\label{subsec:snowballing}

We now begin to develop the snowballing method. This method is primarily encapsulated through the following proposition, whose proof is the main goal of this section, but the intermediate lemmas used in its proof can be used to prove results of indepedent interest as discussed in \cref{sec:joint_continuity}.
Given (not necessarily finite) non-empty sets of vertices $A$, $B$, and $\Lambda$ in a graph $G=(V,E)$ and a parameter $p \in [0,1]$, we define
\[
      \tau_p^{\Lambda}(A,B) := \min_{\substack{a \in A \\ b \in B}} \p_p\left( a \xleftrightarrow{\Lambda} b \right),
\]
where we recall that $\{a \xleftrightarrow{\Lambda} b\}$ denotes the event that $a$ is connected to $b$ by an open path all of whose vertices belong to $\Lambda$.
We will also write $\tau_p^{\Lambda}(A) := \tau_p^{\Lambda}(A,A)$ and $\tau_p(A,B) := \tau_p^{V}(A,B)$.
We also use the notion of distance $\delta(p,q)$ between two parameters $p,q\in (0,1)$ as defined in \cref{sec:induction_step}.
% Note that if $p,q \geq 1/2$ then
% \[
% \Bigl|\log \log \frac{2}{1-p} - \log \log \frac{2}{1-q}\Bigr| \leq
% \left| \int_p^q \frac{1}{(1-\xi)\log \frac{2}{(1-\xi)}} \mathrm{d} \xi \right| \leq \delta(p,q) \leq \left| \int_p^q \frac{2}{(1-\xi)\log \frac{4}{(1-\xi)}} \mathrm{d} \xi \right| = 2 \Bigl|\log \log \frac{4}{1-p} - \log \log \frac{4}{1-q}\Bigr|
% \]
% : If we write $p=e^x/(1+e^x)$ and $q=e^y/(e^y+1)$ for some $x,y\in \mathbb{R}$ then $\delta(p,q)=|x-y|$.

% \begin{setup}\label{setup:ghost_in_box}
% Let $G=(V,E)$ be a unimodular transitive graph with vertex degree $d$. Fix $\eta \in (0,\frac{1}{2})$. Suppose for some $p \in (\eta,1-\eta)$ and $\delta \in (0,1-\eta-p)$ that for every $q \in [p,p+\delta]$, there is at most one infinite cluster $\p_{q}$-almost surely. Pick $h \in (0,1)$ and let $r$ be the minimum positive integer with $rh \geq 1$ such that $\p_{q}( \op{Piv}[1,hr] ) < h$ for every $q \in [p,p+\delta]$, which exists by compactness.
% \end{setup}

\begin{prop}[Snowballing] \label{prop:snowballing}
For each $d\geq 1$ and $D<\infty$ there exist positive constants $c_1=c_1(D)$ and $h_0=h_0(d,D)$, and universal positive constants $c_2$ and $c_3$ such that the following holds. Let $G=(V,E)$ be a unimodular transitive graph with vertex degree $d$, let $A_1,\ldots,A_n$ be non-empty sets of vertices in $G$, and suppose that $0<p_1<p_2 <1$ are such that there is at most one infinite cluster $\p_{p}$-almost surely for every $p\in [p_1,p_2]$.
 % Suppose for some $p \in (\eta,1-\eta)$ and $\delta \in (0,1-\eta-p)$ that for every $q \in [p,p+\delta]$, there is at most one infinite cluster $\p_{q}$-almost surely. 
Let $h\geq (\min_i|A_i|)^{-1}$ and let $r$ be a positive integer with $hr \geq 1$ such that $\p_{p}( \op{Piv}[1,hr] ) < h$ for every $p \in [p_1,p_2]$. If $p_1\geq 1/d$, $\delta=\delta(p_1,p_2)\leq D$, and $h \leq h_0$ then the implication
\begin{multline}
\label{eq:main_snowballing_implication}
      \Bigl(h^{c_1 \delta^3} \leq c_3n^{-1} \text{ and }\tau_{p_1}^{\Lambda}(A_i \cup A_{i+1}) \geq 4 h^{c_1\delta^4} \text{ for every $i=1,\ldots,n-1$}\Bigr)  \\\Rightarrow \Bigl(\tau_{p_2}^{B_{2r}(\Lambda)}(A_1,A_n) \geq c_2 \tau_{p_1}^\Lambda(A_1) \tau_{p_1}^\Lambda(A_n)\Bigr)
\end{multline}
holds for every non-empty set of vertices $\Lambda$ in $G$. In particular, taking $\Lambda=V$ yields the implication
\begin{multline}
\label{eq:main_snowballing_implication_full_space}
      \Bigl(h^{c_1 \delta^3} \leq c_3n^{-1} \text{ and }\tau_{p_1}(A_i \cup A_{i+1}) \geq 4 h^{c_1\delta^4} \text{ for every $i=1,\ldots,n-1$}\Bigr)  \\\Rightarrow \Bigl(\tau_{p_2}(A_1,A_n) \geq c_2 \tau_{p_1}(A_1) \tau_{p_1}(A_n)\Bigr)
\end{multline}
whenever $p_1 \geq 1/d$, $\delta=\delta(p_1,p_2)\leq D$, and $h \leq h_0$.
%  Let $A_1,\ldots,A_n$ and $\Lambda$ be non-empty sets of vertices. Suppose that $h = \max_{i} \abs{A_i}^{-1}$. Assuming that $h$ is sufficiently small with respect to $d$ and $\eta$, there is a constant $c(\eta) \in (0,1)$ such that if $\tau_p^{\Lambda}(A_i \cup A_{i+1}) \geq 4 h^{c\delta^5}$ for all $i \in \llbracket n-1 \rrbracket$, then
% \[
%       \tau_{p+\delta}^{B_{2r}(\Lambda)}(A_1,A_n) \geq c_2 \left[ 1 - C n |A_*|^{-c_1 \delta^4} \right] \tau_p^\Lambda(A_1) \tau_p^\Lambda(A_n) .
% \]
\end{prop}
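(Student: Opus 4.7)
The plan is to combine a sharp-threshold amplification of each pairwise connection event with a final FKG splicing step. I will split the sprinkle from $p_1$ to $p_2$ into two independent stages: sample $\omega^{(0)}$ at $p_1$ and an independent $\omega^{(1)}$ at some $p'<p_2$ with $(1-p_1)(1-p')\geq (1-p_2)$, and set the final configuration to be $\omega^{(0)}\cup\omega^{(1)}$, which is stochastically dominated by Bernoulli-$p_2$ percolation. The copy $\omega^{(1)}$ will be used to produce a chain of connections $A_1 \leftrightarrow A_2 \leftrightarrow \cdots \leftrightarrow A_n$ inside a tube around $\Lambda$, while $\omega^{(0)}$ will be used at the end to connect the designated endpoints $a_1\in A_1$ and $a_n\in A_n$ to this chain using $\Lambda$ only. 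The factor $\tau_{p_1}^\Lambda(A_1)\tau_{p_1}^\Lambda(A_n)$ in the conclusion arises from exactly this final step.

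The heart of the proof is a sharp-threshold estimate for each of the events
\[
E_i = \Bigl\{A_i \xleftrightarrow{\,T_i\,} A_{i+1}\Bigr\}, \qquad 1 \leq i \leq n-1,
\]
where $T_i \subseteq B_r(\Lambda)$ is a thickness-$r$ tube around the segment of $\Lambda$ joining representative vertices of $A_i$ and $A_{i+1}$. By hypothesis $\p_{p_1}(E_i) \geq 4h^{c_1 \delta^4}$. To amplify this to $\p_{p'}(E_i) \geq 1-h^{c_1 \delta^3}$ using only a sprinkle of size $\delta/2$, I apply Talagrand's sharp-threshold theorem after bounding the maximum edge-influence on $E_i$ by a power of $h$. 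The influence bound is the main technical input and uses the two-ghost inequality \cref{thm:two_ghost}: an edge $e$ can be pivotal for $E_i$ only if its two endpoints lie in disjoint clusters meeting $A_i$ and $A_{i+1}$ respectively; introducing an auxiliary ghost field of intensity of order $h$, the hypothesis $|A_j|\geq 1/h$ forces each of these clusters to carry a ghost with uniformly positive probability, so \cref{thm:two_ghost} bounds the probability of this configuration by $O(h^{1/2})$. Pivotal edges whose associated clusters escape to distance $hr$ from $\Lambda$ are handled separately via the hypothesis $\p_p(\op{Piv}[1,hr])<h$.

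Once $\p_{p'}(E_i) \geq 1 - h^{c_1 \delta^3}$ is established for each $i$, the assumption $h^{c_1 \delta^3}\leq c_3/n$ and a union bound give $\p_{p'}\bigl(\bigcap_{i=1}^{n-1} E_i\bigr) \geq 1-c_3$. On this event the tubes $T_i$ concatenate to yield a connection $A_1 \xleftrightarrow{B_r(\Lambda)} A_n$ inside $\omega^{(1)}$. Designating endpoints $a_1^\ast\in A_1$ and $a_n^\ast\in A_n$ on this connection by a deterministic rule and using the independent copy $\omega^{(0)}$ to build the $\Lambda$-paths $a_1 \leftrightarrow a_1^\ast$ and $a_n^\ast \leftrightarrow a_n$ at $p_1$ --- each of probability at least $\tau_{p_1}^\Lambda(A_1)$ and $\tau_{p_1}^\Lambda(A_n)$ respectively by definition of $\tau$ --- one obtains by the independence of $\omega^{(0)}$ and $\omega^{(1)}$ the lower bound $\tau_{p_2}^{B_{2r}(\Lambda)}(A_1,A_n) \geq c_2 \tau_{p_1}^\Lambda(A_1)\tau_{p_1}^\Lambda(A_n)$. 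The slack between $B_r(\Lambda)$, which contains the chain, and the conclusion's $B_{2r}(\Lambda)$ absorbs the fact that the $\Lambda$-paths used by $\omega^{(0)}$ and the tube chain used by $\omega^{(1)}$ live in slightly different regions.

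The main obstacle is executing the influence/sharp-threshold step cleanly, and in particular tracking the powers of $\delta$ and $h$ precisely enough to obtain the exponents $\delta^3$ and $\delta^4$ that appear in the proposition. One must choose the ghost intensity with care, handle boundary effects from restricting the events $E_i$ to the tube $T_i$ (since \cref{thm:two_ghost} is naturally an infinite-volume statement), and verify that \emph{every} pivotal edge produces two genuinely large disjoint clusters rather than a large cluster and a small one; it is here that the lower bound $|A_j|\geq 1/h$ and the subcritical-uniqueness assumption $\p_p(\op{Piv}[1,hr])<h$ work in tandem with the ghost-field coupling.
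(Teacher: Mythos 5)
Your high-level skeleton (a sharp-threshold amplification for each link of the chain, followed by splicing in the endpoints with an independent low-density copy) matches the spirit of the paper's argument, but two of your central steps fail as written, and they fail for precisely the reasons that force the paper's proof into its actual two-lemma structure. The first gap is the influence bound. You apply Talagrand directly to the bare events $E_i=\{A_i\leftrightarrow A_{i+1}\}$ and claim a pivotal edge forces two large disjoint clusters because $|A_j|\geq 1/h$ ``forces each of these clusters to carry a ghost with uniformly positive probability''. This is not so: a cluster witnessing pivotality need only meet $A_i$ in a \emph{single} vertex (consider an edge $e$ one of whose endpoints is joined to $A_i$ by one short path), so a ghost field of intensity $h$ misses it with probability $1-O(h)$, and \cref{thm:two_ghost} --- which requires \emph{both} clusters to have volume at least $n$ --- gives nothing. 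In general there is no useful max-influence bound for $\{A\leftrightarrow B\}$; on a tree, the edge at which a cluster first meets $A$ has influence comparable to the connection probability itself. This is exactly why the paper runs the sharp-threshold argument on the ghost-to-ghost events $\{\mathscr{G}_{A_i}\leftrightarrow \mathscr{G}_{A_{i+1}}\}$ in \cref{lem:ghost_in_a_box_main}: pivotality for \emph{that} event genuinely forces each cluster to reach a ghost point, which is what makes \cref{lem:two_ghost_two_ghost} applicable and yields the $O(h^{1/2})$ influence bound (together with a separate averaging-over-annuli argument for edges near the boundary of $B_r(\Lambda)$, which you do gesture at). The price is that the output is only a high-probability statement about ghost connections, not about $A_i\leftrightarrow A_{i+1}$.

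The second gap is the concatenation. Even granting $\p_{p'}(E_i)\geq 1-h^{c_1\delta^3}$ for every $i$, the event $\bigcap_i E_i$ does \emph{not} produce a connection from $A_1$ to $A_n$: the crossing realizing $E_i$ and the crossing realizing $E_{i+1}$ both touch $A_{i+1}$, but possibly at different vertices lying in different clusters, so ``the tubes $T_i$ concatenate'' is simply false. Merging these clusters is a genuine uniqueness problem; the relevant event is not monotone and cannot be dispatched by Harris--FKG or a union bound. The paper devotes a second, separate argument to it (\cref{lem:ghost_in_box_transitivity}): a cluster-repulsion estimate showing, via Menger's theorem and a staged sprinkle, that two clusters each attached to a ghost field on a set $A$ with $\tau_{p_1}^{\Lambda}(A)\geq h^{c_1\delta}$ must either merge after sprinkling or realize an event whose probability is again controlled by the two-ghost inequality. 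Your hypothesis $\tau_{p_1}^{\Lambda}(A_i\cup A_{i+1})\geq 4h^{c_1\delta^4}$ is exactly the input this gluing lemma consumes, but the lemma itself --- roughly half the work, and the source of the exponent $\delta^3$ in the conclusion --- is absent from your proposal. (Your final step, splicing in the fixed endpoints $u\in A_1$ and $v\in A_n$ at cost $\tau_{p_1}^{\Lambda}(A_1)\tau_{p_1}^{\Lambda}(A_n)$, is sound and matches what the paper does.)
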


\begin{rk}
The fact that we can always find an integer $r$ such that $\p_{p}( \op{Piv}[1,hr] ) < h$ for every $p \in [p_1,p_2]$ can be deduced from the fact that there is at most one infinite cluster $\p_{p}$-almost surely for every $p\in [p_1,p_2]$ by an easy compactness argument. 
\end{rk}

\begin{rk}
The $\Lambda=V$ case of this lemma stated in \eqref{eq:main_snowballing_implication_full_space} already allows us to easily deduce that $p_c(G_n)\to p_c(G)$ when the transitive graphs in the sequence $(G_n)_{n\geq 1}$ all satisfy a uniform superpolynomial growth lower bound of the form $\op{Gr}(r) \geq r^{c(\log\log r)^{10}}$. This is explained in detail in \cref{subsec:high_growth_proof}. Working within finite domains as in \eqref{eq:main_snowballing_implication} will be useful when we apply \cref{prop:snowballing} at low-growth scales in \cref{sec:low_growth_multiscale}.
\end{rk}

The basic idea underlying \cref{prop:snowballing}, which is inspired by earlier works including \cite{contreras2022supercritical,duminil2021upper}, is that one can use the universal two-arm estimates derived from the work of Aizenman, Kesten, and Newman \cite{aizenman1987uniqueness} as reviewed in \cref{sec:AKN} to bound the maximum influence of an edge on certain connection events, which can then be used as an input in  Talagrand's sharp threshold theorem \cite{MR1303654}. Compared to those works, our primary additional insight is that these methods can be made vastly more efficient (especially in the high-growth case) by working with \emph{ghost field connection events} instead of more obvious connection events. Intuitively, these ghost field connection events are ``smoother'' than ordinary connection events, making it easier to bound the maximum influence of an edge. Moreover, the influence bound we get by using the two-ghost inequality of \cite{hutchcroft2019locality} gets better as the size of the relevant sets increases, so that we get extremely strong sharp threshold estimates when the graph has high growth.
% { \color{red}
%  In particular, we will use the \emph{two-ghost inequality} of \cite{hutchcroft2019locality}, a version of the Aizenman-Kesten-Newman inequality that holds for \emph{any} unimodular transitive graph, and which directly concerns the volumes of clusters rather than their radii. [Rewrite after finishing background section.]
% }

\medskip

% The idea centres around the use of ghost fields.
 Given a set of vertices $A$ in a graph $G$ and a parameter $h \in [0,1]$, the \emph{ghost field} of intensity $h$ on $A$ is the random subset $\mathscr{G}_A$ of $A$ in which each vertex is included independently at random with probability\footnote{In the literature one often takes this probability to be $1-e^{-h}$, which makes certain calculations more convenient. The distinction makes little difference since $1-e^{-h}=h\pm O(h^2)$ as $h\to 0$.} $h$. 
 % (In other words, $\mathscr{G}_A$ is the set of points included in a Poisson process of intensity $h$ on $A$.)
  We denote the law of $\mathscr{G}_A$ by $\mathbb G_h^A$. We record the following reformulation of the two-ghost inequality of \cite{hutchcroft2019locality}.

  % \medskip

  \begin{lem}
  \label{lem:two_ghost_two_ghost}
  Let $G$ be a unimodular transitive graph of vertex degree $d$. There exists a constant $C=C(d)$ such that
\[\mathbb{G}^A_h \otimes \mathbb{G}^B_h \otimes \mathbb{P}_p( \{x \leftrightarrow \mathscr{G}_A\}\cap \{y \leftrightarrow \mathscr{G}_B\} \cap  \{x \nleftrightarrow y\} \cap \{ x \nleftrightarrow \infty \text{ or } y \nleftrightarrow \infty \}) \leq C \sqrt{\frac{1-p}{p} h} \]
for every $h \in [0,1]$, every pair of neighbouring vertices $x,y\in V(G)$, every two sets of vertices $A,B \subseteq V(G)$, and every $p \in (0,1)$.
  \end{lem}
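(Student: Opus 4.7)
The plan is to integrate out the two ghost fields first, then reduce the resulting expectation to the two-ghost inequality \cref{thm:two_ghost} via a Fubini-type identity. Since $\mathscr{G}_A$ is independent of the percolation configuration $\omega$ and each vertex of $A$ lies in $\mathscr{G}_A$ independently with probability $h$, we have $\mathbb{G}^A_h(x \leftrightarrow \mathscr{G}_A \mid \omega) = 1 - (1-h)^{|K_x \cap A|}$, where $K_x$ is the cluster of $x$ in $\omega$, and similarly for $y$ and $B$. Since $|K_x \cap A| \leq |K_x|$ (and likewise for $y$), it suffices to bound
\[
      \mathbb{E}_p\bigl[\mathbf{1}\{E'\}(1-(1-h)^{|K_x|})(1-(1-h)^{|K_y|})\bigr],
\]
where $E' := \{x \nleftrightarrow y\}\cap\{K_x \text{ or } K_y \text{ finite}\}$.

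I would then apply the identity $1-(1-h)^N = h \sum_{j=0}^{N-1}(1-h)^j$ to expand both factors inside the expectation, and swap sums with expectation to obtain
\[
      h^2 \sum_{j,k \geq 0}(1-h)^{j+k}\,\mathbb{P}_p\bigl(E' \cap \{|K_x|>j,\,|K_y|>k\}\bigr).
\]
Since $x$ and $y$ are neighbors, on $\{x\nleftrightarrow y\}$ any edge $e$ joining them is closed and $K_x \neq K_y$; combined with the volume lower bounds this places us inside $\mathscr{S}_{e,\min(j,k)+1}$. Hence \cref{thm:two_ghost} gives the per-term bound $C_d\sqrt{(1-p)/(p(\min(j,k)+1))}$.

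It remains to estimate $S(h) := \sum_{j,k\geq 0}(1-h)^{j+k}(\min(j,k)+1)^{-1/2}$. Splitting into the diagonal $j=k$ and the two symmetric off-diagonal parts and summing the geometric series in the larger index reduces $S(h)$ to a constant multiple of $h^{-1}\sum_{j\geq 0}(1-h)^{2j}(j+1)^{-1/2}$, which is $O(h^{-3/2})$ by comparison with the Laplace-type integral $\int_0^\infty e^{-2ht}t^{-1/2}\,dt=\sqrt{\pi/(2h)}$. Combined with the $h^2$ prefactor and the per-term bound from \cref{thm:two_ghost}, this gives the required $O(\sqrt{h(1-p)/p})$ rate. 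I do not anticipate any substantial obstacle beyond bookkeeping: the claim is trivial when $h$ is not small (the left-hand side being a probability), and the Laplace asymptotic handles the small-$h$ regime where the inequality has content.
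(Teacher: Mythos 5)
Your argument is correct and takes essentially the same route as the paper: reduce to $A=B=V$ (your bound $|K_x\cap A|\le|K_x|$ is exactly this monotonicity), integrate out the ghost fields, and deduce the estimate from the two-ghost inequality \cref{thm:two_ghost} — the paper merely asserts that this last deduction "follows easily," whereas you carry out the layer-cake summation explicitly, and the bookkeeping checks out. One dispensable slip in your closing remark: the claim is \emph{not} trivial for large $h$ when $p$ is close to $1$ (the factor $\sqrt{(1-p)/p}$ still makes the right-hand side small), but this is harmless because your estimate $\sum_{j\ge 0}(1-h)^{2j}(j+1)^{-1/2}=O(h^{-1/2})$ holds uniformly over $h\in(0,1]$, so the main computation already covers every case.
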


  \begin{proof}
It suffices without loss of generality to consider the case $A=B=V$ since the relevant probability is increasing in $A$ and $B$. In this case, the probability is the same as if we had a single ghost field instead of two independent ghost fields, since the restrictions of a ghost field to the clusters of $x$ and $y$ are independent when these clusters are disjoint. This version of the lemma then follows easily from \cref{thm:two_ghost}. Alternatively, one can deduce the desired estimate from \cite[Theorem 1.6]{hutchcroft2019locality}. (The only difference is that in that paper the ghost fields are parameterised by $1-e^{-h}$ and are random sets of \emph{edges} rather than vertices. As with \cref{thm:two_ghost}, this is not a problem since the edge version of the statement is stronger than the vertex version.)
  \end{proof}

The following lemma states roughly that the probability that two low-intensity ghost fields are connected in a region $\Lambda$ undergoes a sharp threshold with respect to the percolation parameter $p$. This rough statement has two caveats: as we increase the percolation parameter, we also have to increase the ghost field intensity and thicken the region $\Lambda$. Although the argument using ghost field connections is new, the way we adapt it to run inside a given domain $\Lambda$ (when $\Lambda$ is not the whole vertex set) is inspired by the analysis of \cite[Section 5]{contreras2022supercritical}.
% which has the property that if $p_0<p_1$ and we define $p_\theta$ by $(1-p_\theta)=(1-p_0)^{1-\theta}(1-p_1)^\theta$ for every $0\leq \theta \leq 1$ then
% \[
% \delta(p_0,p_\theta) = \log \frac{\theta \log (1-p_1) - \theta \log (1-p_0)}{\log (1-p_\theta)}
% \]
% which is a natural notion of distance between $p$ and $q$ from the perspective of Talagrand's inequality. 

\begin{lem}[Sharp threshold for ghost connections] \label{lem:ghost_in_a_box_main}
For each $d\geq 1$ and $D<\infty$ there exists a positive constant $c=c(d,D)$ such that the following holds.
 Let $G=(V,E)$ be a unimodular transitive graph with vertex degree $d$, let $A,B$ be non-empty sets of vertices in $G$, and suppose that $0<p_1<p_2<1$ are such that either
 \begin{enumerate}
\item[(i)] there is at most one infinite cluster $\p_p$-almost surely for every $p\in [p_1,p_2]$, or
\item[(ii)] $B$ has finite complement.
% infinite cluster intersecting $A$ also intersects $B$ almost surely under $\p_{p}$ for every $p\in [p_1,p_2]$. 
\end{enumerate}
 If $p_1 \geq 1/d$  and $\delta(p_1,p_2) \leq D$ then the implication
% Assuming that $h$ is sufficiently small with respect to $d$ and $\eta$, there is a constant $c(\eta) \in (0,1)$ such that if
      \begin{equation*}
            \Bigl(\mathbb G_{ h }^A \otimes \mathbb G_{ h }^B \otimes \p_{p_1} ( \mathscr{G}_A \xleftrightarrow{\Lambda\,} \mathscr{G}_B ) \geq h^{c \delta(p_1,p_2)}\Bigr)
      % \]
      % then
      % \[
      \Rightarrow
         \Bigl( \mathbb G_{h^c}^A \otimes \mathbb G_{h^c}^B \otimes \p_{p_2} ( \mathscr{G}_A \xleftrightarrow{B_{r}(\Lambda)} \mathscr{G}_B ) \geq 1 - h^{c\delta(p_1,p_2)}\Bigr)
      \end{equation*}
      holds for every $h\leq 1/d$ and every set $\Lambda \subseteq V$, where $r$ is the minimum positive integer such that $\p_p(\op{Piv}[1,hr])<h$ for every $p\in [p_1,p_2]$.
      % and $h'=\exp( \lfloor \log h / \log p_1 \rfloor \log p_2)$.
\end{lem}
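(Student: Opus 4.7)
The plan is to view the target event $E := \{\mathscr{G}_A \xleftrightarrow{B_r(\Lambda)} \mathscr{G}_B\}$ as an increasing event in the joint product Bernoulli space of the percolation configuration $\omega$ together with the two ghost fields $\mathscr{G}_A$ and $\mathscr{G}_B$, and to apply Talagrand's sharp threshold theorem along an interpolating path of parameters from $(p_1, h)$ to $(p_2, h^c)$. Working in the natural sprinkling coordinate $\lambda$ given by $p(\lambda) = \sprinkle(p_1;\lambda)$ exploits the semigroup structure of $\sprinkle$ and yields a clean integrated differential inequality at the endpoint $\lambda = \delta(p_1,p_2)$.

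The crucial step is bounding the influence of each edge variable by $O(\sqrt{h})$. For an edge $e = \{x,y\}$ contained in $B_r(\Lambda)$, pivotality of $e$ conditional on $e$ being closed forces $x$ and $y$ to lie in distinct clusters of $\omega \cap B_r(\Lambda)$ that meet $\mathscr{G}_A$ and $\mathscr{G}_B$ respectively. I would split on whether $x$ and $y$ are connected in the full graph $\omega$: in the disconnected case, hypothesis (i) (a.s.\ unique infinite cluster throughout $[p_1, p_2]$) or hypothesis (ii) (cofinite $B$, which forces $y$'s ghost-reaching cluster to be the essentially unique infinite one) ensures at least one of the two clusters is finite, so that the two-ghost inequality of \cref{lem:two_ghost_two_ghost} applies and bounds the contribution by $C\sqrt{(1-p)h/p} \leq C'\sqrt{h}$. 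In the connected case, the connecting path must exit $B_r(\Lambda)$, so zooming in on the ball $B_{hr}(e) \subseteq B_r(\Lambda)$ the event is contained in a translate of $\op{Piv}[1,hr]$, which has probability at most $h$ by the defining property of $r$.

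To deal with the ghost-vertex influences, which are not uniformly $O(\sqrt{h})$, I would split the sharpening into two stages: first increase the percolation parameter from $p_1$ to $p_2$ keeping the ghost intensity fixed at $h$, using only the edge-influence bound above in Talagrand's inequality; then boost the ghost intensity from $h$ to $h^c$ keeping $p_2$ fixed, using the explicit product form
\[
\mathbb G_h^A \otimes \mathbb G_h^B (\mathscr{G}_A \xleftrightarrow{B_r(\Lambda)} \mathscr{G}_B \mid \omega) = 1 - \prod_C \Bigl[1 - \bigl(1 - (1-h)^{|C \cap A|}\bigr)\bigl(1-(1-h)^{|C \cap B|}\bigr)\Bigr],
\]
where the product runs over clusters $C$ of $\omega \cap B_r(\Lambda)$, to finish the enhancement via a direct monotone calculation. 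In the first stage, Talagrand's inequality in the $\lambda$-coordinate yields a differential inequality of the form $\frac{d}{d\lambda} \log(q/(1-q)) \geq c \log(1/\sqrt{h})$, which upon integrating from $0$ to $\delta(p_1, p_2)$ carries $q$ from $h^{c\delta}$ past $1/2$; the second stage then boosts this to the claimed bound $1 - h^{c\delta}$.

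The principal obstacles I foresee are: (a) handling edges near the boundary of $B_r(\Lambda)$, where the nested ball $B_{hr}(e)$ may fail to be contained in $B_r(\Lambda)$ and the annulus argument breaks down; (b) making hypothesis (ii) yield a usable form of the two-ghost inequality in the absence of global cluster uniqueness; and (c) calibrating the constants so that the exponent $c\delta$ on both sides of the conclusion matches while $c$ depends only on $d$ and $D$. The first should be absorbed by a minor inflation of $r$ and a corresponding tuning of $h_0$; the second by observing that under (ii) the abundance of $\mathscr{G}_B$ forces $y$'s cluster in the pivotal scenario to be the unique infinite cluster; the third by carefully tracking constants in the Talagrand differential inequality and in the reparametrization $p = \sprinkle(p_1;\lambda)$.
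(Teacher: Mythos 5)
Your proposal captures the two core influence bounds of the paper's argument --- the two-ghost inequality (\cref{lem:two_ghost_two_ghost}) for a pivotal edge whose endpoints lie in distinct clusters of the full configuration $\omega$, and the a priori two-arm bound $\p_p(\op{Piv}[1,hr])<h$ when they are reconnected by a path leaving the domain --- but two of the steps you wave at do not go through as described, and each is resolved in the paper by an idea your sketch is missing. The first is the boundary-edge problem, which you flag as obstacle (a) but cannot fix by ``a minor inflation of $r$'': for any fixed domain $B_{r'}(\Lambda)$, an edge $e$ within distance $hr$ of its boundary has $B_{hr}(e)\not\subseteq B_{r'}(\Lambda)$, so a path reconnecting the endpoints of $e$ may exit $B_{r'}(\Lambda)$ while staying inside $B_{hr}(e)$, the two-arm event never occurs, and the influence of such an edge has no useful bound; enlarging $r$ merely relocates this boundary layer. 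The paper's resolution is structural: it introduces $\ell=\lfloor h^{-1}\rfloor$ nested events $\mathcal{E}_i=\{\mathscr{G}_A\xleftrightarrow{B_{irh}(\Lambda)}\mathscr{G}_B\}$, observes that the events $\op{Piv}_{e_i}[\mathcal{E}_i]\cap\{\omega(e_i)=1\}$ are pairwise disjoint in $i$ (an open pivotal edge for $\mathcal{E}_i$ lies on every connecting path in $B_{irh}(\Lambda)$, so it cannot be pivotal for $\mathcal{E}_j$ with $j>i$), and concludes by Markov that for at least a third of the indices the worst boundary-edge influence is $O(h)$; the differential inequality is then run only at these good indices and averaged over $i$. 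This device is vacuous only when $\Lambda=V$, and the lemma is needed for general $\Lambda$ in \cref{prop:snowballing}.

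The second gap is in your stage-one application of Talagrand, where you increase $p$ from $p_1$ to $p_2$ with the ghost intensity frozen at $h$ ``using only the edge-influence bound''. Russo's formula in $p$ produces only the sum of \emph{edge} pivotalities, whereas Talagrand's inequality controls the sum of influences over \emph{all} coordinates in terms of the maximum over all coordinates; the ghost-vertex coordinates contribute to both sides, their pivotality probabilities need not be small (for $A$ a singleton $\{a\}$, the influence of the single ghost variable at $a$ is comparable to $\p_p(a\leftrightarrow\mathscr{G}_B)$, which can be of order one), and conditioning on the ghost fields before applying Talagrand to $\omega$ alone inflates the conditional edge influences for the same reason. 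So the differential inequality you assert for stage one does not follow from the standard statement. The paper sidesteps this entirely by encoding the whole inhomogeneous measure as i.i.d.\ Bernoulli$(q)$ bits --- each ghost vertex an AND of $m_G$ bits, each edge an OR of $m_E$ bits --- so that every individual coordinate has weighted influence $O((h')^{1/2})$ and a single homogeneous application of Talagrand in the parameter $q$ suffices. A consequence of this coupling is that raising $q$ increases the percolation parameter and the ghost intensity simultaneously, which is precisely where the boosted intensity $h^c$ in the conclusion comes from; your separate stage-two monotone boost is then unnecessary, but it cannot substitute for the missing stage-one inequality.
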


\begin{rk}
% The hypothesis that every infinite cluster intersecting $A$ also intersects $B$ almost surely under $\p_{p}$ for every $p\in [p_1,q_1]$ holds automatically if there is at most one infinite cluster $\p_p$-a.s.\ for every $p\in [p_1,p_2]$ as in the statement of \cref{prop:snowballing}. {\color{red}[No it doesn't...]}
 % It also holds if every path from $A$ to infinity passes through $B$, as is the case when $A$ and $B$ are concentric annuli; 
 To prove \cref{prop:snowballing} we will apply this lemma only under the hypothesis (i). The version with hypothesis (ii) can be used as part of an alternative of the joint continuity of $\theta(p,G)$ in the supercritical region, as discussed in \cref{sec:continuity_of_density}.
\end{rk}

Before proving \cref{lem:ghost_in_a_box_main} we first recall \emph{Talagrand's inequality} \cite{MR1303654}, which (in combination with Russo's formula \cite{MR0488383}) states that there exists a universal positive constant $c$ such that if $A \subseteq \{0,1\}^E$ is an increasing event in a finite product space then
\begin{equation}
            \frac{d}{dp} \p_p(\mathcal A) \geq c \p_p(\mathcal A) (1-\p_p(\mathcal A)) \cdot \left[ p(1-p) \log \frac{2}{p(1-p)} \right]^{-1} \log \frac{1}{p(1-p) \max_{e\in E}  \p_p( \op{Piv}_e[\mathcal A] )}
\end{equation}
for every $p\in (0,1)$.

% If we define the increasing bijection $\phi:\mathbb{R}\to (0,1)$ via the separable ODE
% \[ \phi(0)=1/2 \qquad \text{and} \qquad \phi'=\phi(1-\phi) \log \frac{2}{\phi(1-\phi)},
% \]
% so that the inverse of $\phi$ is given explicitly by the integral
% \[
% \phi^{-1}(p)=\int_{1/2}^{p} \frac{1}{\phi(1-\phi) \log \frac{2}{\phi(1-\phi)}} \mathrm{d} \xi,
% \]
% we can rewrite Talagrand's inequality in this parameterization as
% \[
% \frac{d}{d\beta} \p_{\phi(\beta)}(\mathcal A) \geq c \p_p(\mathcal A) (1-\p_p(\mathcal A)) \log \frac{1}{\phi(\beta)(1-\phi(\beta)) \max_{e\in E}  \p_{\phi(\beta)}( \op{Piv}_e[\mathcal A] )},
% \]
% or even more succinctly as
% \[
% \frac{d}{d\beta} \log \left[\frac{\p_{\phi(\beta)}(\mathcal A)}{1-\p_{\phi(\beta)}(\mathcal A)}\right] \geq c \log \left[\frac{1}{\phi(\beta)(1-\phi(\beta)) \max_{e\in E}  \p_{\phi(\beta)}( \op{Piv}_e[\mathcal A] )}\right].
% \]

% \[
% \phi^{-1}(p) \approx \log \log \frac{1}{1-p} \text{ as $p\to 1$}
% \]
% \[
% 1-p_1 \ll 1, \log \log \frac{1}{1-p_2} - \log \log \frac{1-p_1} = \delta. 
% \]
% \[
% 1-p_2 \geq 4 e^{-e^{2\delta} \log \frac{4}{1-p_1}} = 4^{1-e^{2\delta}} (1-p_1)^{e^{2\delta}}
% \]
% \[
% -\log p_2 \approx 1-p_2 \approx (1-p_2)^{e^{\delta}} = (-\log p_1)^{e^{\delta}}.
% \]
% \[
% h'\approx h^{(-\log p_1)^{e^\delta-1}} \approx h^{(1-p_1)^\delta}.
% \]

% \[
% h^{\delta (1-p_1)^\delta} 
% \]

% If $\delta \leq (-\log (1-p_1))^{-1}$ then $h' \leq h^c$.

\begin{proof}[Proof of \cref{lem:ghost_in_a_box_main}]
      We may assume without loss of generality that $A$, $B$, and $\Lambda$ are finite, exhausting by finite sets and taking a limit otherwise. Since we want to apply Talagrand's inequality to the \emph{inhomogeneous}\footnote{The fact that Talagrand's inequality for homogeneneous product measures implies a version for inhomogeneous measures with parameters close to zero was already observed in \cite[Appendix A]{dembin2022sharp}. In our setting we have some parameters that may be close to zero and others that may be close to $1$.} product measure $\mathbb{G}^A_h \otimes \mathbb{G}^B_h \otimes \mathbb{P}_p$, we will first encode a random variable with this law as a function of i.i.d.\ random bits.
      Define 
      \[ m_E:=\left\lfloor \frac{\log(1-p_1)}{\log((d-1)/d)} \right\rfloor, \quad q_1:=1-(1-p_1)^{1/m_E}, \quad \text{ and } \quad q_2:=1-(1-p_2)^{1/m_E}.\]
      % ,  m_G := \left\lfloor \frac{\log h}{\log q_1} \right\rfloor. \]
      The assumption that $p_1 \geq 1/d$ ensures that $m_E \geq 1$, while it follows from the definitions that $(1-q_1)^{m_E}=(1-p_1)$, that $(1-q_2)^{m_E} = (1-p_2)$, and that
      \begin{equation*}
     \frac{1}{d} \leq  q_1 = 1-\exp\left( \left(\left\lfloor \frac{\log(1-p_1)}{\log((d-1)/d)} \right\rfloor\right)^{-1} \log(1-p_1)\right) \leq \frac{2d-1}{d^2} \leq \frac{2}{d}.
      \end{equation*}
     (Here we used only that $x/2\leq \lfloor x \rfloor \leq x$ for $x \geq 1$.) We also define 
     \[m_G= \left \lfloor \frac{\log h}{\log q_1} \right\rfloor,\]
     which satisfies $m_G \geq 1$ since $h\leq 1/d\leq q_1$.
     For each $q \in (0,1)$, let $\bar \p_q$ be the law of a random variable
      \[
            \mathrm{\textsc{bits}} = (\mathrm{\textsc{bits}}_A,\mathrm{\textsc{bits}}_B,\mathrm{\textsc{bits}}_\omega) \in  \{0,1\}^{A \times \{1,\ldots, m_G\} } \times \{0,1\}^{B \times \{1,\ldots,m_G\}}\times \{0,1\}^{E\times \{1,\ldots,m_E\}}=:\{0,1\}^\Omega,
      \]
      whose constituent random bits are independent Bernoulli random variables of parameter $q$. Given $\mathrm{\textsc{bits}} = (\mathrm{\textsc{bits}}_A,\mathrm{\textsc{bits}}_B,\mathrm{\textsc{bits}}_\omega) \in \{0,1\}^\Omega$, we define $(\mathscr{G}_A,\mathscr{G}_B,\omega)$ as a function of $\mathrm{\textsc{bits}}$ by
      \[
       \mathscr{G}_A(a) = \prod_{i=1}^{m_G} \mathrm{\textsc{bits}}_A(a,i), \quad  \mathscr{G}_B(b) = \prod_{i=1}^{m_G} \mathrm{\textsc{bits}}_B(b,i),  \quad \text{ and } \quad \omega(e) = 1- \prod_{i=1}^{m_E} (1-\mathrm{\textsc{bits}}_\omega(e,i)),
      \]
       % Define $\mathscr{G}_A,\mathscr{G}_B \in \{ 0,1 \}^{V}$ by $\mathscr{G}_{A}(\cdot) := \prod_{i=1}^{b} s_{A}(\cdot , i)$ and $\mathscr{G}_{B}(\cdot) := \prod_{i=1}^{b} s_{B}(\cdot , i)$, 
       so that the triple $(\mathscr{G}_A, \mathscr G_B,\omega)$ has law $\mathbb G_{q^{m_G}}^A \otimes \mathbb G_{q^{m_G}}^B \otimes \p_{1-(1-q)^{m_E}}$. The choice of parameter $m_G$ ensures that $q_1^{m_G} \geq h$ and hence that
      \[
      \bar\p_{q_1} ( \mathscr{G}_A \xleftrightarrow{\Lambda} \mathscr{G}_B ) \geq \mathbb G_{ h }^A \otimes \mathbb G_{ h }^B \otimes \p_{p_1} ( \mathscr{G}_A \xleftrightarrow{\Lambda} \mathscr{G}_B ).
      \]
      Note moreover that if $\mathcal{A} \subseteq \{0,1\}^A \times \{0,1\}^B \times \{0,1\}^E$ is an increasing event, $e\in E$ is an edge, and $1\leq k \leq m_E$ then $(e,k)$ is a closed pivotal for the event $\{\mathrm{\textsc{bits}} : (\mathscr{G}_A,\mathscr{G}_B,\omega) \in \mathcal{A}\}$ if and only if $e$ is a closed pivotal for the event $\mathcal{A}$ in the configuration $\omega$, so that
      % , and that a similar statements holds for elements of the form $(a,k)$ and $(b,k)$ but with the word ``closed'' replaced by ``open''. As such, we have that
      \begin{multline}
      \label{eq:edge_bits1}
      (1-q)\bar \p_q \bigl((e,k) \text{ pivotal for } \{\mathrm{\textsc{bits}} : (\mathscr{G}_A,\mathscr{G}_B,\omega) \in \mathcal{A}\}\bigr) 
    \\= (1-q)^{m_E} \mathbb{G}^A_{q^{m_G}}\otimes \mathbb{G}^B_{q^{m_G}} \otimes \p_{1-(1-q)^{m_E}} (\op{Piv}_e[\mathcal{A}]).
      \end{multline}
      On the other hand, an element  $(x,k)$ of $A \times \{1,\ldots,m_G\}$ can only possibly be an open pivotal if $\mathrm{\textsc{bits}}_A(x,j)=1$ for every $j \in \{1,\ldots,m_G\}$. Similar considerations also apply with $A$ replaced by $B$, so that we obtain the coarse bound
      \begin{equation}\label{eq:ghost_bits1}
      q \hspace{0.05cm}\bar \p_q \bigl((x,k) \text{ pivotal for } \{\mathrm{\textsc{bits}} : (\mathscr{G}_A,\mathscr{G}_B,\omega) \in \mathcal{A}\}\bigr) \leq  q^{m_G}
      \end{equation}
      for every $x\in A\sqcup B$ and $1\leq k \leq m_G$.

\medskip

       Let $\ell := \lfloor h^{-1} \rfloor$ and for each $i =1,\ldots,\ell$ define the event 
      \[\mathcal E_i := \{ \mathscr{G}_A \xleftrightarrow{B_{ir h}(\Lambda)} \mathscr{G}_B \}.\] 
      We want to bound the maximum influence of an element of $(E\times \{1,\ldots,m_E\}) \sqcup (A\times \{1,\ldots,m_G\}) \sqcup (B \times \{1,\ldots,m_G\})$ on the event $\mathcal{E}_i$. For elements of $(A\times \{1,\ldots,m_G\}) \sqcup (B \times \{1,\ldots,m_G\})$ it will suffice to use the trivial bound of \eqref{eq:ghost_bits1}, so that it remains only to bound the pivotality probability $\max_e \mathbb{G}^A_{h'}\otimes \mathbb{G}^B_{h'} \otimes \p_{p} (\op{Piv}_e[\mathcal{E}_i])$ where $(1-p)=(1-q)^{m_E}$ and $h'=q^{m_G}$.  Following \cite{contreras2022supercritical}, we will do this not for every $i$ but instead show that an influence bound of the desired form must hold for an \emph{average} choice of $i$. (Note that if we are working directly in the case $\Lambda=V$ then this issue does not arise.)
    More precisely, we claim that for each $q\in [q_1,q_2]$ there exists a set $I(q)\subseteq \{1,\ldots,\ell\}$ with $|I|\geq 1/(3h)$ such that
\begin{equation}
\label{eq:Ei_max_influence}
\max_{i\in I}\max_{e \in E} q(1-q) \mathbb{G}^A_{h'} \otimes \mathbb{G}^B_{h'} \otimes \p_p(\op{Piv}_{e}[\mathcal{E}_i]) \leq C_d (h')^{1/2}
\end{equation}
where $C_d$ is a constant depending only on the degree $d$.
    There are two separate cases to consider: Edges both of whose endpoints belong to $B_{(i-1)r h}(\Lambda)$ (\emph{bulk edges}) and edges with at least one endpoint not in $B_{(i-1)r h}(\Lambda)$ (\emph{boundary edges}).

\medskip

    % Let $q \in [p_1,p_2]$ and $i \in \{1,\ldots,\ell\}$ be arbitrary.

    \textbf{Bulk edges.}
      First consider an edge $e$ both of whose endpoints $x$ and $y$ belong to $B_{(i-1)r h}(\Lambda)$.
       % we have the inclusion of events
      % \[
      % \op{Piv}_e[\mathcal{E}_i] \subseteq \{\text{$x$ and $y$ belong to distinct $\omega$-clusters, one of which intersects $\mathscr{G}_A$ and the other of which intersects $\mathscr{G}_B$}\}
      % \]
      % For $\bar \p_{q}$-almost every $\mathrm{\textsc{bits}} \in \op{Piv}_e[\mathcal E_i]$,  at least one of the following two events must occur:
      If $\omega$ is such that $e$ is pivotal for the event $\mathcal{E}_i$ then at least one of the following two events must occur:
      \begin{enumerate}
      \item[(i)] The endpoints $x$ and $y$ of $e$ belong to distinct $\omega$-clusters, one of which intersects $\mathscr{G}_A$ but not $\mathscr{G}_B$ and the other of which intersects $\mathscr{G}_B$ but not $\mathscr{G}_A$; at least one of these clusters must be finite almost surely by the hypotheses of the lemma, which allows us to bound the relevant probability using the two-ghost inequality.
      \item[(ii)] The endpoints $x$ and $y$ of $e$ are both $\omega$-connected to the boundary of $B_{irh}(\Lambda)$ but are not $\omega$-connected to each other within $B_{irh}(\Lambda)$, so that $\op{Piv}[1,rh](x)$ and $\op{Piv}[1,rh](y)$ both hold.
  \end{enumerate}
  Thus, it follows by the two-ghost inequality as stated in \cref{lem:two_ghost_two_ghost} and the definition of $r$ that
       % Therefore, by the two-ghost inequality \cite[Theorem 1.6]{hutchcroft2019locality} and by definition of $k$,
      \begin{equation} \label{eq:pivotality_prob_for_good_edge}
            q(1-q)\bar \p_{q}(\op{Piv}_e[ \mathcal E_i ]) \leq q(1-q)\left[ C_1 \frac{(1-p)^{1/2}}{p^{1/2}} (h')^{1/2} + h \right] \leq C_2 (h')^{\frac{1}{2}},
      \end{equation}
      for every $q\in [q_1,q_2]$ and $1\leq i \leq \ell$, where $C_1$ and $C_2$ are constants depending only on $d$ and we used that $p \geq 1/d$.

      \medskip

      \textbf{Boundary edges.}
      An edge $e$ not having both its endpoints in $B_{irh}(\Lambda)$ cannot possibly be pivotal for $\mathcal E_i$; we need only bound $\bar \p_{q}( \op{Piv}_e[\mathcal E_i] )$ for edges $e$ with both endpoints in $B_{irh}(\Lambda)$ and with at least one endpoint not in $B_{(i-1)rh}(\Lambda)$. Rather than bound this maximum influence uniformly in $i$, we will bound it \emph{on average}. 
      % Continue fixing an arbitrary $q \in (p,p+\delta)$, but let $i$ denote a free variable. 
      Fix $q \in [q_1,q_2]$ and, for each $i \in \{1,\ldots,\ell\}$, pick an edge $e_i=e_i(q) \in B_{irh}(\Lambda)$ with at least one endpoint not in $B_{(i-1)rh}(\Lambda)$ that maximises $\bar \p_{q}( \op{Piv}_{e_i}[\mathcal E_i] )$ over all such edges. Notice that the events $\op{Piv}_{e_i}[\mathcal E_i] \cap \{ \omega(e_i) = 1\}$ for $i \in \{1,\ldots,\ell\}$ are pairwise disjoint. Indeed, if $\op{Piv}_{e_i}[\mathcal E_i] \cap \{ \omega(e_i)=1\}$ occurs then $e_i$ must belong to \emph{every} path connecting $\mathscr{G}_A$ to $\mathscr{G}_B$ in $B_{irh}(\Lambda)$, and such a path cannot possibly include $e_j$ if $j>i$. It follows in particular that
      \[
           q \sum_{i=1}^{\ell} \bar \p_{q}( \op{Piv}_{e_i}[\mathcal E_i] ) = \sum_{i=1}^{\ell} \bar \p_{q}( \op{Piv}_{e_i}[\mathcal E_i] \cap \{ \omega(e_i) = 1\} ) \leq 1.
      \]
     % $\op{Piv}_{e_i} [\mathcal E_i]$ is always $\bar \p_q$-independent of the state of $\bar \omega(e_i)$,
     %  \[
     %        \sum_{i=1}^{l} \bar \p_{q}( \op{Piv}_{e_i}[\mathcal E_i] ) = q^{-1} \sum_{i=1}^{l} \bar \p_{q}( \op{Piv}_{e_i}[\mathcal E_i] \cap \{\bar \omega(e_i) = 1\} ) \lesssim_{\eta} 1.
     %  \]
      By Markov's inequality, it follows that we can find a subset $I(q) \subseteq \{1,\ldots,\ell\}$ with $\abs{I} \geq \frac{1}{3h}$ such that
      \[
            \max_{i \in I} q \bar \p_{q}( \op{Piv}_{e_i}[\mathcal E_i] ) \leq 6h \leq 6 (h')^{1/2}
      \]
      as required. This concludes the proof of \eqref{eq:Ei_max_influence}.
      % Together with \cref{eq:pivotality_prob_for_good_edge,eq:pivotality_prob_for_bit}, it follows that for every $i \in I$,
      % \begin{equation} \label{eq:pivotality_bound_final}
      %       \max_{b \in E \sqcup ((A \sqcup B) \times \llbracket k \rrbracket) } \bar \p_{q}( \op{Piv}_{b} [\mathcal E_i] ) \lesssim_{d,\eta} h^{ \Omega_{\eta}(1) }.
      % \end{equation}

      % \textbf{Ghost bits.}
      % Now consider an arbitrary $(u,k) \in (A \sqcup B) \times \{1,\ldots,\ell\}$. The definition of $\mathscr{G}_A$ and $\mathscr{G}_B$ in terms of $\bar \omega$ ensures that if $\bar \omega \in \op{Piv}_{(u,k)} [\mathcal E_i]$ satisfies $\bar \omega(u,k) = 1$, we must have that $\bar \omega(u,k') = 1$ for every $k' \in \{1,\ldots,\ell\}$. Since the event that $\bar \omega \in \op{Piv}_{(u,k)} [\mathcal E_i]$ is independent of the state of $\bar \omega(u,k)$, we deduce that
      % \begin{equation}\label{eq:pivotality_prob_for_bit} \begin{split}
      %       q(1-q)\bar \p_{q} ( \op{Piv}_{(u,k)}[\mathcal E_i] ) &= (1-q) \bar \p_{q} ( \op{Piv}_{(u,k)}[\mathcal E_i] \cap \{ \bar \omega(u,k) = 1 \} )\\
      %       &\leq \bar\p_p(\bar \omega(u,k')=1 \text{ for every $1\leq k' \leq \ell$})\leq q^b \leq h'
      %       % \lesssim h^{\Omega_{\eta}(1)}.
      % \end{split}\end{equation}
      % where the final inequality follows from the definitions of $b$ and $h'$.

\medskip

     Now, the assumption that $\delta(p_1,p_2) \leq D$ implies that $(1-p_2) \geq (1-p_1)^{e^D}$ and hence that $1-q_2 \geq (1-q_1)^{e^D} \geq ((2d-1)/(2d))^{e^D}>0$, so that $q_1$ and $q_2$ are bounded away from $0$ and $1$ by constants depending only on $d$ and $D$. As such, Talagrand's inequality (which is valid to use in our setting since all our events only depend on the status of finitely many bits) together with \eqref{eq:Ei_max_influence} yields that
\[\frac{d}{dq} \log \left[\frac{\bar\p_{q}(\mathcal E_i)}{1-\bar\p_{q}(\mathcal E_i)}\right] \geq c_1 \log \frac{1}{C_d q^{m_G/2}} \geq c_2 m_G \log \frac{1}{q_1} \geq c_3 \log \frac{1}{h} \]
    for every $q_1 \leq q \leq q_2$ and every $i\in I(q)$, where $c_1$, $c_2$, and $c_3$ are positive constants depending only on $d$ and $D$. Since $\ell \leq h^{-1}$ and the derivative is non-negative for every $i$, we can sum over $i$ to obtain that
    \[
    \frac{1}{\ell} \sum_{i=1}^\ell \frac{d}{dq} \log \left[\frac{\bar\p_{q}(\mathcal E_i)}{1-\bar\p_{q}(\mathcal E_i)}\right] \geq \frac{c_3}{3} \log \frac{1}{h} 
    \]
    for every $q_1\leq q \leq q_2$.    
    Integrating this differential inequality yields that
    \[
    \frac{1}{\ell} \sum_{i=1}^\ell \left(\log \left[\frac{\bar\p_{q_2}(\mathcal E_i)}{1-\bar\p_{q_2}(\mathcal E_i)}\right]-\log \left[\frac{\bar\p_{q_1}(\mathcal E_i)}{1-\bar\p_{q_1}(\mathcal E_i)}\right]\right) \geq \frac{c_3}{3} |q_2-q_1| \log \frac{1}{h} 
    \]
    and hence that there exists $i\in \{1,\ldots,\ell\}$ such that
    \begin{align*}
    \max\left\{ \log \left[\frac{1}{1-\bar\p_{q_2}(\mathcal E_i)}\right], \log \left[\frac{1}{\bar\p_{q_1}(\mathcal E_i)}\right] \right\} &\geq 
    \max\left\{ \log \left[\frac{\bar\p_{q_2}(\mathcal E_i)}{1-\bar\p_{q_2}(\mathcal E_i)}\right] , -\log \left[\frac{\bar\p_{q_1}(\mathcal E_i)}{1-\bar\p_{q_1}(\mathcal E_i)}\right] \right\} 
    \\&\geq \frac{c_2}{6} |q_2-q_1| \log \frac{1}{h}.
    \end{align*}
    The claim follows easily from this together with the inequality
      \begin{multline*}
      |q_2-q_1| = |(1-p_2)^{1/m_E}-(1-p_1)^{1/m_E}| \geq \Bigl|(1-p_2)^{\frac{\log((d-1)/d)}{\log(1-p_1)}}-(1-p_1)^{\frac{\log((d-1)/d)}{\log(1-p_1)}}\Bigr|
      \\= \frac{d-1}{d}\left[1-\exp\left(\left(\frac{\log (1-p_2)}{\log(1-p_1)}-1\right)\log \frac{d-1}{d}\right)\right] \geq c_4 \delta(p_1,p_2),
      \end{multline*}
      which holds by calculus with the constant $c_4$ depending only on $d$ and $D$.
    \qedhere
\end{proof}

Our next goal is to run a chaining-like argument but with ghost fields. In this analogy, the previous lemma can be thought of as an \emph{existence} statement whereas the next lemma can be thought of as a \emph{uniqueness} statement. (Note that our proof of the next lemma relies crucially on the previous lemma.) 
We work with the standard monotone coupling $(\omega_p)_{p\in [0,1]}$ of Bernoulli bond percolation, write $\{A \xleftrightarrow[p]{\,\Lambda\,} B\}$  for the event that $A$ is connected to $B$ by a path that is contained in $\Lambda$ and open in $\omega_p$, and write $\{A \nxleftrightarrow[p]{\Lambda} B\}$ for the event that $A$ is \emph{not} connected to $B$ by any such path.

\begin{figure}
\centering
\includegraphics[width=0.75\textwidth]{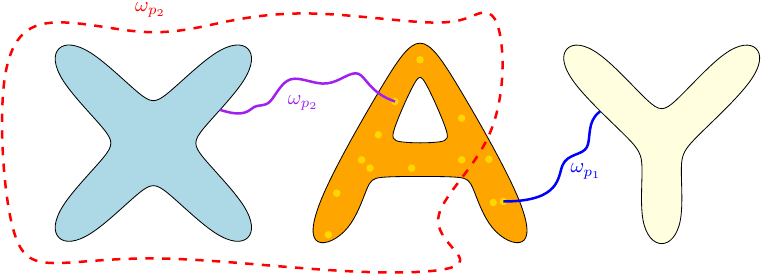}
\caption{Schematic illustration of the event whose probability is estimated in \cref{lem:ghost_in_box_transitivity} when $\Lambda=V$: If any two points in $A$ have a reasonable probability to be connected in $\omega_{p_1}$, then it is unlikely that $X$ is connected to a weak ghost field on $A$ in $\omega_{p_2}$ and $Y$ is connected to a weak ghost field on $A$ in $\omega_{p_1}$ but that $X$ and $Y$ are not connected in $\omega_{p_2}$.}
\label{fig:XAY}
\end{figure}

\begin{lem}[Gluing ghost connections] \label{lem:ghost_in_box_transitivity}
For each $d\geq 1$ and $D<\infty$ there exist positive constants $c_1=c_1(d,D)$ and $c_2=c_2(d,D)$ such that the following holds.
 Let $G=(V,E)$ be a unimodular transitive graph with vertex degree $d$, let $A$, $X$, and $Y$ be non-empty sets of vertices in $G$, and suppose that $0<p_1<p_2<1$ are such that there is at most one infinite cluster $\p_p$-almost surely for every $p\in [p_1,p_2]$. If $p_1 \geq 1/d$ and $\delta=\delta(p_1,p_2)\leq D$ then the implication
% 
% 
 % Assuming that $h$ is sufficiently small with respect to $d$ and $\eta$, there is a constant $c(\eta) \in (0,1)$ such that if $\tau_p^{D}(A) \geq h^{c\delta}$.
 % , then for $\alpha := D \cap \omega_{p}$ and $\beta := B_r(D) \cap \omega_{p+\delta}$,
\[
\Biggl[\tau_{p_1}^\Lambda(A) \geq h^{c_1\delta}\Biggr] \Rightarrow  \Biggl[    \mathbb G_{h}^{A} \otimes \p \left( X \xleftrightarrow[p_2]{ B_r(\Lambda) } \mathscr{G}_A \text{ and } Y \xleftrightarrow[p_1\,]{\;\;\Lambda\;\;\,} \mathscr{G}_A \text{ but } X \nxleftrightarrow[p_2]{ B_r(\Lambda)}Y  \right) \leq 3 h^{c_2\delta^3}\Biggr]
\]
holds for every $h \leq 1/d$ and every set $\Lambda \subseteq V$, where $r$ is the minimum positive integer such that $\p_p(\op{Piv}[1,hr])<h$ for every $p\in [p_1,p_2]$.
\end{lem}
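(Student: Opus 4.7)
The plan is to reduce the bad event to one involving two \emph{independent} ghost fields on $A$, and then apply the sharp threshold lemma (Lemma \ref{lem:ghost_in_a_box_main}) with both ghost-fielded sets equal to $A$. We first observe that we may assume $\delta \geq \delta_0 := \delta_0(d,D)$ for a suitable constant: for smaller $\delta$ the bound $3h^{c_2 \delta^3}$ is at least $1$ after choosing $c_2$ small enough, making the claim trivial. Next, the third condition $X \nxleftrightarrow[p_2]{B_r(\Lambda)} Y$ in the bad event forces the $p_1$-cluster of $Y$ in $\Lambda$ and the $p_2$-cluster of $X$ in $B_r(\Lambda)$ to be disjoint. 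Conditional on the percolation configuration, the events $\{Y \xleftrightarrow[p_1]{\Lambda} \mathscr{G}_A\}$ and $\{X \xleftrightarrow[p_2]{B_r(\Lambda)} \mathscr{G}_A\}$ therefore depend on the restrictions of $\mathscr{G}_A$ to disjoint subsets of $A$, and are independent. Matching marginals lets us rewrite the bad-event probability as
\[
\mathbb{G}_h^A \otimes \mathbb{G}_h^A \otimes \mathbb{P}\bigl(\{Y \xleftrightarrow[p_1]{\Lambda} \mathscr{G}_1\} \cap \{X \xleftrightarrow[p_2]{B_r(\Lambda)} \mathscr{G}_2\} \cap \{X \nxleftrightarrow[p_2]{B_r(\Lambda)} Y\}\bigr),
\]
where $\mathscr{G}_1, \mathscr{G}_2$ are independent ghost fields on $A$ of intensity $h$ each.

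Next, apply Lemma \ref{lem:ghost_in_a_box_main} with both ghost-fielded sets equal to $A$, the same region $\Lambda$, and ghost-field intensity $\tilde h := h^{\alpha}$ for a constant $\alpha = \alpha(d,D) \geq 1$ to be chosen. The hypothesis
\[
\mathbb{G}_{\tilde h}^A \otimes \mathbb{G}_{\tilde h}^A \otimes \mathbb{P}_{p_1}\bigl(\mathscr{G}_1' \xleftrightarrow{\Lambda} \mathscr{G}_2'\bigr) \;\geq\; \tilde h^{c\delta}
\]
is verified by considering any single pair $a_0, a_0' \in A$ and applying the assumption $\tau_{p_1}^\Lambda(A) \geq h^{c_1 \delta}$ to obtain the lower bound $\tilde h^2 \, h^{c_1 \delta}$; this is at least $\tilde h^{c\delta}$ once $c_1$ is chosen large enough in terms of $\alpha$ and the constant $c$ from Lemma \ref{lem:ghost_in_a_box_main}, and $\delta \geq \delta_0$. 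The conclusion of Lemma \ref{lem:ghost_in_a_box_main} then yields
\[
\mathbb{P}\bigl(\mathscr{G}_1' \nxleftrightarrow[p_2]{B_r(\Lambda)} \mathscr{G}_2'\bigr) \;\leq\; h^{\alpha c \delta}
\]
for independent ghost fields $\mathscr{G}_1', \mathscr{G}_2'$ on $A$ of intensity $\tilde h^c$.

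Finally, combine the two ingredients. Since $\tilde h^c \leq h$, we may couple so that each $\mathscr{G}_i$ contains an independent sub-ghost-field of intensity $\tilde h^c$ plus possibly a further independent sub-ghost-field accounting for the residual intensity. Under the reformulated bad event there exist specific ghost points $a \in \mathscr{G}_1$ in $Y$'s $p_2$-cluster and $a' \in \mathscr{G}_2$ in $X$'s $p_2$-cluster that lie in \emph{distinct} $p_2$-clusters of $B_r(\Lambda)$. One argues that with constant probability (bounded below when $h$ is small) these witnessing points fall into the low-intensity sub-ghost-fields, so that the sharp-threshold bound applies; combined with bookkeeping to convert the resulting bound (morally of order $h^{\alpha c \delta}$) into the weaker but sufficient form $3h^{c_2 \delta^3}$ one obtains the conclusion. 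The main obstacle is that the bad event only provides the disconnection of two \emph{specific} ghost points between $\mathscr{G}_1, \mathscr{G}_2$, whereas Lemma \ref{lem:ghost_in_a_box_main} directly bounds the probability that \emph{no} pair of points is connected; bridging this ``existential versus universal'' gap is the key technical subtlety, which we expect to handle via the asymmetric $p_1/p_2$ roles of the two witnessing points together with a careful independent splitting of $\mathscr{G}_1$ and $\mathscr{G}_2$.
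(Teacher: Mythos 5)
Your proposal contains two genuine gaps, and both sit at the heart of the argument rather than at its periphery. The first is the opening reduction to $\delta \geq \delta_0(d,D)$: the bound $3h^{c_2\delta^3}$ depends on $h$ as well as on $\delta$, and for any fixed $c_2>0$ and $\delta>0$ it tends to $0$ as $h\to 0$ (and $h$ is allowed to be arbitrarily small), so no choice of $c_2$ makes the claim trivial for small $\delta$. This matters because the lemma is applied in a multi-scale induction in which $\delta=\delta_i\to 0$, so the small-$\delta$ regime is precisely the one that must be handled. The same defect resurfaces in your verification of the hypothesis of \cref{lem:ghost_in_a_box_main}: lower-bounding the ghost-to-ghost connection probability via a single pair $a_0,a_0'\in A$ costs a factor $\tilde h^2=h^{2\alpha}$ whose exponent carries no factor of $\delta$, so the required inequality $h^{2\alpha+c_1\delta}\geq h^{\alpha c\delta}$ reads $2\alpha+c_1\delta\leq \alpha c\delta$ and fails whenever $c\delta<2$, which is the generic case since $c$ is a small constant. (Note that $|A|$ may equal $1$, so one cannot rescue the bound by summing over many points of $A$.) The paper's proof is built precisely to avoid this trap: it conditions on the clusters $C_X$ (the cluster of $X$ in $\omega_{p_2}\cap B_r(\Lambda)$) and $C_Y$ (the cluster of $Y$ in $\omega_{p_1}\cap\Lambda$), discards the pairs for which $\mathbb G_h^A(\mathscr G_A\cap C_X\neq\emptyset)<h^{c_1\delta}$ or likewise for $C_Y$ (these contribute at most $h^{c_1\delta}$ to the bad event), and then places the ghost fields on $C_X$ and $C_Y$ themselves; the probability that such a ghost field meets $A$ is then at least $h^{c_1\delta}$ rather than of order $h$, so the hypothesis of \cref{lem:ghost_in_a_box_main} is verified with an exponent that is genuinely $O(\delta)$.

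The second gap is the ``existential versus universal'' issue you flag in your final sentence: this is not a deferred technicality but the main content of the proof, and your proposal contains no mechanism for closing it. \cref{lem:ghost_in_a_box_main} tells you that the two ghost fields are connected at $p_2$ with probability $1-h^{\Omega(\delta)}$; the bad event only requires that the particular ghost points witnessing the connections to $X$ and to $Y$ lie in distinct $p_2$-clusters, and these two statements are perfectly compatible when the ghost fields contain many points. The paper closes this gap with a separate quantitative argument: having shown that the specific clusters $C_X$ and $C_Y$ connect at an intermediate level $p_{4/3}$ with probability $1-h^{\Omega(\delta)}$, it applies Menger's theorem to deduce that with high probability there are many edge-disjoint paths between them that are open at a further level $p_{5/3}$ except at their endpoints, hence (by stochastic domination by sums of Bernoulli variables) many closed edges with one endpoint in each cluster, and finally sprinkles from $p_{5/3}$ to $p_2$ to open one of these touching edges; this chain is also where the exponent degrades from $\delta$ to $\delta^3$. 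Without a substitute for these steps --- conditioning on the actual clusters of $X$ and $Y$, converting a connection probability into a count of disjoint paths, and a final sprinkling step --- the argument does not go through. (Your initial reduction to two independent ghost fields on disjoint subsets of $A$ is correct, but it is the only part of the plan that survives scrutiny.)
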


An illustration of the event whose probability this lemma estimates is given in \Cref{fig:XAY}.

\begin{rk}
Several of the calculations in the following proof can be simplified significantly if one allows the constants to depend on how small $1-p_1$ is. Getting the constants to be independent of the choice of $p_1$ is important when using our methods to deduce that $p_c<1$ for transitive graphs of superlinear volume growth.
\end{rk}

\begin{proof}[Proof of \cref{lem:ghost_in_box_transitivity}]
% For each $0<\theta<1$ we define $p_1<p_{1+\theta}<p_2$ to be the unique value such that $\delta(p_1,p_{1+\theta})= \theta \delta(p_1,p_2)$, given explicitly by $(1-p_\theta)=(1-p_1)^{e^{\theta \delta(p_1,p_2)}}$.
It suffices to prove the claim with $3h^{c_2 \delta^4}$ replaced by $\max\{h^{c_1\delta},3h^{c_2 \delta^4}\}$, as the latter  can be bounded by the former  after an appropriate decrease of the relevant constant.
% 
% Since $p_1 \geq 1/2d$ and $p_2=s_\delta(p_1)$, it follows by a similar calculation to that performed in [ref] that there exists a positive constant $c_{-1}=c_{-1}(d,D)$ such that 
      % \[
     % 1- (1-c_{-1}\delta)^{-2} (1-p_2) \geq s_{\delta/3}(p_1).
      % \]
We write $p_{4/3}$ and $p_{5/3}$ for the parameters defined by $p_{4/3}=\sprinkle(p_1;\delta/3)$
% s_{\delta/3}(p_1)$
 and $p_{5/3}=\sprinkle(p_1;2\delta/3)$, so that $p_1\leq p_{4/3} \leq p_{5/3} \leq p_2$. To lighten notation we write $\omega_1=\omega_{p_1}$, $\omega_{4/3}=\omega_{p_{4/3}}$, $\omega_{5/3}=\omega_{p_{5/3}}$, and $\omega_{2}=\omega_{p_{2}}$. We will write $\asymp$, $\preceq$, and $\succeq$ for equalities and inequalities holding to within positive multiplicative constants depending only on $d$ and $D$. We also use the asymptotic big-$O$ and big-$\Omega$ notation with all implicit constants depending only on $d$ and $D$.

Let $c_0=c_0(d,D)$ be the constant from \cref{lem:ghost_in_a_box_main}. We will prove the claim with $c_1=(c_0 \wedge 1)/9$. 
      Assume to this end that $\tau_{p_1}^{\Lambda}(A) \geq h^{c_1\delta}$. 
% Let $\alpha$ be the set of edges with both endpoints in $\Lambda$ that are open in $\omega_{1}$ and let $\beta$ be the set of edges with both endpoints in $B_r(\Lambda)$ that are open in $\omega_{2}$.
Let $K_X(2,r)$ be the set of vertices that are connected to $X$ by an $\omega_2$-open path in $B_r(\Lambda)$ and let $K_Y(1,0)$ be the set of vertices that are connected to $Y$ by an $\omega_1$-open path in $\Lambda$.
Suppose that there exist two disjoint, connected sets of vertices $C_X \supseteq X$ and $C_Y \supseteq Y$  such that $\p(K_X(2,r) = C_X, \; K_Y(1,0) = C_Y)>0$ and
      \begin{multline} \label{eq:definition_of_xy}
            \min\{\mathbb G_h^A ( \mathscr{G}_A \cap C_X \not= \emptyset),\mathbb G_h^A ( \mathscr{G}_A \cap C_Y \not= \emptyset)\} \\ \geq \mathbb G_{h}^{A} \otimes \p \left( X \xleftrightarrow[p_2]{ B_r(\Lambda)\, } \mathscr{G}_A \xleftrightarrow[p_1]{ \Lambda\, } Y  \mid K_X(2,r) = C_X, \; K_Y(1,0) = C_Y \right) \geq h^{ c_1\delta};
      \end{multline}
      If no such pair of sets exists then the conclusion holds trivially (since we are proving a modified version of the claim with $\max\{h^{c_1\delta},3h^{c_2 \delta^4}\}$ on the right hand side of the implication). 
      For such $C_X$ and $C_Y$ we have that
      \[\begin{split}
            \mathbb G_h^{C_X} \otimes \mathbb G_h^{C_Y} \otimes \mathbb P \left( \mathscr{G}_{C_X} \xleftrightarrow[p_1]{\,\Lambda\;} \mathscr{G}_{C_Y}  \right) &\geq \mathbb G_h^{C_X} \otimes \mathbb G_h^{C_Y} (\mathscr{G}_{C_X} \cap A \not= \emptyset \text{ and } \mathscr{G}_{C_Y} \cap A \not= \emptyset ) \min_{u,v \in A} \p(u \xleftrightarrow[p_1]{\,\Lambda\;} v)\\
            &= \mathbb G_h^A ( \mathscr{G}_A \cap C_X \not= \emptyset)\mathbb G_h^A ( \mathscr{G}_A \cap C_Y \not= \emptyset) \min_{u,v \in A} \p(u \xleftrightarrow[p_1]{\,\Lambda\;} v) \\
            &\geq h^{c_1\delta } \cdot h^{c_1 \delta} \cdot h^{c_1\delta} = h^{3c_1\delta} \geq h^{\frac{c_0}{3} \delta}.
      \end{split}\]
      Thus, since $\delta(p_1,p_{4/3}) = \frac{1}{3}\delta(p_1,p_2)$, it follows from \cref{lem:ghost_in_a_box_main} (applied with $p_1$ and $p_{4/3}$ rather than $p_1$ and $p_2$) that
      \begin{equation} \label{eq:applied_ghost_field_trick_to_ghost_clusters}
           \p\left( C_X \nxleftrightarrow[p_{4/3}]{B_r(\Lambda)} C_Y \right) \leq \mathbb G_{h^{c_0}}^{C_X} \otimes \mathbb G_{h^{c_0}}^{C_Y} \otimes \p \Bigl( \mathscr{G}_{C_X} \nxleftrightarrow[p_{4/3}]{B_r(\Lambda)\,} \mathscr{G}_{C_Y} \Bigr) \leq h^{\frac{c_0}{3} \delta}.
      \end{equation}

      % Label $B_r(D) \cap \omega_{p+\frac{i \delta}{3}} =: \beta_i$ for both $i \in \{1,2\}$.
      Let $M$ denote the maximum cardinality of a set of edge-disjoint paths from $C_X$ to $C_Y$ that are contained in $B_r(\Lambda)$ and are open in $\omega_{5/3}$ with the possible exception of their first and last edges. (We stress that $C_X$ and $C_Y$ are not random, but are fixed sets satisfying \eqref{eq:definition_of_xy}.) By Menger's theorem, $M$ is equal to the minimum size of a set of edges separating $C_X$ from $C_Y$ in the subgraph of $B_r(\Lambda)$ spanned by those edges that are either $\omega_{5/3}$-open or have at least one endpoint in $C_X \cup C_Y$. 
      Observe that $M$ is independent of the event $\{K_X(2,r)=C_X, K_Y(1,0)=C_Y\}$ since $M$ depends only on edges with neither endpoint in $C_X \cup C_Y$ while the latter event depends only on edges with at least one endpoint in $C_X \cup C_Y$.
      Conditional on $\omega_{5/3}$, each edge that is open in $\omega_{5/3}$ is closed in $\omega_{4/3}$ with probability 
      \[
      \beta:=\frac{p_{5/3}-p_{4/3}}{p_{5/3}}.
      \]
      % =-\frac{1-p_{5/3}-(1-c_{-1}\delta)^{-1} (1-p_{5/3})}{p_{5/3}} = c_{-1} \delta \frac{1-p_{5/3}}{p_{5/3}} \geq c_2 (1-p_{5/3})\delta,\] 
      % \[
      % (1-p_{4/3})^{e^\delta}-(1-p_{4/3}) = (1-p_{4/3})(1-(1-p_{4/3})^{e^\delta-1})
      % \]
      % \[
      % (1-p_{5/3})^{e^{-\delta/3}} - (1-p_{5/3}) = (1-p_{5/3})((1-p_{5/3})^{e^{-\delta/3}-1}-1) \approx (1-p_{5/3})(e^{-\delta/3}-1) \log(1-p_{5/3}) \approx \frac{\delta}{3} (1-p_{5/3}) \log 1/(1-p_{5/3})
      % \]
      It follows that
       % in $\beta_2 \cup \partial x \cup \partial y$. Let $N \in (0,\infty)$ be a number to be determined. 
      \[
          % \mathbb G_{h^{c_0}}^{C_X} \otimes \mathbb G_{h^{c_0}}^{C_Y} \otimes \p \Bigl( \mathscr{G}_{C_X} \nxleftrightarrow[p_{4/3}]{B_r(\Lambda)} \mathscr{G}_{C_Y} \Bigr) \geq
            \p\left( C_X \nxleftrightarrow[p_{4/3}]{B_r(\Lambda)} C_Y \mid M \leq N \right) \geq \beta^N
            % \left(\frac{p_{5/3}-p_{4/3}}{p_{5/3}}\right)^N \geq (c_{2} \delta)^N
            % \left(\frac{\delta}{3}\right)^N.
      \]
      for every $N\geq 0$ 
      and hence by \eqref{eq:applied_ghost_field_trick_to_ghost_clusters} and the aforementioned independence that
      \begin{equation} \label{eq:few_disjoint_paths_is_unlikely_between_ghost_clusters}
          \p( M \leq N \mid K_X(2,r) = C_X, \; K_Y(1,0) = C_Y ) =   \p\left( M \leq N \right) \leq \beta^{-N} h^{\frac{c_0}{3}\delta}
      \end{equation}
      for every $N\geq 0$. 
      Let $K_Y(5/3,r)$ be the set of vertices that are connected to $Y$ by an $\omega_{5/3}$-open path in $B_r(\Lambda)$.
      Conditioned on the event $\{K_X(2,r)=C_X$ and $K_Y(1,0)=C_Y\}$ and the value of $M$, the size of the boundary $\abs{ \partial K_X(2,r) \cap \partial K_Y(5/3,r)}$  stochastically dominates a sum of $M$ Bernoulli random variables of parameter 
\[
\alpha_1 := \frac{p_{5/3}-p_1}{1-p_1}.
\]
Indeed, if we take some maximal-cardinality set of paths as in the definition of $M$, then the edge adjacent to $C_Y$ of each path in the set is open in $\omega_{5/3}$ with this probability, and the size of the relevant boundary is at least the number of these edges that are open in $\omega_{5/3}$.
      Letting $Z$ be a sum of $N$ Bernoulli random variables each of parameter $1-\alpha_1$, we have that there exist constants $c_3$ and $c_4$ depending only on $d$ and $D$ such that
      \begin{multline} \label{eq:many_disjoint_paths_between_ghost_clusters_yields_many_touches}
            \p \Bigl( \abs{ \partial K_X(2,r) \cap \partial K_Y(5/3,r)} \leq \frac{\alpha_1}{2} N \;\Big|\; K_X(2,r) = C_X, \; K_Y(1,0) = C_Y, \; M \geq N \Bigr) \\
            \leq \p\left(Z \geq \frac{2-\alpha_1}{2}N\right) 
            =\p\left(Z \geq \left(1+\frac{\alpha_1}{2(1-\alpha_1)}\right)\mathbb{E} Z\right)
            \\
            \leq \exp\left[-\left(\frac{2-\alpha_1}{2}\log \frac{2-\alpha_1}{2-2\alpha_1}-\frac{\alpha_1}{2}\right) N\right]
      \end{multline}
      for every $N\geq 1$, where the final inequality follows from the Chernoff bound $\p(Z\geq (1+x) \mathbb{E}Z) \leq (e^{-x}(1+x)^{1+x})^{-\mathbb{E} Z}$, which holds for all sums of independent Bernoulli random variables.

      % 1-\alpha_1/2 = (1+x)(1-\alpha_1)
      % 1+x= (1-\alpha_1/2)/(1-\alpha_1)
      % x=(alpha_1-\alpha_1/2)/(1-\alpha_1)= \alpha_1/2(1-\alpha_1)
      % Indeed, {\color{red}[...]}. 
% 
      % \[
      % \beta^{-N} h^{c_0\delta/3}=e^{-c_4 \beta N}
      % \]
% 
      % \[
      % \beta^{-N} = h^{-c_0\delta/6}
      % \]
% 

\medskip

      We will now apply \eqref{eq:few_disjoint_paths_is_unlikely_between_ghost_clusters} and \eqref{eq:many_disjoint_paths_between_ghost_clusters_yields_many_touches} with
      \[N := \left\lfloor \frac{c_0 \delta }{12 \log (1/\beta)} \log \frac{1}{h} \right\rfloor, \qquad \text{so that} \qquad \beta^{-N}h^{\frac{c_0}{3} \delta} \leq h^{\frac{c_0}{6} \delta}\]
      to prove the inequality 
       \begin{multline} \label{eq:conditioned_on_good_ghost_clusters_there_are_many_touches2}
            \p \left( \abs{ \partial K_X\bigl(2,r\bigr) \cap \partial K_Y\bigl(5/3,r\bigr)} \leq \frac{c_0\alpha_1 \delta}{48\log(1/\beta)} \log \frac{1}{h} \,\Big|\, K_X(2,r) = C_X, \; K_Y(1,0) = C_Y \right) 
        \\\leq 
        2 h^{\Omega(\delta^4)}.
        % C_1h^{ \frac{c_6 \alpha_2 \delta}{\log(1/\beta)}},
      \end{multline}
      To do this, we will make repeated use of the elementary estimates
       \begin{align}
      p_{5/3}\beta=(1-p_{5/3})^{e^{-\delta/3}} - (1-p_{5/3}) &= (1-p_{5/3})((1-p_{5/3})^{e^{-\delta/3}-1}-1) 
      \nonumber\\&\geq
      (1-p_{5/3})((1-p_{5/3})^{-\Omega(\delta)}-1) 
      % = (1-p_{5/3})(e^{c\delta \log 1/(1-p_{5/3})} -1) 
      \succeq \delta(1-p_{5/3}) \log \frac{1}{1-p_{5/3}}
       % (1-p_{5/3})(e^{-\delta/3}-1) \log(1-p_{5/3}) \approx \frac{\delta}{3} (1-p_{5/3}) \log 1/(1-p_{5/3})
       \label{eq:beta_estimate_elementary}
      \end{align}
      and
      \begin{align}
      \alpha_1 = 1-(1-p_{5/3})^{1-e^{-2\delta/3}} = 1-(1-p_{5/3})^{\Theta(\delta)}
      \label{eq:alpha_1_estimate_elementary}
      \end{align}
      where we recall that the implicit constant appearing here depends only on $d$ and $D$.
             % By combining \cref{eq:few_disjoint_paths_is_unlikely_between_ghost_clusters,eq:many_disjoint_paths_between_ghost_clusters_yields_many_touches} with a union bound, and since the event that $\kappa \leq N$ is independent of the event that $K_X(\beta) = x$ and $K_Y(\alpha)=y$,
       First suppose that $N\geq 1$, so that the rounding in the definition of $N$ reduces its size by a factor of at most $1/2$. In this case, \eqref{eq:few_disjoint_paths_is_unlikely_between_ghost_clusters} and \eqref{eq:many_disjoint_paths_between_ghost_clusters_yields_many_touches} 
       yield that there exist positive constants $C_1$, $c_5$ and $c_6$ depending only on $d$ and $D$ such that
      \begin{multline} \label{eq:conditioned_on_good_ghost_clusters_there_are_many_touches}
            \p \left( \abs{ \partial K_X\bigl(2,r\bigr) \cap \partial K_Y\bigl(5/3,r\bigr)} \leq \frac{c_0\alpha_1 \delta}{48\log(1/\beta)} \log \frac{1}{h} \,\Big|\, K_X(2,r) = C_X, \; K_Y(1,0) = C_Y \right) 
        \\\leq h^{\frac{c_0}{6}\delta} +
        \exp\left[-\frac{c_0 \delta}{24 \log(1/\beta)}\left(\frac{2-\alpha_1}{2}\log \frac{2-\alpha_1}{2-2\alpha_1}-\frac{\alpha_1}{2}\right) \log \frac{1}{h} \right].
        % C_1h^{ \frac{c_6 \alpha_2 \delta}{\log(1/\beta)}},
      \end{multline}
%       On the other hand, if $N=0$ then
%       \begin{multline*}
% \p \left( \abs{ \partial K_X\bigl(2,r\bigr) \cap \partial K_Y\bigl(5/3,r\bigr)} \leq \frac{c_0\alpha_1 \delta}{48\log(1/\beta)} \log \frac{1}{h} \,\Big|\, K_X(2,r) = C_X, \; K_Y(1,0) = C_Y \right) \\
% = \p \left( \abs{ \partial K_X\bigl(2,r\bigr) \cap \partial K_Y\bigl(5/3,r\bigr)} =0 \,\Big|\, K_X(2,r) = C_X, \; K_Y(1,0) = C_Y \right) 
% \\\leq \p(B_1(C_X) \nxleftrightarrow[p_{5/3}]{B_r(\Lambda)} B_1(C_Y)) \leq h^{\frac{c_0}{3}\delta},
%       \end{multline*}
%       so that \eqref{eq:conditioned_on_good_ghost_clusters_there_are_many_touches} holds in this case also.
      % where we have absorbed the right hand side of \eqref{eq:few_disjoint_paths_is_unlikely_between_ghost_clusters} into the right hand side of \eqref{eq:many_disjoint_paths_between_ghost_clusters_yields_many_touches} after an appropriate change of constants.
% (Indeed, this bound also holds with $\delta^2/\log\delta$ in place of $\delta^3$.)
 % =-\frac{1-p_{5/3}-(1-c_{-1}\delta)^{-1} (1-p_{5/3})}{p_{5/3}} = c_{-1} \delta \frac{1-p_{5/3}}{p_{5/3}} \geq c_2 (1-p_{5/3})\delta,\] 
      % \[
      % (1-p_{4/3})^{e^\delta}-(1-p_{4/3}) = (1-p_{4/3})(1-(1-p_{4/3})^{e^\delta-1})
      % \]
      We claim that 
      % there exists a positive constant $c_2=c_2(d,D)$ such that
      \begin{equation}
      \label{eq:calculus_claim}
      \frac{c_0 \delta}{24 \log(1/\beta)}\left(\frac{2-\alpha_1}{2}\log \frac{2-\alpha_1}{2-2\alpha_1}-\frac{\alpha_1}{2}\right) \succeq \delta^4.
      \end{equation}
      % We will consider two cases
        We prove this by a case analysis according to whether $1-p_{5/3} \leq e^{-1/\delta}$. 
% 
      %   In both cases, we have the inequalities
      % \begin{align*}
      % p_{5/3}\beta=(1-p_{5/3})^{e^{-\delta/3}} - (1-p_{5/3}) &= (1-p_{5/3})((1-p_{5/3})^{e^{-\delta/3}-1}-1) 
      % \\&\geq
      % (1-p_{5/3})((1-p_{5/3})^{-\Omega(\delta)}-1) 
      % % = (1-p_{5/3})(e^{c\delta \log 1/(1-p_{5/3})} -1) 
      % \succeq \delta(1-p_{5/3}) \log \frac{1}{1-p_{5/3}}
      %  % (1-p_{5/3})(e^{-\delta/3}-1) \log(1-p_{5/3}) \approx \frac{\delta}{3} (1-p_{5/3}) \log 1/(1-p_{5/3})
      % \end{align*}
      % and
      % \begin{align*}
      % 1-(1-p_{5/3})^{\Omega(\delta)} \leq \alpha_1 = 1-(1-p_{5/3})^{1-e^{-2\delta/3}} \leq 1-(1-p_{5/3})^{O(\delta)}.
      % \end{align*}
      If $1-p_{5/3} \geq e^{-1/\delta}$, it follows from \eqref{eq:beta_estimate_elementary} that $\log 1/\beta = O(\delta)$ (where we stress again that the implicit constants depend only on $d$ and $D$). On the other hand, since $p_1 \geq 1/d$, we have that $\alpha_1 \succeq \delta$, and together with the elementary inequality
      \[
\frac{2-\alpha_1}{2}\log \frac{2-\alpha_1}{2-2\alpha_1}-\frac{\alpha_1}{2} \geq \frac{\alpha_1^2}{8}
      \] 
      this yields that if $1-p_{5/3} \geq e^{-1/\delta}$ then
      \[
\frac{c_0 \delta}{24 \log(1/\beta)}\left(\frac{2-\alpha_1}{2}\log \frac{2-\alpha_1}{2-2\alpha_1}-\frac{\alpha_1}{2}\right) \succeq \frac{\delta}{\delta}\cdot \delta^2 \succeq \delta^3
      \]
      as claimed. On the other hand, if $1-p_{5/3}\leq e^{-1/\delta}$ then $1-\alpha_1 \geq 1-e^{-\Omega(1)} \succeq 1$, so that
       % there exists constants such that 
      \[
\frac{2-\alpha_1}{2}\log \frac{2-\alpha_1}{2-2\alpha_1}-\frac{\alpha_1}{2} \succeq\log \frac{1}{1-\alpha_1} \succeq \delta \log \frac{1}{1-p_{5/3}}. 
      \] 
We have under the same assumption that
\[
\log (1/\beta) \preceq \log \frac{1}{1-p_{5/3}},
\]
and it follows that if $1-p_{5/3} \leq e^{-1/\delta}$ then
\[
\frac{c_0 \delta}{24 \log(1/\beta)}\left(\frac{2-\alpha_1}{2}\log \frac{2-\alpha_1}{2-2\alpha_1}-\frac{\alpha_1}{2}\right) \succeq \delta^2 \succeq \delta^3.
\]
This completes the proof of \eqref{eq:calculus_claim}, which together with \eqref{eq:conditioned_on_good_ghost_clusters_there_are_many_touches} yields the claimed inequality \eqref{eq:conditioned_on_good_ghost_clusters_there_are_many_touches2} in the case that $N\geq 1$. Now suppose that $N=0$, so that
\begin{multline}
\p \left( \abs{ \partial K_X\bigl(2,r\bigr) \cap \partial K_Y\bigl(5/3,r\bigr)} \leq \frac{c_0\alpha_1 \delta}{48\log(1/\beta)} \log \frac{1}{h} \,\Big|\, K_X(2,r) = C_X, \; K_Y(1,0) = C_Y \right)
\\
 = \p \left( \abs{ \partial K_X\bigl(2,r\bigr) \cap \partial K_Y\bigl(5/3,r\bigr)} =0 \,\Big|\, K_X(2,r) = C_X, \; K_Y(1,0) = C_Y \right) 
 \leq h^{\frac{c_0}{3}\delta}+(1-\alpha_1),
 \label{eq:many_touches3}
\end{multline} 
where, as in \eqref{eq:many_disjoint_paths_between_ghost_clusters_yields_many_touches}, the first term on the right hand side of the second line bounds the probability that $M=0$ and the second bounds the probability of the appropriate event conditional on $M \geq 1$. 
We will once again prove \eqref{eq:conditioned_on_good_ghost_clusters_there_are_many_touches2} via case analysis according to whether or not $1-p_{5/3} \geq e^{-1/\delta}$. If $1-p_{5/3}\geq e^{-1/\delta}$ then $\log (1/\beta) = O(\delta)$, and since $N=0$ it follows that $h=e^{-O(\delta^{-2})}$. As such, in this case there exists a positive constant $c_5$ such that $h^{c_5 \delta^4} \geq 1/2$, and the desired inequality \eqref{eq:conditioned_on_good_ghost_clusters_there_are_many_touches2} follows trivially since probabilities are bounded by $1$. Otherwise, $1-p_{5/3} \leq e^{-1/\delta}$ and $1-\alpha_1 \leq (1-p_{5/3})^{\Omega(\delta)}$. Since $N=0$, we also have that $h \geq (1-p_{5/3})^{O(\delta^{-1})}$ and hence that $h^{\delta^2}=(1-\alpha_1)^{\Omega(1)}$. As such, \eqref{eq:many_touches3} is stronger than the claimed inequality \eqref{eq:conditioned_on_good_ghost_clusters_there_are_many_touches2} when $N=0$ regardless of the value of $1-p_{5/3}$. This completes the proof of the inequality \eqref{eq:conditioned_on_good_ghost_clusters_there_are_many_touches2}.
     
      % We have by calculus that there exists a constant $c_3$ such that if $1-p_{5/3} \geq e^{-1/\delta}$ then 
      % \[
      % \beta \geq c_3 \delta(1-p_{5/3}) \log \frac{1}{1-p_{5/3}}
      % \]

\medskip

      Since the sets $C_X$ and $C_Y$ were arbitrary sets such that $\p(K_X(2,r) = C_X, \; K_Y(1,0) = C_Y)>0$ and that satisfy \eqref{eq:definition_of_xy}, it follows from  \eqref{eq:conditioned_on_good_ghost_clusters_there_are_many_touches2} that
      \begin{multline} \label{eq:ghost_transitivity_reduced_to_touching_without_joining}
            \mathbb G_h^{A} \otimes \p \left( X \xleftrightarrow[p_2]{B_r(\Lambda)} \mathscr{G}_A \xleftrightarrow[p_1]{\,\Lambda\;} Y \text{ but } X \nxleftrightarrow[p_2]{B_r(\Lambda)} Y \right) 
            \leq h^{c_1\delta}+ 2h^{ \Omega(\delta^4)}
            \\ + \p\left( \abs{ \partial K_X(2,r) \cap \partial K_Y(5/3,r) } \geq \frac{c_0\alpha_1 \delta}{48\log(1/\beta)}\log \frac{1}{h} \text{ but } X \nxleftrightarrow[p_2]{B_r(\Lambda)} Y \right),
      \end{multline}
      where the first term $h^{c_1\delta}$ accounts for the possibility that $K_X(2,r)$ and $K_Y(1,0)$ are equal to some sets $C_X$ and $C_Y$ that do not satisfy \eqref{eq:definition_of_xy}. It remains to bound the final term appearing on the right hand side of \eqref{eq:ghost_transitivity_reduced_to_touching_without_joining}. 
      Notice that we can replace the set $K_X(2,r)$ appearing in \eqref{eq:ghost_transitivity_reduced_to_touching_without_joining} by the set $\tilde K_X$ of vertices that are connected to $X$ by an $\omega_2$-open path using only edges that have both endpoints in $B_r(\Lambda)$ and neither endpoint in $K_Y(5/3,r)$, since the two sets $K_X(2,r)$ and $\tilde K_X$ are equal when $X \nxleftrightarrow[p_2]{B_r(\Lambda)} Y$. Now let $C_X \supseteq X$ and $C_Y \supseteq Y$ be connected sets of vertices that are disjoint from each other such that $\p(\tilde K_X = C_X, K_Y(5/3,r)=C_Y)>0$ and
      \[
            \abs{\partial C_X \cap \partial C_Y} \geq \frac{c_0\alpha_1 \delta}{48\log(1/\beta)} \log \frac{1}{h};
      \]
      If no such sets exist then the last term on the right hand side of \eqref{eq:ghost_transitivity_reduced_to_touching_without_joining} is zero.
      % Conditional Every edge that is closed in $\omega_{5/3}$ 
     Each edge that is closed in $\omega_{5/3}$ is open in $\omega_2$ with probability
      \[
      \alpha_2 = \frac{p_2-p_{5/3}}{1-p_{5/3}},
      % = \frac{1-(1-c_{-1}\delta)}{1-(1-c_{-1}\delta) p_2} p_2
      \]
      and we have by independence that
      \begin{multline*}
            \p\left( X \nxleftrightarrow[p_2]{B_r(\Lambda)} Y \mid \tilde K_X=C_X,\, K_Y(5/3,r) = C_Y \right) \leq (1-\alpha_2)^{ \abs{\partial C_X \cap \partial C_Y}}\\
            % \left(1 - \frac{\delta}{3}\right)^{ \frac{c \delta^3}{36} \log \frac{1}{h} } 
            \leq \exp\left[ - \left(\frac{c_0\alpha_1 \delta}{48\log(1/\beta)} \log\frac{1}{1-\alpha_2}\right) \log \frac{1}{h} \right] \leq h^{\Omega(\delta^3)},
            % \leq h^{ \Omega_{\eta}(\delta^4)}.
      \end{multline*}
      where the final inequality follows by a similar calculation used to prove \eqref{eq:calculus_claim} above.
      Thus, summing over all choices of $C_X$ and $C_Y$, we obtain that
      \begin{equation} \label{eq:ghost_transitivity_unlikely_to_not_join_if_you_touched}
            \p \left( X \nxleftrightarrow[p_2]{B_r(\Lambda)} Y \;\Big|\; \bigl|\partial \tilde K_X \cap \partial K_Y(5/3,r) \bigr| \geq \frac{c_0\alpha_1 \delta}{48\log(1/\beta)} \log \frac{1}{h} \right) \leq h^{\Omega(\delta^3)},
      \end{equation}
      and the claim follows from \eqref{eq:ghost_transitivity_reduced_to_touching_without_joining} and \eqref{eq:ghost_transitivity_unlikely_to_not_join_if_you_touched}.
\end{proof}

We now apply \cref{lem:ghost_in_a_box_main,lem:ghost_in_box_transitivity} to prove \cref{prop:snowballing}.

\begin{proof}[Proof of \cref{prop:snowballing}]
      To lighten notation, for each $\rho \in [0,1]$ define $\mathbb Q_\rho := \bigotimes_{i=1}^n \mathbb G_\rho^{A_i}$, and for each $i$ write $\mathscr{G}_i := \mathscr{G}_{A_i}$. 
      % Label $D \cap \omega_p =: \alpha$ and $B_{ir}(D) \cap \omega_{p+\frac{i \delta}{2}} = : \beta_i$ for both $i \in \{1,2\}$. 
      Let $c_1=c_1(d,D)$  be the constant from \cref{lem:ghost_in_a_box_main} and let $c_2=c_2(d,D)$ and $c_3=c_3(d,D)$ be the constants from \cref{lem:ghost_in_box_transitivity} (that are called $c_1$ and $c_2$ in the statement of that lemma). Define $c=c(d,D) =  (2D)^{-5}(c_1 \wedge 1) (c_2 \wedge 1) (c_3 \wedge 1)$. Let $h\geq (\min_i \abs{A_i})^{-1}$, noting that $\mathbb G_{h}^{A_i} ( \mathscr{G}_i \not= \emptyset) \geq 1-(1-(\min_i \abs{A_i})^{-1})^{\abs{A_i}} \geq 1-1/e \geq 1/2$ for every $i$, and let $r$ be the minimum positive integer such that $\p_p(\op{Piv}[1,hr])<h$ for every $p\in [p_1,p_2]$.

\medskip

      Fix $u \in A_1$ and $v \in A_n$ and suppose that 
      \[\tau_{p_1}^{\Lambda}(A_i \cup A_{i+1}) \geq 4 h^{c \delta^4} \qquad \text{for all $1\leq i \leq n-1$.}\]
      We want to bound from below the probability under $\p_{p_2}$ that $u$ and $v$ are connected inside $B_{2r}(\Lambda)$.      
      Let $1\leq i \leq n-1$ be arbitrary. Note that
      \begin{equation} \label{eq:consecutive_ghosts_can_connect}
            \mathbb Q_h \otimes \p\left( \mathscr{G}_i \xleftrightarrow[p_1]{\,\Lambda\;} \mathscr{G}_{i+1} \right) \geq \mathbb Q_h(\mathscr{G}_i \not= \emptyset) \mathbb Q_h(\mathscr{G}_{i+1}\not= \emptyset) \tau_{p_1}^{\Lambda}(A_i,A_{i+1}) \geq 4 h^{c \delta^4} \cdot \left(\frac{1}{2}\right)^2 \geq h^{c_1\delta/2},
      \end{equation}
      and similarly that
      \begin{equation} \label{eq:first_and_last_ghosts_can_connect_to_u_and_v}
            \mathbb Q_h \otimes \p \left( 
u \xleftrightarrow[p_1]{\,\Lambda\;} \mathscr{G}_{1}
             \right) \geq \frac{1}{2}\tau_{p_1}^\Lambda(A_1)\quad \text{and} \quad \mathbb Q_h \otimes \p \left( 
\mathscr{G}_n \xleftrightarrow[p_1]{\,\Lambda\;} v
              \right) \geq \frac{1}{2}\tau_{p_1}^\Lambda(A_n).
      \end{equation}
      Let $p_{3/2}=  \sprinkle(p_1;\delta/2)$, so that $\delta(p_1,p_{3/2})=\frac{1}{2}\delta$. Using \eqref{eq:consecutive_ghosts_can_connect}, \cref{lem:ghost_in_a_box_main} implies that
      \begin{equation} \label{eq:preparing_for_big_ghost_union_bound_1}
            \mathbb Q_{h^c} \otimes \p (\mathscr{G}_i \xleftrightarrow[p_{3/2}]{B_r(\Lambda)} \mathscr{G}_{i+1} ) \geq 1- h^{c_1 \delta/2} \geq 1- h^{c\delta}.
      \end{equation}
      % By \cref{lem:ghost_in_box_transitivity} with $(h,X,Y,A,D) := (h^c,\{u\},g_{i+1},A_i,B_r(D))$,
    Since we also have by choice of $c$ that
      \[
            \tau_{p_{3/2}}^{B_r(\Lambda)}(A_i) \geq \tau_{p_1}^{\Lambda}(A_i) \geq \tau_{p_1}^{\Lambda}(A_i\cup A_{i+1}) \geq 4 h^{c \delta^4} \geq (h^{c_1})^{c_2\delta/2},
      \]
      we may apply \cref{lem:ghost_in_box_transitivity} (with $[p_1,p_2,h,X,Y,A,\Lambda] := [p_{3/2},p_2,h^{c_1},\{u\},\mathscr{G}_{i+1},A_i,B_r(\Lambda)]$) to deduce that if $h^{c_1} \leq 1/d$ then
      \begin{equation} \label{eq:preparing_for_big_ghost_union_bound_2}
            \mathbb Q_{h^c} \otimes \p\left( u \xleftrightarrow[p_2]{B_{2r}(\Lambda)} \mathscr{G}_i \xleftrightarrow[p_{3/2}]{ B_r(\Lambda) } \mathscr{G}_{i+1} \text{ but } u 
            \nxleftrightarrow[p_2]{B_{2r}(\Lambda) } \mathscr{G}_{i+1}
             \right) \leq 4(h^{c_1})^{ c_3 (\delta/2)^3 }= 4 h^{c_4 \delta^3}
      \end{equation}
      where $c_4=c_4(d,D)=(c_1\cdot c_3)/8$. (Note that using $h^{c_1}$ instead of $h$ changes the hypotheses on $r$, but this is not a problem since the hypotheses on $r$ associated to $h^{c_1}$ are weaker than those associated to $h$.)
      A similar application of \cref{lem:ghost_in_box_transitivity} (with $[p_1,p_2,h,X,Y,A,\Lambda] := [p_{3/2},p_2,h^{c_1},\{u\},\{v\},A_n,B_r(\Lambda)]$) yields that if $h^{c_1} \leq 1/d$ then
      \begin{equation} \label{eq:preparing_for_big_ghost_union_bound_3}
            \mathbb Q_{h^{c_1}} \otimes \p
            \left( u \xleftrightarrow[p_2]{B_{2r(\Lambda)}} \mathscr G_n \xleftrightarrow[p_{3/2}]{B_{r(\Lambda)}} v \text{ but } u \nxleftrightarrow[p_2]{B_{2r(\Lambda)}} v \right) \leq 4h^{c_4 \delta^3}.
      \end{equation}
    Now, we have by a union bound that
      \begin{multline}
            \p\left( u \xleftrightarrow[p_2]{B_{2r}(\Lambda)} v \right) \geq \mathbb Q_{h^{c_1}} \otimes \p \left( u \xleftrightarrow[p_{3/2}]{B_r(\Lambda)} \mathscr{G}_1 \xleftrightarrow[p_{3/2}]{B_r(\Lambda)} \ldots \xleftrightarrow[p_{3/2}]{B_r(\Lambda)} \mathscr{G}_n \xleftrightarrow[p_{3/2}]{B_r(\Lambda)} v \right) \\
            - \sum_{i=1}^{n-1} \mathbb Q_{h^{c_1}} \otimes \p\left( u \xleftrightarrow[p_2]{B_{2r}(\Lambda)} \mathscr{G}_i \xleftrightarrow[p_{3/2}]{B_r(\Lambda)} \mathscr{G}_{i+1} \text{ but } u \nxleftrightarrow[p_2]{B_{2r}(\Lambda)} \mathscr{G}_{i+1} \right)
            \\
            -\mathbb Q_{h^{c_1}} \otimes \p\left( u \xleftrightarrow[p_2]{B_{2r}(\Lambda)} \mathscr{G}_n \xleftrightarrow[p_{3/2}]{B_r(\Lambda)} v \text{ but } u \nxleftrightarrow[p_2]{B_{2r}(\Lambda)} v \right).
\end{multline}
Using \eqref{eq:first_and_last_ghosts_can_connect_to_u_and_v}, \eqref{eq:preparing_for_big_ghost_union_bound_1}, and the Harris-FKG inequality to bound the first term and \eqref{eq:preparing_for_big_ghost_union_bound_2} and \eqref{eq:preparing_for_big_ghost_union_bound_3} to control the error terms, we obtain that there exists a universal positive constant $c_5$ and positive constants $c_6$ and $C$ depending only on $d$ and $D$ such that if $h^c \leq 1/d$ then
      \[\begin{split}
            \p\left( u \xleftrightarrow[p_2]{B_{2r}(\Lambda)} v \right) &\geq \frac{\tau_{p_1}^\Lambda(A_1)}{2} \left( 1 - h^{c\delta} \right)^{n-1} \frac{\tau_{p_1}^\Lambda(A_n)}{2} - n \cdot 4 h^{ c_4 \delta^3 }\\
            &\geq c_5\left[ 1 - C nh^{c_6\delta^3} \right] \tau_{p}^\Lambda(A_1) \tau_p^\Lambda(A_n),
      \end{split}\]
      where we used that $\tau_{p_1}^\Lambda(A_1) \tau_{p_1}^\Lambda(A_n) \geq h^{2c \delta^4} \geq h^{c_1c_3 \delta^3 / 2^4}$ to absorb the error term into the prefactor.
      The proposition follows easily since the vertices $u \in A_1$ and $v \in A_n$ were arbitrary.
\end{proof}

\subsection{Graphs of high growth and the implication (\ref{implication:full-space})}

\label{subsec:high_growth_proof}

We now apply \cref{prop:snowballing} to prove the implication \eqref{implication:full-space} of the main induction step \cref{prop:complicated_induction_statement}.
We state the implication without including the burn-in term in $\delta_0$; this will not cause problems since the statement is stronger without this term than with it.

\begin{prop}[The implication (\ref{implication:full-space})] \label{prop:implication_F}
For each $d\in \mathbb{N}$ there exists a constant $N=N(d)\geq 16$ such that the following holds.
Let $G$ be an infinite, connected, unimodular transitive graph with vertex degree $d$, let $p_0 \in (0,1)$, and let $n_0 \geq 16$.
Let
$\delta_0 = (\log \log n_0)^{-1/2}$,
% Let
% \[
% \qquad \text{ and } \qquad
 % \delta_0 = \frac{1}{(\log \log n_0)^{1/2}} + K \cdot \burnin(n_0,p_0)
 % \max\Biggl\{\left(\frac{K \log \log m}{\max\{(\log m)^{1/10},\log \op{Gr}(b(m))\}}\right)^{1/5} : m \in \mathscr{L}_{20}(G) \cap [n_0,n_1] \Biggr\}\Biggr\},
% \]
% setting $b(m)=0$ and $\delta_0=\infty$ if the set being maximized over in the definition of $b(m)$ is empty.
% Let $\eta \in (0,\frac{1}{2}) \cap (0,p_c)$ and $\eps \in (0,1)$. Pick $p_0 \in (\eta,1-\eta) \cap (0,p_c)$ and $\delta_0 \in (0,\frac{1-p}{2})$.
% and 
define sequences $(n_i)_{i\geq 1}$ and $(\delta_i)_{i\geq1}$ recursively by
\[
      n_i := \exp^{\circ 3}\left( \log^{\circ 3}(n_0) + i \log 9 \right) \quad \text{and} \quad \delta_i := (\log \log n_i)^{-1/2}
      % =3^{-i} [\log n_0]^{-1/2}, 
      % \quad \text{ and } \quad p_i := \sprinkle(p_{i-1};\delta_i).
       % \quad \text{ and} \quad \tau_i:= e^{-[\log \log n_{i-1}]^{1/2}}.
\]
and let $(p_i)_{i\geq 1}$ be an increasing sequence of probabilities satisfying $p_{i+1}\geq \sprinkle(p_i;\delta_i)$ for each $i\geq 0$. Let $n_{-1} := (\log n_0)^{1/2}$.
% For each integer $i \geq 0$ define the statement
% Let $\upsc{Full-Space}(0)$ be the statement that $\p_{p_0}(u \leftrightarrow v) \geq e^{-\frac{1}{3}(\log \log n_0)^{1/2}}$ for every $u,v\in B_{n_0}$ and for each $i\geq 1$  define the statements
For each $i\geq 0$ define the statement
\begin{align*}\text{\emph{\textsc{Full-Space}}}(i) &= \Biggl(\p_{p_i} ( u \leftrightarrow v ) \geq \exp\left[-(\log \log n_{i})^{1/2}\right] \text{ for all $u,v \in B_{n_i}$}\Biggr) 
% \hfill \text{ and }\\
\intertext{and for each $i\geq 1$ define the statement}
% \intertext{for each integer $i\geq 0$ and define the statement}
\text{\emph{\textsc{Corridor}}}(i) &= \Biggl(\kappa_{p_{i}}( e^{[\log m]^{10}}, m ) \geq \exp\left[-(\log \log n_{i})^{1/2}\right] \text{ for every $m \in \mathscr{L}(G,20) \cap [n_{i-2},n_{i-1}]$}\Biggr).
\end{align*}
% for each integer $i\geq 1$.
If $n_0 \geq N$ and $p_0 \geq 1/d$ then the implication
      \begin{align}
             \Bigl[ \upsc{Full-Space}(i) \wedge \upsc{Corridor}(i+1) \Bigr] &\implies \Bigl[ \text{\emph{\textsc{Full-Space}}}(i+1) \vee (p_{i+1} \geq p_c) \Bigr] \label{implication:full-space_restate} \tag{F}
      \end{align}
holds for every $i\geq 0$.
\end{prop}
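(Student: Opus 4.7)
The plan is to assume $p_{i+1} < p_c$ (otherwise the disjunction on the right of \eqref{implication:full-space_restate} already holds) and perform a dichotomy according to whether $n_i$ itself lies in the set of quasi-polynomial scales $\mathscr{L}(G,20)$. If it does, then $n_i$ belongs to $\mathscr{L}(G,20)\cap [n_{i-1},n_i]$, and $\upsc{Corridor}(i+1)$ applied at $m=n_i$ gives directly
\[
\kappa_{p_{i+1}}\bigl(e^{(\log n_i)^{10}}, n_i\bigr) \geq \exp\left[-(\log\log n_{i+1})^{1/2}\right].
\]
For any $u,v \in B_{n_{i+1}}$ the graph geodesic from $u$ to $v$ has length $d(u,v) \leq 2 n_{i+1} = 2\exp((\log n_i)^9)$, which is at most $\exp((\log n_i)^{10})$ once $n_0 \geq N(d)$ is large enough, so applying the corridor function to this geodesic immediately yields $\upsc{Full-Space}(i+1)$.

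The substantive case is when $n_i \notin \mathscr{L}(G,20)$: by definition there then exists an intermediate scale $m \in [n_i^{1/3}, n_i]$ with the large-volume bound $\log \op{Gr}(m) > (\log m)^{20} \geq (\log n_i)^{20}/3^{20}$. I will feed this volume into the snowballing proposition (\cref{prop:snowballing}), applied between $p_i$ and the intermediate parameter $\tilde p_{i+1} := \sprinkle(p_i;\delta_i)$ with $\Lambda = V$ (so the constant $r$ in the statement is irrelevant, and the uniqueness hypothesis is vacuous because $\tilde p_{i+1} \leq p_{i+1} < p_c$). For fixed $u,v \in B_{n_{i+1}}$ pick a graph geodesic $(y_0 = u, y_1, \ldots, y_d = v)$ with $d = d(u,v) \leq 2n_{i+1}$, set $A_j := B_{m-1}(y_{j-1})$ for $j = 1,\ldots,d+1$ (so $u \in A_1$ and $v \in A_{d+1}$), and take $h := 1/\op{Gr}(m-1)$. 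Each union $A_j \cup A_{j+1}$ has diameter at most $2m-1 \leq 2n_i$, so any pair of its vertices lies in some common translate of $B_{n_i}$; by the translation-invariance of $\upsc{Full-Space}(i)$ this gives $\tau_{p_i}(A_j \cup A_{j+1})$, $\tau_{p_i}(A_1)$, and $\tau_{p_i}(A_{d+1})$ all at least $\exp[-(\log\log n_i)^{1/2}]$. The two quantitative hypotheses of \cref{prop:snowballing} then reduce to $c_1 \delta_i^3 \log(1/h) \gtrsim \log d$ and $c_1 \delta_i^4 \log(1/h) \gtrsim (\log\log n_i)^{1/2}$, both of which hold with ample slack because $\log(1/h) \geq (\log n_i)^{20}/3^{20} - O(1)$ while $\log d \leq (\log n_i)^9 + O(1)$ and $\delta_i = (\log\log n_i)^{-1/2}$ contributes only polylogarithmic losses. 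Invoking \cref{prop:snowballing} yields
\[
\mathbb{P}_{\tilde p_{i+1}}(u \leftrightarrow v) \geq c_2 \tau_{p_i}(A_1) \tau_{p_i}(A_{d+1}) \geq c_2 \exp\left[-2(\log\log n_i)^{1/2}\right],
\]
and since $(\log\log n_{i+1})^{1/2} = 3(\log\log n_i)^{1/2}$, this exceeds $\exp[-(\log\log n_{i+1})^{1/2}]$ once $n_0$ is large; monotonicity in $p$ transfers the bound from $\tilde p_{i+1}$ to $p_{i+1}$.

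The main obstacle is arithmetic compatibility between the various exponents: the exponent $20$ in the definition of $\mathscr{L}(G,20)$ must dominate the scale-jump exponent $9$ in $n_{i+1}/n_i$ with enough slack to absorb the $(\log\log n_i)^{-3/2}$ deficit coming from $\delta_i^3$ in the snowballing hypothesis, and the exponent $10$ inside $e^{(\log m)^{10}}$ in $\upsc{Corridor}(i+1)$ must in turn dominate $9$ so that tubes at the low-growth scale in Case 1 are long enough to span scale $n_{i+1}$. Both comparisons $10 > 9$ and $20 \gg 9$ are comfortably satisfied, but any weaker growth quantification in either direction would break one half of the dichotomy.
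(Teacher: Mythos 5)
Your proposal is correct and follows essentially the same route as the paper's proof: the same dichotomy on whether $n_i \in \mathscr{L}(G,20)$, with $\upsc{Corridor}(i+1)$ applied to a geodesic in the low-growth case and \cref{prop:snowballing} applied to a chain of balls along a geodesic (with $h$ roughly $\exp[-(\log n_i)^{20}/3^{20}]$) in the high-growth case, followed by the same exponent bookkeeping. Your explicit reduction to $p_{i+1}<p_c$ to secure the uniqueness hypothesis of the snowballing proposition, and your midpoint trick for bounding $\tau_{p_i}(A_j\cup A_{j+1})$, are minor (and slightly cleaner) variants of steps the paper handles via the disjunct in (F) and Harris-FKG respectively.
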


% \begin{rk}
% As will be apparent from the proof, we could simply set $\delta_0=(\log \log n_0)^{1/2}$ and not worry about the burn-in. We state the proposition this way so that it is 
% \end{rk}

\begin{proof}[Proof of \cref{prop:implication_F}]
Fix $i\geq 0$ and suppose that $\upsc{Full-Space}(i)$ and $\upsc{Corridor}(i+1)$ both hold. If $n_i \in \mathscr{L}(G,20)$, then $e^{(\log n_i)^{10}} \geq 2n_{i+1}=2e^{(\log n_i)^9}$ whenever $N$ is larger than some universal constant, so that if $u,v\in B_{n_{i+1}}$ then $d(u,v) \leq 2 n_{i+1}\leq e^{(\log n_i)^{10}}$ and
\[
\p_{p_{i+1}}(u \leftrightarrow v) \geq \kappa_{p_{i+1}}\Bigl(e^{[\log n_i]^{10}},n_i\Bigr) \geq e^{-(\log \log n_{i+1})^{1/2}} \qquad \text{ for every $u,v \in B_{n_{i+1}}$.}
\] 
That is, $\upsc{Corridor}(i+1)$ trivially implies $\upsc{Full-Space}(i+1)$ whenever $n_i \in \mathscr{L}(G,20)$.
% 
% \medskip
% 
Now suppose that $n_i \not\in \mathscr{L}(G,20)$ and suppose that $\upsc{Full-Space}(i)$ holds, so that
\[
\op{Gr}(n_i) \geq \exp((\log (n_i^{1/3}))^{20})=:h \qquad \text{ and } \qquad \tau_{p_i}( B_{n}(u_i) )  \geq \exp\left[- (\log \log n_i)^{1/2}\right].
\]
 Fix two arbitrary vertices $u,v\in B_{n_{i+1}}$ and let $u = u_1 , u_2 ,\ldots , u_k = v$ be the vertices in a geodesic from $u$ to $v$. It follows from the Harris-FKG inequality that for all $j$,
% By our hypothesis $\mathcal F_i$ and the Harris-FKG inequality, for all $i \in \llbracket k-1 \rrbracket$,
      \begin{align*}
            \tau_{p_i}( B_{n}(u_j) \cup B_{n}(u_{j+1}) ) \geq 
            \tau_{p_i}( B_{n+1}(u_j) )  \geq p_i^2 \tau_{p_i}( B_{n+1}(u_j) ) \geq d^{-2}\exp\left[- (\log \log n_j)^{1/2}\right].
            % \inf\Bigl\{ \p_p(x \leftrightarrow y) : x,y \in B_m(u_i) \cup B_m(u_{i+1}) \Bigr\} \geq \exp\left[-\frac{1}{3}(\log \log n_i)^{1/3}\right]
      \end{align*}
      Let $c_1$, $c_2$, $c_3$ and $h_0=h_0(d)$ be the constants from \cref{prop:snowballing} applied with $D=1$ (so that $c_1$, $c_2$, and $c_3$ are universal) and let $N_1=N_1(d)$ be sufficiently large that $\exp(-(\log n)^{20}) \leq h_0$ for every $n\geq N_1$. There exists a constant $N_2 =N_2(d)\geq N_1$ such that if $n_i \geq n_0 \geq N_2$ then
      % If $n_i \geq n_0 \geq N$ then 
      \[
      h^{c_1\delta_i^3}=\exp\left[- \frac{c_1}{3^{20}}  \frac{(\log n_i)^{20}}{(\log \log n_i)^{3/2}}\right] \leq c_3 n_{i+1}^{-1}
      \]
      and for all $j$,
      \[
      \tau_{p_i}( B_{n}(u_j) \cup B_{n}(u_{j+1}) ) \geq d^{-2}\exp\left[- (\log \log n_i)^{1/2}\right] \geq 4\exp\left[- \frac{c_1}{3^{20}}  \frac{(\log n_i)^{20}}{(\log \log n_i)^{2}}\right] = 4 h^{c_1\delta_i^4}.
      \]
      Thus, if $n_0\geq N_2$ then \cref{prop:snowballing} (applied with $D=1$ and $A_i = B_n(u_i)$) implies that for all $j$,
      \[
      \p_{p_{i+1}}(u \leftrightarrow v) \geq \tau_{p_{i+1}}(B_{n_i}(u),B_{n_i}(v)) \geq c_2 \tau_{p_i}(B_{n_i}(u))\tau_{p_i}(B_{n_i}(v)) \geq c_2 \exp\left[-2(\log \log n_i)^{1/2} \right],
      \]
      where the final inequality follows from the assumption that $\upsc{Full-Space}(i)$ holds. Since $u$ and $v$ were arbitrary vertices in $B_{n_{i+1}}$, it follows that there exists a constant $N_3=N_3(d)\geq N_2$ such that if $n_i \geq n_0\geq N_3$ then $\upsc{Full-Space}(i+1)$ holds as claimed. \qedhere

\end{proof}

As mentioned in the introduction, \cref{prop:implication_F} already allows us to conclude the proofs of \cref{thm:main,thm:p_c<1} under a mild uniform superpolynomial growth assumption.

\begin{cor}
\label{cor:high_growth_pc}
Let $G$ be an infinite, connected, unimodular transitive graph. If $\log \op{Gr}(r) > (\log r)^{20}$ for all sufficiently large $r$ then $p_c(G)<1$.
\end{cor}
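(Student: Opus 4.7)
The plan is to exploit the growth hypothesis to make the set of low growth scales $\mathscr{L}(G,20)$ finite, so that every $\upsc{Corridor}$ statement in \cref{prop:complicated_induction_statement} becomes vacuously true and only the implication \eqref{implication:full-space_restate} from \cref{prop:implication_F} is needed. The argument is then essentially a transcription of \cref{cor:carrying_out_induction} with all Corridor hypotheses replaced by vacuous truth, together with sharpness of the phase transition.

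The substantive observation to make first is the following: if $\log \op{Gr}(r) > (\log r)^{20}$ for every $r \geq r_0$, then $\mathscr{L}(G,20) \subseteq \{1,\ldots,\lceil r_0^3\rceil\}$. Indeed, for any integer $n \geq r_0^3$, the integer $m := \lceil n^{1/3}\rceil$ lies in $[n^{1/3},n]$ and satisfies $\log \op{Gr}(m) > (\log m)^{20}$, which rules $n$ out of $\mathscr{L}(G,20)$.

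Next I set up the initial data. Let $N(d)$ be the constant from \cref{prop:implication_F}, and pick $n_0$ large enough that $n_0 \geq N(d)$, that $(\log \log n_0)^{-1/2} \leq 1$, and that $(\log n_0)^{1/2} > \lceil r_0^3 \rceil$. The last inequality ensures $n_{-1} = (\log n_0)^{1/2}$ strictly exceeds $\max \mathscr{L}(G,20)$, and since the sequence $(n_i)$ is increasing, this forces $[n_{i-2}, n_{i-1}] \cap \mathscr{L}(G,20) = \emptyset$ for every $i \geq 1$, so $\upsc{Corridor}(i)$ is vacuously true. Set $p_0 := \max\{1/d,\, \exp(-(\log \log n_0)^{1/2}/(2n_0))\}$; for $n_0$ large this lies in $[1/d,1)$, and the trivial path bound $\p_{p_0}(u \leftrightarrow v) \geq p_0^{d(u,v)} \geq p_0^{2n_0}$ verifies $\upsc{Full-Space}(0)$.

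Finally, define $p_{i+1} := \sprinkle(p_i;\delta_i)$. Iterating \cref{prop:implication_F} --- which applies because $\upsc{Corridor}(i+1)$ is vacuous at every step --- yields by induction that for each $i \geq 0$ either $p_i \geq p_c$ or $\upsc{Full-Space}(i)$ holds. The limit satisfies $p_\infty \leq \sprinkle(p_0;\; (3/2)(\log\log n_0)^{-1/2}) < 1$, using $\sum_{i\geq 0} 3^{-i} = 3/2$. In the first case $p_c \leq p_\infty < 1$ directly. Otherwise $\upsc{Full-Space}(i)$ holds for every $i$, and the resulting subexponential lower bounds $\p_{p_\infty}(u \leftrightarrow v) \geq \exp[-(\log \log n_i)^{1/2}]$ on balls $B_{n_i}$ of diverging radius contradict the exponential decay of subcritical connection probabilities given by sharpness of the phase transition \cite{MR852458,MR874906} unless $p_\infty \geq p_c$; either way $p_c < 1$. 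I expect no real obstacles beyond the opening bookkeeping observation about $\mathscr{L}(G,20)$, which is the whole point of isolating this corollary at the end of \cref{sec:snowballing}: no input from Sections~5 or~6 is used.
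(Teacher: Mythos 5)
Your proposal is correct and follows the same route as the paper: the growth hypothesis makes $\mathscr{L}(G,20)$ finite so that every $\upsc{Corridor}(i)$ is vacuous, and the conclusion then follows by running the induction of \cref{cor:carrying_out_induction}/\cref{thm:p_c<1} using only the implication \eqref{implication:full-space} from \cref{prop:implication_F} (with no burn-in term) together with sharpness of the phase transition. The paper states this argument only by reference; your write-out of the bookkeeping (in particular the choice of $n_0$ so that $n_{-1}$ exceeds $\max\mathscr{L}(G,20)$, and of $p_0$ so that $\upsc{Full-Space}(0)$ holds) is accurate.
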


\begin{cor}
\label{cor:high_growth_locality}
Let $(G_n)_{n\geq 1}$ be a sequence of infinite, connected, unimodular transitive graphs converging to some transitive graph $G$, and suppose that there exists $R$ such that
$\log \op{Gr}(r;G_n) > (\log r)^{20}$
for every $r\geq R$ and $n\geq 1$. Then $p_c(G_n)\to p_c(G)$
\end{cor}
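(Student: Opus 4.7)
The plan is to reduce \cref{cor:high_growth_locality} to an iteration of the implication \eqref{implication:full-space_restate}, exploiting the uniform superpolynomial growth hypothesis to make the corridor hypothesis in that implication vacuous at every scale. Since lower semicontinuity of $p_c$ is standard, I would assume for contradiction that $\limsup_{n\to\infty} p_c(G_n) > p_c(G)$ and, after passing to a subsequence, set $p_0 := \tfrac{1}{2}(p_c(G) + \inf_n p_c(G_n))$, so that $p_0 > p_c(G)$ while $p_0 < p_c(G_n)$ for every $n$. In particular, $p_0$ is subcritical on every $G_n$, which makes \cref{lem:nearby_clusters_pigeonhole} available. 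Note also that local limits of unimodular transitive graphs are unimodular by \cite[Corollary 5.5]{hutchcroft2019locality} and that all graphs involved share a common vertex degree $d$.

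The key observation is that the hypothesis $\log \op{Gr}(r;G_n) > (\log r)^{20}$ for $r \geq R$ implies $\mathscr{L}(G_n,20) \cap [R,\infty) = \emptyset$ for every $n$, since any $m \geq R$ lies in the interval $[m^{1/3},m]$. Choosing $n_0 \geq N(d) \vee e^{R^{2}}$ guarantees that $n_{-1} = (\log n_0)^{1/2} \geq R$, and hence that $\upsc{Corridor}(i+1)$ is vacuously true for every $i \geq 0$. A direct computation gives $\log \log n_i = 9^i \log \log n_0$, so $\sum_{i \geq 0} \delta_i = \tfrac{3}{2}(\log \log n_0)^{-1/2}$; by further enlarging $n_0$ I can arrange that $\sprinkle\!\left(p_0;\,\tfrac{3}{2}(\log \log n_0)^{-1/2}\right) < \inf_n p_c(G_n)$.

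I would then iterate \eqref{implication:full-space_restate} starting from $\upsc{Full-Space}(0)$ in $G_m$ at $p_0$: since each $\upsc{Corridor}(i+1)$ is vacuous, either some $p_{i+1} \geq p_c(G_m)$, contradicting the upper bound on $p_\infty$ chosen above, or $\upsc{Full-Space}(i)$ holds for every $i$, yielding subexponential-in-distance lower bounds on point-to-point connection probabilities at $p_\infty < p_c(G_m)$ that contradict the exponential decay in the subcritical phase \cite{MR852458,MR874906}. Hence $\upsc{Full-Space}(0)$ must fail in $G_m$ for every sufficiently large $m$ in the subsequence: there exist $u_m, v_m \in B_{n_0}(G_m)$ with $\p_{p_0}^{G_m}(u_m \leftrightarrow v_m) < \exp\!\left[-(\log \log n_0)^{1/2}\right]$.

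To derive a contradiction I would combine Harris-FKG, transitivity, and \cref{lem:nearby_clusters_pigeonhole} (whose applicability uses $p_0 < p_c(G_m)$) with $d(u_m,v_m) \leq 2n_0$ to obtain
\[
\p_{p_0}^{G_m}(|K_o| \geq k)^2 \leq \p_{p_0}^{G_m}(u_m \leftrightarrow v_m) + C_d \cdot 2n_0 \cdot p_0^{-2n_0-1} k^{-1/2},
\]
and then choose $k$ of order $n_0 p_0^{-4n_0}$ to balance the two terms and conclude $\p_{p_0}^{G_m}(|K_o| \geq k) \leq \sqrt{2}\exp\!\left[-\tfrac{1}{4}(\log \log n_0)^{1/2}\right]$. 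Enlarging $n_0$ once more forces the right-hand side below $\tfrac{1}{2}\theta_G(p_0) > 0$. On the other hand, for $m$ large enough that balls of radius $k$ in $G_m$ and $G$ coincide,
\[
\p_{p_0}^{G_m}(|K_o| \geq k) \geq \p_{p_0}^{G_m}(o \leftrightarrow S_k) = \p_{p_0}^G(o \leftrightarrow S_k) \geq \theta_G(p_0),
\]
the desired contradiction. The only subtle step is the base case $\upsc{Full-Space}(0)$, which is handled indirectly by this two-ghost argument; the rest is simply iterating the ready-made implication \eqref{implication:full-space_restate}, with no need to control any burn-in because the uniform growth assumption trivializes the corridor hypothesis and hence shortcircuits \cref{prop:complicated_induction_statement} down to \cref{prop:implication_F}.
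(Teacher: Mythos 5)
Your proof is correct and follows essentially the same route as the paper's: the paper's own argument for this corollary is precisely to note that the growth hypothesis makes $\mathscr{L}(G_n,20)$ eventually empty, so that $\upsc{Corridor}(i)$ is vacuous, and then to repeat the deduction of \cref{thm:main} from the induction step using only \cref{prop:implication_F} (with no burn-in to control), ending with the same two-ghost/local-convergence contradiction via \cref{lem:nearby_clusters_pigeonhole}. The only (harmless) cosmetic difference is that you fix $k$ as a function of $n_0$ in the final step rather than choosing $n_0 \asymp \log k$ as the paper does.
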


\begin{proof}[Proof of \cref{cor:high_growth_pc,cor:high_growth_locality}] Observe that if $G$ satisfies $\log \op{Gr}(r;G) > (\log r)^{20}$ for every $r\geq n_0$ then the statement $\upsc{Corridor}(i)$ holds vacuously for every $i$ since $\mathscr{L}(G,20)\cap [n_0,\infty)$ is empty. Thus, these two corollaries follow from \cref{prop:implication_F} by the same argument used to deduce \cref{thm:main,thm:p_c<1} from \cref{prop:complicated_induction_statement}. (In fact the proof is slightly simpler since one no longer needs to control the burn-in.)
\end{proof}

\begin{rk}[Weaker growth conditions and the gap conjecture]
\label{remark:weaker_growth_assumptions}
The proof of \cref{cor:high_growth_pc,cor:high_growth_locality} extends straightforwardly to (sequences of) graphs satisfying much weaker growth conditions, such as
\begin{equation}
\label{eq:loglog_growth}
\log \op{Gr}(r) \geq c\log r (\log \log r)^{10}.
\end{equation}
It is plausible that this class (and indeed the class treated by \cref{cor:high_growth_pc,cor:high_growth_locality}) includes \emph{every} transitive graph of superpolynomial growth, so that the methods of this section would suffice to prove locality and non-triviality of $p_c$ for unimodular transitive graphs of superpolynomial growth. (One would still need the remainder of the paper to handle sequences of graphs of polynomial growth converging to a graph of superpolynomial growth, such as the Cayley graphs of the free step-$s$ nilpotent groups, which converge to trees as $s\to \infty$.) Indeed, one formulation of Grigorchuk's \emph{gap conjecture} (see \cite{grigorchuk2014gap}) states that there exist universal positive constants $c$ and $\gamma$ such that if $G$ is a Cayley graph of superpolynomial growth then 
$\op{Gr}(r) \geq e^{c r^\gamma}$ for every $r\geq 1$, and it seems reasonable to extend this conjecture to transitive graphs. Thus, the lower bound \eqref{eq:loglog_growth} required for our arguments to work is much smaller than what might plausibly hold universally for all transitive graphs of superpolynomial growth. On the other hand, the best known bound for the gap conjecture, due to Shalom and Tao \cite{shalom2010finitary}, states that there exists a universal constant $c$ such that
\begin{equation}
\label{eq:Shalom_Tao}
\log \op{Gr}(r)\geq c \log r (\log \log r)^c
\end{equation}
for every Cayley graph of superpolynomial growth and every $r\geq 1$. (The authors claim their proof should yield this estimate with the constant $c=0.01$, but do not carry out the necessary bookkeeping in the paper.)
Even if the strong form of the gap conjecture is false, there does not seem to be any reason to believe that the Shalom-Tao bound \eqref{eq:Shalom_Tao} is optimal, so that \eqref{eq:loglog_growth} may very well hold for all transitive graphs of superpolynomial growth.
\end{rk}
%we must use \omega(e), not \omega_e

\section{Quasi-polynomial growth I: Building disjoint tubes}

\label{sec:tubes}

%We will often identify a path $\gamma = (\gamma_i)_{i}$ with its image $\{ \gamma_i \}_{i \in I}$. For example, given paths $\gamma$ and $\gamma'$, we write $\op{dist}(\gamma,\gamma') := \op{dist}(\{\gamma_i\}_{i}, \{\gamma_j'\}_j)$, meaning $\min_{i,j} \op{dist}(\gamma_i,\gamma_j')$. should also define $len(\gamma)$

In this section we prove the geometric facts that we will later use to analyze percolation in the low-growth (a.k.a.\ quasi-polynomial growth) regime.
Let $d\geq 1$, let $G \in \cG_d^*$, and let $D \geq 1$ be a fixed parameter. We recall that the set of \textbf{low growth scales} $\mathscr{L}(G,D)$ is defined to be
\[
\mathscr{L}(G,D) = \Bigl\{n \geq 1: \log \op{Gr}(m) \leq (\log m)^{D} \text{ for all } m \in [n^{1/3} , n]\Bigr\}.
\]
It would suffice for all our applications to take e.g.\ $D=20$; we keep $D$ as a parameter for now to emphasize that the analysis carried out in this section and \cref{sec:low_growth_multiscale} works for arbitrarily large $D$. 

\medskip

Recall that a \textbf{tube} is defined to be a set of the form $B_r(\gamma)=\bigcup_i B_r(\gamma_i)$ where $\gamma$ is a path and $r \in (0,\infty)$ is a parameter we call the \textbf{thickness} of the tube; we call $\op{len}(\gamma)$ the \textbf{length} of the tube. We will also sometimes write $B(\gamma,r)=B_r(\gamma)$ to avoid writing large expressions in the subscript. (Strictly speaking the length and thickness of a tube depends on the pair $(\gamma,r)$ used to represent it, but we will not belabour this point further.) We would like to show that whenever $n$ is in the low growth regime, for all suitable sets $(A,B)$ of vertices at scale $n$, we can find many disjoint tubes from $A$ to $B$ that are reasonably thick and not unreasonably long. 
% The following definition makes precise what sort of geometric control we seek.

\begin{figure}
\center
\includegraphics[width=0.9\textwidth]{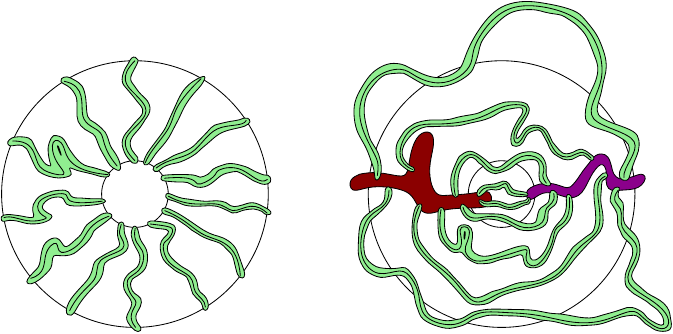} 
% \hspace{2cm} \includegraphics[height=6cm]{annular_tubes.pdf}
\caption{Schematic illustration of radial and annular tubes. Left: The $(k,r,\ell)$-plentiful \emph{radial} tubes condition means that we can find $k$ disjoint tubes crossing the annulus that all have thickness $r$ and length at most $\ell$. Right: the $(k,r,\ell)$-plentiful \emph{annular} tubes condition means that for any two crossings of the annulus, we can find $k$ disjoint tubes connecting the two crossings that all have thickness $r$ and length at most $\ell$; these tubes are \emph{not} required to stay inside the annulus.}
% , and may reach distances greater than the radius of the annulus by a polylogarithmic factor.}
\label{fig:plentiful_tubes}
\end{figure}

\begin{defn}
Let $G$ be a connected transitive graph. Given $k,r,\ell\geq1$ (which need not be integers)
 % $\lambda \in [1,\infty)$, $c \in (0,1)$,
  and $n\geq 1$, we say that $G$ has \textbf{$(k,r,\ell)$-plentiful radial tubes at scale $n$} if there exists a family of paths $\Gamma$ in $G$ with $\abs{\Gamma} \geq k$ satisfying all of the following properties: 
\begin{itemize} 
      \item Every path $\gamma\in \Gamma$ starts in the sphere $S_{n}$ and ends in the sphere $S_{4n}$;
      \item For every two distinct paths $\gamma,\gamma' \in \Gamma$, the  tubes  $B(\gamma,r)$ and $B(\gamma',r)$ are disjoint.
      \item Every path $\gamma\in \Gamma$ has length at most $\ell$;
      % (\gamma^i) \cap B_{n [\log n]^{-\lambda/c}}(\gamma^j) = \emptyset$ for all distinct $i,j \in I$;
\end{itemize}
(In particular, the parameter $k$ controls the number of tubes, the parameter $r$ controls the thickness of the tubes, and the parameter $\ell$ controls the length of the tubes.)
Given $m\geq n \geq 1$, we say that a set of vertices $A \subseteq V$ is an \textbf{$(n,m)$ crossing} if it contains a path from $S_n$ to $S_{m}$.
We say that $G$ has \textbf{$(k,r,\ell)$-plentiful annular tubes at scale $n$} if for every pair of sets of vertices $(A,B)$ that are both $(n,3n)$  crossings\footnote{The constant $3$ that appears here could safely be replaced by any constant strictly larger than $2$. We need the constant to be strictly larger than $2$ to not cause problems with our application of \cref{prop:if_you_can_get_past_2n_you_can_get_away}.}, there exists a family of paths $\Gamma$ in $G$ with $\abs{\Gamma} \geq k$ satisfying all of the following properties:
\begin{itemize} 
      \item Every path $\gamma\in \Gamma$ starts in $A$ and ends in $B$;
         \item For every two distinct paths $\gamma,\gamma' \in \Gamma$, the tubes  $B(\gamma,r)$ and $B(\gamma',r)$ are disjoint.
      \item Every path $\gamma\in \Gamma$ has length at most $\ell$;
\end{itemize}
We say that $G$ has \textbf{$(k,r,\ell)$-plentiful tubes at scale $n$} if $G$ has both $(k,r,\ell)$-plentiful annular tubes and $(k,r,\ell)$-plentiful radial tubes at scale $n$. See \Cref{fig:plentiful_tubes} for an illustration. Given parameters $c,\lambda>0$, we say that $G$ has \textbf{$(c,\lambda)$-polylog-plentiful tubes} at scale $n$ if it has $(k,r,\ell)$-plentiful tubes with $k=(\log n)^{c\lambda}$, $r=n(\log n)^{-\lambda/c}$, and $\ell= n(\log n)^{\lambda/c}$. (NB: These tubes are very thick!)
\end{defn}

\begin{rk}
      The property of having $(k,r,\ell)$-plentiful tubes at scale $n$  gets stronger as $k$ and $r$ increase and gets weaker as $\ell$ increases; the property of having $(c,\lambda)$-polylog-plentiful tubes at scale $n$ gets stronger as $c$ increases but has no obvious monotonicity in the parameter $\lambda$. There is of course a trade-off between the number of tubes and their thicknesses, since e.g.\ if the $(n,3n)$ crossings $A$ and $B$ are segments of geodesics from the origin between distances $n$ and $3n$ then we cannot have more than $2n/r$ disjoint tubes of thickness $r$ connecting $A$ and $B$. In our applications in \cref{sec:low_growth_multiscale} we will want to use families of tubes that have thickness $n(\log n)^{-O(1)}$ and length $n(\log n)^{O(1)}$, which is why we phrase \cref{prop:existence_of_interval_with_controlled_tubes} in terms of polylog-plentiful tubes. As will be clear later in the section, our constructions allow for many other possible trade-offs between $k$ and $r$ which may be useful in future applications.
\end{rk}

Ideally, we would like to say that there is some constant $c \in (0,1)$ such that for every $\lambda$, $G$ has $(c,\lambda)$-polylog-plentiful tubes at every sufficiently large scale $n\in \mathscr{L}(G,D)$. This is, however, slightly stronger than
 % what we need and
  what we have been able to prove. (Fortunately it is also stronger than we need for our applications!)
%could this stronger statement be false?
We instead establish the slightly weaker statement that for every scale $n\in \mathscr{L}(G,D)$, there exists a large interval of scales not much smaller than $n$ on which we have plentiful tubes. We now give the precise statement, the proof of which takes up the rest of this section.

\begin{prop}[Quasi-polynomial growth yields a large range of consecutive scales with polylog-plentiful tubes]
\label{prop:existence_of_interval_with_controlled_tubes}
      For each $D \in [1,\infty)$, $\lambda \in [1,\infty)$, and $d \geq 1$ there exist positive constants $c(d,D) \in (0,1)$ and $n_0=n_0(d,D,\lambda)$ such that the following holds. Let $G$ be an infinite, connected, unimodular transitive graph with vertex degree $d$ that is not one-dimensional. 
      % Suppose that $\op{Gr}(m) \leq e^{[\log m]^D}$ for all $m \in [n,n^3]$.
       For each integer $n\geq n_0$ with $n \in \mathscr{L}(G,D)$ there exist integers $n^{1/3} \leq m_1 \leq m_2 \leq n$ with $m_2 \geq m_1^{1+c}$
        % an integer $n'$ with $n^{1/3} \leq n' \leq n^{1/(1+c)}$
         such that $G$ has $(c,\lambda)$-polylog-plentiful tubes at every scale $m_1 \leq m \leq m_2$.
\end{prop}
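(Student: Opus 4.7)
The plan is to follow the structure-vs-expansion dichotomy sketched in the introduction, exploiting the long stretch of quasi-polynomial growth guaranteed by $n \in \mathscr{L}(G,D)$. Define $f(t):=\log \op{Gr}(e^t)$, so that $f$ is non-decreasing, non-negative, and satisfies $f(t)\leq t^D$ on the long interval $I=[\tfrac13\log n,\log n]$. The total variation $f(\log n)-f(\tfrac13\log n)$ is thus bounded by $(\log n)^D$, while $|I|=\tfrac23\log n$. A scale pigeonhole on the doubling increments $f(t+c)-f(t)$ across $O(\log n)$ subintervals of length $c$ will extract a sub-interval $[\log m_1,\log m_2]\subseteq I$ with $\log m_2\geq (1+c)\log m_1$ on which one of the following two alternatives holds uniformly: $(\mathrm{a})$ $f(t+c)-f(t)\leq K(D,c)$ for every $t\in[\log m_1,\log m_2]$ (the low-dimensional regime), or $(\mathrm{b})$ $f(t+c)-f(t)\geq K(D,c)$ with $K$ large (the expanding regime). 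Both outputs are obtained by standard bookkeeping, splitting according to whether the slow-doubling scales or the fast-doubling scales dominate inside $I$.

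In the low-dimensional regime $(\mathrm{a})$, the bounded doubling at every scale $m\in[m_1,m_2]$ allows us to invoke the finitary structure theorems of Breuillard--Green--Tao and Tessera--Tointon, together with the uniform finite presentation results of the companion paper \cite{EHStructure}. These imply that the ball of radius $O(m)$ around $o$ is $(1,o(m))$-quasi-isometric to the Cayley graph of a virtually nilpotent group of rank and step bounded in terms of $D$ and $c$. Because $G\in\mathcal G^{*}$ is not one-dimensional the rank is at least two, so there is an essentially transverse second direction. Tubes can then be built explicitly in the nilpotent model: fix a geodesic $\gamma$ from $S_m$ to $S_{4m}$ and translate it by $(\log m)^{c\lambda}$ distinct group elements in this transverse direction, spaced at group-distance $\geq 2r=2m(\log m)^{-\lambda/c}$. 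Each translate is a path of length $O(m)$, and the pairwise separation guarantees that the resulting tubes are disjoint. For annular tubes, the same recipe applies to a shortest path joining the two $(n,3n)$-crossings $A,B$; the one-endedness of $G$ (another consequence of not being one-dimensional) is what ensures such a short connecting path exists inside the nilpotent model. Transferring these constructions back to $G$ through the quasi-isometry loses only polylogarithmic factors, which are absorbed into $\lambda$.

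In the expanding regime $(\mathrm{b})$, the graph need not have any structural model and tubes must be built probabilistically. The plan is to run many independent simple random walks from well-separated points of $S_m$ (or of $A$) and argue that, with high probability, many of them reach $S_{4m}$ (or $B$) in $m(\log m)^{O(1)}$ steps while their trajectories remain mutually far apart. The crucial input is a coalescence coupling: by applying the quasi-polynomial growth bound at smaller scales in $[n^{1/3},m]$, two random walks started at distance $m$ can be coupled so as to meet within $m(\log m)^{O(1)}$ steps with high probability. Using coalescence together with heat-kernel anti-concentration coming from uniform expansion, I would extract $(\log m)^{c\lambda}$ trajectories that stay at pairwise distance $\geq 2r$ away from small ``end-zones'' near their source and target points, and then truncate each trajectory just before entering the end-zone. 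The truncated paths serve as centerlines of the desired disjoint tubes, and their length is controlled by the hitting estimate.

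The hardest step, I anticipate, is the expanding case: arranging that many random-walk trajectories are simultaneously well-separated on a very coarse scale $r=m(\log m)^{-\lambda/c}$, while still being controllable via coalescence on the same scale, is delicate because the separation and coalescence requirements pull against each other and must both be extracted from a single input, namely the quasi-polynomial growth bound. A secondary difficulty is the pigeonhole step itself, which must simultaneously produce a sub-interval long enough to satisfy $\log m_2\geq (1+c)\log m_1$ and uniform enough for the dichotomy to apply at every scale in $[m_1,m_2]$; this relies on a careful accounting of how the total budget $(\log n)^D$ distributes over the $\Theta(\log n)$ dyadic scales in $I$. Finally, handling the annular tubes condition for arbitrary crossings $A,B$ in the low-dimensional regime is the place where we genuinely use the ``not one-dimensional'' hypothesis, via connectivity properties of exposed spheres inside the nilpotent model.
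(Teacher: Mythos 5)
Your high-level plan (a structure-vs-expansion dichotomy, with structure theory in the low-growth case and coupled random walks in the high-growth case) matches the paper's, but there are three concrete gaps. First, the pigeonhole step as you formulate it does not work: the increments $f(t+c)-f(t)$ are non-negative and sum to at most $(\log n)^D$ over $\Theta(\log n)$ scales, so with a \emph{constant} threshold $K(D,c)$ you cannot rule out that the scales oscillate between the two regimes, and no sub-interval of multiplicative length $(1+c)$ need be uniformly of either type. The paper sidesteps this entirely: its dichotomy is ``there \emph{exists} a single scale $n'\in[n^{1/3},n^{10/11}]$ with $\op{Gr}(3n')\le 3^5\op{Gr}(n')$'' versus ``fast tripling holds at \emph{every} such scale.'' This is exhaustive by construction, and the point you are missing is that small tripling at \emph{one} scale suffices, because the finitary structure theorem then controls the graph at all larger scales (\cref{prop:structure_theory_conclusion} produces plentiful tubes at every $m\ge Ckn'$ outside a bounded exceptional set); no uniformity over an interval of scales is needed in the low-growth branch.

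Second, in the low-growth case the construction ``translate a geodesic by group elements in a transverse direction'' is not adequate, especially for annular tubes: a translate of a path from $A$ to $B$ no longer starts in $A$ or ends in $B$, and identifying a transverse direction with the required separation in a general virtually nilpotent model is not routine. The paper instead uses the uniform finite presentation theorem together with Tim\'ar's lemma (\cref{lem:short_generating_loops_yield_tubes}): both crossings must meet every exposed sphere $(S_i^\infty)'$ (\cref{prop:if_you_can_get_past_2n_you_can_get_away}), short generating cycles let one join the two intersection points by a path staying in a thin neighbourhood of that exposed sphere, and disjointness comes from using well-separated radii $i$. Third, in the expanding case you have conflated time with displacement: coalescence of two walks started at distance $m$ occurs after $\sim m^2(\log m)^{O(1)}$ steps, not $m(\log m)^{O(1)}$, so the raw trajectories give centerline paths of length $\sim m^2(\log m)^{O(1)}$, violating the length bound $\ell = n(\log n)^{\lambda/c}$ in the polylog-plentiful tubes condition. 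The paper repairs this with the ``ironing'' coarse-graining procedure, which replaces the walk by a concatenation of geodesics between crease points and shortens it by a factor of $r$ while keeping essentially the same tube; without this step (or an equivalent) your expanding-case tubes are too long.
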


The constant $c=c(d,D)$ can be thought of as an ``exchange rate'', governing the cost to trade between the number of tubes and their thicknesses, while $\lambda$ is the parameter we vary to make this trade. \emph{It is very important that the constant $c$ does not depend on $\lambda$!}

% {\color{red} I changed the statement to be about $[n^{1/3},n]$ instead of $[n,n^3]$ to better fit the later applications. I'm not going to change the rest of the section yet.}

\begin{rk}
No non-trivial statements about plentiful annular tubes can be made without some kind of growth upper bound (or other geometric assumption), since  the $3$-regular tree does not have plentiful $(k,r,\ell)$-plentiful annular tubes for any $k,r,\ell>1$. Similarly, the assumption that the graph is not one-dimensional is needed since the line graph $\mathbb{Z}$ does not have $(k,r,\ell)$-plentiful radial or annular tubes for any $k,r,\ell>1$. (On the other hand, the assumption of unimodularity is redundant since it is implied by the existence of large scales with subexponential growth \cite[Section 5.1]{hutchcroft2019locality}.)
\end{rk}

To prove this proposition, we split into two cases according to the \emph{rate of change} of growth in the given interval. More precisely, we will split according to whether or not there exists $n$ in a suitable initial segment of the interval such that $G$ satisfies the small-tripling condition $\op{Gr}(3n) \leq 3^{5} \op{Gr}(n)$ at scale $n$, where the constant $5$ could be replaced by any other constant strictly larger than $4$. (This is related to the fact that four is the critical dimension for two independent random walks to intersect infinitely often almost surely.) Our proofs in the two cases are completely different from one another. 

\medskip

In the first case, when there does exist such an $n$, we will build our disjoint tubes by applying the structure theory of transitive graphs of polynomial volume growth \cite{breuillard2011structure,MR4253426,EHStructure}. Informally, this theory guarantees that, for a large interval of scales, our graph looks approximately like the Cayley graph of a finitely presented group whose relations are generated by cycles of diameter much smaller than the given scale. We will use these techniques to prove the following proposition.

% \textcolor{orange}{Need to add that $G$ has bounded vertex degree in this next prop. We might also be able to take $H=3$.}

\begin{prop}[Slow tripling yields plentiful tubes] \label{prop:structure_theory_conclusion}
    For each $d\geq 1$ and $\kappa <\infty$ there exist positive constants $c=c(d,\kappa)$, $C=C(d,\kappa)$, and $n_0=n_0(d,\kappa)$ such that
     % the following holds.
      % There exist universal constants $H \in (1,\infty)$ and $c \in (0,1)$ such that the following holds. 
% 
      if $G$ is an infinite, connected, transitive graph of vertex degree $d$ that is not one-dimensional and $n \geq n_0$ is such that
      % Let $n_0 \in [1,\infty)$, $n \in [n_0,\infty)$, and $\lambda \in [1,\infty)$. 
      % Suppose that 
      $\op{Gr}(3 n) \leq 3^{\kappa} \op{Gr}(n)$, then there exists a set $A \subset [n,\infty)$ with $|A|\leq C$ such that for each $k \geq 1$, 
        $G$ 
       % for each $n_2\geq n_1$ there exists $n_2 \leq m_1 \leq m_2 \leq n_2^{1+\eps}$ with $m_2\geq m_1^{1+c}$ such that if $m\in [n_3,n_4]$ and $r\geq C n_1$ then 
      % for  every scale $m \in [(n'),(n')^{1+c}]$, 
        has $(ck,ck^{-1} m,C k^C m)$-plentiful tubes at every scale $m \geq Ckn$ such that $m \notin \bigcup_{a\in A}[a,2ka]$.
\end{prop}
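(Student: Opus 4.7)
The plan is to feed the slow-tripling hypothesis $\op{Gr}(3n) \le 3^\kappa \op{Gr}(n)$ into the finitary structure theory developed in \cite{breuillard2011structure,MR4253426} (and refined in the authors' companion paper \cite{EHStructure}). This should produce a constant $C=C(d,\kappa)$, a virtually nilpotent group $N$ whose rank, step, and index are all at most $C$, and a $(1,Cn)$-quasi-isometry from a large ball of $G$ onto a Cayley graph of $N$ on a bounded symmetric generating set. Because $G$ is not one-dimensional, $N$ must have Bass dimension at least $2$. The uniform finite presentation theorem of \cite{EHStructure} furnishes a presentation in which all relators have length $O(n)$, so that translating a local picture at scale $n$ to all larger scales introduces no further error and the model remains valid on an interval of scales that is, up to explicit exceptions, $[Cn,\infty)$.

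For a scale $m \ge Ckn$ outside the exceptional set, the nilpotent model gives $B_m$ an essentially explicit anisotropic box shape, in which the $i$-th layer of the lower central series scales like $m^i$. To obtain $ck$ disjoint tubes of thickness $ck^{-1}m$ and length $O(k^C m)$ crossing the annulus $B_{4m}\setminus B_m$, the idea is to pick a single ``radial'' geodesic $\gamma_0$ from $S_m$ to $S_{4m}$ in $N$, then translate it by a family $g_1,\ldots, g_{ck}$ of group elements forming a $(ck^{-1}m)$-separated net inside a ball of radius $O(k^C m)$ within a ``slab'' transverse to $\gamma_0$. The existence of such a net is equivalent to the two-sided polynomial volume estimate coming from the structure theory together with the fact that the transverse direction is at least one-dimensional (this is where Bass dimension $\ge 2$ is used). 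Pulling the $g_i\gamma_0$ back to $G$ via the quasi-isometry yields disjoint tubes of the required dimensions. For annular tubes, given two $(m,3m)$-crossings $A$ and $B$, I would push each into the exposed sphere $S_{2m}^\infty$ (well-defined thanks to the nilpotent model being not one-dimensional), then connect them there by the same translated-geodesic construction, with the relators of length $O(n)$ used to bound any cost of gluing.

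The finite exceptional set $A$ should correspond to the transition scales in the lower central series of $N$: for each commutator layer $N_i/N_{i+1}$ there is a characteristic scale $a_i$ at which that layer begins to contribute appreciably to the volume. Within a multiplicative window around $a_i$, one direction of the anisotropic box is extremely thin relative to another, and the separated-net construction breaks down. Excluding the window $[a_i,2k a_i]$ guarantees that a single layer dominates the geometry on the range $[a_i \cdot 2k,\, a_{i+1}]$, restoring a clean box picture in which the net construction is uniform. Since the step and rank of $N$ are at most $C$, there are at most $C$ such transitions, giving $|A|\le C$.

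The main obstacle I expect is making the dependence on $k$ clean and uniform: the trade-off between $ck$ tubes of thickness $ck^{-1}m$ requires a quantitatively sharp net-packing statement in nilpotent Cayley graphs whose constants do not degrade with $k$, and the need to widen the excluded window to $[a,2ka]$ precisely captures the price of pushing $k$ upward. A secondary challenge is ensuring that the quasi-isometry from the nilpotent model back to $G$ does not destroy the disjointness of the translated tubes, which is where the $O(n)$ relator length from \cite{EHStructure} plays the essential role.
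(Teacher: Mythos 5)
Your high-level ingredients (finitary structure theory, the uniform finite presentation theorem of \cite{EHStructure}, a quasi-isometry transfer, and a bounded exceptional set of scales) are the right ones, but the core geometric construction you propose does not go through, and the role of \cite{EHStructure} is mischaracterized. First, the annular tubes: translating a single radial geodesic by a separated net of group elements cannot produce tubes \emph{connecting two prescribed crossings} $A$ and $B$ --- the translates $g_i\gamma_0$ start and end at translated points, not on $A$ and $B$, and ``pushing $A$ and $B$ into $S_{2m}^\infty$ and connecting them there'' gives at best one path, not $ck$ disjoint ones. The paper's mechanism is different: since $A$ and $B$ are $(m,3m)$ crossings, their images in the quotient Cayley graph meet the exposed sphere $(S_i^\infty)'$ for \emph{every} radius $i$ in an interval of length $\sim m$ (via \cref{prop:if_you_can_get_past_2n_you_can_get_away}); Tim\'ar's theorem (\cref{lem:short_generating_loops_yield_tubes}) then supplies, for each such $i$, a path between the two crossing points that stays in the thin shell $\bigcup_{x\in (S_i^\infty)'}B'_{2m_1}(x)$ with $m_1\approx m/2k$, and disjointness of the tubes follows because shells at radii separated by more than $\sim m_1$ are disjoint. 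Taking $\sim k$ well-separated radii gives the $ck$ tubes of thickness $\sim k^{-1}m$. No explicit nilpotent ``anisotropic box'' geometry or net-packing in transverse slabs is used anywhere, and the finitary theorems of Breuillard--Green--Tao and Tessera--Tointon do not deliver such a uniform box picture in any case.

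Second, the exceptional set. It is not true that all relators have length $O(n)$; what \cite{EHStructure} gives is that the number of dyadic scales $2^i\geq n$ at which $\llangle R_{2^{i+1}}\rrangle\neq\llangle R_{2^i}\rrangle$ is bounded by $C(\kappa,d)$. The set $A$ consists precisely of these finitely many ``new relator'' scales, and the excluded windows $[a,2ka]$ are there to guarantee that at scale $m$ the cycle space is generated by cycles of length at most $a(m)\leq m/2k$: this is exactly the input Tim\'ar's lemma needs to confine the connecting paths to shells of thickness $\sim m/k$, which is what lets $\sim k$ disjoint tubes fit in the annulus. Your explanation of $A$ via lower-central-series transition scales and degenerating box aspect ratios is a different (and unsubstantiated) mechanism; in particular it offers no route to the key quantitative point that the factor $2k$ in the excluded window is what buys the $k^{-1}m$ tube thickness.
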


In the second case, when there does \emph{not} exist such a scale $n$ with small tripling, we will prove that the desired plentiful tubes condition holds using random walks. 
More specifically, we will apply an estimate of \cite{MR3395463}, which can be thought of as a ``quantitative weak elliptic Harnack inequality'' for graphs of subexponential growth. Under our low-growth assumption, this inequality implies that two random walks on $G$ started at distance $n' \in [n^{1/3},n^{0.9}]$, say, can be coupled to coincide with good probability by the time they reach distance $n' (\log \op{Gr}(n'))^{O(1)}$. On the other hand, the assumption that the growth is rapidly increasing (in an at-least-five-dimensional fashion) lets us prove that two independent pairs of these coupled random walks are unlikely to have their tubes intersect. 
(This will be proven using fairly standard random walk techniques, most notably the isoperimetric inequality of Coulhon and Saloff-Coste \cite{MR1232845} and the heat kernel bounds resulting from this inequality together with the work of Morris and Peres \cite{morris2005evolving}.)
 Unfortunately these walks will have length of order $(n')^2(\log \op{Gr}(n'))^{O(1)}$, which is larger than we want by a factor of $n'$. This can be fixed by a simple coarse-graining argument, using the diffusivity of random walks on low-growth graphs, to replace the random walk by a shorter path with  essentially the same tube around it. 
We will use these techniques to prove the following proposition.
 %will first use the fast rate of change of growth to upper bound the probability that a random walk hits a small ball some distance away. Using this together with the results of \cite{MR3395463}, which can be thought of as providing a ``quantitative weak elliptic Harnack inequality'' under our low-growth assumption $n \in \mathscr{L}$, we will build our disjoint tubes from the trajectories of certain conditioned random walks. Here are the precise statements of the propositions for each case. Together these immediately imply \cref{prop:existence_of_interval_with_controlled_tubes}. 

\begin{prop}[Fast tripling and quasi-polynomial growth yield plentiful tubes] \label{prop:rw_conclusion}
     Let $G$ be an infinite connected unimodular transitive graph with vertex degree $d$, let $D,\lambda \geq 1$ and let $\eps>0$. There exist positive  constants $c=c(d,D,\eps)$ and $n_0=n_0(d,D,\lambda,\eps)$ such that 
       % Let $n \geq 1$, $D \in [1,\infty)$, $H \in (1,\infty)$, and $\lambda \in [20,\infty)$. 
       if $n\geq n_0$ satisfies
      \[
            \op{Gr}(m) \leq e^{(\log m)^D} \quad \text{and} \quad \op{Gr}(3 m) \geq 3^{5} \op{Gr}(3m) \qquad \text{ for every $n^{1-\eps} \leq m \leq n^{1+\eps}$}
      \]
      % Assuming that $n$ is sufficiently large with respect to $d,D,\lambda,H$, there is a constant $c(d,D,H) \in (0,1)$ such that
      then $G$ has $(c,\lambda)$-polylog-plentiful tubes at scale $n$.
      % Let $G$ be an infinite, connected, unimodular transitive graph with vertex degree $d$. There exist positive  constants $c=c(d)$ and $n_0=n_0(d)$ such that 
      %  % Let $n \geq 1$, $D \in [1,\infty)$, $H \in (1,\infty)$, and $\lambda \in [20,\infty)$. 
      %  if $n\geq n_0$, $1\leq r \leq n$, and there exists $\ell \geq n$ such that
      % \[
      %      \log \op{Gr}(\ell) \leq \frac{\ell}{2m} \quad \text{and} \quad \op{Gr}(3 m) \geq 3^{5} \op{Gr}(3m) \qquad \text{ for every $n_0 \leq m \leq \ell$}
      % \]
      % % Assuming that $n$ is sufficiently large with respect to $d,D,\lambda,H$, there is a constant $c(d,D,H) \in (0,1)$ such that
      % then $G$ has $(k,r, \ell (\log \op{Gr}(n))^C)$-plentiful tubes at scale $n$.
      % {[\color{red} Check assumptions again after finishing section.]}
\end{prop}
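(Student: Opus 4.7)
The plan is to follow the outline preceding the proposition: build each tube using a coupled pair of random walks between its endpoints, and use the fast-tripling hypothesis (effective dimension strictly greater than four) to ensure that the resulting tubes from independent couplings do not collide. Two analytic ingredients are needed. First, the Coulhon--Saloff-Coste isoperimetric inequality \cite{MR1232845} and the Morris--Peres evolving-set method \cite{morris2005evolving} together give a heat-kernel bound of the form $p_t(x,x) \lesssim \op{Gr}(\sqrt t)^{-1}$, which, combined with the hypothesis $\op{Gr}(3m) \geq 3^5 \op{Gr}(m)$ throughout $[n^{1-\eps}, n^{1+\eps}]$, is dominated by $t^{-\alpha/2}$ for some effective dimension $\alpha > 4$ depending only on $d$ and $\eps$. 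Summing gives Green's function estimates controlling the expected number of vertices in the intersection of two independent random walk traces by a negative power of their starting distance. Second, the quantitative weak elliptic Harnack inequality of \cite{MR3395463}, applied under the subexponential growth hypothesis $\op{Gr}(m) \leq e^{(\log m)^D}$, provides a coupling of two simple random walks started at vertices $u,v$ with $d(u,v) = n'$ that coalesce by time $T = (n')^2 (\log n)^{C(D)}$ with probability $1 - (\log n)^{-\Omega(1)}$, while each walk's trace remains in $B(u, n'(\log n)^{C(D)})$ by standard diffusivity.

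To build the radial tubes, I select $k = (\log n)^{c\lambda}$ pairs of endpoints $(u_i,v_i)$ with $u_i \in S_n$, $v_i \in S_{4n}$, arranged so that the sets $\{u_i\}$ and $\{v_i\}$ are each pairwise separated by at least $3r = 3n(\log n)^{-\lambda/c}$. This is possible because $G$ is not one-dimensional, so the spheres at these scales contain a sufficiently large collection of well-separated points (a quantitative statement extractable from the structure theory developed in this section and \cite{EHStructure}). For each $i$, run the coupling between a walk from $u_i$ and one from $v_i$ (iterating across scales within $[n^{1-\eps},n^{1+\eps}]$ as needed to bridge a distance of order $n$), producing a connected random set $\Pi_i$ joining $u_i$ and $v_i$. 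The annular tubes case is handled identically, with the endpoints chosen along the two given $(n,3n)$-crossings.

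Disjointness of the tubes $B_r(\Pi_i)$ for $i\ne j$ is controlled using the heat kernel estimate: by Markov's inequality and the Green's function bound, the probability that $B_r(\Pi_i) \cap B_r(\Pi_j) \neq \emptyset$ is bounded by a polylog factor times $d(u_i,u_j)^{-(\alpha-4)}$, and the spacing of the endpoints then allows a union bound over all $\binom{k}{2}$ pairs. Choosing $c=c(d,D,\eps)$ small enough that $k$, $r$, and the polylogarithmic losses are all dominated by the $\alpha-4 > 0$ dimensional gap, we conclude that with positive probability all $k$ fattened traces are pairwise disjoint. Finally, each $\Pi_i$ has length of order $T$, which is larger than $\ell = n(\log n)^{\lambda/c}$ by a factor of $n'$; reduce it by coarse-graining, sampling the walk at equally spaced times with gap $s$ and connecting consecutive samples by $G$-geodesics. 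Diffusivity ensures each such geodesic has length $\lesssim \sqrt s \cdot (\log n)^{O(1)}$, and choosing $s$ so that $\sqrt s \cdot (\log n)^{O(1)} \leq r/2$ keeps the resulting path $\gamma_i$ inside $B_{r/2}(\Pi_i)$ (so disjointness is preserved after shrinking $r$ by a constant absorbed into the choice of $c$), while bringing $\op{len}(\gamma_i)$ down to at most $n(\log n)^{\lambda/c}$.

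The main obstacle is quantitative bookkeeping. The conclusion requires a single constant $c=c(d,D,\eps)$, \emph{independent of $\lambda$}, for which the number, thickness, and length of the tubes (each depending on $\lambda$) are all achievable simultaneously. The proof must therefore reconcile the fixed dimensional gap $\alpha-4$ (depending only on $d,D,\eps$) with several competing $(\log n)^{O(\lambda)}$ factors coming from the number of tubes, the failure probability of the coupling, the coarse-graining length/thickness tradeoff, and the separation between endpoints. The flexibility $\eps>0$ in the hypothesis gives enough scale slack to run the coupling at a convenient intermediate $n' \leq n$, and chaining couplings at successive scales within $[n^{1-\eps},n^{1+\eps}]$ is the mechanism that lets us connect endpoints at distance $n$ while keeping the hypotheses valid throughout.
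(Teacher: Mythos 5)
Your plan is essentially the paper's proof: coupled lazy walks whose coalescence is guaranteed by the entropy/total-variation inequality of \cite{MR3395463} under the quasi-polynomial growth bound, intersection control for the fattened traces via Coulhon--Saloff-Coste plus Morris--Peres heat-kernel decay exploiting the $\kappa=5>4$ tripling gap, and a coarse-graining step to cut the length from $\sim n^2$ down to $n(\log n)^{O(D\lambda)}$ (the paper's ``ironing'' is spatial rather than temporal, but this is cosmetic), with exactly the bookkeeping tension you identify resolved by taking the thickness $r=n(\log n)^{-20D\lambda}$ so that all polylog losses are beaten and $c$ depends only on $d,D,\eps$. Two details of your plan should be corrected rather than carried out as written. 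First, no chaining of couplings across scales is needed or really meaningful (once the walks coalesce there is nothing left to chain): a single coupling at time $t=n^2(\log n)^{2D}$ already brings the total variation distance below $1/4$ for starting points at distance $O(n)$, and the $\eps$-slack in the hypotheses is used only to ensure the growth assumptions hold at the slightly smaller scale $r$ and the slightly larger scale $t^{1/2}$. Second, the well-separated endpoints do not require any structure theory (and the proposition does not even assume $G$ is not one-dimensional): for annular tubes one picks $a_i\in A$, $b_i\in B$ in interleaved thin shells of width $n/2k$, so separation follows from the triangle inequality applied to distances from $o$, and for radial tubes the paper sidesteps the issue entirely by launching \emph{single} walks from equidistant points on a geodesic and using the heat-kernel bound to show they escape $B_{3n}$, rather than coupling pairs across the annulus. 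With those repairs the argument goes through as in the paper; the paper also replaces your union bound over all $\binom{k}{2}$ pairs by a Markov-inequality-plus-greedy-selection step, but with the stated parameters either version closes.
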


%in fact i think c in the second prop only depends on $d$.

We prove \cref{prop:structure_theory_conclusion} in \cref{subsection:structure_theory} and \cref{prop:rw_conclusion} in \cref{subsection:rws}. Before doing this, we note that \cref{prop:existence_of_interval_with_controlled_tubes} follows easily from these two propositions.

\begin{proof}[Proof of \cref{prop:existence_of_interval_with_controlled_tubes} given \cref{prop:structure_theory_conclusion,prop:rw_conclusion}]

Let $c_1(d,5)$, $C(d,5)$, and $n_1(d,5)$ be the constants from \cref{prop:structure_theory_conclusion} with $\kappa := 5$. Let $c_2(d,D,0.1)$ and $n_2(d,D,\lambda,0.1)$ be the constants from \cref{prop:rw_conclusion} with $\eps := 0.1$. We may assume that $c_1 \vee c_2 \leq 1/2$ and $C \geq 2$. Suppose that some $n\in \mathscr{L}(G,D)$ is large with respect to $d,D,\lambda$, satisfying in particular $ n\geq (n_1 \vee n_2)^3$. If there exists $n' \in [n^{1/3},n^{10/11}]$ such that $\op{Gr}(3n') \leq 3^{5} \op{Gr}(n')$, then we may apply \cref{prop:structure_theory_conclusion} with $k := (\log n)^{\lambda}$ to obtain an interval of scales $[m_1,m_2] \subseteq [n^{1/3},n]$ with $m_2\geq m_1^{1.1}$ on which $G$ has $(1/(2C),\lambda)$-polylog-plentiful tubes on every scale. Otherwise, since each $n' \in [n^{0.4},n^{0.8}]$ satisfies $[(n')^{0.9},(n')^{1.1}] \subseteq [n^{1/3},n^{10/11}]$, we can apply 
\cref{prop:rw_conclusion} to each such scale to obtain that $G$ has $(c_1,\lambda)$-plentiful tubes at every scale in the interval $[n^{0.4},n^{0.8}]$. This is easily seen to imply the claim in either case. 
%{\color{red}[Needs to be rewritten.]} Take $\kappa=5$ and $\eps=0.1$. Let $c=c(d,D)$ be the minimum of the two constants $c$ from \cref{prop:structure_theory_conclusion,prop:rw_conclusion}, let $n_0=n_0(d,D,\lambda)$ be the maximum of the two constants $n_0$ from the same propositions, and suppose that $n\geq n_0^3$ is such that $n\in \mathscr{L}(G,D)$. If there exists $n_1 \in [n^{1/3},n^{10/11}]$ such that $\op{Gr}(3n_1) \leq 3^{5} \op{Gr}(n_1)$, then we may apply \cref{prop:structure_theory_conclusion} to obtain a suitable interval of scales $[m_1,m_2] \subseteq [n^{1/3},n]$ with $m_2\geq m_1^{1+c}$ on which $G$ has $(c,\lambda)$-polylog-plentiful tubes.  Otherwise, since each $n_2 \in [n^{0.4},n^{0.8}]$ satisfies $[n_2^{0.9},n_2^{1.1}] \subseteq [n^{1/3},n^{10/11}]$, we can apply \cref{prop:rw_conclusion} to each such scale to obtain that $G$ has $(c,\lambda)$-plentiful tubes at every scale in the interval $[n^{0.4},n^{0.8}]$. This is easily seen to imply the claim in either case. 
% allows us to take $[(n'),(n')^{1+c}] :=  [(n^{1.15}),(n^{1.15})^{1+0.08}]$ because every element $n_0$ in this interval satisfies $[n_0^{0.9},n_0^{2.1}] \subseteq [n,n^{2.7}]$.
\end{proof}

\subsection{Using the structure theory of approximate groups} \label{subsection:structure_theory}

The goal of this subsection is to prove \cref{prop:structure_theory_conclusion}. Let us first give some relevant context.  Given a graph $G$, a vertex $v \in V(G)$, and a radius $r \geq 0$, we define the \textbf{exposed sphere} $S_{r}^{\infty}(v)$ to be the set of vertices $u \in S_r(v)$ such that there exists an infinite self-avoiding path started at $u$ that never returns to $B_r(v)$ after its first step. When $G$ is transitive, we set $S_r^{\infty} := S_r^{\infty}(o)$. In \cite{contreras2022supercritical}, the authors applied results of Timar \cite{timar2007cutsets} to obtain geometric control of exposed spheres in transitive graphs of polynomial growth using the fact that (by Gromov's theorem \cite{MR623534} and Trofimov's theorem \cite{MR735714}) these graphs are quasi-isometric to Cayley graphs of nilpotent groups, which are finitely presented.

\medskip

In this section, we will run a similar argument to build our disjoint tubes under the hypotheses of \cref{prop:structure_theory_conclusion}. While these hypotheses suffice to guarantee polynomial growth by the results of \cite{breuillard2011structure,MR4253426} (discussed in detail below), a key technical difference between our analysis and that of \cite{contreras2022supercritical} is that we must run all our arguments in a more finitary, quantitative way;
 % we 
% The key difference between the two works is that we have to do everything more quantitatively because 
we need to build our disjoint tubes at a scale not much larger than the scale where we are assumed to witness relative polynomial growth.  This requires us to engage more deeply with the structure theory of approximate groups than was necessary in \cite{contreras2022supercritical}. Indeed, rather than using Gromov's theorem and Trofimov's theorem, we will instead apply finitary versions of these theorems due to Breuillard, Green, and Tao \cite{breuillard2011structure} and Tessera and Tointon \cite{MR4253426}. Moreover, we will also apply a \emph{new} structure theoretic result proven in our paper \cite{EHStructure}, which can be thought of as a ``uniform'' version of the statement that groups of polynomial growth are finitely presented.

\begin{rk}
It will be convenient in this section to let $\Gamma$ denote a group. This will not conflict with the $\Gamma$ used to denote a set of disjoint tubes, which we will not use in this section.
\end{rk}

% \medskip

% Throughout this section we use the notation that for a set of elements $S$ in a group, $S^n := \{ g_1 g_2 \ldots g_n : g_i \in S \text{ for all }i \}$ for all $n \in \mathbb N$, $S^0 := \{ \op{id}\}$, and $S^n := S^{\lfloor n \rfloor}$ for all $n \in (0,\infty)$.

\medskip

\noindent
\textbf{Structure theory.} We now state the main structure-theoretic results we will use after reviewing some relevant definitions. We begin with Tessera and Tointon's finitary structure theorem for vertex-transitive graphs of low growth \cite{MR4253426}; this theorem builds on Breuillard, Green, and Tao's  structure theorem for approximate groups \cite{breuillard2011structure} as well as Carolino's extension of this theorem to locally compact approximate groups \cite{carolino2015structure}.
Recall that a function $\phi:V_1\to V_2$ between the vertex sets of two graphs $G_1=(V_1,E_1)$ and $G_2=(V_2,E_2)$ is said to be an $(\alpha,\beta)$\textbf{-quasi-isometry} (a.k.a.\ \textbf{rough isometry}) if 
\[\alpha^{-1} d(x,y) -\beta \leq d(\phi(x),\phi(y)) \leq \alpha d(x,y) +\beta\]
for every $x,y\in V_1$ and every vertex $z\in V_2$ is within distance at most $\beta$ of $\phi(V_1)$; note that the second property holds automatically if $\phi$ is surjective.
Given a transitive graph $G$ and a subgroup $H \subseteq \Aut(G)$, we write $G/H$ for the associated quotient graph. If $H$ is a normal subgroup of $\Aut(G)$ then the action of $\Aut(G)$ on $G$ descends to a transitive action of $\Aut(G)$ on $G/H$ (see \cite[Section 3]{MR4253426}), and we write $\Aut(G)_{G/H}$ for the image of $\Aut(G)$ in $\Aut(G/\Gamma)$ induced by this action. Note that we view $\Aut(G)$ as a \emph{topological} group where convergence is given by pointwise convergence (see \cite[\S 4]{MR4253426}).

% IfGisanothersubgroupofAut(Γ),wesaythatthequotientgraphΓ/Hisinvariantunderthe actionofGonΓifforeveryg∈Gandx∈Γthereexistsy∈ΓsuchthatgH(x)=H(y).Wewill seeinLemmas3.1and3.2thatifHisnormalisedbyGthenΓ/Hisinvariantundertheactionof G,andtheactionofGonΓdescendstoanactionofGonΓ/H.WewriteGΓ/Hfortheimageof GinAut(Γ/H)inducedbythisaction;thusGΓ/HisthequotientofGbythenormalsubgroup {g∈G:gH(x)=H(x)foreveryx∈Γ}.

 \begin{thm}[Finitary structure theory of transitive graphs of polynomial growth]
 \label{thm:tessera-tointon}
For each $K\geq 1$ there exist constants $n_0=n_0(K)$ and $C=C(K)$ such that the following holds. Let $G$ be a connected (locally finite) (vertex-)transitive graph with a distinguished vertex $o$, and suppose that there exists $n \geq n_0$ such that $\op{Gr}(3n) \leq K\op{Gr}(n)$. 
% Let $\Gamma < \op{Aut} (\Gammab)$ be a transitive subgroup.
 Then there exists a compact normal subgroup $H \triangleleft \op{Aut}(G)$ such that:

\begin{enumerate}
    \item Every fibre of the projection $\pi:G \rightarrow G/H $ has diameter at most $Cn$.
    
    \item $\Aut(G)_{G/H }$ can be canonically identified with $\Aut(G)/H $.
    
    \item $\Aut(G)/H$ has a nilpotent normal subgroup $N$ of rank, step and index at most $C$.
    
    \item The set $S = \{g \in \Aut(G)_{G/H } : d_{G/H } (g(H o), H o) \leq 1\}$ is a finite symmetric generating set for $\Aut(G)/H$.
    
    \item Every vertex stabiliser of the action of $\Aut(G)/H$ on $G/H $ has cardinality at most $C$.
    
    % \item There is a $(1, Cn)$-quasi-isometry $G \rightarrow C(G_{G/H } , S)$.
    \item If for each $v\in G$ we let $g_v\in \Aut(G)/H$ be such that $g_v(\pi(e))=\pi(v)$ then $v\mapsto g_v$ is a $(1,Cn)$-quasi-isometry from $G$ to $\op{Cay}(\Aut(G)/H,S)$.
    \item If $\op{Gr}'$ denotes the growth function of the Cayley graph $\op{Cay}(\Aut(G)/H,S)$ then
    \[
    \frac{\op{Gr}(m_2)}{C\op{Gr}(m_1+Cn)} \leq  \frac{\op{Gr}'(m_2)}{\op{Gr}'(m_1)} \leq \frac{C\op{Gr}(m_2+Cn)}{\op{Gr}(m_1)}
    \]
    for every $m_1,m_2 \in \mathbb{N}$.
    \item The growth bound $\op{Gr}(m_2) \leq C (m_2/m_1)^C \op{Gr}(m_1)$ holds for every $m_2\geq m_1 \geq m$.
\end{enumerate}
\end{thm}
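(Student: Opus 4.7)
This theorem is essentially a restatement of the main results of Tessera--Tointon \cite{MR4253426}, whose proof builds on the Breuillard--Green--Tao structure theorem for approximate groups \cite{breuillard2011structure} and Carolino's extension to the locally compact setting \cite{carolino2015structure}. Accordingly, my plan is to proceed largely by citation, with the work lying in extracting the precise statement above from the results in the literature. The overall strategy is to convert the geometric tripling hypothesis into an approximate group statement about $\op{Aut}(G)$, apply the structure theorem there, and then translate back to geometric conclusions about $G$ via the canonical quotient $G/H$.

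The first step is to leverage the hypothesis $\op{Gr}(3n)\leq K\op{Gr}(n)$ to produce, inside the locally compact group $\Gamma=\op{Aut}(G)$ (equipped with the topology of pointwise convergence), a set of group elements that forms an $O_K(1)$-approximate group at scale $n$. Concretely, one takes the set of $\gamma\in\Gamma$ that move $o$ by at most $Cn$ for an appropriate constant $C$, up to modding out by the pointwise stabiliser of a large ball. The small tripling of the ball $B_n(o)$ translates to small tripling of this set, so Carolino's locally compact structure theorem produces a compact open normal subgroup of an ambient group together with a virtually nilpotent discrete quotient having controlled rank, step, and index.

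Once this is in place, I would define $H$ to be the compact normal subgroup of $\op{Aut}(G)$ produced by the structure theorem, and verify the enumerated properties in turn. Properties (1) and (2) follow from the general theory of quotients by compact normal subgroups together with the fact that $H$ is contained in a bounded-diameter stabiliser-like set (so fibres have bounded diameter at most $Cn$). Property (3) is the direct output of BGT/Carolino, and (4) follows since $G/H$ is locally finite and connected. Property (5) uses that finite subgroups of virtually nilpotent groups with bounded rank, step and index have bounded order. The quasi-isometry in (6) is an instance of the standard \v{S}varc--Milnor-type argument comparing $\Gamma$ to its orbit $\Gamma\cdot o$, applied after the projection $\Gamma\to\Gamma/H$. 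Properties (7) and (8) follow from (6) together with the Bass--Guivarc'h polynomial growth formula for nilpotent groups (noting that polynomial growth with bounded exponent then transfers back to $G$ up to the fibre-diameter error $Cn$).

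The main obstacle, and the core content of \cite{MR4253426}, is to carry out the first step in a \emph{finitary} manner, so that the constants $C$ and $n_0$ depend only on $K$ and not on any asymptotic information about $G$. This is precisely where the BGT/Carolino machinery is essential: the earlier Gromov--Trofimov theorems only give asymptotic information, whereas here one needs the conclusions to kick in at any single scale $n\geq n_0(K)$ satisfying the tripling hypothesis. A secondary technical point is ensuring that the stabilisers in (5) are genuinely bounded (rather than just finite), which requires a little care since a priori the vertex stabilisers in $\op{Aut}(G)$ can be large compact groups; the bound follows because after quotienting by $H$ these stabilisers embed into a virtually nilpotent group of bounded rank, step, and index.
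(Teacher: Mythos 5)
Your proposal takes essentially the same route as the paper: the paper also proves this theorem purely by citation to Tessera--Tointon, deducing Items 1--5 from \cite[Theorem 2.3]{MR4253426}, Item 6 as implicit in their proof, Item 7 from \cite[Proposition 9.1]{MR4253426}, and Item 8 from Item 7 together with \cite[Theorem 1.1]{MR3439705}. The one caveat concerns Item 8: the asymptotic Bass--Guivarc'h formula you invoke does not by itself yield a \emph{uniform} constant $C(K)$ valid at every scale $m_2\geq m_1\geq n$, since the implied constants in $\op{Gr}'(m)\asymp m^{d}$ depend on the particular nilpotent group rather than only on its rank, step, and index; one genuinely needs a finitary persistence-of-polynomial-growth statement such as \cite[Theorem 1.1]{MR3439705}, transferred back to $G$ via the relative growth comparison of Item 7.
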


% \begin{proof}[Proof of \cref{thm:tessera-tointon}]
\begin{proof}[Proof of \cref{thm:tessera-tointon}]
% The statement that $\op{Gr}(a)\leq C(a/b)^C\op{Gr}(b)$ for every $a \geq b \geq n$ \cite[Theorem 1.1]{MR3439705}. 
The first five items of the theorem are essentially equivalent to \cite[Theorem 2.3]{MR4253426} (although that theorem is slightly more general as it allows one to replace $\Aut(G)$ with any other transitive group of automorphisms of $G$).
In their original statement of the theorem, Tessera and Tointon do not explicitly identify the rough isometry from $G$ to $\op{Cay}(\Aut(G)/H,S)$, but the fact we can take it to be of the form above is implicit in their proof. 
% (Note that such a function $G \to \op{Cay}(\Aut(G)/H,S)$ always exists since $\Aut(G)/H$ acts transitively on $G/H$.)
 Item 7 is implied by \cite[Proposition 9.1]{MR4253426}, while Item 8 follows from Item 7 together with \cite[Theorem 1.1]{MR3439705}.
\end{proof}

\begin{rk}
\label{remark:S_bounded}
The set $S$ has size equal to the union of the stabilizers of the vertices $\{u \in G/H : u \sim v\}$, so that $|S|\leq C(\deg(o)+1)$.
% Why? If $x_0\sim y_0$ satisfy $x_0\in H(x)$ and $y_0\in H(y)$,  $x_1 \in H(x)$, $h(x_0)=x_1$, $h(y_0)\sim h(x_0)$ 
\end{rk}

\begin{rk}
For $K=3^\kappa$ with $\kappa$ an integer, the growth bound $\op{Gr}(m_2)\leq C(m_2/m_1)^C\op{Gr}(m_1)$ can be improved to the sharp bound $\op{Gr}(m_2)\leq C(m_2/m_1)^\kappa\op{Gr}(m_1)$ under the stronger assumption that the graph satisifes an \emph{absolute} growth bound of the form $\op{Gr}(n)\leq \eps_K n^{\kappa+1}$ at a sufficiently large scale and for a sufficiently small constant $\eps_K>0$ \cite[Corollary 1.5]{MR4253426}. This strong bound is not implied by the small-tripling condition (which is the relevant condition for our applications) as shown in \cite[Example 1.11]{MR3658282}. (Indeed, this example suggests that the small-tripling condition $\op{Gr}(3m)\leq 3^\kappa \op{Gr}(m)$ should not imply any bound on the limiting growth dimension stronger than $O(\kappa^2)$. Optimal bounds on growth implied by small tripling will be established in forthcoming work of Tessera and Tointon.)
\end{rk}

% \begin{thm}
% \label{thm:BGT_TT_metric}
% For each $K\geq 1$ there exist constants $r_0=r_0(K)$ and $C=C(K)$ such that the following holds. Let $G$ be a group with finite generating set $S$, and suppose that there exists $r \geq r_0$ such that $\op{Gr}(3r) \leq K\op{Gr}(r)$. Then $\op{Gr}(mr) \leq m^C \op{Gr}(r)$ for every $m\geq 3$ and there exists a finite normal subgroup $Q \triangleleft G$ such that:

% \begin{enumerate}
%     \item Every fibre of the projection $\pi:G \rightarrow G/Q$ has diameter at most $Cr$.
    
%     % \item $G_{H/Q} = G/Q$.
    
%     \item $G/Q$ has a nilpotent normal subgroup $N$ of rank, step and index at most $C$.
    
%     % \item The set $S = \{g \in G_{\Gamma/Q} : d_{\Gamma/Q} (g(Q(e)), Q(e)) \leq 1\}$ is a finite symmetric generating set for $G_{\Gamma/Q}$.
    
%     % \item Every vertex stabiliser of the action of $G_{\Gamma/Q}$ on $\Gamma/Q$ has cardinality at most $C$.
    
%     % \item There is a $(1, Cn)$-quasi-isometry $\Gamma \rightarrow C(G_{\Gamma/Q} , S)$.
%     % \item The projection $x\mapsto \pi(x)$ is a $(1,Cr)$-quasi-isometry from $\operatorname{Cay}(G,S)$ to $\operatorname{Cay}(G/Q,\pi(S))$.
% \end{enumerate}
% \end{thm}

% \medskip
\noindent \textbf{Uniform finite presentation.}
We next state our theorem on uniform finite presentation proven in \cite{EHStructure}.
Given a set of elements $A$ in a group $\Gamma$, we define $\llangle A \rrangle$ to be the normal subgroup of $\Gamma$ generated by $A$ and define $\bar A=A\cup\{\mathrm{id}\}\cup A^{-1}$. Consider a group $\Gamma$ with a finite generating set $S$, let $F_S$ be the free group on $S$ and let $\pi : F_S \to G$ be the associated group homomorphism with kernel $R$. Since $\Gamma \cong F_S / R$, we can think of the sequence of quotients $(\Gamma_r)_{r\geq 1}$ defined by $\Gamma_r:=F_S / \llangle \bar S^r \cap R \rrangle$  as being finitely presented approximations to $\Gamma$, since
$\Gamma_r$ admits a finite presentation $\Gamma_r=\langle S \mid R_r \rangle=F_S/\llangle R_r\rrangle$ with $R_r = \bar S^r \cap R \subseteq \bar S^r$.
 These approximations have the property that the Cayley graphs $\op{Cay}(\Gamma_r,S)$ and $\op{Cay}(\Gamma,S)$ have isomorphic $((r/2)-1)$-balls. We record this fact in the following lemma, which is taken from \cite[Lemma 5.6]{EHStructure}. (Although it is stated there only in the case that $r$ is a power of 2, the same proof works for arbitrary $r$.)

\begin{lem}\label{lem:small_presentation_means_balls_look_similar}
Let $\Gamma$ be a group with a finite generating set $S$. For all $i \geq 1$, the quotient map $\Gamma_{r} \to \Gamma$ induces a map of the associated Cayley graphs that restricts to an isomorphism between the balls of radius $\lfloor r/2\rfloor-1$.
\end{lem}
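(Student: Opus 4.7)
The plan is to analyze the canonical group homomorphism $\pi_r : \Gamma_r \to \Gamma$ directly. Because $\llangle \bar S^r \cap R \rrangle \subseteq R$, the quotient $F_S / \llangle \bar S^r \cap R \rrangle \to F_S / R$ is well-defined and sends each generator $s \in S \subseteq \Gamma_r$ to the corresponding generator $s \in S \subseteq \Gamma$. In particular $\pi_r$ descends to a graph map of Cayley graphs that is non-expanding, and it is surjective on the ball of every radius $k$ since any geodesic factorization $h = s_1 \cdots s_\ell$ in $\Gamma$ with $\ell \leq k$ lifts tautologically to a word of length $\leq k$ in $\Gamma_r$.

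The content of the lemma lies in proving injectivity of $\pi_r$ on the ball of radius $m+1$, where $m = \lfloor r/2 \rfloor - 1$. Suppose $g_1, g_2 \in \Gamma_r$ satisfy $|g_1|_{\Gamma_r}, |g_2|_{\Gamma_r} \leq m+1$ and $\pi_r(g_1) = \pi_r(g_2)$. Choose geodesic representatives $w_1, w_2 \in F_S$ with $|w_i| \leq m+1$. Since $\pi_r(g_1) = \pi_r(g_2)$, the product $w_1 w_2^{-1}$ lies in $R$, and its free-group length is at most $2(m+1) \leq r$. Hence $w_1 w_2^{-1} \in \bar S^r \cap R \subseteq \llangle \bar S^r \cap R \rrangle$, so $g_1 = g_2$ in $\Gamma_r$.

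Combining the two points, $\pi_r$ restricts to a bijection $B^{\Gamma_r}_m \to B^\Gamma_m$. Forward edge-preservation is automatic from $\pi_r$ being a group homomorphism. For the reverse direction, if $h_1, h_2$ are adjacent images with $h_2 = s h_1$ and $g_1, g_2 \in B^{\Gamma_r}_m$ are their preimages, then $s g_1$ and $g_2$ both lie in $B^{\Gamma_r}_{m+1}$ and share the same image $h_2$; injectivity at radius $m+1$ (the reason for the ``$-1$'' in $\lfloor r/2\rfloor - 1$) forces $g_2 = s g_1$, so $g_1 \sim g_2$ in $\op{Cay}(\Gamma_r, S)$. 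This completes the isomorphism on balls of radius $m$.

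The argument has no genuinely hard step; the only delicate point is the elementary bookkeeping that forces the ``$-1$'': to check that the inverse of our bijection preserves adjacency, we must verify injectivity one step past the radius at which we claim the graph isomorphism, which in turn consumes one unit of budget in the triangle inequality $|w_1| + |w_2| \leq r$. The heart of the matter is the simple observation that any element of $R$ produced by a pair of short representatives has length bounded by the sum of those representative lengths, so relators shorter than $r$ are already killed in $\Gamma_r$.
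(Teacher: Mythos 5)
Your proof is correct and complete: it is the standard argument (non-expansion plus surjectivity of the lift, injectivity one radius further out via the length bound $2\lfloor r/2\rfloor\le r$ placing $w_1w_2^{-1}$ in $\bar S^r\cap R$, and then edge-preservation in both directions), which is exactly what the paper defers to \cite[Lemma~5.6]{EHStructure}. You also correctly identify the reason for the ``$-1$'' in the radius, namely that reverse edge-preservation consumes one extra unit of injectivity budget.
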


% (The second property is trivial, while we give a full proof of the first in \cref{lem:small_presentation_means_balls_look_similar}.) 
% With this notation in hand, 
The main result of \cite{EHStructure} can be stated as follows:
% We say that the pair $(\Gamma,S)$ has a \textbf{new relation} at scale $k$ if $\llangle \bar S^{2^k} \cap R \rrangle \neq \llangle \bar S^{2^{k+1}} \cap \ker \pi \rrangle $, so that the Cayley graphs $\Cay(\Gamma_{2^{k}},S)$ are 
% Here is the main result from our companion paper, followed by an elementary lemma explaining how to use it.

\begin{thm}[{\cite{EHStructure}, Theorem 1.1}]
\label{thm:induced_subgroup_doesnt_keep_changing}
For each $K,d<\infty$ there exist constants $n_0=n_0(K)$ and $C=C(K,d)$ such that if $\Gamma$ is a group and $S$ is a finite generating set for $\Gamma$ with $|S|\leq d$ whose growth function $\operatorname{Gr}$ satisfies $\operatorname{Gr}(3n)\leq K \operatorname{Gr}(n)$ for some integer $n\geq n_0$ then
\[
\#\Bigl\{k\in \mathbb{N} : k \geq \log_2 n \text{ and $\llangle R_{2^{k+1}} \rrangle \neq  \llangle  R_{2^k} \rrangle $} \Bigr\}\leq C.
\]
 % there exist at $C$ scales $k\geq \log_2 (mn)$ on which $G$ has a new relation.
\end{thm}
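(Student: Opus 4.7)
The plan is to run a monovariant argument along the tower of finitely presented approximations $\Gamma_{2^k} \twoheadrightarrow \Gamma_{2^{k+1}} \twoheadrightarrow \cdots \twoheadrightarrow \Gamma$, using the finitary structure theorem to equip each $\Gamma_{2^k}$ with a virtually nilpotent structure whose complexity is bounded in terms of $K$ and $d$ and strictly decreases at every nontrivial stage.

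First I would transfer the small-tripling hypothesis down the tower. By \cref{lem:small_presentation_means_balls_look_similar}, the surjection $\Gamma_{2^k}\to\Gamma$ restricts to a graph isomorphism between the balls of radius $2^{k-1}-1$ in the respective Cayley graphs, so once $2^{k-1}-1\geq n$ the growth function of $\op{Cay}(\Gamma_{2^k},S)$ agrees with that of $\op{Cay}(\Gamma,S)$ up to radius $n$ and in particular still satisfies $\op{Gr}(3n)\leq K\op{Gr}(n)$. I would then apply \cref{thm:tessera-tointon} to each $\op{Cay}(\Gamma_{2^k},S)$ separately, obtaining a compact normal subgroup $H_k$ and a nilpotent normal subgroup $N_k$ of the associated quotient whose rank, step, and index are all bounded by some $C'=C'(K,d)$ independent of $k$. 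Each $\Gamma_{2^k}$ is thus virtually nilpotent with uniformly controlled invariants.

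The core of the argument is to assign to each $\Gamma_{2^k}$ a complexity $\Phi(\Gamma_{2^k})$ taking values in a finite set of size at most $C(K,d)$, in such a way that $\Phi$ strictly decreases whenever $\llangle R_{2^{k+1}}\rrangle\neq \llangle R_{2^k}\rrangle$. A natural first candidate is the pair consisting of the Hirsch length of $N_k$ and the index $[\Gamma_{2^k}:N_k]$, ordered lexicographically: the Hirsch length is bounded by (rank)$\cdot$(step)$\leq (C')^2$ and the index is bounded by $C'$, and a nontrivial quotient that kills infinite-order elements of $N_k$ strictly drops the Hirsch length, while one that identifies cosets of $N_k$ strictly drops the index. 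Given such a monovariant, the total number of strict decreases across the tower is bounded by the size of the target poset, which is $C(K,d)$.

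The main obstacle is that a nontrivial quotient of a virtually nilpotent group can leave both of these invariants unchanged, for instance when it merely kills a finite normal piece contained inside $N_k$ that is invisible to both the Hirsch length and the index. To handle this case I would enrich the monovariant with torsion data of the lower central series of $N_k$: embedding $N_k$ into its rational Mal'cev completion (a connected nilpotent Lie group of bounded dimension) and recording the order of the torsion subgroup of each successive abelian quotient in the lower central series, which under the bounds from \cref{thm:tessera-tointon} is itself bounded in terms of $K$ and $d$. A careful bookkeeping should then show that every nontrivial quotient in the tower strictly decreases this enriched invariant, yielding the claimed uniform bound $C=C(K,d)$ on the number of scales $k\geq \log_2 n$ at which $\llangle R_{2^k}\rrangle$ changes.
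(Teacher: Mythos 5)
The paper does not actually prove this statement: it is imported verbatim as Theorem 1.1 of the companion paper \cite{EHStructure}, so there is no in-paper argument to compare against. Judged on its own merits, your proposal has a genuine gap at its central step.

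The setup is fine: the surjections do run $\Gamma_{2^k}\twoheadrightarrow\Gamma_{2^{k+1}}\twoheadrightarrow\cdots\twoheadrightarrow\Gamma$, and \cref{lem:small_presentation_means_balls_look_similar} does transfer the tripling hypothesis to $\Gamma_{2^k}$ once $2^{k-1}-1\geq 3n$, so each of these groups is virtually nilpotent with rank, step, and index of the nilpotent subgroup bounded in terms of $K$ and $d$. The Hirsch length is then a legitimate non-increasing monovariant, and it strictly drops exactly when a kernel is infinite. But the entire difficulty of the theorem is concentrated in the case of \emph{finite} kernels, and your fix for that case rests on a false premise: the torsion of the nilpotent part is \emph{not} bounded in terms of $K$ and $d$. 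The group $\Z\times\Z/2^M\Z$ with its two standard generators satisfies $\op{Gr}(3n)\leq 9\,\op{Gr}(n)$ for all $n\leq 2^M/100$, is abelian of rank $2$ and index $1$, yet has torsion subgroup of order $2^M$ with $M$ arbitrary. Consequently your ``enriched invariant'' does not take values in a set of size $C(K,d)$, and the chain of surjections $\Z\times\Z/2^M\Z\twoheadrightarrow\Z\times\Z/2^{M-1}\Z\twoheadrightarrow\cdots$ shows that an abstract tower of uniformly-bounded-complexity virtually nilpotent groups really can have unboundedly many non-injective surjections. So no monovariant depending only on the abstract isomorphism types of the $\Gamma_{2^k}$ can work.

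What your argument never uses — and what any correct proof must exploit — is the specific structure of the kernels in this particular tower: the kernel of $\Gamma_{2^k}\to\Gamma$ is normally generated by images of words of length at most $2^{k+1}$, while by the ball isomorphism it intersects the ball of radius $2^{k-1}-1$ of $\Gamma_{2^k}$ trivially, and these two scales grow together with $k$. It is this tension (finite normal subgroups that are simultaneously ``short to write down'' and ``far from the identity'', at geometrically growing scales) that rules out the $\Z\times\Z/2^{M-k}\Z$-type degeneration, and it is absent from your proposal. To repair the argument you would need a lemma controlling finite normal subgroups of a bounded-complexity virtually nilpotent group that avoid a ball of radius comparable to the word length of their normal generators; as written, the proposal proves nothing in the finite-kernel case.
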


For our purposes, the main output of \cref{thm:tessera-tointon,thm:induced_subgroup_doesnt_keep_changing} is that if the small tripling condition $\operatorname{Gr}(3n)\leq K \operatorname{Gr}(n)$ holds at some sufficiently large $n$, then at ``most'' scales $r\geq n$ the graph ``looks like'' the Cayley graph of a finitely presented group with relations generated by words of length much smaller than $r$.
 % The precise meaning of this intuitive statement will become clear later in the proof.

% \textcolor{orange}{I haven't added the bounded-degree hypothesis or the fact that small tripling suffices}

% \begin{thm} \label{thm:induced_subgroup_doesnt_keep_changing}
%       Let $\Gamma$ be a group with a finite generating set $S$. Suppose that $\abs{S^n} \leq n^\mu \abs{S}$ for some $n,\mu \in [1,\infty)$. Assuming that $n$ is sufficiently large with respect to $\mu$, there are at most $O_\mu(1)$ distinct subgroups of $F_S$ of the form $\llangle S^{2^t} \cap \ker \phi_S \rrangle$ as $t$ ranges over all $t \in \mathbb N$ satisfying $2^t \geq n$. 
% \end{thm}

\medskip

\noindent \textbf{Using the structure theory to build disjoint tubes.} It remains to apply \cref{thm:tessera-tointon,thm:induced_subgroup_doesnt_keep_changing} to prove \cref{prop:structure_theory_conclusion}. When working with a group $\Gamma$ and a finite generating set $S$ of $\Gamma$, we will continue to use the notation $(\Gamma_r)_{r\geq 1}$ and $(R_r)_{r\geq 1}$ as defined above. %We begin by formally verifying that the graph homomorphism $\op{Cay}(\Gamma_r,S)\to \op{Cay}(\Gamma,S)$ has injectivity radius at least $r/2$.
 % as discussed above. 

\medskip

Let us introduce some more definitions. Let $G=(V,E)$ be a graph, and consider the set $\{0,1\}^E$ with addition modulo $2$ as a vector space over $\mathbb{Z}_2$.
% Given spanning subgraphs $\omega_1,\omega_2 \in \{0,1\}^{E}$, we define their sum $\omega := \omega_1 + \omega_2 \in \{0,1\}^E$ by $\omega(e) \equiv \omega_1(e) + \omega_2(e) \mod 2$ for all $e \in E$. This makes $\{0,1\}^E$ into a $\mathbb Z_2$-vector space. We use this to add (unoriented) cycles by identifying each cycle with the spanning subgraph of $G$ that has the same edge set.
 If $G = \op{Cay}(\Gamma,S)$ for some group $\Gamma=\langle S | R \rangle$ with finite generating set $S$ and (not necessarily finitely generated) relation group $R \triangleleft F_S$, then we can identify every oriented cycle started at the origin with a word in $R$. Under this identification, we see that if $R$ is generated as a normal subgroup by words of length at most $r$ in $F_S$, then every cycle in $G$ can be written as a mod-$2$ sum of cycles of length at most $r$. Note that this leads to a notion of finite presentation for graphs that are not Cayley graphs, namely that their cycle space is generated as a $\mathbb{Z}_2$-vector space by the cycles of some finite length $r<\infty$; see \cite{timar2007cutsets} for further details.

 \medskip

We will rely crucially on the following lemma, which is essentially due to Tim\'ar \cite{timar2007cutsets}.
Recall that an infinite graph $G = (V,E)$ is \textbf{one-ended} if for every finite set of vertices $W \subseteq V$, the graph $G \backslash W$ has exactly one infinite component; groups of polynomial growth are one-ended if and only if they are not one-dimensional. (Note that when $G$ is the Cayley graph of an infinite finitely-generated group, the property of being one-ended is independent of the choice of finite generating set, so that one can sensibly refer to a \emph{group} as being one-ended without specifying a generating set.) 
% , which for Cayley graphs of finiute

\begin{lem}
% [{\cite[proof of Lemma 2.1 and 2.7]{contreras2022supercritical}}]
\label{lem:short_generating_loops_yield_tubes}
Let $G$ be an infinite, connected, one-ended transitive graph. If $r \in \mathbb N$ has the property that every cycle in $G$ is equal to a sum of cycles of length at most $r$, then for every $k,n \in \mathbb N$ with $r \leq k \leq n$ and every $u,v \in S_n^{\infty}$ there exists a path from $u$ to $v$ that is contained in $\bigcup_{x \in S_n^{\infty}} B_{2k}(x)$ and has length at most $3k \abs{B_{3n}}/\abs{B_k}$.
\end{lem}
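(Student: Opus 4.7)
The plan is to invoke Tim\'ar's connectivity theorem~\cite{timar2007cutsets} for edge-boundaries in one-ended graphs with short cycles, and then extract the desired path via a minimality argument. First, let $W := B_{n-1}$ together with every finite component of $G \setminus B_{n-1}$; since $G$ is one-ended, $W$ is finite and $G \setminus W$ is the unique infinite component of $G \setminus B_{n-1}$. The outer vertex boundary of $W$ coincides with $S_n^\infty$, since any vertex $x \notin W$ adjacent to $W$ lies in the infinite component of $G \setminus B_{n-1}$ and hence starts an infinite self-avoiding path avoiding $B_n$.

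Next, apply Tim\'ar's theorem to $W$: the hypothesis that every cycle in $G$ is a sum of cycles of length at most $r$ implies that the edge-boundary $\partial_E W$ is \emph{$r$-connected}, i.e.\ any two boundary edges $e, e'$ are joined by a sequence of boundary edges with consecutive pairs at $G$-distance $\leq r$. For $u, v \in S_n^\infty$, pick boundary edges $e_u, e_v$ incident to $u$ and $v$, and apply the above to get a chain $e_u = e_0, \ldots, e_m = e_v$ whose $W^c$-endpoints form a sequence $u = x_0, x_1, \ldots, x_m = v$ in $S_n^\infty$ with $d(x_{i-1}, x_i) \leq r + 2 \leq 2k$. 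Joining consecutive $x_i$ by $G$-geodesics of length $\leq 2k$ produces a walk from $u$ to $v$, which, since each geodesic stays within distance $k$ of its endpoints in $S_n^\infty$, is contained in $\bigcup_{x \in S_n^\infty} B_{2k}(x)$, as required.

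To bound the walk's length, I take the chain to be of minimal cardinality $m$. In any such chain, if $|i - j| \geq 2$ then $d(x_i, x_j) > r$, since otherwise the sub-chain from $x_i$ to $x_j$ could be replaced by a single hop. A packing argument then bounds $m$: after coarsening the Tim\'ar chain by bundling consecutive hops into effective hops of size $\sim k$ (which preserves containment of the concatenated geodesics in the tube), the balls of radius $\sim k$ around a suitably sparse subsequence of the resulting points are pairwise disjoint and contained in $B_{n+k} \subseteq B_{3n}$, so that their number is at most $|B_{3n}|/|B_k|$ by transitivity. Combined with the per-hop length bound of $\leq 2k$, the total walk length is $\leq 3k|B_{3n}|/|B_k|$.

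The main technical obstacle I anticipate is extracting the precise $r$-connectivity statement for $\partial_E W$ from the $\mathbb{Z}_2$-cycle-space formalism used in~\cite{timar2007cutsets}, together with the volume-packing bookkeeping in the last step: a naive minimal-chain argument yields a bound involving $|B_{(r+2)/2}|$ in place of $|B_k|$, and so it is necessary to coarsen the Tim\'ar chain to effective hops of size $\sim k$ (rather than $\sim r$) in order to obtain the stated constant.
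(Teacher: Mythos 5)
Your overall strategy is exactly the one the paper relies on: the paper does not write out a proof but simply observes that the statement follows from Tim\'ar's Theorem 5.1 as used in the proofs of Lemmas 2.1 and 2.7 of Contreras--Martineau--Tassion, and your plan (realize $S_n^\infty$ as the boundary of a finite cut, get an $r$-connected chain of boundary edges from Tim\'ar, then coarsen and pack) is a faithful reconstruction of that argument.

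There is, however, one step that fails as written: the identification of $S_n^\infty$ with the \emph{outer} vertex boundary of $W = B_{n-1}\cup(\text{finite components of }G\setminus B_{n-1})$. A vertex $x$ in the infinite component of $G\setminus B_{n-1}$ adjacent to $B_{n-1}$ lies in $S_n$ and admits an infinite self-avoiding path avoiding $B_{n-1}$, but that path may return to $S_n\subseteq B_n$ many times; the definition of $S_n^\infty$ requires a path that never re-enters $B_n$ after its first step, i.e.\ that $x$ have a neighbour in the (unique) infinite component of $G\setminus B_n$. So your outer boundary contains $S_n^\infty$ but can in principle contain additional vertices of $S_n$, and for such a chain point $x_i$ there is no bound on $d(x_i,S_n^\infty)$, which breaks the containment of the concatenated path in $\bigcup_{x\in S_n^\infty}B_{2k}(x)$. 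The fix is standard: take $W$ to be the hull $B_n\cup(\text{finite components of }G\setminus B_n)$ and observe that the set of $W$-side endpoints of $\partial_E W$ (the \emph{inner} vertex boundary) is exactly $S_n^\infty$, since no two components of $G\setminus B_n$ are adjacent and no vertex of $B_{n-1}$ can neighbour a vertex outside $B_n$. A second, smaller soft spot is the packing step: a greedy coarsening of the Tim\'ar chain into hops of size $\sim k$ does not by itself make non-consecutive coarse points pairwise far apart, because the chain may loop back on itself. You should instead pass to a shortest path in the auxiliary graph on $S_n^\infty$ in which two vertices are adjacent when their $G$-distance is at most $2k$ (connectivity of this graph is what the Tim\'ar chain provides, since $r+2\leq 2k$); minimality in that graph forces all non-consecutive points to be at $G$-distance greater than $2k$, after which the disjoint-balls count inside $B_{n+k}\subseteq B_{3n}$ and the per-hop length bound give the stated estimate.
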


\begin{proof}
 % Note that in the case that $G$ is the Cayley graph of a finitely presented group, this lemma relates the lengths of words in group presentations to the connectivity of exposed spheres.
  This statement is implicit in the proofs of \cite[Lemma 2.1 and 2.7]{contreras2022supercritical}, and is an easy consequence of \cite[Theorem 5.1]{timar2007cutsets}.
\end{proof}

% everything would work fine with ``diameter'' of cycles in place of ``length'', which is really what it's about

% also, we can quote the lemma 2.7 from CMT (without having to appeal to the proof) if we rephrase our lemma to be about groups of polynomial growth with dimension \geq 2. of course this is fine for later too, it's just a little weird to state it that way because the proof is really about the one-endedness not the polynomial-ness

We will also need two more elementary geometric facts. The first states that if a path travels from a sphere $S_r$ to $S_{2r+1}$, then it must pass through the exposed sphere $S_{r}^{\infty}$.
 % In particular this holds whenever the path travels from $S_r$ to $S_{2r+1}$. {\color{red} [I think the notation $\partial B_r$ is used inconsistently in different parts of the paper.]}

 % The second roughly says that if a symmetric set of elements in a group has small doubling at some large scale, then it will continue to have small doubling at all later scales. Recall that a set of elements $S$ in a group is \emph{symmetric} if it contains the identity and is closed under taking inverses, and recall that $S^n := \{ u_1 u_2 \ldots u_{\lfloor n \rfloor} : u_1,\ldots,u_{\lfloor n \rfloor} \in S \}$ for every $n \in (0,\infty)$. Here are the precise statements we will quote.

% the definition of \partial A must've be covered before the first section, on ghost fields

\begin{prop}[{\cite[Proposition 5]{MR3291630}}] \label{prop:if_you_can_get_past_2n_you_can_get_away}
      Let $G$ be an infinite connected transitive graph and let $r \in \mathbb N$. Every path that starts in $S_r$ and ends in $S_{2r+1}$ contains a vertex in $S_{r}^{\infty}$. 
\end{prop}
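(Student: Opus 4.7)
The plan is to package the geometry via the \emph{hull} $\overline{B_r}$ of $B_r$, defined as the union of $B_r$ with all finite components of $G\setminus B_r$, and to establish two things: (a) the inner boundary of $\overline{B_r}$ is contained in $S_r^\infty$, and (b) the endpoint $x_k\in S_{2r+1}$ lies outside $\overline{B_r}$. Together these force any path from $\overline{B_r}$ into its complement---in particular the path $\gamma$, which starts at a vertex of $S_r\subseteq B_r$ and ends at $x_k$---to cross $S_r^\infty$, which is the desired conclusion.

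For (a), I would argue as follows. Suppose $u\in \overline{B_r}$ has a neighbour $u'\in V\setminus\overline{B_r}$; then $u'$ lies in an infinite component of $G\setminus B_r$. The vertex $u$ cannot itself lie in a finite component of $G\setminus B_r$, since otherwise $u$ and $u'$ would be joined by an edge while sitting in distinct components of $G\setminus B_r$, a contradiction. Hence $u\in B_r$; since $u$ has a neighbour outside $B_r$ it cannot be in $B_{r-1}$, so $u\in S_r$; and since that neighbour lies in an infinite component of $G\setminus B_r$, $u\in S_r^\infty$ by definition. Thus every inner-boundary vertex of $\overline{B_r}$ is exposed.

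The substantive content is (b), which reduces to the key geometric claim that every vertex at graph-distance at least $2r+1$ from $o$ lies in an infinite component of $G\setminus B_r$. The strategy I would pursue is a transitivity-based contradiction: if $v$ with $d(o,v)\geq 2r+1$ lay in a finite component $C$ of $G\setminus B_r(o)$, pick an automorphism $\varphi$ with $\varphi(v)=o$ and set $w:=\varphi(o)$, so that $d(o,w)=d(o,v)\geq 2r+1$. The image $\varphi(C)$ is a finite component of $G\setminus B_r(w)$ containing $o$, and because $d(o,w)>2r$ the balls $B_r(o)$ and $B_r(w)$ are disjoint, so the connected set $B_r(o)$ is forced to sit entirely inside $\varphi(C)$, yielding $|C|\geq |B_r|$. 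Iterating at a vertex of $B_r(o)$ lying on the extension of the $w$--$o$ geodesic past $o$---a vertex at distance at least $3r+1$ from $w$, hence again eligible to play the role of $v$---should force successively larger balls into translates of $C$, eventually contradicting the finiteness of $|C|$. Making this iteration quantitative and harvesting the actual contradiction is the main technical obstacle; in spirit the argument uses the rich automorphism group of a transitive graph to rule out isolated finite ``pockets'' of $G\setminus B_r$ that extend to points far from the origin, while the rest of the proof is routine bookkeeping.
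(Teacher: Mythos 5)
Your reduction via the hull $\overline{B_r}$ and your part (a) are correct, and the first application of transitivity (yielding $B_r(o)\subseteq\varphi(C)$ and hence $|C|\geq|B_r|$) is sound. Note that the paper does not prove this proposition itself but cites it from an external source, so the relevant comparison is with the standard argument for your claim (b) — and that is exactly where your proof has a genuine gap, as you partly acknowledge.

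Two things go wrong with the proposed iteration. First, the vertex ``on the extension of the $w$--$o$ geodesic past $o$ at distance at least $3r+1$ from $w$'' need not exist: geodesics in a graph do not in general extend to longer geodesics, and when $d(o,w)=2r+1$ such a vertex $u\in B_r(o)$ would have to satisfy $d(u,w)=d(u,o)+d(o,w)$ exactly. Second, and more fundamentally, each further step of the iteration requires exhibiting a vertex of the fixed finite set $\varphi(C)$ strictly farther from $w$ than all previously used ones; since a finite set has bounded diameter, the iteration necessarily halts unless one proves it can always continue — and proving that is essentially equivalent to producing an infinite path from $o$ that avoids $B_r(w)$, i.e.\ to the statement being proved, so the scheme is circular as described. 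The standard way to close the gap is to use the fact that every infinite, connected, locally finite transitive graph contains a bi-infinite geodesic through each of its vertices (take midpoints of longer and longer geodesic segments, translate them to a fixed vertex by automorphisms, and pass to a limit using local finiteness). Given such a geodesic $(\gamma_i)_{i\in\mathbb{Z}}$ with $\gamma_0=v$ and $d(o,v)\geq 2r+1$: if $\gamma_i,\gamma_{-j}\in B_r(o)$ for some $i,j\geq 0$, then $i+j=d(\gamma_{-j},\gamma_i)\leq d(\gamma_{-j},o)+d(o,\gamma_i)\leq 2r$, while $i=d(v,\gamma_i)\geq d(v,o)-r\geq r+1$ and likewise $j\geq r+1$, a contradiction. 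Hence at least one of the two rays avoids $B_r(o)$ entirely and witnesses that $v$ lies in an infinite component of $G\setminus B_r(o)$. With this substituted for your iteration, the rest of your argument goes through.
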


The next lemma lets us pass disjoint tubes through quasi-isometries with all relevant quantities changing in a controlled way.

\begin{lem}\label{lem:quasi_isometry_tubes}
Let $G=(V,E)$ and $G'=(V',E')$ be two graphs and let $\phi:V\to V'$ be an $(\alpha,\beta)$-quasi isometry for some $\alpha,\beta \geq 1$. 
Let $u,v\in V$ and suppose that $x,y\in V'$ satisfy $d(x,\phi(u))\leq \beta$ and $d(y,\phi(v)) \leq \beta$. For each path $\gamma'$  from $x$ to $y$  in $G'$,  there exists a path $\gamma$ from $u$ to $v$ in $G$ such that $\op{len}(\gamma) \leq 10\alpha (\op{len}(\gamma')+\beta)$ and  
\[\phi(B_r(\gamma)) \subseteq B_{\alpha r + (5\alpha^2+2)\beta}(\gamma')\]
for every $r\geq 0$.
    % \item If $\alpha=1$ then for each $y \in S_r^\infty$
% \end{enumerate}
\end{lem}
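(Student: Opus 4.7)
The plan is to build $\gamma$ by a coarse-graining of $\gamma'$: rather than lifting every vertex of $\gamma'$ to $G$ via the (not quite inverse) quasi-isometry, we would sample $\gamma'$ at spacing comparable to $\beta$ and lift only those anchor points, then connect consecutive lifts by geodesics in $G$. This keeps the number of anchors small (so the total length remains $O(\alpha(\op{len}(\gamma')+\beta))$) while making each individual connecting geodesic short enough that its $r$-neighborhood in $G$ maps, via $\phi$, into a modestly-thickened neighborhood of $\gamma'$.

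Concretely, write $\gamma'=(x_0,x_1,\dots,x_L)$ with $L=\op{len}(\gamma')$ and let $k=\lceil\beta\rceil$. Choose indices $0=i_0<i_1<\dots<i_M=L$ with $i_j-i_{j-1}\leq k$ and $M\leq L/\beta+1$. For each interior $j$, use the $\beta$-density part of the quasi-isometry hypothesis to pick $w_j\in V$ with $d(\phi(w_j),x_{i_j})\leq\beta$, and set $w_0=u$, $w_M=v$ (these satisfy $d(\phi(w_0),x_0),d(\phi(w_M),x_L)\leq\beta$ by assumption). Triangle inequality in $G'$ gives $d(\phi(w_j),\phi(w_{j+1}))\leq 2\beta+k\leq 4\beta$, and the lower quasi-isometric bound then yields $d(w_j,w_{j+1})\leq\alpha(4\beta+\beta)=5\alpha\beta$. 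Define $\gamma$ to be the concatenation of geodesics from $w_j$ to $w_{j+1}$ for $j=0,\dots,M-1$. Summing the lengths gives $\op{len}(\gamma)\leq M\cdot 5\alpha\beta\leq 5\alpha(L+\beta)\leq 10\alpha(\op{len}(\gamma')+\beta)$, which is even stronger than needed.

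For the tube containment, suppose $z\in B_r(\gamma)$; then $d(z,y)\leq r$ for some $y$ lying on one of our geodesic segments, say from $w_j$ to $w_{j+1}$, so $d(y,w_j)\leq 5\alpha\beta$ and hence $d(z,w_j)\leq r+5\alpha\beta$. Applying the upper bound of the quasi-isometry and then the triangle inequality at $x_{i_j}\in\gamma'$ gives
\[
d(\phi(z),x_{i_j})\;\leq\;\alpha d(z,w_j)+\beta+d(\phi(w_j),x_{i_j})\;\leq\;\alpha r+5\alpha^2\beta+2\beta,
\]
which is exactly the asserted bound $\alpha r+(5\alpha^2+2)\beta$. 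Since $x_{i_j}$ lies on $\gamma'$, this places $\phi(z)\in B_{\alpha r+(5\alpha^2+2)\beta}(\gamma')$, completing the proof.

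The argument is essentially routine once one decides on the correct spacing for the anchor points; the only subtle point is that one must \emph{not} lift every vertex of $\gamma'$ (which would blow up the length by a factor of $\beta$) but must instead skip at spacing $\asymp\beta$, and then absorb the small price of $5\alpha^2\beta$ in the tube thickness. There are no real obstacles.
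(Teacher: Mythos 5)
Your proof is correct and follows essentially the same route as the paper's: sample $\gamma'$ at spacing $\lceil\beta\rceil$, lift the anchor points via a near-inverse of $\phi$ (fixing the endpoints to $u$ and $v$), concatenate geodesics between consecutive lifts, and verify the length and tube-containment bounds by the same triangle-inequality computations. No issues.
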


\begin{proof}[Proof of \cref{lem:quasi_isometry_tubes}]
Let $\psi:V'\to V$ be a function such that $\psi(x)=u$, $\psi(y)=v$, and $d(\phi(\psi(z)),z)\leq \beta$ for every $z\in V$; such a function exists by the definition of an $(\alpha,\beta)$-quasi-isometry and our assumptions on $u$, $v$, $x$, and $y$. Let $\ell=\lceil \op{len}(\gamma')/\lceil \beta \rceil \rceil $, and let the sequence $(n_i)_{i=0}^\ell$ be defined by $n_i = \lceil \beta \rceil i $ for $i<\ell$ and $n_\ell = \op{len}(\gamma')$. For each $0\leq i \leq \ell$ let $x_i=\gamma_{n_i}'$ and let $u_i=\psi(x_i)$, so that $u_0=u$ and $u_\ell=v$.  For each $0\leq i <\ell$, the points $x_i$ and $x_{i+1}$ have distance at most $\lceil \beta\rceil \leq 2\beta$ in $G'$, so that $\phi(u_i)$ and $\phi(u_{i+1})$ have distance at most $4 \beta$ in $G'$. Since $\phi$ is an $(\alpha,\beta)$-quasi-isometry, it follows that $u_i$ and $u_{i+1}$ have distance at most $5\alpha \beta$ in $G$. Let $\gamma$ be formed by concatenating geodesics between $u_i$ and $u_{i+1}$ for each $0\leq i < \ell$. The path $\gamma$ clearly has length at most $5\alpha \beta \lceil \op{len}(\gamma')/\lceil \beta \rceil \rceil \leq 10\alpha (\op{len}(\gamma')+\beta)$. Moreover, given $r\geq 1$, each point in $B_r(\gamma)$ has distance at most $5\alpha\beta+r$ from one of the points $u_i$, whose image under $\phi$ has distance at most $\beta$ from one of the points of~$\gamma'$. Using the definition of an $(\alpha,\beta)$-quasi-isometry again, this implies that every point in $\phi(B_r(\gamma))$ has distance at most $\alpha(5\alpha\beta+r)+2\beta$ from a point of $\gamma'$, which is equivalent to the desired set inclusion.
\end{proof}

% \begin{lem}
% Let $G=(V,E)$ and $G'=(V',E')$ be two graphs and let $\phi:V\to V'$ be a $(1,\beta)$-quasi isometry for some $\beta>0$. For each $r\geq 1$, $u\in V$, and $x\in V'$ let $S_r^\infty(u)$ and $S_r^\infty(x)$ denote the exposed spheres around $u$ and $x$ in $G$ and $G'$ respectively. Then for each $u\in V$, $r\geq 1$, and  $v\in S_r^\infty(u)$, $\phi(w)$ has distance at most $\beta$ from $S_r^\infty(\phi(u))$.
% % If $x,y\in V$ and $\gamma'$ is a path in $G'$ between $\phi(x)$ and $\phi(y)$, then there exists a path $\gamma$ from $x$ to $y$ with $\op{len}(\gamma) \leq \alpha \op{len}(\gamma')$ and with $\gamma \subseteq B_\beta(\phi^{-1}(\gamma'))$.
%     % \item If $\alpha=1$ then for each $y \in S_r^\infty$
% % \end{enumerate}
% \end{lem}

% \begin{proof}

% \end{proof}

% \begin{thm}[{\cite[Theorem 1.1]{MR3439705}}] \label{thm:doubling_persists}
%       Let $S$ be a finite symmetric set of elements in a group $\Gamma$. Suppose that $\abs{S^{2n+1}} \leq H \abs{S^n}$ for some $H,n \in [1,\infty)$. Assuming that $n$ is sufficiently large with respect to $H$, then $\abs{S^{2m}} \lesssim_{H} \abs{S^m}$ for every $m \in [n,\infty)$.
% \end{thm}

%the notation is a bit risky: S_k for the sphere but S^k for the product set...

We now have everything we need to prove \cref{prop:structure_theory_conclusion}.

\begin{proof}[Proof of \cref{prop:structure_theory_conclusion}]
Fix $\kappa<\infty$ and let $n_0=n_0(\kappa)$ and $C_1=C_1(\kappa)$ be the constants from \cref{thm:tessera-tointon} applied with $K=3^\kappa$.
% maximum of the constants from \cref{thm:tessera-tointon,thm:induced_subgroup_doesnt_keep_changing}.
 Suppose that $G$ is an infinite, connected, transitive graph of vertex degree $d$ that is not one-dimensional and that $n \geq n_0$ is such that $\op{Gr}(3n) \leq 3^\kappa \op{Gr}(n)$. Let $H$ be the normal subgroup of $\Aut(G)$ that is guaranteed to exist by \cref{thm:tessera-tointon}, let $S$ be the generating set for $\Aut(G)/H$ from Item 4 of \cref{thm:tessera-tointon}, and write $\Gamma:=\Aut(G)/H$. Letting $\op{Gr}'$ denote the growth function of the Cayley graph $\op{Cay}(\Gamma,S)$, we have by Items 7 and 8 of \cref{thm:tessera-tointon} that
\[
\frac{\op{Gr}'(3n)}{\op{Gr}'(n)} \leq \frac{C_1 \op{Gr}(3n+C_1n)}{\op{Gr}(n)} \leq C_1^2 (3+C_1)^{C_1}.
\]
Moreover, as discussed in \cref{remark:S_bounded}, the generating set $S$ has at most $C_1(d+1)$ elements. Thus, if we take $n_1=n_1(\kappa)\geq n_0$ and $C_2=C_2(\kappa,d)$ to be the constants from \cref{thm:induced_subgroup_doesnt_keep_changing} applied with $K=C_1^2 (3+C_1)^{C_1}$ and $d=C_1(d+1)$, we have that if $n\geq n_1$ then
\begin{equation}
\label{eq:presentation_thm_restatement}
\#\Bigl\{i\in \mathbb{N} : i \geq \log_2 n \text{ and $\llangle R_{2^{i+1}} \rrangle \neq  \llangle  R_{2^i} \rrangle $} \Bigr\}\leq C_2
\end{equation}
where $R_k$ is the set of relations of $\Gamma$ that have word length at most $k$ and $\llangle R_k \rrangle$ is the smallest normal subgroup of the free group $F_S$ generated by $R_k$. 
Let $G'=(V',E')$ be the Cayley graph $\op{Cay}(\Gamma,S)$, let $\phi:V\to V'$  be the $(1,C_1n)$-quasi-isometry that is guaranteed to exist by \cref{thm:tessera-tointon} (which we may assume maps $o$ to the identity of $\Gamma$, which we denote by $\mathrm{id}$), and let $\psi:V'\to V$ be such that $d(\phi(\psi(x)),x)\leq C_1n$ for every $x\in V'$, such a function being guaranteed to exist since $\phi$ is a $(1,C_1n)$-quasi-isometry. This function $\psi$ is easily seen to be a $(1,C_3n)$-quasi-isometry for an appropriate choice of constant $C_3=C_3(\kappa)$.
For each $k\geq 1$ let $G'_k$ be the Cayley graph $\op{Cay}(\Gamma_k,S)$ of the group $\Gamma_k$ defined by $\Gamma_k=\langle S \mid R_k \rangle$.

\medskip

Fix $n\geq n_1$ and consider the set
$A=\{2^i: i \geq \log_2 n \text{ and } \llangle R_{2^{i+5}}\rrangle \neq \llangle R_{2^i} \rrangle \}$.
This set contains at most five times as many elements as the set considered in \eqref{eq:presentation_thm_restatement}, so that $|A|\leq 5C_2$. Fix $k \geq n$ and suppose that $m\geq 2kn$ does not belong to $[a,2ka]$ for any $a\in A$, so that if we define $a(m)=\sup\{a\in A:a\leq m\}$ then $a(m)\leq m/2k$. The definition of $A$ ensures that $\llangle R_{2a(m)} \rrangle = \llangle R_{16m}\rrangle$ and hence by \cref{lem:small_presentation_means_balls_look_similar} that the Cayley graphs $G'$ and $G'_{a(m)}$ have isomorphic $(8m-1)$-balls. For each $r < 4m$ we write $(S_{r}^\infty)'$ for the exposed spheres in $G'$ and $G'_{a(m)}$, which can be identified by \cref{prop:if_you_can_get_past_2n_you_can_get_away}. We also identify the balls $B_{r}'(x)$ in $G'$ and $G'_{a(m)}$ for points $x$ of distance at most $4m$ from the identity and all $r\leq m$, where the prime on $B_{r}'(x)$ reminds us that we are working with $G'$ rather than $G$.

\medskip

Now, since $\Gamma_{a(m)}$ has its group of relations generated by its relations of length at most $a(m)\leq m/2k$, it follows from  \cref{lem:short_generating_loops_yield_tubes} that for each $m/2k \leq m_1 \leq m_2 \leq 3m$ and each $u,v\in (S_{m_2}^\infty)'$ there exists a path from $u$ to $v$ in $G_{a(m)}'$ that is contained in $\bigcup_{x \in (S^\infty_{m_2})'} B_{2m_1}'(x)$ and has length at most $3m_1 \op{Gr}(3m_2)/\op{Gr}(m_1)$, where all sets and growth functions are identical in the two groups $\Gamma_{a(m)}$ and $\Gamma$ by the restrictions placed on $k$ and $m$. Since these paths are entirely contained within the ball for which $G'$ and $G'_{a(m)}$ are identical, they exist in $G'$ also. Moreover, since $m_2 \geq m_1 \geq m/2k\geq n$, we have by Item 9 of \cref{thm:tessera-tointon} as above that 
\[
\frac{\op{Gr}'(3m_2)}{\op{Gr}'(m_1)} \leq C_1^2(3+C_1)^{C_1} \left(\frac{m_2}{m_1}\right)^{C_1}
% =: C_3 \left(\frac{m_2}{m_1}\right)^{C_1},
\]
so that the length of this path is at most $3 C_1^2(3+C_1)^{C_1} (m_2/m_1)^{C_1} m_1=: C_4 (m_2/m_1)^{C_1} m_1$. (Everything discussed in this paragraph is still under the assumption that $m \geq 2kn$ does not belong to $[a,2ka]$ for any $a\in A$.)

\medskip

We now use the existence of these paths in $G'$ to guarantee the desired plentiful tube conditions in the original graph $G$. More concretely, we will prove that there exist positive constants $c=c(\kappa,d)$, $C=C(\kappa,d)$, and $n_2=n_2(\kappa) \geq n_1$ such that if $n\geq n_2$ then $G$ has $(ck,ck,Ck^{C}m)$-plentiful tubes on each scale $m\geq Ckn$ such that $m$ does not belong to $[a,2ka]$ for any $a\in A$.
% ; this is easily seen to imply the claimed proposition since $A$ has bounded size. 

\medskip

We begin by constructing \emph{annular} tubes. It suffices to construct tubes in the case that the two $(m,3m)$ crossings are both the vertex sets of paths $\eta_1,\eta_2$ from $S_m$ to $S_{3m}$ since any crossing contains the vertex set of such a path.
Fix $m\geq 2kn$ and two paths $\eta_1$ and $\eta_2$ from $S_m$ to $S_{3m}$ in $G$, and suppose that $m$ does not belong to $[a,2ka]$ for any $a\in A$. Apply \cref{lem:quasi_isometry_tubes} with each of these paths and the $(1,C_3n)$-quasi-isometry $\psi :V'\to V$ (taking $u$ and $v$ to be the endpoints of $\eta_i$, $x$ to be $\phi(u)$ and $y$ to be $\phi(v)$) to obtain two paths $\eta'_1$ and $\eta_2'$ in $G'$. Using the $(1,C_1n)$-quasi-isometry property of $\phi$, each of these paths starts at distance at most $m+C_1 n$ from $\phi(o)=\mathrm{id}$ and ends at distance at least $3m - C_1 n$ from $\mathrm{id}$. If $m \geq 9 C_1 n$ 
 then both paths start at distance at most $\frac{10}{9}m$ from $\mathrm{id}$ and end at distance at least $\frac{26}{9}m$ from $\mathrm{id}$. As such, it follows from \cref{prop:if_you_can_get_past_2n_you_can_get_away} that if $m\geq 9C_1 n$ then the paths $\eta_1'$ and $\eta_2'$ both intersect the exposed sphere $(S_{m_2}^\infty)'$ in $G$ for each integer $i\in I:=\mathbb{Z}\cap [\frac{10}{9}m, \frac{12}{9}m]$. (The only property of these numbers we will need is that $12>10$ and $12<26/2$.) Fix $m_1=\lceil m/2k\rceil \leq m$ and for each  integer $i\in I$ let $x_i$ be a point of $\eta_1'$ belonging to $(S_i^\infty)'$ and let $y_i$ be a point of $\eta_2'$ belonging to $(S_i^\infty)'$.  If $m \geq \max\{9C_1 n, 2kn\}$ then, since $m$ was assumed not to belong to $[a,2ka]$ for any $a\in A$, it follows from \cref{lem:short_generating_loops_yield_tubes}
as discussed above that for each such $i$ there exists a path $\gamma_i'$ from $x_i$ to $y_i$ that is contained in $\bigcup_{(S^\infty_{i})'} B_{2m_1}'(x)$ and has length at most $C_4(i/m_1)^{C_1} m_1$.
Since $i\leq 3m$, the length of this path can be bounded by $C_5 k^{C_1-1} m$ for an appropriate constant $C_5=C_5(\kappa)$. Moreover, if $r\geq 1$ and $i$ and $j$ are two integers $i,j \in I$ satisfying $|i-j|> 4 m_1 +2r$ then the tubes of radius $r$ around $\gamma_i'$ and $\gamma_j'$ in $G'$ are disjoint since they are contained in disjoint annuli $\bigcup_{(S^\infty_{i})'} B_{2m_1+r}'(x)$ and $\bigcup_{(S^\infty_{j})'} B_{2m_1+r}'(x)$.

\medskip

Since \cref{lem:quasi_isometry_tubes} guaranteed that the paths $\eta_1'$ and $\eta_2'$ have images under $\psi$ contained in the $4C_1n$-neighbourhoods of $\eta_1$ and $\eta_2$ respectively, we can for each $i\in I$ find points $u_i$ and $v_i$ in $\eta_1$ and $\eta_2$ respectively such that $d(x_i,\phi(u_i))$ and $d(y_i,\phi(v_i))$ are at most $6C_1n$.
% For each integer $i \in I$, there exists 
Applying \cref{lem:quasi_isometry_tubes} to each of the paths $\gamma_i'$ (with the quasi-isometry $\phi$, the points $u_i$ and $v_i$ in $G$, the points $x_i$ and $y_i$  in $G'$, and the quasi-isometry constants $\alpha=1$ and $\beta = 6C_1 n$), it follows that there exist constants $C_6=C_6(\kappa)$ and $C_7=C_7(\kappa)$ such that for each $i\in I$ there exists a path $\gamma_i$ from $u_i$ to $v_i$ of length at most $C_6 k^{C_1-1}$ such that if $i,j\in I$ satisfy $|i-j|\geq C_7 m_1$ then the tubes of radius $m_1$ around $\gamma_i$ and $\gamma_j$ are disjoint. The claim about annular tubes follows easily by taking a $C_7m_1$-separated subset of $I$ (i.e., a subset of $I$ in which all distinct pairs of integers have distance at least $C_7m_1$), since such a set may be taken to have size at least $I/C_7m_1 \geq c_1 k$ for some positive constant $c_1=c_1(\kappa)$ whenever $n$ is larger than some constant $n_2=n_2(\kappa)\geq n_1$. (The freedom to increase $n_1$ to $n_2$ lets us make sure that every real number we round down is at least $1$, so that rounding cannot reduce any relevant quantities by more than a factor of $1/2$.)

\medskip

We now briefly argue that the same construction also yields \emph{radial tubes} crossing a shifted annulus. Run the above construction again with $\eta_1$ and $\eta_2$ taken to be the two portions crossing from $S_m$ to $S_{12m}$ of a doubly-infinite geodesic passing through $o$, so that every point in $\eta_1$ has distance at least $2m$ from every point in $\eta_2$, but with various constants changed appropriately since we are now working with $(m,12m)$ crossings instead of $(m,3m)$ crossings. In particular, the interval $I$ can now be taken to be $\mathbb{Z}\cap[8m,10m]$, with various other constants changing to reflect this change. Consider the point $u$ in $\eta_1$ that has distance $9m$ from $o$. When we perform the above construction to build paths between $\eta_1$ and $\eta_2$, each path $\gamma_i$ starts at distance at most $m$ from $u$ and ends at distance at least $11 m$ from $u$. Thus, since $G$ is transitive, the family of paths we have constructed verifies the $(ck,ck,Ck^{C}m)$-plentiful radial tubes condition holds at the scale $m$ as desired, for some constants $c=c(\kappa,d)$ and $C=C(\kappa,d)$. As before, this works under the assumption that $n\geq n_2$ for some $n_2=n_2(\kappa)$ and that $m\geq Ckn$ does not belong to $[a,2ka]$ for any $a\in A$.
\end{proof}

\subsection{Using random walk trajectories} \label{subsection:rws}

In this section we prove \cref{prop:rw_conclusion}, which verifies the plentiful tubes condition for graphs that have quasi-polynomial absolute growth but a fast rate of relative growth over an appropriate range of scales. As discussed above, we will construct the required collections of disjoint tubes by modifying certain conditioned random walk trajectories. To avoid parity issues, we work with \emph{lazy} random walks throughout the section.  We will spend most of the section proving general bounds on the behaviour of random walk on some scale in terms of the growth of the graph at that scale, specializing to the setting of \cref{prop:rw_conclusion} only at the very end of the proof.
% 
 % Here is the rough idea to build the required annular tubes: Suppose we have our two annulus-crossing sets $A$ and $B$. Our growth upper bound lets us couple a random walk started in $A$ with a random walk started in $B$ in such a way that they are likely to coalesce in a not-too-large number of steps. By taking the union of small balls along the trajectories of these random walks, we obtain a tube from $A$ to $B$ with controlled length. Now by independently running a large number of such pairs of coupled random walks that start from well-separated points in $A$ and $B$, we can build a large number of tubes from $A$ to $B$. Our growth rate-of-change lower bound guarantees that because the random walks started at well-separated points, we can find a large subset of tubes that are pairwise disjoint. The radial tubes are easier to build and in particular do not require the growth upper bound.
Given a graph $G$ and a vertex $u$ of $G$, let $\mathbf{P}_u$ denote the law of the \emph{lazy} random walk started from $u$, which at each step either stays in place with probability $1/2$ or else crosses a uniform random edge emanating from its current position, and let the \textbf{heat kernel} $p_t(u,v)$ be defined by $p_t(u,v)=\mathbf{P}_u(X_t=v)$. 
 % labelled $X=(X_t)_{t \geq 0}$ or $Y = (Y_t)_{t \geq 0}$ that is started from a vertex $u$.
  % To avoid dealing with rounding errors
  % To lighten notation, we define the random walks for all times $t \in [0,\infty)$ by setting $X_t := X_{ \lfloor t \rfloor }$ for every $t\geq 0$, and define
% $p_t(u,v) := \mathbf{P}_u(X_t = v)$ for all vertices $u,v$ and times $t \in [ 0,\infty)$.

  % Similarly, we will use the extended definitions $B_t := B_{\lfloor t \rfloor}$ and $S_t := S_{\lfloor t \rfloor}$ for balls and spheres of radius $t \in [0,\infty)$. 

  % Set $p_t(u,v) := \p_u^{\op{rw}}(X_t = v)$ for all vertices $u,v$ and times $t \in [ 0,\infty)$. When we have two independent random walks with joint law $\p_{u}^{\op{rw}} \otimes \p_{v}^{\op{rw}}$, we will call them $X$ and $Y$, where $X \sim \p_u^{\op{rw}}$ and $Y \sim \p_v^{\op{rw}}$.

% Before arguing that tubes around two independent random walks started far away from each other are disjoint with high probability under our

\medskip

We begin by recalling two important facts about random walks on graphs of  quasi-polynomial growth that will be used in the proof: The Varopoulos-Carne inequality \cite{varopoulos1985long,carne1985transmutation}, which implies near-diffusive estimates on the rate of escape, and the total variation inequality of \cite[Chapter 7.5]{Yadin}, which implies that two walks started from different vertices can be coupled to coalesce by the time they reach a distance that is near-linear in their starting distance.

\medskip

\noindent
\textbf{Diffusive estimates from Varopoulos-Carne.} We now state the Varopoulos-Carne inequality, which gives Gaussian-like bounds on the $n$-step transition probabilities between two specific vertices. See e.g.\ \cite[Chapter 13.2]{MR3616205} for a modern treatment. This inequality does not require transitivity, and holds for the random walk on any graph. 

\begin{thm}[Varopoulos-Carne]
\label{thm:Varopoulos-Carne}
Let $G=(V,E)$ be a (locally finite) graph. Then 
\[
p_t(u,v) \leq 2\sqrt{\frac{\deg(v)}{\deg(u)}} \exp\left[-\frac{d(u,v)^2}{2t}\right].
\]
for every $t\geq 1$ and every $u,v\in V$.
\end{thm}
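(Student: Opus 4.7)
The plan is to run the classical Chebyshev polynomial proof of the Varopoulos--Carne bound, carefully adapted so that the Gaussian exponent keeps the constant $1/2$ even in the lazy setting. First I would observe that the lazy transition kernel $P$ is self-adjoint on $\ell^2(\pi)$ where $\pi(x) := \deg(x)$ is the stationary measure (reversibility), and that the rescaled non-lazy operator $Q := 2P-I$ is also self-adjoint on $\ell^2(\pi)$ with spectrum contained in $[-1,1]$. By the spectral theorem, $\|f(Q)\|_{\ell^2(\pi)\to\ell^2(\pi)} \leq \|f\|_{L^\infty[-1,1]}$ for every real polynomial $f$.

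The key algebraic identity I would use is the Chebyshev-type expansion
\[
\theta^k = \mathbb{E}\bigl[T_{|S_k|}(\theta)\bigr] \qquad (\theta\in[-1,1]),
\]
where $T_j$ is the Chebyshev polynomial of the first kind (so $|T_j|\leq 1$ on $[-1,1]$) and $S_k$ is simple random walk on $\mathbb{Z}$ started from $0$; this follows by taking $\theta=\cos\phi$ and expanding $\cos^k\phi = \mathbb{E}[\cos(S_k\phi)]$. Applying the spectral theorem lifts this to $Q^k = \mathbb{E}[T_{|S_k|}(Q)]$. Combining with the binomial identity $P^t = 2^{-t}\sum_k\binom{t}{k}Q^k = \mathbb{E}[Q^{N_t}]$ for $N_t \sim \mathrm{Binomial}(t,1/2)$ (and conditioning on $N_t$), I obtain the representation $P^t = \mathbb{E}[T_{|L_t|}(Q)]$, where $L_t := S_{N_t}$ is the \emph{lazy} simple random walk on $\mathbb{Z}$ started from $0$.

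Next I would pointwise estimate $T_k(Q)(u,v)$ in two complementary ways. Since $T_k$ is a polynomial of degree $k$ and $Q^j(u,v)=0$ whenever $j < d(u,v)$ (the walk cannot travel distance $d$ in fewer than $d$ steps), we have $T_k(Q)(u,v)=0$ for every $k < d(u,v)$. For $k\geq d(u,v)$, the operator-norm bound $\|T_k(Q)\|_{\ell^2(\pi)\to\ell^2(\pi)}\leq 1$ together with Cauchy--Schwarz and $\|\delta_x\|_\pi = \sqrt{\deg(x)}$ gives the entrywise bound
\[
|T_k(Q)(u,v)| = \frac{1}{\deg(u)}\bigl|\langle \delta_u, T_k(Q)\delta_v\rangle_\pi\bigr| \leq \sqrt{\frac{\deg(v)}{\deg(u)}}.
\]
Averaging over $L_t$ yields
\[
p_t(u,v) = \mathbb{E}\bigl[T_{|L_t|}(Q)(u,v)\bigr] \leq \sqrt{\frac{\deg(v)}{\deg(u)}}\,\mathbb{P}\bigl(|L_t| \geq d(u,v)\bigr).
\]

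Finally, since $L_t$ is a sum of $t$ i.i.d.\ mean-zero increments each bounded by $1$ in absolute value, Hoeffding's inequality gives $\mathbb{P}(|L_t|\geq d) \leq 2\exp(-d^2/(2t))$, which combined with the above produces exactly the stated bound. The only delicate point, and the main reason I would run the Chebyshev expansion directly at the level of the lazy walk $P$ rather than first proving the non-lazy Varopoulos--Carne estimate for $Q$ and averaging, is the bookkeeping in the constant: pushing the randomness of the binomial step count $N_t$ inside the Chebyshev expectation collapses it with the random walk $S_{N_t}$ into a single lazy walk $L_t$ on $\mathbb{Z}$, and it is the concentration of this lazy walk (not of $N_t$ or $S_t$ separately) that yields the clean Gaussian exponent $d(u,v)^2/(2t)$ uniformly in the regime $d(u,v) \leq t$.
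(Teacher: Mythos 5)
Your proof is correct and is essentially the standard Chebyshev-polynomial argument that the paper simply cites (e.g.\ \cite[Ch.\ 13.2]{MR3616205}) rather than proving: spectral calculus on $\ell^2(\pi)$, the identity $\theta^k=\mathbb{E}[T_{|S_k|}(\theta)]$, vanishing of $T_k(Q)(u,v)$ for $k<d(u,v)$, and Hoeffding. The only (harmless) extra step is the detour through $Q=2P-I$; since the general reversible-chain form of Varopoulos--Carne applies to any self-adjoint kernel with spectrum in $[-1,1]$ and $P^j(u,v)=0$ for $j<d(u,v)$ holds for the lazy kernel itself, one can apply the Chebyshev expansion directly to $P$ and still get the exponent $d(u,v)^2/(2t)$.
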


The Varopoulos-Carne inequality easily implies that the random walk on a graph of quasi-polynomial growth is very unlikely to be at a distance much larger than $t^{1/2}(\log t)^{O(1)}$ from its starting point.

\begin{cor}
\label{cor:diffusivity_from_VC}
Let $G=(V,E)$ be a (locally finite) transitive graph and let $o$ be a vertex of $G$. Then
\[
\mathbf{P}_o\left(\max_{0\leq k\leq t} d(o,X_k) \geq n\right) \leq 2(t+1) \op{Gr}(n) \exp\left[-\frac{n^2}{2t}\right]
\]
for every $t,n \geq 1$.
\end{cor}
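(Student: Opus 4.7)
The target inequality has a factor $\op{Gr}(n)$ that must come from summing over vertices of the sphere $S_n$, a factor $(t+1)$ that must come from a union bound over the time parameter, and an exponential $\exp[-n^2/(2t)]$ that is the natural output of the Varopoulos--Carne estimate in \cref{thm:Varopoulos-Carne}. The strategy is to combine all three via a first-passage decomposition.

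Concretely, I would introduce the stopping time $\tau := \min\{k \geq 0 : d(o,X_k) \geq n\}$ and note that because the lazy random walk moves by at most one edge per step, the position $X_\tau$ on the event $\{\tau \leq t\}$ must lie in $S_n$ (it cannot jump over the sphere). Writing
\[
\mathbf{P}_o\!\Bigl(\max_{0 \leq k \leq t} d(o,X_k) \geq n\Bigr) = \mathbf{P}_o(\tau \leq t) = \sum_{k=0}^{t}\sum_{v\in S_n} \mathbf{P}_o(\tau = k,\, X_k = v),
\]
and dropping the event $\{\tau = k\}$ from the summand yields the bound $\sum_{k=0}^{t}\sum_{v\in S_n} p_k(o,v)$.

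For each term, \cref{thm:Varopoulos-Carne} combined with the fact that $G$ is transitive (so $\deg(o)=\deg(v)$) gives $p_k(o,v) \leq 2\exp[-d(o,v)^2/(2k)] = 2\exp[-n^2/(2k)]$, and monotonicity in $k$ upgrades this to $2\exp[-n^2/(2t)]$ uniformly in $k\leq t$. Since $|S_n| \leq \op{Gr}(n)$, summing over the at most $t+1$ values of $k$ and the vertices of $S_n$ yields exactly the desired bound.

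The main thing to be careful about, rather than an obstacle, is ensuring that $X_\tau$ really lies on $S_n$ (so that $v$ ranges only over $S_n$ and not over all of $B_n^c$); this relies on the one-step-at-most-one-edge property of the lazy simple random walk. The $k=0$ term contributes zero whenever $n\geq 1$, so the extra ``$+1$'' in the prefactor is cosmetic and could be absorbed, but keeping it makes the statement valid for all $t\geq 1$ including the boundary case.
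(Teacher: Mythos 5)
Your argument is correct and is essentially the paper's own proof: both reduce to the observation that the walk must visit $S_n$ exactly (since it moves by at most one edge per step), then apply a union bound over the at most $t+1$ time indices and the at most $\op{Gr}(n)$ vertices of $S_n$, with \cref{thm:Varopoulos-Carne} and transitivity supplying the factor $2\exp[-n^2/(2t)]$. The stopping-time packaging and the remark about the $k=0$ term are harmless elaborations of the same idea.
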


\begin{proof}[Proof of \cref{cor:diffusivity_from_VC}]
If $\max_{0\leq k\leq t} d(o,X_k) \geq n$ then there exists $0\leq k \leq t$ such that $X_k$ has distance exactly $n$ from $o$. Since the number of points at distance $n$ from $o$ it at most $\op{Gr}(n)$, the claim follows from \cref{thm:Varopoulos-Carne} by taking a union bound over the possible values of $k$ and $X_k$.
\end{proof}

\begin{rk}
For transitive graphs of polynomial growth, Varopoulos-Carne implies a displacement upper bound of the form $\sqrt{t\log t}$ while the true displacement is of order $\sqrt{t}$ with high probability. (This sharp upper bound on the displacement can be proven using a (highly nontrivial) improvement of the Varopoulos-Carne inequality due to Hebisch and Saloff-Coste \cite{hebisch1993gaussian}.) As such, our reliance on Varopoulos-Carne leads to all of the estimates in this section having  polylog terms that are known to be unnecessary for transitive graphs of polynomial growth and are presumably non-optimal for transitive graphs of quasi-polynomial growth also. 
% There is presumably an extension of Hebisch-Saloff-Coste to transitive graphs of quasi-polynomial growth that would allow us to improve the exponents in the polylog terms appearing throughout this section, but we do not pursue this here.
% Fortunately, the much easier Varopoulos-Carne bound will suffice for all our applications.
\end{rk}

\medskip

\noindent
\textbf{Coupling from low growth.} We now explain how low growth can be used to couple two walks to coalesce within time not much larger than quadratic in their starting distance; we will eventually  concatenate these pairs of coupled random walks to build annular tubes. We first recall some relevant definitions. Given two probability measures $\mu$ and $\nu$ on a countable set $\Omega$, the \textbf{total variation distance} $\|\mu-\nu\|_\mathrm{TV}$ between $\mu$ and $\nu$ is defined by
\[
\|\mu-\nu\|_\mathrm{TV} = \sup_{A \subseteq \Omega}|\mu(A)-\nu(A)| = \frac{1}{2} \sum_{\omega\in \Omega} |\mu(\omega)-\nu(\omega)|.
\]
The total variation distance is indeed a distance in the sense that it defines a metric on the space of probability measures on $\Omega$.
The total variation distance is related to \emph{coupling} (and to the theory of optimal transport) by the variational formula  
\[
\|\mu-\nu\|_\mathrm{TV} = \inf\Bigl\{\p(X\neq Y) : X,Y \text{ random variables with $X\sim \mu$ and $Y\sim \nu$}\Bigr\}.
\]
In particular, given two vertices $x$ and $y$ in a graph, we can couple the lazy random walks started at $x$ and $y$ to coincide at time $m$ with probability $1-\|\mathbf{P}_x(X_m=\cdot)-\mathbf{P}_y(X_m=\cdot)\|_{\mathrm{TV}}$. Note that if the two walks coincide at time $m$ then we can trivially couple them to remain equal at all subsequent times, so that $\|\mathbf{P}_x(X_m=\cdot)-\mathbf{P}_y(X_m=\cdot)\|_{\mathrm{TV}}$ is a decreasing function of $m$ when $x$ and $y$ are fixed. These couplings will be used when we construct annular tubes using pairs of coupled random walks.

\medskip

Given a (locally finite) transitive graph $G=(V,E)$, the \textbf{Shannon entropy} $H_t$ of the $t$th step of the lazy random walk is defined to be
\[
H_t = -\mathbf{E}_o \Bigl[\log p_t(o,X_t)\Bigr] = -\sum_{x\in V} p_t(o,x)\log p_t(o,x).
\]
Since the Shannon entropy of any random variable taking values in a set of size $n$ is at most $\log n$, the quantity $H_t$ satisfies the trivial inequality
$H_t  \leq \log \op{Gr}(t)$. The following extremely useful inequality\footnote{known in some circles as ``the cool inequality''.} relates the total variation distance to the increments of the Shannon entropy; versions of this inequality have been rediscovered independently in the works \cite{erschler2010homomorphisms,MR3395463,ozawa2018functional} as discussed in detail in \cite[Chapter 7.5]{Yadin}.

\begin{thm} 
\label{thm:cool_inequality}
If $G$ is a (locally finite) transitive graph and $o$ is a vertex of $G$ then
\[\frac{1}{\deg(o)}\sum_{x \sim o} \left\| \mathbf{P}_o(X_t = \cdot) - \mathbf{P}_{x}(X_{t-1} = \cdot) \right\|_{\op{TV}}^2  \leq H_t - H_{t-1}\]
for every $t\geq 1$.
\end{thm}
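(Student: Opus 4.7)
The plan is to derive the inequality from two classical ingredients: the convex-combination identity for entropy, together with Pinsker's inequality comparing Kullback--Leibler divergence and total variation distance. The key observation is that conditioning on the first step of the lazy walk gives a clean decomposition of $\mathbf{P}_o(X_t = \cdot)$ as a convex combination of the distributions $\mathbf{P}_x(X_{t-1}=\cdot)$ for $x$ in the neighbourhood of $o$, and transitivity ensures that all of these time-$(t-1)$ distributions have the same Shannon entropy $H_{t-1}$.

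More concretely, write $\mu(y) := \mathbf{P}_o(X_t = y)$ and $\nu_x(y) := \mathbf{P}_x(X_{t-1} = y)$ for each $x \in V$. Conditioning on the location of the walk after one step and using laziness gives the decomposition
\[
\mu \;=\; \tfrac{1}{2}\,\nu_o \;+\; \frac{1}{2\deg(o)}\sum_{x \sim o} \nu_x,
\]
so that $\mu$ is a convex combination of the measures $\{\nu_z\}_{z \in \{o\} \cup N(o)}$ with weights $\lambda_o = 1/2$ and $\lambda_x = 1/(2\deg(o))$ for $x\sim o$. The standard identity for the entropy of a convex combination (equivalent to the statement that mutual information equals an average KL divergence) then yields
\[
H(\mu) \;-\; \sum_z \lambda_z H(\nu_z) \;=\; \sum_z \lambda_z \, D(\nu_z \,\|\, \mu),
\]
where $D$ denotes KL divergence. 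Transitivity gives $H(\nu_z) = H_{t-1}$ for every $z$, so the left-hand side is exactly $H_t - H_{t-1}$.

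To finish, I would apply Pinsker's inequality $D(\nu_x \| \mu) \geq 2 \|\nu_x - \mu\|_{\op{TV}}^2$ to each summand with $x \sim o$ and discard the non-negative contribution $\tfrac{1}{2} D(\nu_o \| \mu)$, giving
\[
H_t - H_{t-1} \;\geq\; \frac{1}{2\deg(o)}\sum_{x \sim o} D(\nu_x \| \mu) \;\geq\; \frac{1}{\deg(o)}\sum_{x \sim o}\|\nu_x - \mu\|_{\op{TV}}^2,
\]
which is exactly the claimed inequality. The proof is essentially routine; the only step requiring any care is checking the decomposition of $\mu$ as a convex combination with uniform weight on the neighbours of $o$, which relies on the symmetric lazy transition kernel (and which should be stated so as to accommodate loops and multi-edges consistently with the paper's conventions, most naturally by interpreting the sum $\sum_{x\sim o}$ as a sum over edges incident to $o$).
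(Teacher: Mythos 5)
Your proof is correct, and it is precisely the standard argument behind this inequality (first-step decomposition of the lazy kernel, the mixture-entropy identity $H(\mu)-\sum_z\lambda_z H(\nu_z)=\sum_z\lambda_z D(\nu_z\,\|\,\mu)$ combined with transitivity, then Pinsker); the paper itself supplies no proof and simply cites the literature, where this is exactly the argument given. Your closing remark about interpreting $\sum_{x\sim o}$ as a sum over edges incident to $o$ is the right way to handle loops and multi-edges consistently with the paper's conventions.
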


To apply this inequality, we will need to bound the entropy in terms of the growth. While we always have the trivial bound $H_t \leq \log \op{Gr}(t)$, it is also possible to bound the entropy in terms of $[\log \op{Gr}(t^{1/2})]^2$, which is a significantly better bound when the growth is much larger on scale $t$ than scale $t^{1/2}$. (While this bound is worse than the trivial bound when the growth is subexponential and sufficiently regular, it better fits into our philosophy of understanding the behaviour of the random walk at some scale from the growth of the graph at that scale alone.)

\begin{lem}
\label{lem:entropy_from_growth}
For each $d\geq 1$ there exists a constant $C=C(d)$ such that if $G$ is a (locally finite) transitive graph of degree $d$ then
\[H_t \leq C \Bigl(\log \op{Gr}\bigl(t^{1/2}\bigr)\Bigr)^2\]
for every $t\geq 1$.
\end{lem}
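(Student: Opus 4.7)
The plan is to split the heat-kernel distribution $\mu(x)=p_t(o,x)$ according to whether its argument lies in a ball $B_r$ about $o$, for a parameter $r=r(t)$ of order $\sqrt{t}\log\op{Gr}(\sqrt{t})$. Writing $\alpha=\mathbf{P}_o(X_t\in B_r)$, $\beta=1-\alpha$, and $\mu_A,\mu_B$ for the two conditional distributions on $B_r$ and $B_r^c$, the standard identity for the entropy of a mixture of distributions with disjoint supports gives
\begin{equation*}
H_t=\alpha H(\mu_A)+\beta H(\mu_B)-\alpha\log\alpha-\beta\log\beta.
\end{equation*}
Since the lazy walk at time $t$ is supported in $B_t$, the elementary bound on the entropy of a probability measure on a finite set yields $H(\mu_A)\leq \log\op{Gr}(r)$ and $H(\mu_B)\leq\log\op{Gr}(t)$, hence
\begin{equation*}
H_t\leq \alpha\log\op{Gr}(r)+\beta\log\op{Gr}(t)+\log 2.
\end{equation*}

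To bound $\log\op{Gr}(r)$ at our chosen scale I would use the sub-multiplicativity $\op{Gr}(2n)\leq \op{Gr}(n)^{2}$, which follows from transitivity via the covering $B_{2n}(o)\subseteq \bigcup_{x\in B_n(o)}B_n(x)$. Iterating dyadically and interpolating produces $\log\op{Gr}(m)\leq 2(m/n)\log\op{Gr}(n)$ for all $m\geq n\geq 1$. Applied with $n=\lceil\sqrt{t}\,\rceil$ and $r=\lceil C_0\sqrt{t}\log\op{Gr}(\sqrt{t})\rceil$ for a sufficiently large constant $C_0$ to be chosen, this yields $\log\op{Gr}(r)\leq 4C_0\bigl(\log\op{Gr}(\sqrt{t})\bigr)^{2}$, which is the required bound on the first term. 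For the tail term, \cref{cor:diffusivity_from_VC} with this choice of $r$ gives
\begin{equation*}
\beta\;\leq\; 2(t+1)\,\op{Gr}(r)\,\exp\!\Bigl[-\tfrac{1}{2}C_0^{2}\bigl(\log\op{Gr}(\sqrt{t})\bigr)^{2}\Bigr].
\end{equation*}
Substituting $\log\op{Gr}(r)\leq 4C_0(\log\op{Gr}(\sqrt{t}))^{2}$, using $\log\op{Gr}(t)\leq 2\log\op{Gr}(\sqrt{t})$, and bounding $\log(t+1)\leq 2\log\op{Gr}(\sqrt{t})+O(1)$ (via the elementary inequality $\op{Gr}(n)\geq n+1$ for any infinite connected transitive graph), a choice such as $C_0=10$ makes the quadratic Gaussian exponent dominate all the competing terms, so that $\beta\log\op{Gr}(t)=o(1)$ as $\log\op{Gr}(\sqrt{t})\to\infty$.

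Combining the two bounds yields $H_t\leq 5C_0\bigl(\log\op{Gr}(\sqrt{t})\bigr)^{2}$ whenever $t$ exceeds a threshold that depends only on $d$; the remaining small-$t$ regime is absorbed into the $d$-dependent constant via the trivial bound $H_t\leq t\log d$ together with the observation that $\log\op{Gr}(\sqrt{t})\geq \log 2>0$ for every $t\geq 1$. The main delicate point is the quantitative juggling of constants in paragraph two: $C_0$ must be chosen large enough that the Gaussian tail in Varopoulos--Carne defeats both the volume factor $\op{Gr}(r)$ (whose logarithm scales linearly in $C_0$) and the factor $\log\op{Gr}(t)$, while still leaving $\log\op{Gr}(r)$ of the correct order $(\log\op{Gr}(\sqrt{t}))^{2}$. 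Because the Gaussian exponent is quadratic in $C_0$ while the competing volume contribution is only linear, such a choice exists, but the bookkeeping has to be done carefully to avoid circularities when $\log\op{Gr}(\sqrt{t})$ is itself only moderately large.
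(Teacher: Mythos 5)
Your argument is essentially the paper's proof: the mixture identity you use is exactly the conditional-entropy decomposition $H(X_t)\leq H(\mathbbm{1}(X_t\in B_r))+H(X_t\mid \mathbbm{1}(X_t\in B_r))$ from the paper, followed by the same choice $r\asymp\sqrt{t}\,\log\op{Gr}(\sqrt{t})$, the same submultiplicativity bound $\log\op{Gr}(r)\lesssim (r/\sqrt{t})\log\op{Gr}(\sqrt{t})$, and the same Varopoulos--Carne control of the tail, with the quadratic-in-$C_0$ Gaussian exponent beating the linear-in-$C_0$ volume factor. One slip: the inequality $\log\op{Gr}(t)\leq 2\log\op{Gr}(\sqrt{t})$ is false for graphs of superpolynomial growth (submultiplicativity only gives $\log\op{Gr}(t)\leq 2\sqrt{t}\,\log\op{Gr}(\sqrt{t})$), but this is harmless since $\log\op{Gr}(t)$ only needs to be beaten by the Gaussian factor, and the trivial bound $\log\op{Gr}(t)\leq(t+1)\log d$ together with $\log(t+1)=O(\log\op{Gr}(\sqrt{t}))$ suffices.
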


\begin{proof}[Proof of \cref{lem:entropy_from_growth}]
We may assume that the diameter of $G$ is at least $t^{1/2}$, the claim being trivial otherwise since $H_t \leq \log |V|$.
Recall that if $X$ and $Y$ are two random variables defined on the same probability space, the \textbf{conditional entropy} $H(X\mid Y)$ is defined to be the expected entropy of the conditional law of $X$ given $Y$. Bayes' rule for the conditional entropy states that
\[
H(X)=H(Y)+H(X|Y)-H(Y|X)\leq H(Y)+H(X|Y).
\]
Applying this inequality with $X=X_t$ and $Y=\mathbbm{1}(X_t \in B_r)$, and using that the entropy of a random variable supported on a set of size $N$ is at most $\log N$, we obtain that
\[
H_t \leq \log 2 + \log \op{Gr}(r)  + [\log \op{Gr}(t)] \mathbf{P}_o(X_t \notin B_r)
\leq \log 2+  \log \op{Gr}(r) + 2 (t+1) \op{Gr}(r) \exp\left[-\frac{r^2}{2t}\right] \log \op{Gr}(t) 
\]
for every $r,t\geq 1$. Using the fact that the growth is submultiplicative and that $\op{Gr}(n) \leq d^{n+1}$, we obtain that if $n:=\lfloor t^{1/2} \rfloor$ divides $r$ then
\[
H_t \leq \log 2+  \frac{r}{n} \log \op{Gr}(n) + 2 d (t+1)^2  \exp\left[\frac{r}{n} \log \op{Gr}(n)-\frac{r^2}{2t}\right],
\]
and the claim follows by taking $r$ to be a multiple of $n$ closest to $C  \log [t\op{Gr}(n)]$ for an appropriately large constant $C=C(d)$. (Note that $\log [t\op{Gr}(n)]$ and $\log \op{Gr}(n)$ are of the same order since the diameter of $G$ is at least $t^{1/2}$ and hence $\op{Gr}(n)\geq n$.)
\end{proof}

This inequality easily implies the following simple bound on the total variation distance in terms of the growth, yielding in particular that the total variation distance is small whenever $t$ is much larger than $d(x,y)^2 \log \op{Gr}(t^{1/2})^2$.

\begin{cor}
\label{cor:coupling_from_low_growth}
For each $d\geq 1$ there exists a constant $C=C(d)$ such that if $G=(V,E)$ is a (locally finite) transitive graph of degree $d$ then
\[\|\mathbf{P}_x(X_t=\cdot)-\mathbf{P}_y(X_t=\cdot)\|_{\mathrm{TV}} \leq \frac{C\log \op{Gr}(t^{1/2})}{t^{1/2}}\, d(x,y)\]
for every $t \geq 1$ and  $x,y\in V$.
\end{cor}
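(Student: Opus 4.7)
My plan is to first prove the bound in the special case that $x\sim y$ are neighbours, then extend it to arbitrary pairs $(x,y)$ by triangle inequality along a geodesic. For the neighbour case, the natural quantity to control is the neighbour-averaged quantity
\[
\overline{D}_t := \frac{1}{\deg(o)}\sum_{z \sim o}\|\mathbf{P}_o(X_t=\cdot) - \mathbf{P}_z(X_t=\cdot)\|_{\mathrm{TV}},
\]
together with its ``staggered'' variant
\[
\overline{E}_t := \frac{1}{\deg(o)}\sum_{z \sim o}\|\mathbf{P}_o(X_t=\cdot) - \mathbf{P}_z(X_{t-1}=\cdot)\|_{\mathrm{TV}},
\]
since the cool inequality (\cref{thm:cool_inequality}) gives us control on $\overline{E}_t$ but not immediately on $\overline{D}_t$.

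The first step is to combine Jensen's inequality with \cref{thm:cool_inequality} to obtain $\overline{E}_t^2 \leq H_t - H_{t-1}$, and then apply Cauchy--Schwarz to sum over $s \in \{1,\dots,t\}$ and conclude that $\sum_{s=1}^t \overline{E}_s \leq \sqrt{t\,H_t}$. The second step is to pass from $\overline{E}$ to $\overline{D}$: using the laziness decomposition $\mathbf{P}_z(X_s=\cdot) = \frac{1}{2}\mathbf{P}_z(X_{s-1}=\cdot) + \frac{1}{2\deg(z)}\sum_{u\sim z}\mathbf{P}_u(X_{s-1}=\cdot)$ together with transitivity, one sees that $\|\mathbf{P}_z(X_{s-1}=\cdot)-\mathbf{P}_z(X_s=\cdot)\|_{\mathrm{TV}} \leq \tfrac{1}{2}\overline{D}_{s-1}$, and hence by the triangle inequality and averaging that the recursion
\[
\overline{D}_s \leq \overline{E}_s + \tfrac{1}{2}\overline{D}_{s-1}
\]
holds. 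Summing this recursion from $s=1$ to $t$ and rearranging yields $\tfrac{1}{2}\sum_{s=1}^t \overline{D}_s \leq \sum_{s=1}^t\overline{E}_s + \tfrac{1}{2}\overline{D}_0 \leq \sqrt{t\,H_t} + \tfrac{1}{2}$.

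The third step is to convert this time-averaged bound into a pointwise bound at time $t$. This uses the fact that $\overline{D}_s$ is non-increasing in $s$, which follows from the standard observation that the Markov transition operator is a contraction in total variation. Combined with the previous step this yields $\overline{D}_t \leq (2\sqrt{t\,H_t}+1)/t$, and then invoking the entropy bound of \cref{lem:entropy_from_growth} in the form $H_t \leq C(\log \op{Gr}(t^{1/2}))^2$ gives $\overline{D}_t \leq C'\log \op{Gr}(t^{1/2})/t^{1/2}$ after absorbing the $1/t$ term using that $\log \op{Gr}(t^{1/2}) \geq \log 2$ for any non-trivial transitive graph (the trivial case being handled by adjusting the constant).

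The final step is to pass from neighbours to general $x,y$. Given $x,y\in V$ with $d(x,y)=k$, take a geodesic $x=v_0, v_1,\ldots, v_k = y$ and apply the triangle inequality. For each consecutive pair $v_i\sim v_{i+1}$, transitivity gives an automorphism sending $v_i\mapsto o$, $v_{i+1}\mapsto u_i$ for some $u_i\sim o$, so $\|\mathbf{P}_{v_i}(X_t=\cdot)-\mathbf{P}_{v_{i+1}}(X_t=\cdot)\|_{\mathrm{TV}} = \|\mathbf{P}_o(X_t=\cdot)-\mathbf{P}_{u_i}(X_t=\cdot)\|_{\mathrm{TV}} \leq \deg(o)\cdot \overline{D}_t = d\,\overline{D}_t$, and summing over $i$ yields the claimed bound with $C := d\cdot C'$. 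The main technical obstacle is the second step, where the cool inequality only controls the staggered quantity $\overline{E}_t$ and one must use laziness in an essential way to produce the recursion relating $\overline{D}_t$ to $\overline{E}_t$ without incurring losses that destroy the $t^{-1/2}$ decay; the fact that the recursion has contraction factor $\tfrac{1}{2}$ (coming precisely from the lazy step) is what makes the sum-and-monotonicity trick go through.
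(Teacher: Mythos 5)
Your proof is correct, and its skeleton is the same as the paper's: both arguments combine the cool inequality (\cref{thm:cool_inequality}) with the entropy bound of \cref{lem:entropy_from_growth}, use monotonicity of the total variation distance in $t$ to convert averaged-over-time information into a bound at time $t$, and finish by chaining along a geodesic. The one step where you genuinely diverge is the treatment of the one-step time offset in the cool inequality: the paper handles it by showing directly that $\left\|\mathbf{P}_x(X_t=\cdot)-\mathbf{P}_x(X_{t-1}=\cdot)\right\|_{\mathrm{TV}}=O(t^{-1/2})$ via an explicit computation with the $\mathrm{Binomial}(n,1/2)$ versus $\mathrm{Binomial}(n+1,1/2)$ densities (conditioning on the number of non-lazy steps), whereas you instead set up the recursion $\overline{D}_s\leq\overline{E}_s+\tfrac12\overline{D}_{s-1}$ from the laziness decomposition and sum it. Your route avoids the Binomial calculation entirely and is arguably cleaner; the paper's route gives the quantitative one-step bound $O(t^{-1/2})$ as a standalone fact, which is occasionally useful elsewhere. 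Relatedly, you telescope the entropy increments over all $s\leq t$ via Cauchy--Schwarz while the paper pigeonholes a single good time $k\in[t/2,t]$ with $H_k-H_{k-1}\leq\frac{2}{t}H_t$; these are interchangeable. All of your intermediate claims (the Jensen step, the identity $\|\mathbf{P}_z(X_s=\cdot)-\mathbf{P}_z(X_{s-1}=\cdot)\|_{\mathrm{TV}}\leq\tfrac12\overline{D}_{s-1}$ via transitivity, the contraction property giving monotonicity of $\overline{D}_s$, and the factor-$d$ loss in passing from the neighbour average to a single neighbour) check out.
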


\begin{proof}[Proof of \cref{cor:coupling_from_low_growth}]
It follows by a standard computation that the total variation distance between a Binomial$(n,1/2)$ distribution and a Binomial$(n+1,1/2)$ distribution is of order $n^{-1/2}$. Indeed, if we let $\mu$ be the Binomial$(n,1/2)$ distribution and let $\nu$ be the Binomial$(n+1,1/2)$ distribution then $\mu$ is absolutely continuous with respect to $\nu$ with density
\[
\frac{\mu(k)}{\nu(k)} = \frac{2 (n-k+1)}{n+1} \qquad \text{ for every $0\leq k \leq n+1$},
\]
and we have by an easy computation (using e.g.\ Jensen's inequality and the linearity of the variance) that
\[
\|\mu-\nu\|_\mathrm{TV} = \frac{1}{2}\sum_{k=0}^{n+1} \left|\frac{\mu(k)}{\nu(k)}-1\right|\nu(k) = \frac{1}{2}\sum_{k=0}^{n+1} \left|\frac{n-2k+1}{n+1}\right|\nu(k) =O(n^{-1/2})
\]
as claimed. Since the conditional laws of the lazy random walks $X_t$ and $X_{t+1}$ are the same given that the number of non-lazy steps are the same, it follows that 
\[
\left\| \mathbf{P}_x(X_t = \cdot) - \mathbf{P}_{x}(X_{t-1} = \cdot) \right\|_{\op{TV}} \leq C_1 t^{-1/2}
\]
for every $t\geq 1$ and $x\in V$, where $C_1$ is a universal constant. Putting this together with \cref{thm:cool_inequality} yields that if $x$ and $y$ are neighbouring vertices on a transitive graph of degree $d$ then
\[
\left\| \mathbf{P}_x(X_t = \cdot) - \mathbf{P}_{y}(X_{t} = \cdot) \right\|_{\op{TV}} \leq \sqrt{4d (H_t-H_{t-1})}+C_1 t^{-1/2}
\]
for every $t\geq 1$. Since the left hand side is increasing in $t$ and, by \cref{lem:entropy_from_growth}, there exists $t/2\leq k\leq t$ with $H_k-H_{k-1} \leq \frac{2}{t} H_t \leq \frac{C_2}{t}\log \op{Gr}(t^{1/2})^2$ for some constant $C_2=C_2(d)$, it follows 
that
\[
\left\| \mathbf{P}_x(X_t = \cdot) - \mathbf{P}_{y}(X_{t} = \cdot) \right\|_{\op{TV}} \leq C_1 t^{-1/2}+ \sqrt{\frac{4C_2 d }{t}\log \op{Gr}(t^{1/2})^2}
\]
for every pair of adjacent vertices $x$ and $y$ and every $t\geq 1$. The analogous bound for arbitrary pairs of vertices follows from this and the triangle inequality for the total variation distance.
% , and the expectation of $|\frac{2 (n-k+1)}{n+1}-1|=|\frac{n-2k+1}{n+1}|$
\end{proof}

\medskip

\noindent
\textbf{Hitting probabilities of balls.} We now want to argue that tubes around independent random walks started at distant vertices are likely to be disjoint under the assumption that our graph ``looks at least five dimensional'' on all relevant scales. In fact we will prove more general versions of these estimates in which ``five'' is replaced by an arbitrary constant $\kappa>4$. 
We begin by noting the following simple analytic consequence 
of the results of Coulhon and Saloff-Coste \cite{MR1232845} and Morris and Peres \cite{morris2005evolving}, 
% \cite{morris2005evolving} and \cite{MR1232845,MR1377559,MR2448128}
 which lets us convert growth bounds into bounds on the heat kernel $p_t(u,v)$. 
 % This follows from the well-known results of  that relate growth to isoperimetry to heat kernel decay in general unimodular transitive graphs.

\begin{lem} \label{lem:growth_gives_kernel_decay}
      For each integer $d \geq1$ there exists a positive constant $c=c(d) \in(0,1]$ such that if $G = (V,E)$ is an infinite unimodular transitive graph with vertex degree $d$ then
      % . For all $t \in \mathbb N$ with $t \geq 4$ and all $u,v \in V(G)$,
      \[
            p_t(u,v) \leq \frac{1}{\op{Gr} \left( c t^{1/2} \left[ \log \op{Gr}\left( t^{1/2} \right) \right]^{-1/2} \right)}
      \]
      for every integer $t\geq 4$ and every pair of vertices $u,v$ in $G$.
\end{lem}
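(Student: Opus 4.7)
The plan is to combine three classical ingredients: a Cauchy--Schwarz reduction to the return probability, the isoperimetric inequality of Coulhon--Saloff-Coste, and the Morris--Peres evolving-set bound on the heat kernel in terms of the isoperimetric profile. First I would reduce to bounding the return probability at the origin: by Cauchy--Schwarz applied to $p_t(u,v)=\sum_w p_{t/2}(u,w)p_{t/2}(w,v)$ (with standard adjustments for parity since the walk is lazy) one obtains $p_t(u,v)\le \sqrt{p_t(u,u)\,p_t(v,v)}$, and by transitivity $p_t(u,u)=p_t(v,v)=p_t(o,o)$, so it suffices to prove that $p_t(o,o)\leq 1/\op{Gr}(r(t))$ with $r(t)=ct^{1/2}[\log\op{Gr}(t^{1/2})]^{-1/2}$.

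Next I would invoke the Coulhon--Saloff-Coste isoperimetric inequality, which (using the mass-transport principle to extend from the Cayley case to unimodular transitive graphs of vertex degree $d$) gives a constant $c_1=c_1(d)>0$ such that every finite set of vertices $A\subseteq V$ satisfies
\[
\frac{|\partial_E A|}{|A|}\;\geq\;\frac{c_1}{\rho(2|A|)},\qquad\text{where}\qquad \rho(v):=\inf\bigl\{r\geq 1:\op{Gr}(r)\geq v\bigr\}.
\]
Equivalently, the Cheeger-type isoperimetric profile $\Phi(v):=\inf\{|\partial_E A|/|A|:1\leq |A|\leq v\}$ satisfies $\Phi(v)\geq c_1/\rho(2v)$.

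Now I would feed this into the Morris--Peres evolving-sets bound (or equivalently a Nash/Faber--Krahn inequality as in Coulhon's survey), which states that for a lazy reversible chain on a $d$-regular graph one has $p_t(o,o)\leq 1/v$ provided
\[
t\;\geq\; C_d\!\int_{1}^{v}\frac{du}{u\,\Phi(u)^{2}}.
\]
Combined with the Coulhon--Saloff-Coste bound above, choosing $v=\op{Gr}(r)$ and substituting $u=\op{Gr}(s)/2$ in the integral, the right-hand side is bounded by
\[
C_d'\int_{1}^{r}\frac{s^{2}\,V'(s)}{V(s)}\,ds\;\leq\; C_d'\,r^{2}\log V(r),
\]
where I write $V=\op{Gr}$ and use monotonicity together with an integration by parts (the non-smooth case is handled by summation in place of integration).

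The final paragraph is the inversion. Take $r=c\,t^{1/2}[\log V(t^{1/2})]^{-1/2}$ with $c=c(d)$ small enough that $C_d'c^{2}\leq 1$. Since $r\leq t^{1/2}$ and $V$ is monotone, $\log V(r)\leq \log V(t^{1/2})$, and therefore
\[
C_d'\,r^{2}\log V(r)\;\leq\; C_d'\,c^{2}\,t\,\frac{\log V(r)}{\log V(t^{1/2})}\;\leq\; t,
\]
which verifies the hypothesis of the Morris--Peres bound with $v=V(r)$ and yields $p_t(o,o)\leq 1/V(r)$, as required. The main obstacle I expect is the careful bookkeeping needed to apply Morris--Peres on an \emph{infinite} unimodular transitive graph (rather than a finite chain converging to its stationary distribution): one must verify that the evolving-set/Faber--Krahn machinery applies with counting measure as ``reference measure'' using unimodularity, and that the integral estimate is truly of the form $\int du/(u\Phi(u)^{2})$ rather than involving the spectral profile $\Lambda$ in a way that loses an additional factor. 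A subsidiary technical point is handling the integrals when $V$ is only defined at integer scales and $\rho$ is a step function, which I would address by piecewise-constant majorization.
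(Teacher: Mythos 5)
Your proposal is correct and follows essentially the same route as the paper: the Coulhon--Saloff-Coste isoperimetric bound $\Phi(x)\gtrsim 1/\op{Gr}^{-1}(2x)$ (extended to unimodular transitive graphs), fed into the Morris--Peres evolving-set heat-kernel bound, with the integral estimated by $O(r^2\log\op{Gr}(r))$ and the same inversion at $r=ct^{1/2}[\log\op{Gr}(t^{1/2})]^{-1/2}$. The initial Cauchy--Schwarz reduction to the return probability is harmless but unnecessary, since Morris--Peres already bounds $p_{2t}(u,v)$ uniformly over all pairs, and the paper handles the technical points you flag (parity, constant prefactors, integer scales) by one extra averaging step and by absorbing constants via $\op{Gr}(3nm)\geq n\op{Gr}(m)$.
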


%Taking t \geq 4 avoiding division by zero in the proof

\begin{proof}[Proof of \cref{lem:growth_gives_kernel_decay}]
It suffices to prove an inequality of the form
      \begin{equation}
      \label{eq:heat_kernel_even_only}
            p_{2t}(u,v) \leq \frac{1}{c\op{Gr} \left( c t^{1/2} \left[ \log \op{Gr}\left( t^{1/2} \right) \right]^{-1/2} \right)}
      \end{equation}
      for every integer $t\geq 4$ and every $u,v\in V$, where $c=c(d)$ is a positive constant depending only on the degree.
      Indeed, odd values of $t$ can then be handled using the inequality
      \[
      p_{2t+1}(u,v) \leq \frac{1}{d} \sum_{v' \sim v} p_{2t}(u,v') \leq \max_{v' \sim v} p_{2t}(u,v'),
      \]
      while the constant outside of the growth function can be absorbed into the constant inside the growth function using the inequality $\op{Gr}(3nm) \geq n \op{Gr}(m)$, which holds for all positive integers $n,m$ in any infinite transitive graph as an elementary consequence of the triangle inequality.
       % to absorb the constant outside of the argument of $\beta$ into the constant inside.

\medskip
    
    We now prove an estimate of the form \eqref{eq:heat_kernel_even_only}.
      Let the inverse growth function $\op{Gr}^{-1}$ be defined by $\op{Gr}^{-1}(x) := \inf \{ n : \op{Gr}(n) \geq x  \}$ and recall that the \textbf{isoperimetric profile} of $G$ is the function $\Phi:[1,\infty) \to [0,d]$ defined by
      \begin{equation}
      \label{eq:Coulhon_Saloff-Coste}
            \Phi(x) := \inf \left\{ \frac{ \abs{ \partial W } }{\abs{W}} : W \subseteq V(G) \text{ and } 0 < \abs{W} \leq x  \right\}.
      \end{equation}
      % Let $t \in \mathbb N$ with $t \geq 2$ and $u,v \in V(G)$ be arbitrary.
       For transitive unimodular graphs, the isoperimetric profile and the growth are related by the inequality
      \begin{equation} \label{eq:growth_to_isop}
      \Phi(x) \geq \frac{1}{2 \op{Gr}^{-1}(2x)},
      \end{equation}
      which was proven for Cayley graphs by Coulhon and Saloff-Coste \cite{MR1232845} and extended to unimodular transitive graphs by Saloff-Coste \cite{MR1377559} and Lyons, Morris, and Schramm \cite{MR2448128}; we use the statement given in \cite[Theorem 10.46]{MR3616205}.
To make use of this inequality, we will apply the results of Morris and Peres
\cite{morris2005evolving}, which imply that there exists a constant $c_1(d) \in (0,1)$ such that
      \[
            p_{2t}(u,v) \leq \frac{1}{c_1 \sup \left\{ y : \int_1^y \frac{1}{x \Phi(4x)^2} \mathrm{d}x  \leq c_1 t  \right\}}
      \]
      for every integer $t\geq 1$ and every $u,v\in V$. Using \eqref{eq:growth_to_isop} to estimate the integral that appears here, we have for each $y \in [1,\infty)$ that
      \[ 
            \int_1^y \frac{1}{x \Phi(4x)^2} \mathrm{d}x \leq \int_1^y \frac{4 \op{Gr}^{-1}(8x)^2}{x} \mathrm{d}x \leq 4 \op{Gr}^{-1}(8y)^2 \int_1^y \frac{1}{x} \mathrm{d}x = 4 \op{Gr}^{-1}(8y)^{2} \log(y).
     \]
     Thus, to prove an estimate of the form \eqref{eq:heat_kernel_even_only} it suffices to verify that 
     \begin{equation}
     \label{eq:growth_verification}
     \text{if $y\geq 1$ satisfies } y \leq \frac{1}{8} \op{Gr}\left( \left[ \frac{c_1 t}{4 \log \op{Gr}(t^{\frac{1}{2}})}\right]^{\frac{1}{2}} \right) \qquad \text{ then } \qquad 4 \op{Gr}^{-1}(8y)^{2} \log(y) \leq c_1 t.
     \end{equation}
     This follows straightforwardly by noting that $8y \leq \op{Gr}( \sqrt{c_1 t / (4 \log y)})$ whenever $y$ satisfies the upper bound on the left hand side of \eqref{eq:growth_verification}. \qedhere
     % and elementary calculation using that if 
     % then $4 \op{Gr}^{-1}(8y)^{2} \log(y) \leq c_1 t$.

      % Note that $\op{Gr}^{-1} \circ \op{Gr}(z) \geq z$ for all $z \in [1,\infty)$. So an arbitrary $y \geq 1$ satisfies
      % \[
      %       4 \op{Gr}^{-1}(8y)^{2} \log(y) \leq c_1 t
      % \]
      % if it satisfies
      % \[
      %       y \leq \frac{1}{8} \op{Gr}\left( \left[ \frac{c_1 t}{4 \log y}\right]^{\frac{1}{2}} \right).
      % \]
      % This, in turn, holds if
      % \[
      %       y \leq \frac{1}{8} \op{Gr}\left( \left[ \frac{c_1 t}{4 \log \left( \beta(t^{\frac{1}{2}}) \right) }\right]^{\frac{1}{2}} \right),
      % \]
      % since this last inequality in particular guarantees that $y \leq \beta( t ^{\frac{1}{2}} )$. Therefore, for some constant $c_2(d) \in (0,1)$,
      % \[
      %       p_{2t}(u,v) \leq \frac{8}{c_1 \beta\left( \left[ \frac{c_1 t}{4 \log \left( \beta(t^{\frac{1}{2}}) \right) }\right]^{\frac{1}{2}} \right)} \leq \beta \left( \frac{ c_2 t^{\frac{1}{2}}}{ \left[ \log \beta\left( t^{\frac{1}{2}} \right) \right]^{\frac{1}{2}}  } \right)^{-1},
      % \]
      % where we used the trivial observation that $\beta(3ab) \geq a \beta(b)$ for all $a,b \in \mathbb N$ to absorb the constant outside of the argument of $\beta$ into the constant inside. Since
      % \[
      %       p_{2t+1}(u,v) = \frac{1}{d} \sum_{v' \sim v} p_{2t}(u,v') \leq \max_{v' \sim v} p_{2t}(u,v'),
      % \]
      % we deduce that the exact same bound holds for $p_{2t+1}(u,v)$ in place of $p_{2t}(u,v)$. The result now follows with $c := \frac{c_2}{\sqrt{2}}$.
\end{proof}

\cref{lem:growth_gives_kernel_decay} has the following elementary corollary, which we will apply only in situations where $\log \op{Gr}\bigl(t^{1/2}\bigr)$ is much smaller than $n^{-1} t^{1/2}$.

\begin{cor}[Leaving a ball]
\label{cor:leaving_ball}
For each integer $d\geq 1$ and real number $\kappa \geq 1$ there exists a constant $C=C(d,\kappa)$ such that if $G= (V,E)$ is an infinite, connected, unimodular transitive graph with vertex degree $d$ and $n,t\geq 1$ are integers such that $\op{Gr}(3m)\geq 3^\kappa \op{Gr}(m)$ for every $n \leq m \leq \frac{1}{2}t^{1/2}$ then
\[
\mathbf{P}_u(X_t \in B_n(v)) \leq C\left[\log \max\left\{ \frac{t}{n^2},\op{Gr}\bigl(n\bigr)\right\} \right]^{\kappa} \left(\frac{n^2}{t}\right)^{\kappa/2}
% +\exp\left[-c\frac{t^{1/2}}{n}\right] \op{Gr}(n)
\]
for every pair of vertices $u$ and $v$ in $G$.
\end{cor}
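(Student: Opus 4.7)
I plan to prove \cref{cor:leaving_ball} by revisiting the proof of \cref{lem:growth_gives_kernel_decay}, exploiting the polynomial-like volume lower bound that the fast-tripling hypothesis yields throughout the range $[n, t^{1/2}/2]$. The key point is that \cref{lem:growth_gives_kernel_decay} only controls the heat kernel at scale $t^{1/2}$ via $\log \op{Gr}(t^{1/2})$, which we have no a priori upper bound on; the fast-tripling hypothesis instead effectively makes the graph look polynomial of dimension $\geq \kappa$ at the relevant scales, which is precisely the input needed for a sharp heat kernel bound via the Morris-Peres / Coulhon-Saloff-Coste framework.

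First, I would reduce to the case $t \geq C_0 n^2 \log \max\{t/n^2, \op{Gr}(n)\}$ for a sufficiently large $C_0 = C_0(d,\kappa)$; in the complementary regime the claimed bound is implied by $\mathbf{P}_u(\cdot) \leq 1$ together with a short check that $[\log\max\{t/n^2,\op{Gr}(n)\}]^\kappa (n^2/t)^{\kappa/2}$ is bounded below by a positive constant. Next, iterating $\op{Gr}(3m) \geq 3^\kappa \op{Gr}(m)$ gives $\op{Gr}(m) \geq 3^{-\kappa}(m/n)^\kappa \op{Gr}(n)$ for every integer $m \in [n, t^{1/2}/2]$, equivalently $\op{Gr}^{-1}(y) \leq 3n(y/\op{Gr}(n))^{1/\kappa}$ for $y \in [\op{Gr}(n), \op{Gr}(t^{1/2}/6)]$. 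Combining this polynomial bound on $\op{Gr}^{-1}$ with the Coulhon-Saloff-Coste isoperimetric inequality $\Phi(x) \geq 1/(2\op{Gr}^{-1}(2x))$ and the Morris-Peres bound $p_{2t}(u,v) \leq C_1/y^{\ast}$ (where $y^{\ast} = \sup\{y: \int_1^y \frac{dx}{x\Phi(4x)^2} \leq c_1 t\}$) used in the proof of \cref{lem:growth_gives_kernel_decay}, I would split the integral at $x = \op{Gr}(n)/8$: below this threshold the integrand is bounded by $4n^2/x$ (contributing $O(n^2 \log \op{Gr}(n))$), and above it by $O(n^2 (x/\op{Gr}(n))^{2/\kappa}/x)$, whose antiderivative $\int x^{2/\kappa-1}\,dx$ contributes $O_\kappa(n^2 (y/\op{Gr}(n))^{2/\kappa})$.

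Setting the total integral $\leq c_1 t$ and using the standing assumption on $t$ to dominate the log term yields $y^{\ast} \geq \op{Gr}(n)(c_2 t/n^2)^{\kappa/2}$ for some $c_2 = c_2(\kappa)$, hence $p_{2t}(u,v) \leq C_3 (n^2/t)^{\kappa/2}/\op{Gr}(n)$; summing over $v' \in B_n(v)$ gives the desired $\mathbf{P}_u(X_{2t} \in B_n(v)) \leq C_3 (n^2/t)^{\kappa/2}$, with the odd-time case handled by $p_{2t+1} \leq (1/d)\sum_{v'\sim v} p_{2t}$ exactly as in \cref{lem:growth_gives_kernel_decay}. The main technical obstacle is to verify that $y^{\ast}$ as above lies in $[\op{Gr}(n), \op{Gr}(t^{1/2}/6)/8]$ so that our polynomial bound on $\op{Gr}^{-1}$ is applicable throughout the integration range; this reduces to the elementary inequality $c_2 \leq 1/(324\cdot 8^{2/\kappa})$, verifiable using the fast-tripling lower bound $\op{Gr}(t^{1/2}/6) \geq 18^{-\kappa} (t/n^2)^{\kappa/2} \op{Gr}(n)$ and handled by choosing $c_2$ small enough (which only affects the constant $C$ in the conclusion).
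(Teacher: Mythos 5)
Your proposal is correct, but it takes a genuinely different route from the paper. The paper's proof treats \cref{lem:growth_gives_kernel_decay} as a black box: it sets $r := ct^{1/2}[\log \op{Gr}(t^{1/2})]^{-1/2}$, splits into the cases $r\leq n$ and $r\geq n$, uses the iterated tripling bound $\op{Gr}(r)\geq (r/3n)^{\kappa}\op{Gr}(n)$ together with the submultiplicativity consequence $\op{Gr}(r)\geq \exp[c\sqrt{\log\op{Gr}(t^{1/2})}]$, and then eliminates the unknown quantity $\log \op{Gr}(t^{1/2})$ via the elementary inequality $\min\{Ax^{\kappa},e^{-x}\}\leq A(1\vee\log(1/A))^{\kappa}$ — this is precisely where the $[\log\max\{t/n^2,\op{Gr}(n)\}]^{\kappa}$ factor in the statement comes from. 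You instead go back inside the Morris--Peres/Coulhon--Saloff-Coste machinery and feed the tripling hypothesis directly into the integral $\int_1^y \frac{dx}{x\Phi(4x)^2}$ via the polynomial bound $\op{Gr}^{-1}(y)\leq 3n(y/\op{Gr}(n))^{1/\kappa}$ on the relevant range. Your split of the integral at $\op{Gr}(n)/8$, the choice $y=\op{Gr}(n)(c_2t/n^2)^{\kappa/2}$, the consistency check $8y\leq \op{Gr}(t^{1/2}/6)$ via $c_2\leq 1/(324\cdot 8^{2/\kappa})$, and the reduction to $t\gtrsim n^2\log\max\{t/n^2,\op{Gr}(n)\}$ (with the complementary regime trivial since $\op{Gr}(n)\geq 2$ forces the right-hand side to be bounded below) all check out. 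What your approach buys is a strictly sharper conclusion — $C(n^2/t)^{\kappa/2}$ with no polylogarithmic factor in the main regime — at the cost of redoing the heat-kernel computation rather than quoting \cref{lem:growth_gives_kernel_decay}; what the paper's approach buys is brevity, since the lemma is already available and the log loss is harmless for its applications.
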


(Note that this corollary holds vacuously when $t\leq n^2$.) 

\begin{rk}
This estimate is quite similar to that appearing in e.g.\ \cite{MR4133700}; the important distinction is that we only assume the tripling condition $\op{Gr}(3m)\geq 3^\kappa \op{Gr}(m)$ for $m=O(t^{1/2})$ rather than for all sufficiently large scales.
\end{rk}

\begin{proof}[Proof of \cref{cor:leaving_ball}]
Since $\max_u \mathbf{P}_u(X_t\in B_n(v))$ is a decreasing function of $t$, it suffices to prove the claim under the slightly stronger assumption that $\op{Gr}(3m)\geq 3^\kappa \op{Gr}(m)$ for every $n \leq m \leq t^{1/2}$.
Fix $n,t\geq 1$ and let $c$ be the constant from \cref{lem:growth_gives_kernel_decay}. We have by submultiplicativity of the growth function that
\[
\op{Gr}(t^{1/2}) \leq \op{Gr} \left( c t^{1/2} \left[ \log \op{Gr}\left( t^{1/2} \right) \right]^{-1/2} \right)^{c^{-1}\left[\log \op{Gr}\left( t^{1/2} \right) \right]^{1/2}}
\]
and hence that
\begin{equation}
\op{Gr} \left( c t^{1/2} \left[ \log \op{Gr}\left( t^{1/2} \right) \right]^{-1/2} \right) \geq \op{Gr} \left( t^{1/2}\right)^{c\left[\log \op{Gr}\left( t^{1/2} \right) \right]^{-1/2}} = \exp\left[ c \sqrt{\log \op{Gr}\left( t^{1/2} \right)}\right].
\label{eq:submult_growth2}
\end{equation}
Applying \cref{lem:growth_gives_kernel_decay}, it follows that if $r:=ct^{1/2} \left[ \log \op{Gr}\left( t^{1/2} \right) \right]^{-1/2} \leq n$ then 
\[
p_t(u,v) \leq \frac{1}{\op{Gr(r)}} \leq \exp\left[-c^2 \frac{t^{1/2}}{n}\right]
\]
for every $u,v\in V$. It follows by a union bound that if $r\leq n$ then
\[
\mathbf{P}_u(X_t\in B_n(v)) \leq \exp\left[-c^2 \frac{t^{1/2}}{n}\right]\op{Gr}(n),
\]
which is stronger than the desired inequality.
 Now suppose that $r\geq n$. 
The assumption $\op{Gr}(3m)\geq 3^\kappa \op{Gr}(m)$ for every $n \leq m \leq t^{1/2}$ guarantees that 
\[
\op{Gr}(r) \geq \op{Gr}(n) \prod_{i=1}^{\lfloor \log_3 (r/n) \rfloor} \frac{\op{Gr}(3^i n)}{\op{Gr}(3^{i-1}n)} \geq \left(\frac{r}{3n}\right)^\kappa \op{Gr}(n).
\]
We deduce from \cref{lem:growth_gives_kernel_decay} that there exists a constant $C$ such that
\begin{align*}
p_t(u,v) 
&\leq \frac{1}{\op{Gr} \left( r \right)} 
\leq \min\left\{ \left(\frac{3n \sqrt{ \log \op{Gr}\left( t^{1/2} \right)}}{c t^{1/2} }\right)^\kappa\frac{1}{\op{Gr}(n)}, \exp\left[ -c \sqrt{\log \op{Gr}\left( t^{1/2} \right)}\right]\right\}\\
&\leq C\left(\frac{n^2}{t}\log \max\left\{ \frac{t}{n^2},\op{Gr}\bigl(n\bigr)\right\} \right)^{\kappa/2} \frac{1}{\op{Gr(n)}}
\end{align*}
where the second inequality follows since $\min\{A x^{\kappa},e^{-x}\} \leq  A (1\vee \log (1/A))^{\kappa}$ for every $A,x>0$ (as can be checked by case analysis according to whether $x \geq \log (1/A)$). 
% \[
% \max\{\left(\frac{c t^{1/2} (\log \op{Gr}(t^{1/2}))^{-1/2}}{n}\right)^\kappa \op{Gr}(n) , e^{c(\log \op{Gr}(t^{1/2}))^{1/2}} \}
% \]
% \[
% A x^{-\kappa} = e^x 
% \]
% \[
% x = \log A + \kappa \log x \approx \log A + \kappa \log \log A
% \]
% \[
% e^x \approx A (\log A)^{-\kappa}
% \]
\end{proof}

We next analyze the probability of hitting a ball whose radius is much smaller than its distance from the starting point. For transitive graphs of polynomial growth, a similar estimate without the logarithmic term can be proven by a similar calculation as in \cite[Lemma 4.4]{MR4055195}.
% The proof is adapted from an analogous calculation done for groups of polynomial growth in \cite{MR3395463}.

\begin{lem}[Hitting a distant ball]
\label{lem:hitting_distant_ball}
For each integer $d\geq 1$ and real number $\kappa > 2$ there exists a constant $C=C(d,\kappa)$ such that if $G$ is an infinite, connected, unimodular transitive graph with vertex degree $d$ and $n,t\geq 1$ are integers such that $t \geq n^2$ and $\op{Gr}(3m)\geq 3^\kappa \op{Gr}(m)$ for every $n \leq m \leq t^{1/2}$ then
\[
\mathbf{P}_u(\text{\emph{hit $B_n(v)$ before time $t$}}) \leq C \left[\log \max\left\{ d(u,v),\op{Gr}\bigl(2n\bigr)\right\}\right]^{(3\kappa+2)/2}\left(\frac{n}{d(u,v)}\right)^{\kappa-2}
\]
for every pair of vertices $u$ and $v$ with $d(u,v)\geq 2n$.
\end{lem}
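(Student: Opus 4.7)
\medskip

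\noindent\textbf{Overall strategy.} My plan is to estimate the hitting probability via a first-passage / occupation-time decomposition of the form
\[
\mathbf{P}_u(\tau\leq t)\cdot \min_{w\in B_n(v)} \mathbf{E}_w\bigl[L_t\bigr]\;\leq\;\mathbf{E}_u\bigl[L^{(2)}_{2t}\bigr],
\]
where $\tau=\inf\{s:X_s\in B_n(v)\}$, $L_t=\#\{s\leq t:X_s\in B_n(w)\}$, and $L^{(2)}_{2t}=\#\{s\leq 2t:X_s\in B_{2n}(v)\}$. This inequality follows from the strong Markov property at $\tau$ together with the containment $B_n(w)\subseteq B_{2n}(v)$ for $w\in B_n(v)$: the walk restarted at $X_\tau\in B_n(v)$ accumulates local time in $B_n(X_\tau)\subseteq B_{2n}(v)$, and that local time contributes to $L^{(2)}_{2t}$. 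Thus the hitting probability is controlled by an expected local-time ratio. The use of $B_{2n}(v)$ (not $B_n(v)$) in the numerator is the trick that lets transitivity identify the denominator with $\mathbf{E}_o[\text{time in } B_n(o) \text{ up to }t]$, independently of $w$.

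\medskip

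\noindent\textbf{Denominator bound.} By transitivity, $\mathbf{E}_w[\#\{s\leq t:X_s\in B_n(w)\}]=\mathbf{E}_o[\#\{s\leq t:X_s\in B_n(o)\}]$ for every $w$. I would then use \cref{cor:diffusivity_from_VC}: taking $s_*=c n^2/\log\op{Gr}(n)$ for a small enough constant $c$, the bound $\mathbf{P}_o(\max_{k\leq s}d(o,X_k)\geq n)\leq 2(s+1)\op{Gr}(n)\exp(-n^2/(2s))$ becomes at most $\tfrac12$ for every $s\leq s_*$. Hence $\mathbf{P}_o(X_s\in B_n(o))\geq \tfrac12$ for all $s\leq s_*$, yielding
\[
\min_{w\in B_n(v)} \mathbf{E}_w[L_t]\;\geq\;\tfrac12\, s_*\;\gtrsim\;\frac{n^2}{\log\op{Gr}(n)}\;\gtrsim\;\frac{n^2}{\Lambda},
\]
where $\Lambda:=\log\max\{d(u,v),\op{Gr}(2n)\}$ and we use $\log\op{Gr}(n)\leq\log\op{Gr}(2n)\leq\Lambda$. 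The hypothesis $t\geq n^2$ ensures $s_*\leq t$.

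\medskip

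\noindent\textbf{Numerator bound.} I would split the sum $\mathbf{E}_u[L^{(2)}_{2t}]=\sum_{s=0}^{2t}\mathbf{P}_u(X_s\in B_{2n}(v))$ at the threshold $s_0:=d(u,v)^2/(C_0\Lambda)$ for a suitable large $C_0=C_0(d,\kappa)$. For the early range $s\leq s_0$, \cref{thm:Varopoulos-Carne} gives $\mathbf{P}_u(X_s\in B_{2n}(v))\leq 2\op{Gr}(2n)\exp(-d(u,v)^2/(8s))$, and plugging in $s\leq s_0$ makes the exponent $\leq -C_0\Lambda/8$; choosing $C_0$ large this is dominated by $e^{-C\Lambda}$, so the contribution of this range is negligible (much smaller than any polynomial in $\Lambda$). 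For the tail range $s_0\leq s\leq 2t$, \cref{cor:leaving_ball} (applied to the ball $B_{2n}(v)$, whose hypotheses are implied by those of our lemma after replacing $n$ by $2n$) gives
\[
\mathbf{P}_u(X_s\in B_{2n}(v))\;\leq\;C\,[\log\max\{s/n^2,\op{Gr}(2n)\}]^\kappa\,(n^2/s)^{\kappa/2}.
\]
The crucial feature is $\kappa/2>1$, so the tail sum $\sum_{s\geq s_0}s^{-\kappa/2}(\log s)^{\kappa}$ converges and is dominated by its first term (up to a constant); an elementary computation using $\log(s_0/n^2)\leq 2\Lambda$ yields
\[
\sum_{s=s_0}^{2t}\mathbf{P}_u(X_s\in B_{2n}(v))\;\leq\;C\,\Lambda^{\kappa}\,n^\kappa\,s_0^{1-\kappa/2}\;\lesssim\;\Lambda^{(3\kappa-2)/2}\,n^\kappa d(u,v)^{2-\kappa}.
\]

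\medskip

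\noindent\textbf{Combining and expected obstacles.} Dividing the numerator by the denominator gives
\[
\mathbf{P}_u(\tau\leq t)\;\lesssim\;\Lambda^{(3\kappa-2)/2+1}(n/d(u,v))^{\kappa-2}\;=\;\Lambda^{3\kappa/2}(n/d(u,v))^{\kappa-2},
\]
which is even stronger than the claimed exponent $(3\kappa+2)/2$. The case $d(u,v)\leq 2\sqrt{C_0\Lambda}\,n$ (where $s_0<4n^2$ and Cor 5.12 cannot directly be applied for $s$ just above $s_0$) is absorbed into the constant: there $(n/d(u,v))^{\kappa-2}\gtrsim\Lambda^{(2-\kappa)/2}$, so the claimed RHS is at least of order $\Lambda^{\kappa+2}$ and is $\geq 1$ for an appropriate choice of $C$, making the bound trivial. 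The main technical obstacle I foresee is the bookkeeping around the various log factors --- in particular tracking how the $\log\max\{s/n^2,\op{Gr}(2n)\}$ appearing in \cref{cor:leaving_ball} relates to the single cleaner quantity $\Lambda$ appearing in the statement, and ensuring that the boundary case $d(u,v)\asymp n$ and the case where the interval $[s_0,4n^2)$ is nonempty are both handled uniformly without loss of the correct power of $\Lambda$.
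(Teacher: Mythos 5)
Your proposal is correct and follows essentially the same route as the paper's proof: an occupation-time ratio bounded below via the diffusive estimate of \cref{cor:diffusivity_from_VC} and above by combining Varopoulos--Carne for early times with \cref{cor:leaving_ball} for late times. The only difference is organizational (the paper splits the hitting event at an intermediate time $s$ and optimizes over $s$, whereas you split the occupation-time sum at $s_0$), which amounts to the same computation.
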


\begin{proof}[Proof of \cref{lem:hitting_distant_ball}]
For each $1\leq s \leq t$, let $A_s$ be the event that the random walk hits $B_n(v)$ between times $s$ and $t$. (We will optimize over the choice of $s$ at the end of the proof.) It follows from \cref{cor:leaving_ball} that there exist constants $C_1$ and $C_2$ depending only on $d$ and $\kappa$ such that
\begin{multline}
\mathbf{E}_u\Bigl[\#\{s \leq k \leq 2t: X_k \in B_{2n}(v)\}\Bigr] \leq C_1\sum_{k=s}^{2t} \left[\log \max\left\{ \frac{k}{n^2},\op{Gr}\bigl(2n\bigr)\right\} \right]^{\kappa} \left(\frac{n^2}{k}\right)^{\kappa/2} \\
\leq C_2 \left[\log \max\left\{ \frac{s}{n^2},\op{Gr}\bigl(2n\bigr)\right\} \right]^{\kappa} \frac{n^\kappa}{s^{(\kappa-2)/2}},
\label{eq:expected_time_in_ball}
\end{multline}
where the second inequality follows by calculus. On the other hand, it follows from \cref{cor:diffusivity_from_VC} and a straightforward calculation that
\[
\mathbf{P}_w\Bigl(X_{\tau+k}\in B_{2n} \text{ for every } k \leq \frac{n^2}{8\log \op{Gr}(n)}  \Bigr) \geq \frac{1}{2}
\]
for every $w\in B_n(v)$, and since $t+ \frac{n^2}{8\log \op{Gr}(n)} \leq 2t$ it follows by the strong Markov property that
\begin{equation}
\label{eq:conditional_expected_time_in_ball}
\mathbf{E}_u\Bigl[\#\{s \leq k \leq 2t: X_k \in B_{2n}(v)\}\mid A_s\Bigr] \geq \frac{n^2}{16\log \op{Gr}(n)}.
\end{equation}
Putting together the estimates \eqref{eq:expected_time_in_ball} and \eqref{eq:conditional_expected_time_in_ball} yields that
\[
\mathbf{P}_u(A_s) \leq \frac{\mathbf{E}_u\Bigl[\#\{s \leq k \leq 2t: X_k \in B_{2n}(v)\}\Bigr]}{\mathbf{E}_u\Bigl[\#\{s \leq k \leq 2t: X_k \in B_{2n}(v)\}\mid A_s\Bigr]} \leq C_3 \left[\log \max\left\{ \frac{s}{n^2},\op{Gr}\bigl(2n\bigr)\right\} \right]^{\kappa+2} \frac{n^{\kappa-2}}{s^{(\kappa-2)/2}},
\]
while, since every point in $B_n(v)$ has distance at least $d(u,v)/2$ from $u$, it follows from the Varopoulos-Carne inequality and a union bound as in the proof of \cref{cor:diffusivity_from_VC} that
\[
\mathbf{P}_u(\text{hit $B_n(v)$ before time $s$}) \leq 2 s \op{Gr}(n) \exp\left[-\frac{d(u,v)^2}{8s}\right].
\]
Putting together these estimates yields that
\[
\mathbf{P}_u(\text{hit $B_n(v)$ before time $t$})\leq C_3 \left[\log \max\left\{ \frac{s}{n^2},\op{Gr}\bigl(2n\bigr)\right\} \right]^{\kappa+2} \frac{n^{\kappa-2}}{s^{(\kappa-2)/2}}+2 s \op{Gr}(n) \exp\left[-\frac{d(u,v)^2}{8s}\right],
\]
and the claimed inequality follows by taking $s=c' d(u,v)^2 (\log \op{Gr}(n))^{-1}$ for an appropriately small constant $c'$.
\end{proof}

\textbf{Disjoint tubes from coarse-grained random walks.}
As noted above, a naive construction of disjoint tubes using random walks is not appropriate for our plentiful tubes condition, since the two walks will couple at a time roughly quadratic in their starting distance rather than roughly linear. To circumvent this issue, we will instead consider tubes around certain coarse-grained versions of the random walk defined through what we call \textbf{ironing}, where we replace portions of the random walk with geodesics between their endpoints. This process will also be useful when we analyze intersections between random walk tubes, as the ironing process allows us to circumvent overcounting issues that would arise in a naive first-moment argument.

\medskip

% Random walk paths are too windy for our purposes. To use them to build suitable tubes, we first need to straighten them out locally. We call this procedure \emph{ironing}. 
We now define the ironing procedure formally; see \cref{fig:ironing} for an illustration.
Let $G$ be a graph. For every pair of distinct vertices $u,v \in V(G)$, fix a geodesic $\zeta(u,v)$ in $G$ from $u$ to $v$. (The choice of $\zeta$ is irrelevant to our arguments; we need only that it is done deterministically for every pair of vertices before we start running any random walks.) Fix $r > 0$ and let $\gamma$ be a finite path in $G$. We define a sequence $(\tau_i)_{i \geq 0}$ recursively as follows: Let $\tau_0=0$. For each $i\geq 0$, if $d(\gamma_{\tau_i},\gamma_{k})<r$ for every $\tau_i \leq k \leq \op{len}(\gamma)$ we set $\tau_{i+1}=\op{len}(\gamma)$ and stop. Otherwise, we set $\tau_{i+1}$ to be the minimal time $k$ after $\tau_i$ that $d(\gamma_{\tau_i},\gamma_{k}) \geq r$.
 % there exists an integer $t \in [\tau_i+1,\op{len}(\gamma)]$ such that $\op{dist}(\gamma_t,\gamma_{\tau_i}) \geq r$, then set $\tau_{i+1}$ to be the least such integer. Otherwise, set $\tau_{i+1} := \op{len}(\gamma)$. 
 We define the \textbf{crease number} $\op{cr}(\gamma)=\op{cr}_r(\gamma)$ to be the number of non-zero terms in this sequence, so that $\tau_{\op{cr}(\gamma)}=\op{len}(\gamma)$, call the points $\{\gamma_{\tau_i}:0\leq i \leq \op{cr}_r(\gamma)\}$ \textbf{crease points}, and define the \textbf{ironed path} $\op{iron}(\gamma)=\op{iron}_{r}(\gamma)$ by concatenating geodesics between crease points
\[
      \op{iron}_r(\gamma) := \zeta(\gamma_{\tau_0},\gamma_{\tau_1}) \circ \zeta(\gamma_{\tau_1},\gamma_{\tau_2}) \circ \cdots \circ \zeta(\tau_{\op{cr}(\gamma)-1},\tau_{\op{cr}(\gamma)}).
\]
Thus, the ironed path $\op{iron}(\gamma)$ is a finite path in $G$ which has the same start and end points as $\gamma$, has length at most $r\cdot\op{cr}(\gamma)$, 
and satisfies the containment of tubes
\[
B_r(\op{iron}_r(\gamma)) \subseteq B_{2r}(\gamma) \qquad \text{ and } \qquad B_{r}(\gamma) \subseteq B_{2r}(\{\gamma_{\tau_i} : 0\leq i \leq \op{cr}_r(\gamma)\}) \subseteq B_{2r}(\op{iron}_r(\gamma)).
\]

\begin{figure}
\center
\includegraphics[width=0.6\textwidth]{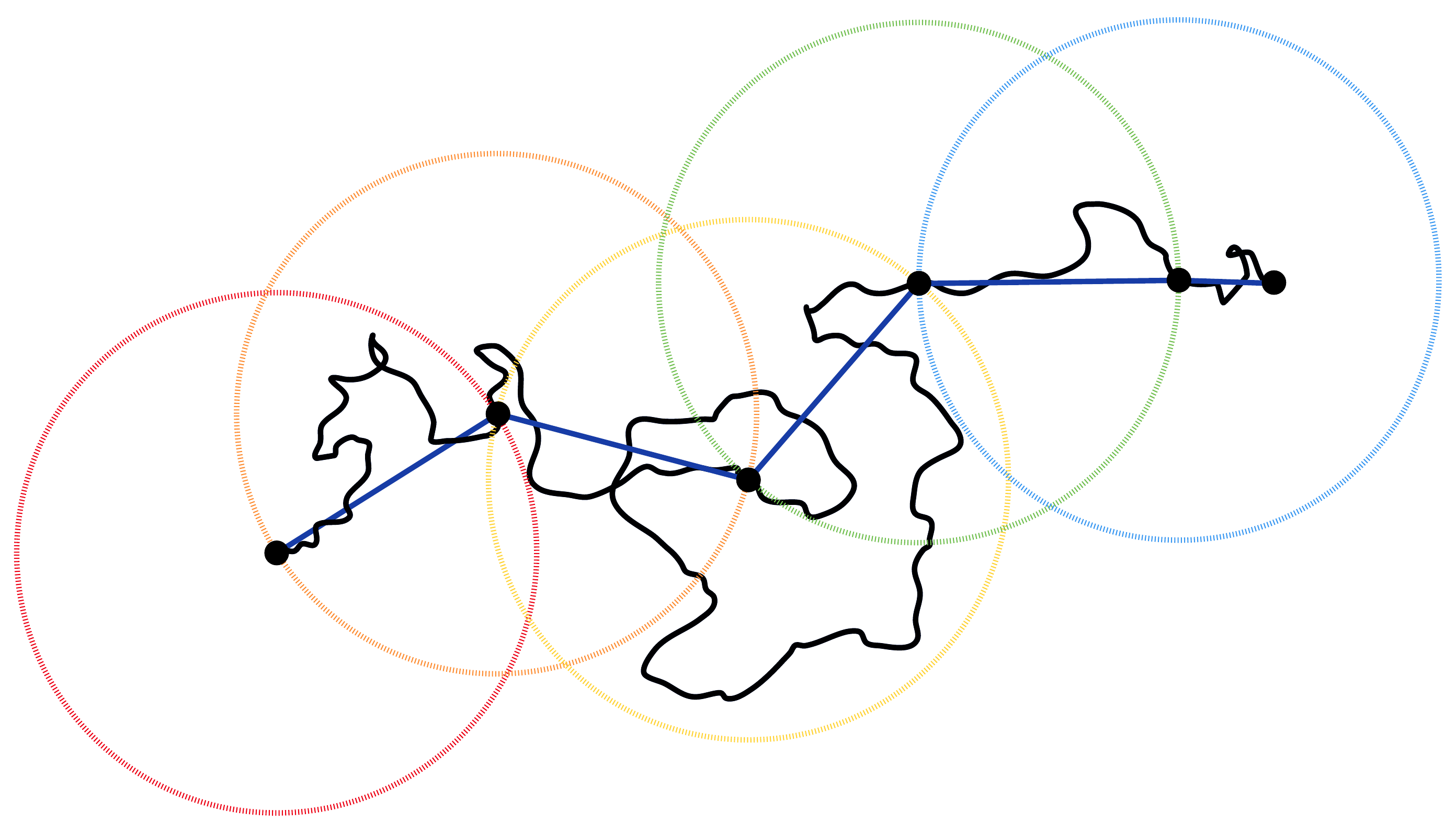} 
% \hspace{2cm} \includegraphics[height=6cm]{annular_tubes.pdf}
\caption{Schematic illustration of the ironing procedure applied to a path: The original path is in black, from left to right. The black dots are crease points. A new crease point is formed every time the path leaves a fixed-radius ball centered at the previous crease point. The straight blue line segments are geodesics between consecutive crease points (together with the final point of the walk). The ironed path is formed by concatenating these geodesics.}
\label{fig:ironing}
\end{figure}

% $\gamma \subseteq B_r(\op{iron}_r(\gamma))$ and $\op{iron}_r(\gamma) \subseteq  B_r(\gamma)$. 
% A useful quantity in our analysis of these ironed paths is the \emph{number of creases} $\op{cr}_n(\gamma) = \op{cr}_{r,\zeta}(\gamma)$ defined to be
% \[
%       \op{cr}_r(\gamma) := \min\{ i : \tau_i = \op{len}(\gamma) \}.
% \]
% For example, although this is not the only way we will use $\op{cr}_r$, we have the simple bound
% \[
%       \op{len} \circ \op{iron}_r(\gamma) \leq r \op{cr}_r(\gamma).
% \]
% The choice of $\zeta$ is irrelevant to our arguments. So we will always assume that our base graph $G$ comes equipped with a fixed choice of $\zeta$, and we will omit $\zeta$ in the notation for $\op{iron}_r$ and $\op{cr}_r$.

For graphs of quasi-polynomial growth, we can use the Varopoulos-Carne inequality to show that the length of an ironed random walk $\op{iron}_r(X^t)$ is of order at most $r^{-1}t (\log t)^{O(1)}$ with high probability when $r=O(\sqrt{t})$. We write $X^t$ for the path formed by the first $t$ steps of the random walk.

\begin{lem}
Let $G$ be a (locally finite) transitive graph and let $u$ be a vertex of $G$. Then
\label{lem:creases}
\[
% \mathbf{P}_u\Bigl(\op{cr}_r((X_i)_{i=0}^t) \geq 2\bigl[\lambda + \log 2t\op{Gr}(r) \bigr]\frac{t}{r^2}  \Bigr) \leq e^{-\lambda}
\mathbf{P}_u\Bigl(\op{cr}_r(X^t) > \frac{t}{m} \Bigr) \leq 2tm \op{Gr}(r) \exp\left[-\frac{r^2}{2m}\right]
\]
for every $r,t,\lambda \geq 1$.
\end{lem}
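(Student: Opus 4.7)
The plan is a pigeonhole reduction to a displacement estimate, followed by the Varopoulos--Carne bound.

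First, I would argue that if $\op{cr}_r(X^t) > t/m$, then at least one proper crease increment $\tau_i - \tau_{i-1}$ is at most $m$. This is because proper crease gaps sum to at most $t$: if each exceeded $m$, their total would strictly exceed $\op{cr}_r(X^t)\cdot m > t$, a contradiction. (A minor bookkeeping adjustment is required to separate proper crease increments from the single possibly terminal final increment; this loses at most one gap and is easily absorbed in any regime in which the stated bound is non-trivial.) Consequently,
\[
\{\op{cr}_r(X^t) > t/m\} \;\subseteq\; \bigcup_{s=0}^{t-1} \bigl\{\max_{1\leq k\leq m} d(X_s, X_{s+k}) \geq r\bigr\},
\]
with the convention $X_{s+k}:=X_t$ when $s+k>t$.

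Next, by the Markov property at time $s$ together with vertex-transitivity of $G$, each summand in the union bound over $s$ is bounded by $\mathbf{P}_u\bigl(\max_{1 \leq k \leq m} d(u, X_k) \geq r\bigr)$. Since the walk has displacement zero at time $k=0$ and moves by at most one step per unit time, $\{\max_{1 \leq k \leq m} d(u, X_k) \geq r\}$ implies that $X_k \in S_r(u)$ for some $k\in\{1,\ldots,m\}$. Taking a union bound over $k$ and over $v\in S_r(u)$ (whose cardinality is at most $\op{Gr}(r)$) and invoking the Varopoulos--Carne inequality (\cref{thm:Varopoulos-Carne}), exactly as in the proof of \cref{cor:diffusivity_from_VC} but with the sum starting at $k=1$ instead of $k=0$, yields
\[
\mathbf{P}_u\bigl(\max_{1 \leq k \leq m} d(u, X_k) \geq r\bigr) \;\leq\; 2m \op{Gr}(r)\, \exp\!\bigl[-r^2/(2m)\bigr].
\]

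Summing over the $t$ values of $s$ produces the claimed inequality $2tm\op{Gr}(r)\exp[-r^2/(2m)]$. The only step with any content is the pigeonhole argument, which becomes essentially immediate once one separates proper from terminal crease increments; the rest is a routine Varopoulos--Carne computation.
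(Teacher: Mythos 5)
Your proof is correct and is essentially the paper's own argument: the paper's proof is the one-line observation that $\op{cr}_r(X^t) > t/m$ forces some time window of length $m$ over which the walk moves distance at least $r$, followed by a union bound over starting times and an appeal to the Varopoulos--Carne estimate as in \cref{cor:diffusivity_from_VC}. Your write-up simply makes this explicit, and your parenthetical about the terminal (improper) crease increment flags the one small bookkeeping point that the paper's version silently glosses over.
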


\begin{proof}
In order for the inequality $\op{cr}_r((X_i)_{i=0}^t) > t/m$ to hold, there must exist $0\leq i \leq t$ such that $d(X_i,X_{i+m}) \geq r$. As such, the claim follows from \cref{cor:diffusivity_from_VC} and a union bound.
%  we have that
% \[
% \mathbf{P}_u\Bigl(\op{cr}_r((X_i)_{i=0}^t) > \frac{t}{m} \Bigr) \leq 2tm \op{Gr}(r) \exp\left[-\frac{r^2}{2m}\right]
% \]
% for every $t,r,m\geq 1$, and the claim follows easily.
\end{proof}

We now analyze intersections between independent random walk tubes.
Given two vertices $u$ and $v$, we write $\mathbf{P}_u\otimes \mathbf{P}_v$ for the law of  a pair of independent lazy random walks $X$ and $Y$ started at $u$ and $v$ respectively.

\begin{lem}[Intersections of random walk tubes]
\label{lem:disjoint_RW_tubes}
For each integer $d\geq 1$ and real number $\kappa > 4$ there exists a constant $C=C(d,\kappa)$ such that if $G=(V,E)$ is an infinite, connected, unimodular transitive graph with vertex degree $d$ and $n,t\geq 1$ are integers such that $t \geq r^2$ and $\op{Gr}(3m)\geq 3^\kappa \op{Gr}(m)$ for every $r \leq m \leq t^{1/2}$ then
\begin{multline*}
\mathbf{P}_u \otimes \mathbf{P}_v\Bigl(\text{there exist $0\leq i,j \leq t$ such that $d(X_i,Y_j)\leq r$}\Bigr) 
\\\leq 
C \frac{t}{d(u,v)^2}\left[\log \max\left\{ d(u,v),\op{Gr}\bigl(4r\bigr)\right\}\right]^{(3\kappa+4)/2}\left(\frac{r}{d(u,v)}\right)^{\kappa-4},
\end{multline*}
for every $u,v\in V$ with $d(u,v)\geq 4r$.
\end{lem}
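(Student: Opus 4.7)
The strategy is to bound the intersection probability by a union bound of $X$-hitting probabilities over an appropriate coarse-grained version of the trajectory $Y^t$, where the coarse-graining is exactly the ironing procedure just defined.

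First I would apply the ironing procedure to $Y^t$ at scale $r$, producing crease points $\tilde Y_0,\tilde Y_1,\ldots,\tilde Y_{c_Y}$. The key structural observation is the set containment
\[
B_r(Y^t) \subseteq B_{2r}\bigl(\{\tilde Y_b : 0 \leq b \leq c_Y\}\bigr),
\]
so the event $\{\exists i,j : d(X_i,Y_j) \leq r\}$ is contained in the event that $X^t$ hits $B_{2r}(\tilde Y_b)$ for some crease~$b$. Applying \cref{lem:creases} with $m := c r^2 /\log(t\op{Gr}(r))$ for an appropriately small constant $c=c(d,\kappa)$, I obtain $c_Y \leq M := C t r^{-2} \log(t\op{Gr}(r))$ off an event of probability much smaller than the target bound. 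The next task is to estimate
\[
\mathbf{P}_u \otimes \mathbf{P}_v\bigl(\exists i,b : d(X_i,\tilde Y_b) \leq 2r \bigr) \leq \mathbf{P}(c_Y > M) + \mathbf{E}\left[\sum_{b=0}^{\min(c_Y,M)} \mathbf{P}\bigl(X^t \cap B_{2r}(\tilde Y_b) \neq \emptyset \mid Y\bigr)\right].
\]

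Next I would bound each hitting probability using \cref{lem:hitting_distant_ball} (applied with the walk $X$ from $u$ and target ball of radius $2r$ centered at the conditionally fixed point $\tilde Y_b$): when $d(u,\tilde Y_b) \geq 4r$ the probability is at most $C \Lambda^{(3\kappa+2)/2}(2r/d(u,\tilde Y_b))^{\kappa-2}$ with $\Lambda := \log \max\{d(u,v),\op{Gr}(4r)\}$, and otherwise at most $1$. What remains is to control the expected sum of these quantities over the random set of creases, which I would do by decomposing the creases according to dyadic shells $S_k := \{w : 2^k r \leq d(u,w) < 2^{k+1} r\}$.

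The crucial quantitative input is an upper bound on $\mathbf{E}[\#\{b \leq c_Y: \tilde Y_b \in S_k\}]$ of order $r^{-2}$ times the expected time spent by $Y$ in the enlargement $B_r(S_k)$ up to time $t$. This is what will produce the matching $r^{\kappa-4}$ (rather than $r^{\kappa-3}$) in the final bound, together with the factor of $t/d(u,v)^2$ coming from the typical number of creases. The expected occupation time is then estimated using \cref{cor:leaving_ball}: for shells with $2^k r$ much smaller than $d(u,v)$, the walk $Y$ starting from $v$ must traverse nearly the full distance $d(u,v)$ to visit the shell, and the shell is small, making such visits rare. After summing the resulting contributions over $k$, the dominant term comes from the shell at distance $\sim d(u,v)$ from $u$, where there are of order $t r^{-2}$ creases each contributing $(r/d(u,v))^{\kappa-2}$.

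The main obstacle is precisely the ``$r^2$ between creases'' bound in the previous paragraph. The purely deterministic inequality $\tau_{b+1}-\tau_b \geq r$ (because the walk moves at speed at most one) only yields $c_Y \leq t/r$ and would lose a factor of $r$. Closing this gap requires using the probabilistic Varopoulos--Carne bound (\cref{thm:Varopoulos-Carne,cor:diffusivity_from_VC}), which says the walk needs $\gtrsim r^2/\log \op{Gr}(r)$ time to move distance $r$. Translating this from a statement about individual gaps $\tau_{b+1}-\tau_b$ to a pointwise shell-by-shell bound on the expected number of creases will have to be done by partitioning time into blocks of length $\sim r^2/\log$, applying the Varopoulos-Carne bound to each block, and then summing the resulting occupation time estimates. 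This bookkeeping accounts for the extra factor of $\log$ in the final exponent $(3\kappa+4)/2$, one unit more than the $(3\kappa+2)/2$ inherited from \cref{lem:hitting_distant_ball}.
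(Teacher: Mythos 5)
Your overall skeleton --- iron $Y^t$, union-bound over crease points, and pay the hitting cost via \cref{lem:hitting_distant_ball} --- matches the paper's, but you miss the one reduction that makes the paper's proof short and that keeps the logarithmic exponent at $(3\kappa+4)/2$. The paper first \emph{symmetrizes}: if $d(X_i,Y_j)\leq r$ then the meeting point lies at distance at least $\tfrac{1}{2}d(u,v)$ from at least one of $u,v$, so after exchanging the roles of the two walks it suffices to bound the probability that $X$ comes within $O(r)$ of a crease point of $Y$ lying at distance at least $\tfrac{1}{4}d(u,v)$ from $u$. After this reduction \emph{every} relevant crease point is far from $u$, so a single uniform application of \cref{lem:hitting_distant_ball} to each of the at most $t/m$ crease points (with $m\asymp r^2/\log$ supplied by \cref{lem:creases}) finishes the proof. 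No shell decomposition and no occupation-time estimate for the $Y$-walk is needed.

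Your shell decomposition can be pushed through, but it is substantially heavier and, as sketched, does not deliver the stated exponent. For a shell $S_k$ with $2^kr\ll d(u,v)$ you must bound the expected number of creases of $Y$ in $S_k$ by $m^{-1}$ times the expected occupation time of $B_r(S_k)$, and that occupation time must itself be estimated by rerunning the argument inside \cref{lem:hitting_distant_ball} (a Varopoulos--Carne cutoff at $s_0\asymp d(u,v)^2/\log$ followed by \cref{cor:leaving_ball} for $s\geq s_0$), which yields roughly $\Lambda^{(3\kappa-2)/2}(2^kr)^\kappa/d(u,v)^{\kappa-2}$ with $\Lambda:=\log\max\{d(u,v),\op{Gr}(4r)\}$. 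Dividing by $m$, multiplying by the hitting cost $\Lambda^{(3\kappa+2)/2}2^{-k(\kappa-2)}$, and summing the resulting geometric series over $k$ gives a near-shell contribution of order $\Lambda^{3\kappa+1}(r/d(u,v))^{\kappa-4}$. In the regime $t\asymp d(u,v)^2/\Lambda$, where the walks can barely meet, this exceeds the claimed bound $\Lambda^{(3\kappa+4)/2}\tfrac{t}{d(u,v)^2}(r/d(u,v))^{\kappa-4}$ by a factor of about $\Lambda^{3\kappa/2}$: the polylogarithms of the two independent worst-case estimates multiply. So your route proves the lemma only with a worse (still polylogarithmic) correction --- which would in fact suffice for the application in \cref{prop:rw_conclusion} --- but your assertion that the bookkeeping produces exactly $(3\kappa+4)/2$ is not justified. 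To get the stated exponent, use the symmetrization and drop the shells.
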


\begin{rk}
Although it will suffice for all our applications, we note that this bound is very wasteful when $t$ is much larger than $d(u,v)^2$. A more careful analysis would use that $Y_{\tau_\kappa}$ is typically very far from $u$ when $k$ is large.
 % It will suffice for all our applications, where we may take $t=d(u,v)^2 (\log d(u,v))^{O(1)}$.
\end{rk}

\begin{proof}[Proof of \cref{lem:disjoint_RW_tubes}]
Let $A$ be the event that there exists $w\in V$ with $d(u,w) \geq \frac{1}{2}d(u,v)$ and $1\leq i,j\leq t$ such that $d(X_i,w),d(Y_j,w)\leq r$.
It suffices by symmetry to prove that there exists a constant $C=C(d,\kappa)$ such that
\begin{equation*}
\mathbf{P}_u \otimes \mathbf{P}_v(A) \leq C \frac{t}{r^2}\left[\log \max\left\{ d(u,v),\op{Gr}\bigl(4r\bigr)\right\}\right]^{(3\kappa+4)/2}\left(\frac{r}{d(u,v)}\right)^{\kappa-2}.
\end{equation*}
Let $\tau_0,\ldots,\tau_{\op{cr}_r(Y^t)}$ be the stopping times used to define the ironed walk $\op{iron}_r(Y^t)$, and observe that if $A$ holds then there must exist $0\leq k \leq \op{cr}_r(Y^t)$ and $0\leq i \leq t$ such that
\[
d(Y_{\tau_k},v) \geq \frac{1}{2}d(u,v)-r\geq \frac{1}{4}d(u,v) \qquad \text{ and } \qquad d(X_i,Y_{\tau_k}) \leq 2r.
\]
Thus, it follows from  \cref{lem:hitting_distant_ball} and  a union bound that there exists a constant $C_1=C_1(d,\kappa)$ such that
\begin{align*}
&\mathbf{P}_u \otimes \mathbf{P}_v(A) \\&\hspace{0.75cm}\leq \mathbf{P}_v\Bigl(\op{cr}_r(Y^t) > \frac{t}{m}\Bigr) +  \frac{t}{m}\max \left\{\mathbf{P}_u\Bigl(\text{hit $B_{2r}(w)$ before time $t$}\Bigr) : w\in V, d(u,w)\geq \frac{1}{4}d(u,v)\right\} \\
&\hspace{1.5cm}\leq 2tm \op{Gr}(r) \exp\left[-\frac{r^2}{2m}\right] +C_1 \frac{t}{m}\left[\log \max\left\{ d(u,v),\op{Gr}\bigl(4r\bigr)\right\}\right]^{(3\kappa+2)/2}\left(\frac{r}{d(u,v)}\right)^{\kappa-2}
\end{align*}
and the claim follows by taking $m=c'r^2 (\log \max\{d(u,v),\op{Gr}(r)\})^{-1}$ for an appropriately small constant $c'=c'(d,\kappa)$.
% \[
% \left(\frac{r}{d(u,v)}\right)^{\kappa-2}\frac{t}{r^2}\log t\op{Gr}(r) 
% \]
\end{proof}

We now have everything we need to prove \cref{prop:rw_conclusion}.

\begin{proof}[Proof of \cref{prop:rw_conclusion}]
We will prove the claim concerning annular tubes (which is harder); the changes to the proof needed to establish the claim concerning radial tubes are straightforward and will be explained briefly at the end of the proof.

\medskip

We will prove a general condition for $G$ to have $(k,r,\ell)$-plentiful annular tubes on scale $n$ in terms of the growth of $G$, which we specialize to give the claim about scales of quasi-polynomial growth at the end of the proof.
Fix $n\geq 1$, $k\leq n/2$, $r\leq n$, $t\geq n$ and two $(n,3n)$ crossings $A$ and $B$ as in the definition of plentiful annular tubes, so that $A$ and $B$ each contain points at every distance from $o$ between $n$ and $3n$. 
We will carry out our analysis under the hypotheses that
% and suppose that $t \geq n$ and $r\leq n$ are such that
\begin{equation}
\label{eq:t&r_growth_assumption}
\op{Gr}(3m) \geq 3^\kappa \op{Gr}(m) \qquad \text{ for every $r\leq m \leq t$}
\end{equation}
and
\begin{equation}
\label{eq:t_coupling_assumption}
 \qquad \|\mathbf{P}_{x}(X_t = \cdot)-\mathbf{P}_{y}(X_t=\cdot)\|_\mathrm{TV} \leq \frac{1}{4} \qquad \text{ for every $x,y \in B_{3n}$.}
\end{equation}
We will return to the question of when suitable $t$ and $r$ satisfying these hypotheses can be chosen at the end of the proof.

\medskip

Since $A$ and $B$ each contain at least one point at each distance from $o$ between $n$ and $3n$, and since $n/2k \geq1$, we may choose for each $1\leq i \leq k$ points $a_i\in A$ and $b_i \in B$ such that
\[
n+(4i-4) \frac{n}{2k} \leq d(o,a_i) \leq n + (4i-3)\frac{n}{2k} \quad \text{ and } \quad n+(4i-2)\frac{n}{2k}\leq d(o,b_i) \leq n + (4i-1)\frac{n}{2k}
\]
for every $1\leq i \leq k$,
so that the set of points $\{a_i\}\cup\{b_i\}$ is $(n/k)$-separated (i.e., any two distinct points in the set have distance at least $n/k$).
 % This can be done because $A$ and $B$ each contain  and $n/2k \geq 1$.
 % Moreover, we have trivially that $d(a_i,b_i) \leq 6n$ for every $1 \leq i \leq k$. 
% 
% 
% 
% 
For each $i$, let $\mathbf{Q}_i$ be the joint law of a random walk $(X_{i,m})_{m\geq 0}$ started at $a_i$ and a random walk $(Y_{i,m})_{m\geq 0}$ started at $b_i$, coupled so that $X_t = Y_t$ with probability at least $3/4$ (such a coupling exists by the hypothesis \eqref{eq:t_coupling_assumption} imposed on the value of $t$), and let $\mathbf{Q}=\bigotimes \mathbf{Q}_i$ be the law of the collection $\{X_{i,m},Y_{i,m}:1\leq i \leq k, m\geq 0\}$ in which the pairs $((X_{i,m})_{m\geq 0},(Y_{i,m})_{m\geq 0})$ are sampled independently for each $1\leq i \leq k$.
For each $1\leq i \leq k$, consider the events
\[
\mathcal{A}_i = \{X_{i,t} = Y_{i,t}\}
\qquad\text{and}\qquad
\mathcal{B}_i = \left\{\op{len}\bigl(\op{iron}_r(X^t)\bigr), \op{len}\bigl(\op{iron}_r(Y^t)\bigr) \leq \frac{16t}{r}\log t\op{Gr}(r) \right\}.
\]
The event $\mathcal{A}_i$ has probability at least $3/4$ for every $1\leq i \leq k$ by construction. Meanwhile, \cref{lem:creases} implies that
\begin{multline*}
\mathbf{Q}(\mathcal{B}_i^c) \leq 2\mathbf{P}_u\left(\op{cr}_r(X^t) > \frac{16t}{r^2}\log \max\{t,\op{Gr}(r)\}\right) \\
\leq 4t r^2 [\log \max\{t,\op{Gr}(r)\} ]\op{Gr}(r)\exp\left[-8 \log \max\{t,\op{Gr}(r)\}\right],
\end{multline*}
which is less than $1/4$ if $t$ is larger than some universal constant $t_0$. Now, for each $1\leq i,j \leq k$ let
\[
\mathcal{I}_{i,j} = \{B_{2r}(X^t_i) \cup B_{2r}(Y^t_i) \text{ has non-empty intersection with } B_{2r}(X^t_j) \cup B_{2r}(Y^t_j) \}.
\]
We have by a union bound that if $i$ and $j$ are distinct then
\begin{align*}
\mathbf{Q}(\mathcal{I}_{i,j}) 
&\leq 4 \max\left\{\mathbf{P}_u\otimes \mathbf{P}_v(B_{2r}(X^t) \cap B_{2r}(Y^t) \neq \emptyset) : d(u,v) \geq \frac{n}{k}\right\}\\
\\
&\leq 
C \frac{tk^2}{n^2}\left[\log \max\left\{ n,\op{Gr}\bigl(8r\bigr)\right\}\right]^{(3\kappa+4)/2}\left(\frac{rk}{n}\right)^{\kappa-4} =: \alpha = \alpha(n,t,k,r)
\end{align*}
Thus, if for each $1\leq i \leq k$ we define the event
\[
\mathcal{C}_i = \{\text{there exist at most $4\alpha k$ values of $1\leq j \leq k$ with $j\neq i$ such that $\mathcal{I}_{i,j}$ holds}\}
\]
then $\mathbf{Q}(\mathcal{C}_i) \geq \frac{3}{4}$ by Markov's inequality. It follows by a union bound that $\mathbf{Q}(\mathcal{A}_i \cap \mathcal{B}_i \cap \mathcal{C}_i) \geq 1/4$, and hence that if we define 
\[
\mathcal{E}=\#\bigl\{1\leq i \leq k : \mathcal{A}_i \cap \mathcal{B}_i \cap \mathcal{C}_i \text{ holds}\bigr\}\geq \frac{k}{4} \qquad \text{ then } \qquad \mathbf{Q}(\mathcal{E}) >0.
\]
Consider the random set of indices
\[
I = \{1\leq i \leq k: \mathcal{A}_i \cap \mathcal{B}_i \cap \mathcal{C}_i \text{ holds but } \mathcal{I}_{i,j} \text{ does not hold for any $j<i$ for which $\mathcal{A}_j \cap \mathcal{B}_j \cap \mathcal{C}_j$ holds}\}.
\]
On the event $\mathcal{E}$, the set $I$ has size at least $\lceil (k/4)/(1+4\alpha k) \rceil \geq \frac{1}{20}\min\{k,\alpha^{-1}\}$. Moreover, on this event, the set of paths formed by concatenating $\op{iron}_r(X^t_i)$ and the reversal of $\op{iron}_r(Y^t_i)$ for each $i\in I$ have the property that each such path has length at most $32 t/r \log t\op{Gr}(r)$, and the tubes of thickness $r$ around distinct such paths are disjoint. Since this event has positive probability, there must exist a set of paths with this property.
Since the sets $A$ and $B$ were arbitrary, it follows that there exists a positive constant $c=c(d,\kappa)$ such that $G$ has
\[
\left(c\min\left\{k,\,\frac{n^2}{tk^2} \left[\log \max\{n,\op{Gr}(8r)\}\right]^{-(3\kappa+4)/2}\left(\frac{n}{rk}\right)^{\kappa-4}\right\},\;r,\; \frac{32t}{r}\log \max\{t,\op{Gr}(r)\} \right)\text{-plentiful}
\]
annular tubes on scale $n$ for each $1\leq r,k \leq n/2$ and $t\geq n^2$ satisfying \eqref{eq:t&r_growth_assumption} and \eqref{eq:t_coupling_assumption}.

\medskip

We now specialize to the setting of the proposition. Let $D,\lambda \geq 1$ and $0<\eps<1$ and suppose that $n \geq 1$ satisfies
$\op{Gr}(m)\leq e^{(\log m)^D}$ and $\op{Gr}(3m) \geq 3^5 \op{Gr}(m)$ for every $n^{1-\eps}\leq m \leq n^{1+\eps}$.
Let $k=(\log n)^\lambda$, $r=n(\log n)^{-20D\lambda}$, and $t=n^2 (\log n)^{2D}$. There exists $n_0=n_0(d,D,\lambda,\eps)$ such that if $n\geq n_0$ then $n^{1-\eps}\leq r,t^{1/2}\leq n^{1+\eps}$, so that \eqref{eq:t&r_growth_assumption} holds. Moreover, since $\log \op{Gr}(t^{1/2}) \leq (\log t^{1/2})^D \leq (1+\eps)^2 (\log n)^{D}$, it follows from \cref{cor:coupling_from_low_growth} that \eqref{eq:t_coupling_assumption} holds whenever $n\geq n_1$ for some constant $n_1=n_1(d,D,\lambda,\eps)\geq n_0$. It follows that there exists a constant $C$ such that if $n\geq n_1$ then $G$ has 
\[
\left(c\min\left\{(\log n)^\lambda,\, (\log n)^{(20\lambda-\frac{21}{2})D-\lambda}\right\},\;n(\log n)^{-20D\lambda},\; C(\log n)^{(3+20\lambda)D} \right)\text{-plentiful}
\]
annular tubes on scale $n$. Since the plentiful annular tubes condition is monotone increasing in the number and thickness parameters and monotone decreasing in the length parameter, it follows that there exists a constant $n_2=n_2(d,D,\lambda,\eps)\geq n_1$ such that if $n\geq n_2$ then $G$ has $(1/40D,\lambda)$-polylog-plentiful annular tubes on scale $n$ as claimed (where we used the assumption that $n$ is large to absorb the constant prefactors into the exponents of the logarithms).
% This is easily seen to imply the claim. (Indeed, just take $c=1/(40D)$ and take $n$ sufficiently large that the constant prefactors can be absorbed into the polylog.)
% Using a greedy algorithm, we may construct a set $I \subseteq \{1\leq i \leq k: \mathcal{A}_i \cap \mathcal{B}_i \cap \mathcal{C}_i \text{ holds}\}$ of size $|I|\geq$ such that the event $\mathcal{I}_{i,j}$ does not hold for any distinct $i,j\in I$. Indeed, to construct such a set $I$, 
% \[
% k^2 [\log \op{Gr}(n)]^{(3\kappa+6)/2}  (rk/n)^{\kappa-4} \ll k
% \]

\medskip

For the claim concerning \emph{radial} tubes, one uses random walks started at $k$ equidistant points along a geodesic from $o$ to $S_n$. \cref{cor:leaving_ball} implies that each of these random walks has a good probability not to belong to $B_{3n}$ at times of order $n^2 (\log \op{Gr}(n))^{\kappa}$, so that we can obtain tubes from $S_n$ to $S_{3n}$ using segments of the resulting ironed paths. The analysis of the number, thickness, and lengths we can take these walks to have with the resulting tubes being disjoint is similar to the annular case above. (Indeed, the analysis is somewhat simpler since we use single walks instead of coupled pairs of walks. This also makes the dependence on the growth better than in the annular case.)
\end{proof}

\section{Quasi-polynomial growth II: Analysis of percolation}

% \subsection{Multi-scale analysis of the corridor function}
\label{sec:low_growth_multiscale}

% {\color{red}[Revisit this introductory paragraph. Convince the reader that there is a lot of room throughout this section.]}

In this section, we analyze percolation in the low-growth regime, i.e., on scales $n$ where $\op{Gr}(n) \leq e^{[\log n]^D}$ for a constant $D$. Our goal is to show that a lower bound on (full-space) connection probabilities at some low-growth scale $n$ implies a lower bound on connection probabilities within a tube \emph{at a much larger scale} after sprinkling. This analysis will employ both the polylog-plentiful tubes condition from \cref{prop:existence_of_interval_with_controlled_tubes} and the outputs of the ghost field technology developed in \cref{prop:snowballing}. 

\medskip

Recall that $\mathcal{U}_d^*$ is the space of infinite, connected, unimodular transitive graphs of degree $d$ that are not one-dimensional. 
%{[\color{red} TH: We should consider using this notation throughout the paper.]}

\begin{prop}[Inductive analysis of percolation in the low-growth regime.]
\label{prop:low_growth_main}
For each $d,D\geq 1$ there exist positive constants $\lambda_0=\lambda_0(d,D)$ and $c=c(d,D)$ such that for each $\lambda \geq \lambda_0$ there exist constants $K_1=K_1(d,D,\lambda)$ and $n_0=n_0(d,D,\lambda)$ such that if $K\geq K_1$ and $n\geq n_0$ then the following holds:
If $G\in \mathcal{U}_d^*$ and for each $1\leq b \leq n$ we define
\[
\delta(b,n)=\delta_K(b,n)= \left(\frac{K \log \log n}{\min\{\log n, \log \op{Gr}(b)\}}\right)^{1/4}
\]
% and for each $b\leq n$
% and $n\geq n_0$ is such that $n \in \mathscr{L}(G,D)$
 then the implication
\begin{multline}
\label{eq:goal_of_the_low_growth_argument}
\Bigl(n \in \mathscr{L}(G,D),\; \kappa_{p_1}(n,\infty)\geq e^{-(\log \log n)^{1/2}}, \, \p_{p_1} ( \op{Piv}[4b , n^{1/3} ] ) \leq (\log n)^{-1}, \, \text{ \emph{and} } \delta(b,n) \leq 1 \Bigr)
\\
\Rightarrow \Bigl(p_2 \geq  p_c \, \text{ \emph{or} } \, \kappa_{p_2}\left( e^{(\log n)^{c \lambda}},n \right) \geq e^{-3(\log \log n)^{1/2}}\Bigr)
\end{multline}
holds for every $n\geq n_0$, $b\leq \frac{1}{8}n^{1/3}$, and $p_2 \geq p_1\geq 1/d$ with $p_2 \geq \sprinkle(p_1,\delta(b,n))$.
 % where $c_1=c_1(d,D)$ is the constant from \cref{prop:existence_of_interval_with_controlled_tubes}.
 % and we define
% \[
% \delta:= \left(\frac{K \log \log n}{\min\{(\log n)^{1/10}, \log \op{Gr}(b)\}}\right)^{1/5}.
% \]
      % \begin{equation} \label{eq:goal_of_the_low_growth_argument}
      %       \kappa_{p+\delta}\left( e^{[\log n]^{c_1 \lambda/4}},n \right) \geq e^{-3[\log \log n]^{1/2}}.
      % \end{equation}
\end{prop}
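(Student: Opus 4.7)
The plan is to run an orange-peeling multi-scale induction within the interval of low-growth scales guaranteed by \cref{prop:existence_of_interval_with_controlled_tubes}, using the snowballing method (\cref{prop:snowballing}) as the principal tool at every step. We first apply \cref{prop:existence_of_interval_with_controlled_tubes} with the given $D$ and a sufficiently large $\lambda$ to obtain integers $m_1 \leq m_2$ in $[n^{1/3}, n]$ with $m_2 \geq m_1^{1+c}$ at every scale of which $G$ enjoys $(c,\lambda)$-polylog-plentiful tubes, giving us $(\log m)^{c\lambda}$ disjoint tubes of thickness $m (\log m)^{-\lambda/c}$ and length at most $m (\log m)^{\lambda/c}$ between any two crossings of the annulus at scale $m$.

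The core of the argument is an induction over a dyadic sequence of scales $r_i \in [m_1, m_2]$, maintaining at each step after a cumulative sprinkling $\delta^{(i)}$ with $\sum \delta^{(i)} \ll \delta(b,n)$ the two statements: (A) a tube-confined two-point lower bound at scale $r_i$ in tubes of length $e^{(\log r_i)^{c\lambda}}$, and (B) an annular uniqueness bound $\p_{p_1 + \delta^{(i)}}(\op{Piv}[\,\cdot\,, r_i]) \leq (\log r_i)^{-C\lambda}$. The step from $r_i$ to $r_{i+1}$ uses the polylog-plentiful \emph{annular} tubes at scale $r_{i+1}$ to force any two crossings of the thick annulus at scale $r_{i+1}$ to merge after a small sprinkle: each of the $(\log r_{i+1})^{c\lambda}$ disjoint tubes between the crossings provides an independent merging opportunity, and the conditional merge probability inside each tube is bounded from below using the snowballing implication \eqref{eq:main_snowballing_implication} applied with $\Lambda$ equal to that tube and with the inductive hypothesis (A) as the two-point input. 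The new bound (A) at scale $r_{i+1}$ is obtained by a parallel application of snowballing, chaining together $r_{i+1}$-balls along the target tube of length $e^{(\log r_{i+1})^{c\lambda}}$, with the local two-point input at each ball supplied by the full-space hypothesis $\kappa_{p_1}(n, \infty) \geq e^{-(\log \log n)^{1/2}}$.

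The base case at scale $r_1 \asymp m_1$ is seeded by the hypotheses of the proposition: the pivotality bound $\p_{p_1}(\op{Piv}[4b, n^{1/3}]) \leq (\log n)^{-1}$ furnishes (B), suitably reformulated using \cref{lem:nearby_clusters_using_two_point} together with the full-space two-point input to pass from the single-edge-pivotality form to annular pivotality, while the full-space bound on scale $n \geq m_1$ immediately implies (A). Once the induction reaches $r_* \asymp m_2$, one final snowballing step -- chaining $m_2$-balls along a target tube of thickness $n$ and length $e^{(\log n)^{c\lambda}}$, and using the polylog-plentiful tubes at scale $m_2$ together with $m_2 \geq m_1^{1+c}$ to thicken the effective tube up to radius $n$ -- delivers the sought conclusion $\kappa_{p_2}(e^{(\log n)^{c\lambda}}, n) \geq e^{-3 (\log \log n)^{1/2}}$. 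The sharpness-of-the-phase-transition trichotomy in \cref{prop:snowballing} accounts for the alternative $p_2 \geq p_c$.

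The main obstacle will be the sprinkling accounting. Each inductive step costs a sprinkling of order $(\log r_i)^{-c'\lambda}$, arising from the exponent $c_1 \delta^4$ in \cref{prop:snowballing} when applied with the polylog merge-opportunity parameter $h \asymp (\log r_i)^{-c \lambda}$; summing this over the at-most-polylog-in-$n$ dyadic scales in $[m_1, m_2]$ must remain within the very tight budget $\delta(b,n) \asymp (K \log \log n / \min\{\log n, \log \op{Gr}(b)\})^{1/4}$. The burn-in parameter $b$ enters precisely through the base case: the pivotality bound at scale $4b$ sets the initial scale at which chaining can be started, and $\log \op{Gr}(b)$ controls the sharpness of the initial threshold via the $h$ entering \cref{prop:snowballing} at the base, explaining its appearance in $\delta(b,n)$. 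Carefully balancing the snowballing error terms $h^{c_1 \delta^3}$ against the target quality $e^{-3 (\log \log n)^{1/2}}$ while choosing $\lambda_0, c, K_1, n_0$ uniformly in $d$ and $D$, and in particular verifying that the hypothesis $\delta(b,n) \leq 1$ keeps the snowballing proposition applicable at every scale of the induction, constitutes the principal bookkeeping task.
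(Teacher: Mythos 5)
Your proposal correctly identifies several of the right ingredients (the interval of scales with polylog-plentiful tubes from \cref{prop:existence_of_interval_with_controlled_tubes}, snowballing, merging of annulus crossings via disjoint tubes, an orange-peeling flavour, and the role of $\op{Gr}(b)$ in the ghost intensity), but the induction as described cannot be closed because of how you seed and propagate the tube-confined two-point bound (A). The full-space hypothesis $\kappa_{p_1}(n,\infty)\geq e^{-(\log\log n)^{1/2}}$ does \emph{not} imply any ball- or tube-confined two-point bound at scale $m_1$: the connecting paths it provides may exit any prescribed region, whereas \cref{prop:snowballing} requires its input $\tau_{p_1}^{\Lambda}(A_i\cup A_{i+1})$ to be realized \emph{inside} $\Lambda$. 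The only localized information available from the hypotheses is at scale $b$: combining the full-space bound with $\p_{p_1}(\op{Piv}[4b,n^{1/3}])\leq(\log n)^{-1}$ gives $\tau_{p_1}^{B_{m/2}}(B_{4b})\geq\frac{1}{2}e^{-(\log\log n)^{1/2}}$ (this, not an annular-uniqueness seed, is the actual role of the pivotality hypothesis). And one cannot chain directly from scale $b$ up to scale $m_1\geq n^{1/3}$: that requires $k\asymp m_1/b$ snowballing steps, and the constraint $h^{c_1\delta^3}\leq c_3k^{-1}$ with $h\asymp\op{Gr}(b)^{-1}$ forces $\delta^3\gtrsim\log(m_1/b)/\log\op{Gr}(b)$, which in general far exceeds the budget $\delta(b,n)=(K\log\log n/\min\{\log n,\log\op{Gr}(b)\})^{1/4}$.

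The paper's resolution, absent from your proposal, is a dichotomy on the \emph{two-point zone} $\op{tz}(m)=\sup\{r:\tau_{p_{3/2}}^{B_m}(B_r)\geq(\log n)^{-1}\}$. Chaining is always done with consecutive centres spaced $\op{tz}(m/2)/4$ apart, so that the in-tube input between consecutive $b$-balls comes from the \emph{definition} of $\op{tz}$ and the number of steps along a tube of length $\op{tz}(m/2)(\log n)^{c_3K}$ is only polylogarithmic, while the ghost intensity stays tied to $\op{Gr}(b)$ (\cref{lem:can_always_cross_tz_tubes}). If some scale $m\in\mathscr{S}(n)$ has $\op{tz}(m/2)$ within a polylog factor of $m$ (the statement $\mathcal{B}$), these tube crossings are long enough to feed the plentiful-tubes merging argument, and one concludes roughly as you describe. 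If no such scale exists, the argument is entirely different: the smallness of $\op{tz}$ yields a $(\log n)^{c_3K}$-sized set of pairwise poorly connected vertices in $B_{\op{tz}(m)}$ (\cref{lem:existence_of_well-sep_set}), which combined with the full-space bound and the Hamming-distance sprinkling inequality (\cref{prop:hamming}) gives $\p_{p_{3/2}}(S_{\op{tz}(m)}\leftrightarrow S_{m/2})\geq 1-e^{-(\log n)^{c_3K-1}}$; only with this super-strong sphere-to-sphere connectivity does the orange-peeling uniqueness argument go through. Your single induction maintaining (A) and (B) at every dyadic scale has no mechanism for the case where the tube-confined two-point function genuinely is small at intermediate scales, which is precisely the case the negative branch of the dichotomy exists to treat.
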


The parameter $\lambda$ controls how far a connection probability lower bound is propagated by sprinkling, namely from scale $n$ to scale $e^{(\log n)^{c\lambda}}$, where $c$ is independent of $\lambda$. \Cref{prop:low_growth_main} morally says that for any choice of $\lambda$, if we sprinkle by a sufficient amount at a sufficiently large scale, we can achieve this propagation from scale $n$ to scale $e^{(\log n)^{c\lambda}}$. (Note that since the conclusion of \Cref{prop:low_growth_main} gets stronger as $\lambda$ increases, the $\lambda \geq \lambda_0$ condition is redundant; we have nevertheless included it because some of our working is simpler if we can assume that $\lambda$ is large.)

\medskip

 The overall strategy of this section is closely inspired by \cite{contreras2022supercritical}. % although the implementation has many crucial differences
A key insight of that paper was that if one is working in the supercritical phase of percolation, then one can sometimes deduce positive information (e.g.\ a lower bound on set-to-set connection probabilities) from negative information (e.g.\ an upper bound on set-to-set connection probabilities). Thus, one can analyze the supercritical phase by a case analysis according to whether or not certain point-to-point connection lower bounds hold: both assumptions are useful.
 This only makes sense because being in the supercritical phase is already a positive hypothesis. For our purposes, we cannot assume that we are in the supercritical phase. However, we can assume that we have a two-point lower bound at some large scale $n$, which lets us pretend that we are supercritical when working with scales much smaller than $n$. The positive information that this lets us deduce by working with scales smaller than $n$ is so strong that it implies set-to-set connection lower bounds even at scales much larger than $n$. 

\medskip

 Besides this, there are two main complications we need to address when adapting the methods of \cite{contreras2022supercritical} in this section. First, 
 our quasi-polynomial growth and plentiful tubes conditions are rather different than the polynomial growth and connectivity of exposed spheres used in \cite{contreras2022supercritical}, and many details of the argument must change to accommodate this.
 Second, and more seriously, we must use methods that use the growth upper bound \emph{at one scale only}, since that is all we can assume; this is very different from \cite{contreras2022supercritical} where their graphs are assumed to have polynomial growth at all scales and many of the arguments work by inducting up from scale $1$. 
 % A related issue is that we must make sure all our estimates work in a way that is ``uniform in the graph'', something that has already largely been taken care of 
% Here is the setup we will be working with for most of this section, followed by the proposition we are going to prove.

% Let $d\in \mathbb{N}$, let $D\geq 1$, and let $\lambda \geq 20$, and let $\lambda \geq 20$ and let $c_1=c_1(d,D) \in (0,1)$ and $n_1=n_1(d,D,\lambda)$ be the constants guaranteed to exist by \cref{prop:existence_of_interval_with_controlled_tubes}.
% Given a graph $G \in \cG^*_d$, we say that a scale $n \in \mathscr{L}=\mathscr{L}(G,D)$ is \textbf{good} if

%  $c_1(d,D)$  let $G \in \cG_d^*$, and let $D \geq 1$ be a fixed parameter. We define the set of \textbf{low growth scales} to be
% \[
% \mathscr{L}(G)=\mathscr{L}(G,D) = \Bigl\{n \geq 1: \log \op{Gr}(m) \leq (\log m)^{D} \text{ for all } m \in [n^{1/3} , n]\Bigr\}.
% \]
% Let $\lambda \geq 20$ and let $c_1=c_1(d,D) \in (0,1)$ and $n_1=n_1(d,D,\lambda)$ be the constant guaranteed to exist by \cref{prop:existence_of_interval_with_controlled_tubes}.

\medskip

% \begin{rk}
% Since the statement of this proposition is very technical, the reader may like to skip ahead and see how it is used in the proof of the main theorem before returning to this section. 
% \end{rk}

We now begin to work towards the proof of \cref{prop:low_growth_main}. We begin by recording various notations and important constants that will be used throughout the proof. First, we let $\mathcal{A}(b,n,p_1)=\mathcal{A}_{D,K}(b,n,p_1;G)$ be the statement on the left hand side of the implication \eqref{eq:goal_of_the_low_growth_argument}:
\[
\mathcal{A}(b,n,p_1) = \Bigl(n \in \mathscr{L}(G,D),\; \kappa_{p_1}(n,\infty)\geq e^{-(\log \log n)^{1/2}}, \, \p_{p_1} ( \op{Piv}[4b , n^{1/3} ] ) \leq (\log n)^{-1} \, \text{ \emph{and} } \delta(b,n) \leq 1 \Bigr).
\]
We will always assume that $\delta(p_1,p_2)$ is larger than the quantity $\delta:= \delta(b,n)=\delta_K(b,n)$ introduced in \cref{prop:low_growth_main}. We regard the choices of $d \in \mathbb{N}$, $D \geq 1$, $K\geq 1$, and $\lambda \geq 20$, as well as the graph $G\in \mathcal{U}_d^*$, as being fixed for the remainder of the section.
We write $p_{3/2}:=\sprinkle(p_1;\delta(p_1,p_2)/2)$, so that $p_2=\sprinkle(p_{3/2};\delta(p_1,p_2)/2)$ by the semigroup property of our sprinkling operation, and let $c_1=c_1(d,D)>0$ and $N=N(d,D,\lambda)$ be the constants guaranteed to exist by \cref{prop:existence_of_interval_with_controlled_tubes}.

\medskip

Inspired by \cite{contreras2022supercritical}, we will split the proof of \cref{prop:low_growth_main} into two cases according to how easy it is to connect points at certain well-chosen intermediate scales. 
% We first introduce some relevant notation.
  For each $1 \leq m \leq n$ we define the \textbf{two-point zone}
\[
      \op{tz}(m)=\op{tz}(m,n) := \sup \left\{ r \geq 0 : \tau_{p_{3/2}}^{B_m}(B_r) \geq (\log n)^{-1} \right\},
\]
where we recall that $\tau^B_p(A)$ is the quantity defined by
$\tau^B_p(A) := \inf_{x,y \in A}\p_p(x \xleftrightarrow{B} y )$.
We stress that the two-point zone $\op{tz}(m)$ is defined using the intermediate parameter $p_{3/2}$.
 % the intermediate choice of parameter between $p_1$ and $p_2$. 
Knowing the value of $\op{tz}(m)$ for some $m$ provides us with both positive information (on scales smaller than $\op{tz}(m)$) and negative information (on scales larger than $\op{tz}(m)$). 
% 
% 
% Our next goal is to prove that $\mathcal{A}(b,n,p)$ \emph{together with an additional assumed lower bound on the corridor function} implies the statement on the right hand side of \eqref{eq:goal_of_the_low_growth_argument}. We begin with some definitions.
 % that if we have a good enough lower bound on the corridor function, then we can deduce \cref{prop:low_growth_main}.
% Let  $c_1(d,D) \in (0,1)$ and $n_1(d,D,\lambda)$ be the constants guaranteed to exist by \cref{prop:existence_of_interval_with_controlled_tubes}. 
% 

\medskip

For each $n\in \mathscr{L}(G,D)$ with $n\geq N$, let $n^{1/3} \leq m_1 \leq m_2 \leq n^{1/(1+c_1)}$ be such that $m_2 \geq m_1^{1+c_1}$ and 
% 
 % Let $[(n'),(n')^{1+c_1}] \subseteq [n^{1/3},n]$ be the subinterval such that 
 $G$ has $(c_1,\lambda)$-polylog-plentiful tubes at every scale $m_1 \leq m \leq m_2$ (such $m_1$ and $m_2$ existing by \cref{prop:existence_of_interval_with_controlled_tubes}), 
 % that is thereby guaranteed to exist. To give ourselves some slack, define
  and let $\mathscr{S}(n)=\mathscr{S}_{d,D,\lambda}(n) =\{m \in \mathbb{N} : m_1^{1+(c_1/4)} \leq m \leq m_1^{1 +(3c_1/4)} \}$. From now on we will mostly work at scales $m \in \mathscr{S}(n)$, so that $G$ has plentiful tubes not just at these scales but at a large range of consecutive scales on either side of every such scale. Let $c_2=c_2(d)$ be the minimum of the four constants appearing in \cref{prop:snowballing} with `$D$' equal to 1 (known in the statement of that proposition as $c_1$, $c_2$, $c_3$, and $h_0$; the proposition becomes weaker if we replace all four constants by their minimum), define $c_3=c_3(d) := 2^{-9}c_2$, and
   consider the statement
  \begin{multline*}
\mathcal{B}(n,p_{3/2})=\mathcal{B}_{d,D,\lambda,K}(n,p_{3/2}) \\= \Bigl(\text{there exists $m\in \mathscr{S}(n)$ such that }
\op{tz}\left(\frac{m}{2}\right) [ \log n]^{c_3K} \geq m (\log n)^{3\lambda / c_1 }
\Bigr).
  \end{multline*}
  We think of $\mathcal{B}(n,p_{3/2})$ as our ``positive assumption'' about percolation on scale $n$: it means that there is a ``good'' scale $m\in \mathscr{S}(n)$ such that points in a ball of radius not much smaller than $m$ can be connected with reasonable probability within the ball of radius $m/2$ when $p=p_{3/2}$.

  \medskip

  \medskip
  \hrule

  % \medskip

  \medskip

  \noindent \emph{Notational conventions and standing assumptions:}
  \textbf{Recall that the choices of $d \in \mathbb{N}$, $D \geq 1$, $K\geq 1$, $\lambda \geq 20$, and $G\in \mathcal{U}_d^*$ are considered to be fixed for the remainder of the section.
  The constants $N$, $c_1$, $c_2$, and $c_3$ used to define $\mathscr{S}(n)$ and $\mathcal{B}(n,p)$ will be used with the same meaning throughout this section. We also define $c_{-1}=c_{-1}(d)$ to be a positive constant such that $\sprinkle(p;\delta)\geq p + c_{-1}\delta$ for every $p\geq 1/d$ and $0<\delta \leq 1$, which exists by \eqref{eq:natural_sprinkling_lower_bound}. Finally, we fix $K_0=K_0(d,D)=2^{11}(c_3^{-1}\vee c_{-1}^{-5}\vee 1)$ throughout the section: all subsequent lemmas in this section will include $K\geq K_0$ as an implicit hypothesis.  These conventions do not apply outside of this section (\cref{sec:low_growth_multiscale}).}
  \medskip
  % \medskip

  \hrule

\medskip

\medskip

  \cref{prop:low_growth_main} follows trivially from the following two lemmas.

\begin{lem}[Concluding from a positive assumption]
\label{lem:done_if_can_cross_tubes}
There exists a constant $\lambda_0=\lambda_0(d,D)$ such that if $\lambda \geq \lambda_0$ then there exists a constant $n_0=n_0(d,D,\lambda) \geq N$ such that the implication
\begin{equation}
\label{eq:goal_of_the_low_growth_argument_B}
\mathcal{A}_{D,K}(b,n,p_1) \wedge \mathcal{B}_{d,D,\lambda,K}(n,p_{3/2})  \\ 
\Rightarrow \Bigl(p_2 \geq  p_c \, \text{ \emph{or} } \, \kappa_{p_2}\left( e^{(\log n)^{c_1 \lambda/4}},n \right) \geq e^{-3(\log \log n)^{1/2}}\Bigr)
\end{equation}
holds for every $n\geq n_0$, $b\leq \frac{1}{8}n^{1/3}$, and every pair of probabilities $p_2 \geq p_1 \geq 1/d$ with $\delta(p_1,p_2)\geq \delta_K(b,n)$.
\end{lem}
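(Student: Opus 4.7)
Assume $\mathcal A(b,n,p_1)$ and $\mathcal B_{d,D,\lambda,K}(n,p_{3/2})$ hold, and also that $p_2<p_c$ (otherwise the conclusion is trivial). From $\mathcal B$, fix $m\in\mathscr S(n)$ with $r^*:=\op{tz}(m/2)\geq m(\log n)^{3\lambda/c_1-c_3K}$: by the defining property of $\op{tz}$ and transitivity, any two vertices at distance at most $r^*$ are connected inside a translate of $B_{m/2}$ at parameter $p_{3/2}$ with probability at least $(\log n)^{-1}$. Given $u,v$ and an arbitrary path $\gamma$ of length at most $L:=e^{(\log n)^{c_1\lambda/4}}$ connecting them, we prove the required lower bound on $\p_{p_2}(u\xleftrightarrow{B_n(\gamma)}v)$ via a single application of the snowballing proposition \cref{prop:snowballing}.

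Place crease points $x_0=u,x_1,\dots,x_\ell=v$ along $\gamma$ at spacing at most $r^*/2$, so that $\ell\le 2L/r^*$, set $A_i:=B_{\rho}(x_i)$ for a radius $\rho\in[b,r^*/4]$ to be chosen, and let $\Lambda$ be the tube of thickness $m$ around $\gamma$. Then $\Lambda\subseteq B_n(\gamma)$ (since $m\leq n$) and $\Lambda\supseteq B_{m/2}(x_i)$ for every $i$, so the two-point zone property immediately yields $\tau^\Lambda_{p_{3/2}}(A_i\cup A_{i+1})\geq(\log n)^{-1}$. We apply \cref{prop:snowballing} with $p_1\to p_{3/2}$, $p_2\to p_2$, and $\delta\ge\delta(b,n)/2$; setting $h:=\op{Gr}(\rho)^{-1}$, the remaining hypotheses reduce to $(\log n)^{-1}\ge 4h^{c\delta^4}$ and $h^{c\delta^3}\le c'\ell^{-1}$. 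Plugging in $\delta(b,n)^{-4}\le 16\log\op{Gr}(b)/(K\log\log n)$ together with the low-growth bound $\log\op{Gr}(b)\le(\log n)^D$ (from $b\le n^{1/3}$ and $n\in\mathscr L(G,D)$), both reduce to an inequality of the form $\log\op{Gr}(\rho)\ge C(d,D,\lambda)\log\op{Gr}(b)/K$, satisfiable by some $\rho\in[b,r^*/4]$ provided $K\geq K_1(d,D,\lambda)$ and $n\geq n_0$. The snowballing ``inflation'' scale $r_0$ produced is polylogarithmic in $n$, so $B_{2r_0}(\Lambda)\subseteq B_n(\gamma)$, and the proposition yields
\[
\p_{p_2}\bigl(u\xleftrightarrow{B_n(\gamma)}v\bigr)\;\ge\;c_2\,\tau^\Lambda_{p_{3/2}}(A_0)\,\tau^\Lambda_{p_{3/2}}(A_\ell).
\]

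To hit the target $e^{-3(\log\log n)^{1/2}}$, each endpoint factor must be at least $e^{-(3/2)(\log\log n)^{1/2}}$. The naive two-point zone bound only gives $(\log n)^{-1}$, which is insufficient, so we instead use the full-space input $\kappa_{p_{3/2}}(n,\infty)\ge\kappa_{p_1}(n,\infty)\ge e^{-(\log\log n)^{1/2}}$ together with a ``full-space to tube'' conversion. Concretely, for any two vertices $a,b\in A_0=B_\rho(u)$, we have $\p_{p_{3/2}}(a\leftrightarrow b)\ge e^{-(\log\log n)^{1/2}}$, and the correction from restricting the connection to $\Lambda$ is controlled via \cref{lem:nearby_clusters_pigeonhole}: at $p_{3/2}<p_c$, the probability that $a$ and $b$ lie in distinct large clusters is of order $d(a,b)p_{3/2}^{-d(a,b)}$ times a negative power of the relevant volume scale (read off from the two-ghost inequality at scale $\sim m$, after the burn-in of size $b$ furnished by the pivotal hypothesis of $\mathcal A$), which for $\rho$ polylogarithmic in $n$ is much smaller than $e^{-(\log\log n)^{1/2}}$. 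This yields $\tau^\Lambda_{p_{3/2}}(A_0)\ge\tfrac12 e^{-(\log\log n)^{1/2}}$ and similarly for $A_\ell$, and absorbing $c_2/4$ into the exponent produces the target bound for $n\ge n_0$.

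I expect the endpoint-conversion step to be the main technical hurdle, as it is the point at which the two-ghost inequality, the burn-in hypothesis supplied by $\mathcal A$, and the quasi-polynomial geometry on scale $m$ must interact cleanly; the application of \cref{prop:snowballing} itself is essentially mechanical once the correct choice of $\rho$ has been made, and the plentiful tubes hypothesis is not needed here (it will be used instead in the companion lemma treating the negation of $\mathcal B$).
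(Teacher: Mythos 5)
There is a genuine quantitative gap that breaks the proposal: the single application of \cref{prop:snowballing} over the whole corridor cannot work in the low-growth regime. Your chain has $\ell\approx L/r^*$ balls with $L=e^{(\log n)^{c_1\lambda/4}}$, so $\ell$ is super-polynomial in $n$ (at least $e^{(1-o(1))(\log n)^{c_1\lambda/4}}$ since $r^*\le n$). The snowballing hypothesis $h^{c_1\delta^3}\le c_3\ell^{-1}$ with $h=\op{Gr}(\rho)^{-1}$ reads $c\,\delta^3\log\op{Gr}(\rho)\gtrsim(\log n)^{c_1\lambda/4}$. But $\delta\le 1$ and $\mathscr L(G,D)$ only provides an \emph{upper} bound on the growth; for the archetypal case of genuinely polynomial growth (e.g.\ $\mathbb Z^2$, which this lemma must handle) one has $\log\op{Gr}(\rho)=O(\log n)$, so the left-hand side is $O(\log n)$ and the hypothesis fails catastrophically for any $\lambda$ with $c_1\lambda/4>1$. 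This is not a fixable bookkeeping issue: snowballing's efficiency is intrinsically tied to $\log\op{Gr}$ at the relevant scale, which is exactly why the paper only ever applies it over chains of \emph{polylogarithmic} length (as in \cref{lem:can_always_cross_tz_tubes}, where $k\le 5(\log n)^{c_3K}$).

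Your closing remark that ``the plentiful tubes hypothesis is not needed here'' is therefore exactly backwards: the plentiful tubes condition is the essential amplification device. The paper first uses snowballing to get a \emph{moderate} crossing probability $(\log n)^{-1}$ for tubes of thickness $m$ and length $m(\log n)^{3\lambda/c_1}$ (this is where $\mathcal B$ and the two-point zone enter), and then uses the $(\log n)^{\Omega(\lambda)}$ disjoint radial and annular tubes to run independent attempts, boosting the crossing and annulus-uniqueness probabilities to $1-e^{-(\log n)^{\Omega(\lambda)}}$. Only a failure probability this small survives the union bound over the $e^{(\log n)^{c_1\lambda/4}}$ steps of the corridor; a per-step success probability bounded away from $1$ (which is all a direct chaining/snowballing argument can give) is useless at this length scale. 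Your endpoint conversion (full-space connectivity minus a correction controlled by the $\op{Piv}[4b,n^{1/3}]$ hypothesis in $\mathcal A$) is in the right spirit — the paper does essentially this in \eqref{eq:good_connection_at_dist_z} — but it does not rescue the main step.
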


\begin{lem}[Concluding from a negative assumption]
\label{lem:complement_of_done_if_can_cross_tubes}
% {\color{red}[To be rewritten.]} For each $d\in \mathbb{N}$, $D\geq 1$, and $0<\eta<1/2$ there exists a constant $\lambda_0=\lambda_0(d,D,\eta)$ such that for each $\lambda \geq \lambda_0$ there exists a constant $K_0=K_0(d,D,\eta,\lambda)$ such that for each $K \geq K_0$ there exists a constant $n_0=n_0(d,D,\eta,\lambda,K)$ such that the following holds:
% If $G\in \mathcal{U}_d^*$ 
 % then 
There exists a constant $\lambda_0=\lambda_0(d,D)$ such that if $\lambda \geq \lambda_0$ then there exist constants $K_1=K_1(d,D,\lambda)\geq K_0$ and $n_0=n_0(d,D,\lambda) \geq N$ such that if $K\geq K_1$ and $n\geq n_0$ then the implication
\begin{equation}
\label{eq:goal_of_the_low_growth_argument_B}
\mathcal{A}_{D,K}(b,n,p_1) \wedge (\neg\, \mathcal{B}_{d,D,\lambda,K}(n,p_{3/2})) \\ 
\Rightarrow \Bigl(p_2 \geq  p_c \, \text{ \emph{or} } \, \kappa_{p_2}\left( e^{(\log n)^{c_1 \lambda/4}},n \right) \geq e^{-3(\log \log n)^{1/2}}\Bigr)
\end{equation}
holds for every $n\geq n_0$, $b\leq \frac{1}{8}n^{1/3}$, and every pair of probabilities $p_2 \geq p_1 \geq 1/d$ with $\delta(p_1,p_2)\geq \delta_K(b,n)$.
\end{lem}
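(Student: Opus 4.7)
The plan is to combine the polylog-plentiful tubes condition provided by \cref{prop:existence_of_interval_with_controlled_tubes} with the snowballing technology from \cref{prop:snowballing}, exploiting the negative hypothesis $\neg \mathcal{B}(n,p_{3/2})$ to force a chaining argument that remains efficient through many scales. At a high level, $\neg\mathcal{B}$ is a \emph{small} two-point-zone statement: for every $m \in \mathscr{S}(n)$,
\[
\op{tz}(m/2) \leq m (\log n)^{3\lambda/c_1 - c_3 K},
\]
which, by choosing $K_1$ so that $c_3 K \gg \lambda/c_1$, is a very strong quantitative upper bound. Combined with the positive hypotheses in $\mathcal{A}(b,n,p_1)$ (namely the full-space two-point lower bound $\kappa_{p_1}(n,\infty)\geq e^{-(\log \log n)^{1/2}}$ and the pivotal estimate $\p_{p_1}(\op{Piv}[4b,n^{1/3}]) \leq (\log n)^{-1}$), this says that while points in $B_{m/2}$ are hard to connect \emph{inside} $B_{m/2}$ at $p_{3/2}$, they are easy to connect in full space, so the connecting paths must exit the ball and traverse many plentiful sub-tubes.

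Having fixed a scale $m \in \mathscr{S}(n)$, I would use the $(c_1,\lambda)$-polylog-plentiful tubes at scale $m$ to extract a family of $(\log n)^{c_1\lambda}$ disjoint tubes of thickness $m (\log n)^{-\lambda/c_1}$ and length at most $m(\log n)^{\lambda/c_1}$ crossing each annulus of inner radius $m$ and outer radius $3m$. To connect two endpoints of a long tube $B_n(\gamma)$ of length $\ell_0 = e^{(\log n)^{c_1\lambda/4}}$, I would partition $\gamma$ into roughly $\ell_0/m$ consecutive blocks of length $\asymp m$, and at each block apply the plentiful tubes to produce many disjoint candidate ``bridges.'' These bridges will serve as the sets $A_1,\ldots,A_n$ in \cref{prop:snowballing} (taken at scale $m$), with $\Lambda$ the slightly thickened tube $B_n(\gamma)$. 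The volumes $|A_i|$ will be of order $\op{Gr}(m(\log n)^{-\lambda/c_1})$, which under the hypothesis $n \in \mathscr{L}(G,D)$ is large enough to make the parameter $h := (\min_i |A_i|)^{-1}$ satisfy both $h^{c_1 \delta^3}\leq c_3 n^{-1}$ and the pivotal requirement $\p_p(\op{Piv}[1,hr])<h$ (the latter via the bound on $\p_{p_1}(\op{Piv}[4b,n^{1/3}])$ from $\mathcal{A}$ together with \cref{cor:trivial_a_priori_uniqueness_zone} applied at the relevant intermediate scales).

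Third, to feed snowballing I would verify the input $\tau^\Lambda_{p_1}(A_i \cup A_{i+1}) \geq 4 h^{c_1 \delta^4}$ for consecutive blocks. Here I would use the full-space bound $\kappa_{p_1}(n,\infty) \geq e^{-(\log\log n)^{1/2}}$ together with a preliminary, short-range application of \cref{prop:snowballing} \emph{within} $B_n(\gamma)$: the plentifulness of tubes allows us to run snowballing with $\Lambda$ a single block of the big tube and $A_i$ thick slices within it. This is where the hypothesis $\lambda \geq \lambda_0$ enters — we need enough disjoint sub-tubes that the local snowballing output is stronger than the polynomial losses picked up by the outer chaining. The sprinkling budget $\delta_K(b,n)$, with its $(\log \op{Gr}(b))^{-1/4}$ scaling, is tailored precisely to absorb the exponent $c_1\delta^4$ that appears on the right-hand side of \cref{prop:snowballing}, so that $h^{c_1 \delta^4}$ is comparable to $(\log n)^{-O(1)}$, matching the hypotheses available from $\mathcal{A}$.

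The main obstacle is the bookkeeping of the exponents across all scales simultaneously. I would need to verify that: (i) the total number of chaining links, which is $\ell_0/m \leq e^{(\log n)^{c_1\lambda/4}}/m$, fits inside the allowed $n^{-1}$-factor in snowballing's $h^{c_1 \delta^3}\leq c_3 n^{-1}$; (ii) the two-point-zone bound from $\neg \mathcal{B}$, with $K$ taken larger than a $\lambda$-dependent threshold $K_1$, is sharp enough to rule out ``bad'' short connections that could otherwise contaminate the sharp-threshold input; and (iii) the cumulative multiplicative losses after chaining, combined with the initial lower bound $e^{-(\log \log n)^{1/2}}$, still leave us above the target $e^{-3(\log \log n)^{1/2}}$. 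Provided these can be balanced — which dictates the choices $\lambda_0 = \lambda_0(d,D)$ and $K_1 = K_1(d,D,\lambda)$ — the implication \eqref{eq:goal_of_the_low_growth_argument_B} will follow, with the dichotomy ``$p_2 \geq p_c$ or $\kappa_{p_2}(e^{(\log n)^{c_1\lambda/4}},n) \geq e^{-3(\log\log n)^{1/2}}$'' emerging because the snowballing conclusion requires at-most-one-infinite-cluster uniqueness throughout $[p_1,p_2]$, which can fail only when $p_2 \geq p_c$.
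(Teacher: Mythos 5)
Your reading of $\neg\,\mathcal{B}$ as a strong upper bound on the two-point zone is correct, and you have correctly identified the two main tools (polylog-plentiful tubes and snowballing) and the role of taking $K$ large relative to $\lambda$. However, the central architectural step of your argument --- chaining directly via \cref{prop:snowballing} along a corridor of length $e^{(\log n)^{c_1\lambda/4}}$ with the plentiful-tube bridges as the sets $A_i$ --- fails quantitatively, and the mechanism that replaces it is missing. The error term in \cref{prop:snowballing} is of the form $n\, h^{c\delta^3}$ where $n$ is the number of links and $h \geq (\min_i |A_i|)^{-1}$. Under the standing hypothesis $n \in \mathscr{L}(G,D)$ the sets $A_i$ have volume at most $\op{Gr}(m) \leq e^{(\log m)^D} \leq e^{(\log n)^D}$, so $h^{c\delta^3} \geq e^{-(\log n)^D}$ no matter how the parameters are tuned; since $D$ is fixed while $\lambda$ (hence the number of links $e^{(\log n)^{c_1\lambda/4}}/m$) is taken large, the product $n\,h^{c\delta^3}$ blows up and the chaining produces nothing. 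More fundamentally, snowballing outputs lower bounds of the form $c\,\tau(A_1)\tau(A_n)$, i.e.\ probabilities bounded \emph{below} by small quantities; to survive a union bound over $e^{(\log n)^{c_1\lambda/4}}$ steps you need per-step failure probabilities of size $e^{-(\log n)^{\Omega(\lambda)}}$, i.e.\ success probabilities \emph{exponentially close to $1$}, which snowballing cannot deliver.

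The paper obtains these near-$1$ probabilities by using the negative information $\neg\,\mathcal{B}$ \emph{positively}, via three steps absent from your proposal. First, the two-point-zone upper bound is converted (by a contradiction argument that itself uses snowballing, \cref{lem:existence_of_well-sep_set}) into a set $U$ of $(\log n)^{c_3K}$ vertices in $B_{\op{tz}(m)}$ that are pairwise poorly connected inside $B_{m/2}$. Second, since each point of $U$ reaches $S_{m/2}$ with probability at least $e^{-(\log\log n)^{1/2}}$ (by the full-space bound in $\mathcal{A}$) while pairwise connections are rare, the Hamming-distance sprinkling inequality (\cref{prop:hamming}) boosts $\p_{p_{3/2}}(S_{\op{tz}(m)} \leftrightarrow S_{m/2})$ to $1 - e^{-(\log n)^{c_3K-1}}$ --- this is the source of the superpolylogarithmically small failure probabilities, and it is here (not in snowballing) that the exponent $c_3K$ beats $c_1\lambda$ once $K \geq K_1(\lambda)$. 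Third, an iterative ``orange peeling'' argument over $2\lfloor(\log n)^D\rfloor$ thin annuli, sprinkling by $(\log n)^{-(D+1)}$ at each step and using the $(\log n)^{c_1\lambda}$ disjoint annular tubes as independent merging opportunities, shows that all clusters crossing $B_{m/16}$ to $S_{m/8}$ merge into one $\omega_{p_2}$-cluster except on an event of probability $e^{-(\log n)^{c_1\lambda/3}}$. Only with this uniqueness estimate in hand does the final Harris--FKG chaining over $e^{(\log n)^{c_1\lambda/4}}$ annuli along $\gamma$ close, since $c_1\lambda/3 > c_1\lambda/4$. Without these ingredients your outline does not yield the claimed implication.
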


\subsection{Concluding from a positive assumption}

Our next goal is to prove \cref{lem:done_if_can_cross_tubes}. We begin with the following lemma, which will later allow us to use the positive assumption $\mathcal{B}(n,p)$ to deduce lower bounds on the corridor function after sprinkling. Note that we are not yet using the polylog-plentiful tubes condition or the assumption $\mathcal{B}(n,p)$, so that the parameter $\lambda$ does not appear in this lemma. (Recall our standing assumption throughout this section that $K\geq K_0=K_0(d,D)$.)

\medskip

We define $p_{7/4}=\sprinkle(p_1;\frac{3}{4}\delta(p_1,p_2))=\sprinkle(p_{3/2};\frac{1}{4}\delta(p_1,p_2))$, and remind the reader that the two-point zone $\op{tz}(m)$ was defined with respect to the parameter $p_{3/2}$.

\begin{lem} \label{lem:can_always_cross_tz_tubes}
 There exist a constant $n_0=n_0(d,D) \geq N$ and a universal constant $c_4$ such that if $n \geq n_0$ then the implication
% d$ and $n \in \mathcal{L}(C,D)$ satisfies $n\geq n_0$ then 
\begin{equation*}
\mathcal{A}_{D,K}(b,n,p_{1})
\Rightarrow \Bigl(
      \kappa_{p_{7/4}}\left( \op{tz}\left(\frac{m}{2}\right) [ \log n]^{c_3K} , m \right) \geq c_4 e^{-2(\log \log n)^{1/2}} \text{ for every $n^{1/3} \leq m\leq n$}\Bigr)
\end{equation*}
holds for every $n\geq n_0$, $b\leq \frac{1}{8}n^{1/3}$, and $p_1 \geq 1/d$.
\end{lem}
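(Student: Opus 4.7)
The plan is to apply the snowballing proposition (\cref{prop:snowballing}) along a geodesic traversing the proposed tube, using the two-point zone to ensure the chaining hypothesis and the sprinkling from $p_{3/2}$ to $p_{7/4}$ to bridge the scales. Fix $m \in [n^{1/3}, n]$, set $t := \op{tz}(m/2)$, and consider any path $\gamma$ of length $L \leq t[\log n]^{c_3 K}$. I would take $\Lambda := B_{m/2}(\gamma)$ as the thin tube around $\gamma$; the snowballing output, which lives in $B_{2r}(\Lambda)$, will sit inside the claimed $m$-tube $B_m(\gamma)$ as long as the snowballing radius satisfies $r \leq m/4$. Place centers $x_0 = \gamma_0, x_1, \dots, x_N = \gamma_{\op{len}(\gamma)}$ along $\gamma$ with consecutive spacing at most $t/4$, so that $N \leq 4[\log n]^{c_3 K}$, and let $A_i := B_s(x_i)$ for a radius $s \leq t/8$. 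Consecutive unions then satisfy $A_i \cup A_{i+1} \subseteq B_t(x_i)$, and the definition of $\op{tz}$ combined with $B_{m/2}(x_i) \subseteq \Lambda$ yields $\tau^\Lambda_{p_{3/2}}(A_i \cup A_{i+1}) \geq (\log n)^{-1}$.

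Next I would verify the hypotheses of \cref{prop:snowballing} for the sprinkling $\delta' := \delta(p_{3/2}, p_{7/4}) = \delta(p_1,p_2)/4 \geq \delta_K(b,n)/4$, namely: selecting $h \in (0, h_0]$ small enough for $h^{c_1 \delta'^3} \leq c_3/N$ and $4 h^{c_1 \delta'^4} \leq (\log n)^{-1}$ (compatible because of the factor gap between $\delta'^3$ and $\delta'^4$), while large enough that the size constraint $|A_i| \geq h^{-1}$ holds (tuning $s$ accordingly using the quasi-polynomial growth hypothesis $n \in \mathscr{L}(G,D)$). The pivotality condition $\p_p(\op{Piv}[1, hr]) < h$ required by snowballing, which must hold uniformly on $p \in [p_{3/2}, p_{7/4}]$, is obtained by combining the assumption $\p_{p_1}(\op{Piv}[4b, n^{1/3}]) \leq (\log n)^{-1}$ from $\mathcal{A}$ at the lowest scales with the trivial \emph{a priori} uniqueness zone \cref{cor:trivial_a_priori_uniqueness_zone} (valid uniformly in $p$ bounded away from $0$ because $n \in \mathscr{L}(G,D)$ controls $\log \op{Gr}(n)$). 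The snowballing proposition then yields the template bound
\[
\tau^{B_m(\gamma)}_{p_{7/4}}(A_1, A_N) \geq c_2 \, \tau^\Lambda_{p_{3/2}}(A_1) \, \tau^\Lambda_{p_{3/2}}(A_N),
\]
and infimizing over paths $\gamma$ and endpoints in $A_1 \times A_N$ gives the corridor function lower bound.

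The principal obstacle, and the most delicate point of the proof, is that applying the $\op{tz}$ bound naively to the two terminal factors gives only $\tau^\Lambda_{p_{3/2}}(A_1), \tau^\Lambda_{p_{3/2}}(A_N) \geq (\log n)^{-1}$, so the template yields $\geq c_2 (\log n)^{-2}$, which for large $n$ is strictly smaller than the target $c_4 e^{-2(\log\log n)^{1/2}}$ (since $e^{-2\log\log n} \ll e^{-2(\log\log n)^{1/2}}$). To close this gap I would split the sprinkling budget $\delta/4$ into two portions and run a second, inner snowballing inside each of $B_{m/2}(x_1)$ and $B_{m/2}(x_N)$. Here the inner snowballing uses the full-space assumption $\kappa_{p_1}(n,\infty) \geq e^{-(\log\log n)^{1/2}}$ from $\mathcal{A}$ (propagated to $p_{3/2}$ by monotonicity) as the input $\tau$ bound, chains through sub-balls inside a single $B_{m/2}(x_i)$ where the $\op{tz}$ hypothesis provides the intermediate pair connectivity, and outputs the upgraded bound $\tau^{B_{m/2}(x_i)}_{p'}(B_{s}(x_i)) \geq c\, e^{-(\log\log n)^{1/2}}$ at some intermediate parameter $p' \in (p_{3/2}, p_{7/4})$. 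The outer chaining, run with the remaining sprinkling budget $\delta/4 - \delta(p_{3/2},p')$, then produces the terminal factors at strength $e^{-(\log\log n)^{1/2}}$ and the final product $c_4 e^{-2(\log\log n)^{1/2}}$. The remaining bookkeeping, namely ensuring that the pivotality, size, and sharp-threshold constraints of both snowballing applications are simultaneously satisfied under our choice of $\delta_K(b,n)$, the low-growth hypothesis, and the constants $c_3, K_0$, is straightforward but requires care; a separate direct argument handles the degenerate case in which $t = \op{tz}(m/2)$ is too small to accommodate balls of the required size, where the tube length $L \leq t[\log n]^{c_3 K}$ itself is small enough that the full-space bound can be used directly after applying a local Harris-FKG open-geodesic estimate to capture the $m$-tube containment.
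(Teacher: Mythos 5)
Your outer chaining is essentially the paper's argument: centres spaced along $\gamma$ at a quarter of $\op{tz}(m/2)$, balls around them as the $A_i$, the definition of the two-point zone supplying $\tau^{\Lambda}_{p_{3/2}}(A_i\cup A_{i+1})\geq (\log n)^{-1}$, an application of \cref{prop:snowballing} with sprinkling $\delta/4$ from $p_{3/2}$ to $p_{7/4}$, and a final check via \cref{prop:two_arm_bound_in_box} that the thickening radius $2r$ fits inside $B_m(\gamma)$. You have also correctly isolated the one real difficulty: the two terminal factors must come out at $e^{-(\log\log n)^{1/2}}$ rather than $(\log n)^{-1}$. But your fix for that difficulty does not work. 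An ``inner snowballing'' inside $B_{m/2}(x_1)$ cannot take the full-space bound $\kappa_{p_1}(n,\infty)\geq e^{-(\log\log n)^{1/2}}$ as its input: every hypothesis and every conclusion of \cref{prop:snowballing} is a connection probability restricted to $\Lambda$ (or its $2r$-thickening), so the proposition provides no mechanism for converting a full-space two-point bound into an in-ball one. If instead you set up the inner chain so that the $\op{tz}$ hypothesis supplies the intermediate pair connectivities, then the terminal factors of \emph{that} application are again in-ball connection probabilities of small balls --- precisely the quantity you are trying to lower bound --- and the argument is circular.

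The missing ingredient is an elementary union bound, and it is the entire reason the hypothesis $\p_{p_1}(\op{Piv}[4b,n^{1/3}])\leq (\log n)^{-1}$ is part of $\mathcal{A}_{D,K}(b,n,p_1)$ (you invoke that hypothesis only for the sharp-threshold pivotality condition $\p_p(\op{Piv}[1,hr])<h$, which is in fact handled directly by \cref{prop:two_arm_bound_in_box} together with the low-growth assumption). A pair $x,y\in B_{4b}$ that is connected in the full configuration but not inside the ball witnesses two distinct clusters crossing the annulus from $4b$ to $n^{1/3}$, so
\[
\tau^{B_{m/2}}_{p_1}\bigl(B_{4b}\bigr)\;\geq\;\kappa_{p_1}(n,\infty)-\p_{p_1}\bigl(\op{Piv}[4b,n^{1/3}]\bigr)\;\geq\;e^{-(\log\log n)^{1/2}}-(\log n)^{-1}\;\geq\;\tfrac{1}{2}e^{-(\log\log n)^{1/2}}.
\]
Taking the chaining balls to have radius exactly $b$ then makes each terminal factor at least $\tfrac{1}{2}e^{-(\log\log n)^{1/2}}$ with no second snowballing and no extra sprinkling budget. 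The same display also shows $\op{tz}(m/2)\geq 4b$, which eliminates your ``degenerate case'' and guarantees that balls of radius $b$ are compatible with the spacing $\tfrac{1}{4}\op{tz}(m/2)$. With this replacement, and with the explicit choice $h=c_2\max\{\op{Gr}(b)^{-1},n^{-1/15}\}$ so that $h^{c_2(\delta/4)^4}\leq(\log n)^{-2c_3K}$ by the definition of $\delta_K(b,n)$, the remainder of your outline coincides with the paper's proof.
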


\begin{proof}[Proof of \cref{lem:can_always_cross_tz_tubes}]
Fix $n^{1/3}\leq m \leq n$ and a path $\gamma$ starting at some vertex $u$, ending at some vertex $v$, and with $\op{len} \gamma \leq  \op{tz}\left(\frac{m}{2}\right) (\log n)^{c_3K}$. Pick a subsequence $u = u_1, u_2,\ldots, u_k = v$ of $\gamma$ with $k \leq 5 (\log n)^{c_3K}$ and $\op{dist}(u_i,u_{i+1}) \leq \frac{1}{4} \op{tz}\left(\frac{m}{2}\right)$ for every $1\leq i<k$. (There are rounding issues that may prevent such a sequence existing when $\op{tz}\left(\frac{m}{2}\right)$ is small, but this is not a problem when $n$ is large.) We claim that $\frac{1}{4} \op{tz}\left(\frac{m}{2}\right) \geq b$. Indeed, the hypothesis $b\leq \frac{1}{8}n^{1/3}$ guarantees that $ 4b \leq m/2 $, and we have by a union bound that
\begin{multline} 
      \tau_{p_{3/2}}^{B_{m/2}} \left( B_{4b} \right) \geq \tau_{p_{1}}^{B_{m/2}} \left( B_{4b} \right) \geq \kappa_{p_{1}}(n,\infty) - \p_{p_{1}}( \op{Piv}[4b,n^{1/3}] ) \\
      \geq e^{-(\log \log n)^{1/2}} - [\log n]^{-1} \geq \frac{1}{2} e^{-[\log \log n]^{1/2}}\label{eq:good_connection_at_dist_z}
\end{multline}
for every $n$ larger than some universal constant. 
It follows in particular that $\tau_{p_{3/2}}^{B_{m/2}}(B_{4b}) \geq (\log n)^{-1}$ for every $n$ larger than some universal constant $n_0$, and hence that $\frac{1}{4} \op{tz}\left(\frac{m}{2}\right) \geq b$ when $n\geq n_0$ by maximality of $\op{tz}\left(\frac{m}{2}\right)$.

\medskip

Let $h=c_2 \max\{\op{Gr}(b)^{-1},n^{-1/15}\}$, so that if $n_0 \geq 3$ (to guarantee that $n^{1/15} \geq \log n$) then
\begin{equation}
\label{eq:h_in_always_cross_tubes}
h^{c_2 (\delta/4)^3} \leq h^{c_2 (\delta/4)^4} \leq \exp\left(\log h \cdot \frac{c_2}{2^{8}} \cdot \frac{ K \log \log n }{ \log h} \right) = (\log n)^{-2 c_3 K} \leq (\log n)^{-2}
\end{equation}
by the definition of $\delta_K(b,n)$ and the assumption that $K\geq K_0$. 
We want to apply \cref{prop:snowballing} where `$n$' is $k$, the sets `$A_i$' are the balls $(B_b(u_i))_{i=1}^k$, the superset `$\Lambda$' is the tube $B_{m/2}(\gamma)$, the ghost field intensity `$h$' is equal to $h$, `$p_1$' is $p_{3/2}$, and the sprinkling amount `$\delta$' is $\delta/4$. To do this, it suffices to verify that if $n$ is sufficiently large then
\[
h^{c_2 (\delta/4)^3} \leq c_2 k^{-1} \qquad \text{ and } \qquad
      \tau_{p_{3/2}}^{B_{m/2}(\gamma)}( B_{b}(u_i) \cup B_b(u_{i+1}) ) \geq 
      % 4 \op{Gr}(b)^{-c_2 (\delta/2)^5 } \geq
       4 h^{c_2 (\delta/4)^4 } \quad  \text{for every $1 \leq i \leq k-1$.}
\]
 (The assumption that $h$ is sufficiently small holds automatically since we defined $c_2$ to be the minimum of the constants appearing in \cref{prop:snowballing} and set $h\leq c_2$.) The inequality \eqref{eq:h_in_always_cross_tubes} implies that the first required inequality $h^{c_2 (\delta/4)^3} \leq c_2 k^{-1}$ holds whenever $n$ is larger than some constant depending only on $d$, since $h^{c_2 (\delta/4)^3} \leq (\log n)^{-2c_3K}$, $k \leq 5(\log n)^{c_3K}$ and $c_3K \geq 2$. 
% 
% \medskip
% {\color{red} [This is where I got interrupted when working through the notation changes in this section. The main change is that the old $p-\delta$ is now $p_1$, the old $p+\delta$ is now $p_2$, and there are various intermediate values of $p$ that have names like $p_{3/2}$ (which corresponds to the old `$p$'). Of course this isn't *just* a notation change since the sprinkling is also done using the new coordinate system. There were also several $2^5$ factors above I changed to $2^{10}$, I guess because the old $\delta$ is more like the new $\delta/2$. Also change name of path families to $\Gamma$. We also have to make changes when Prop 4.1 is applied, since I absorbed the $(1-O(h^{c\delta^4}n))$ term into the left side of the implication.]}
% 
For the second required inequality, note that for every such $i$ and for every $u \in B_b(u_i)$ and $v \in B_{b}(u_{i+1})$, we have $u,v \in B_{\op{tz}(m/2)-1}(u_{i})$ and hence that
 % So, for every integer $1\leq i \leq k-1$,
\[
      \tau_{p_{3/2}}^{B_{m/2}(\gamma)}( B_b(u_i) \cup B_b(u_{i+1}) ) \geq \tau_{p_{3/2}}^{B_{m/2}}( B_{\op{tz}(m/2)-1} ) \geq (\log n)^{-1}
\]
for all $n$ larger than some universal constant by \eqref{eq:good_connection_at_dist_z}. Let $n_0 =n_0(d,D) \geq N$ be the maximum of these two constants and $N$.
% On the other hand, our choice of $\delta$ ensures (since $K \geq K_0$) that 
% \[
%       4 h^{c_2 (\delta/2)^5 } \leq 4 \exp\left(\log h \cdot \frac{c_2}{2^5} \cdot \frac{ K \log \log n }{ \log h} \right) \leq 4 (\log n)^{-2} \leq (\log n)^{-1}
% \]
% for every $n \geq n_0 \vee e^{30}$.

\medskip

Let $r$ be the minimum positive integer such that $\p_{q}(\op{Piv}[1,r h]) < h$ for every $q \in [p_{3/2},\sprinkle(p_{3/2};\delta/4)]$.
 The previous paragraph shows that if $n\geq n_0$ then the hypotheses of \cref{prop:snowballing} are met. Thus, since $p_{7/4} \geq \sprinkle(p_{3/2};\delta/4)]$, we obtain from that proposition that for a universal constant $c > 0$,
  % and since $k$ was chosen to satisfy $k\leq 5(\log n)^{c_3K}$ we obtain that there exists a constant $n_0=n_0(\eta)$ such that if $K \geq K_0$ and $n\geq n_0$ then
\begin{align*}
      \p_{p_{7/4}} \left( u \xleftrightarrow{B_{m/2+2r}(\gamma)} v \right) &\geq c \tau_{p_{3/2}}^{B_{m/2}(\gamma)}(B_{b}(u)) \cdot \tau_{p_{3/2}}^{B_{m/2}(\gamma)}(B_b(v)) \geq \frac{c}{4} e^{-2 (\log \log n)^{1/2}}.
\end{align*}
Since $\gamma$ was arbitrary, this implies that if $n\geq n_0$ then
\[
\kappa_{p_{7/4}}\left( \op{tz}\left(\frac{m}{2}\right) [ \log n]^{c_3K} , \frac{m}{2}+2r \right) \geq \frac{c}{4} e^{-2(\log \log n)^{1/2}}.
\]
All that remains is to verify that $2r \leq m/2$ when $n$ is sufficiently large.
% where we used that $c_3=2^{-11}c_2$ and $K_0 \geq 2c_3^{-1}$ in the last line. 
% (We stress that $\gtrsim$ means that the left hand side is bounded by a universal positive constant multiple of the right hand side in the relevant range of parameters, and that the implicit constants in the $O(k)$ and $O(1)$ terms are also universal.)
% where $r$ is the minimum positive integer such that $\p_{q}(\op{Piv}[1,r \op{Gr}(b)^{-1}]) < \op{Gr}(b)^{-1}$ for every $q \in [p,p_{7/4}]$.

\medskip

 Let $q \in [p_{3/2},\sprinkle(p_{3/2};\delta/4)]$ be arbitrary. Since $2/5<1/2$, 
\cref{prop:two_arm_bound_in_box} yields that there exists a constant $C_{d}$ such that
\begin{align*}
\p_q\left( \op{Piv}[1, (m/4)  h   ] \right) &\leq C_{d} \left[ \frac{ \log \op{Gr}((m/4) h )}{(m/4) h } \right]^{2/5}
\leq 
 C_{d} \left[ \frac{ 4\log \op{Gr}(m)}{m h} \right]^{2/5}.
\end{align*}
Since $h \geq n^{-1/15}$, $n^{1/3} \leq m \leq n$, $\log \op{Gr}(m)\leq (\log m)^D$, and $(2/5)\cdot (4/15) > 1/15$, it follows that there exists a constant $n_2=n_2(d,D)$ such that if $n\geq n_2$ then
\begin{align*}
\p_q\left( \op{Piv}[1, (m/4)  h   ] \right) &\leq
 C_{d} \left[ \frac{ 4(\log n)^D}{n^{1/3}\cdot n^{-1/15}} \right]^{2/5} \leq n^{-1/15} \leq h,
\end{align*}
and hence that $r \leq m/4$ as claimed. This completes the proof. \qedhere

% \[
% \left[ \frac{ 4\log \op{Gr}(m)}{m h} \right]^{0.4} = h
% h^{1.4}= \left[ \frac{ 4\log \op{Gr}(m)}{m} \right]^{0.4}
% h^{1.4} \geq \frac{4^{0.4} (\log n)^{2D/5}}{n^{2/15}}
% h \geq n^{-1/15}
% \]
% 
% 
%  Note that $[\log n]^{K} \leq \op{Gr}(z) \leq e^{[\log n]^{0.1}}$. Indeed, the second inequality was explicitly assumed in \cref{setup:low_growth}, and the first follows from the fact that $\delta \leq 1$ by rearranging. Now by applying these bounds and \cref{prop:two_arm_bound_in_box},
% \[\begin{split}
%       \p_q\left( \op{Piv}[1, (m/4)  \op{Gr}(z)^{-1}   ] \right) &\leq C_{d,\eta} \left[ \frac{ \log \op{Gr}((m/4) \op{Gr}(z)^{-1} )}{(m/4) \op{Gr}(z)^{-1} } \right]^{0.4} \\
%       &\leq C_{d,\eta}' \left[ \frac{\op{Gr}(z) \log \op{Gr}(n)}{n^{1/3}} \right]^{0.4} \\
%       &\leq C_{d,\eta}' \left[ \frac{ e^{[\log n]^{0.1}} [\log n]^D }{n^{1/3}} \right]^{0.4}\\
%       &\leq n^{-0.1} <  \op{Gr}(z)^{-1},
% \end{split}\]
% where $C_{d,\eta},C_{d,\eta}' \in (1,\infty)$ are constants determined by $d$ and $\eta$. So $2r \leq m/2$, as required.
\end{proof}

We now apply \cref{lem:can_always_cross_tz_tubes} together with the polylog-plentiful tubes condition to prove \cref{lem:done_if_can_cross_tubes}.

% The next lemma states that if we have a good enough lower bound on the corridor function, then we can deduce \cref{prop:low_growth_main}. This is proved by used the existence of controlled tubes. Together with the previous lemma, this tells us that we are done if we can find $m \in S$ where $\op{tz}(m/2)$ is large. So for the rest of the argument it will suffice to consider the case where $\op{tz}(m/2)$ is small for every $m \in S$.

% \begin{lem} \label{lem:done_if_can_cross_tubes}
%       \Cref{eq:goal_of_the_low_growth_argument} holds if there exists $m \in S$ such that
%       \begin{equation} \label{eq:sufficient_tube_crossing_to_get_conclusion}
%             \kappa_{p+\delta/2} ( m[\log n]^{3\lambda/c_1} , m ) \geq [\log n]^{-1}.
%       \end{equation}
% \end{lem}

% Together with the previous lemma, this tells us that we are done if we can find $m \in S_n$ where $\op{tz}(m/2)$ is large. So for the rest of the argument it will suffice to consider the case where $\op{tz}(m/2)$ is small for every $m \in S$.

\begin{proof}[Proof of \cref{lem:done_if_can_cross_tubes}]
% Let $c_1=c_1(d,D)$ be the constant from \cref{prop:existence_of_interval_with_controlled_tubes}, let $c_2=c_2(\eta)$ be the constant from \cref{prop:snowballing}, let $c_3=c_3(\eta)=2^{-5}c_2$, let $K_0=K_0(\eta)=\max\{2c_3^{-1},2^5\}$, and
 Let $n_0=n_0(d,D)$ and the universal constant $c_4$ be as in \cref{lem:can_always_cross_tz_tubes}. Suppose that $n\geq n_0$, $b \leq \frac{1}{8}n^{1/3}$, and $p_1 \in [1/d,1]$ are such that $\mathcal{A}_{D,K}(b,n,p_1)$ holds, and write $\delta=\delta(b,n)$. We have by \cref{lem:can_always_cross_tz_tubes} that
\[
\kappa_{p_{7/4}}\left( \op{tz}\left(\frac{m}{2}\right) [ \log n]^{c_3K} , m \right) \geq c_4 e^{-2(\log \log n)^{1/2}} \text{ for every $n^{1/3} \leq m\leq n$}
\]
and in particular that if $\mathcal{B}_{d,D,\lambda,K}(n,p)$ holds then there exists $m\in \mathscr{S}(n)=\mathscr{S}_{d,D,\lambda}(n)$ such that
      \begin{equation} \label{eq:sufficient_tube_crossing_to_get_conclusion}
            \kappa_{p_{7/4}} \bigl( m(\log n)^{3\lambda/c_1} , m \bigr) \geq (\log n)^{-1}
      \end{equation}
      whenever $n_0$ is larger than a suitable universal constant.
(Note that we have not yet put any restrictions on the parameter $\lambda$.)

\medskip

      Suppose that $m \in \mathscr{S}(n)$ satisfies \eqref{eq:sufficient_tube_crossing_to_get_conclusion} and let $r= m[\log n]^{3\lambda/(2c_1)}$, so that if $n \geq 2$ we have the inclusion of intervals
      \begin{equation} \label{eq:correct_choice_of_wiggled_parameter_for_tubes}
            \Bigl[\frac{9}{10} r (\log r)^{-\lambda/c_1} , r (\log r)^{\lambda/c_1}\Bigr] \subseteq \Bigl[m,m(\log n)^{3 \lambda/c_1}\Bigr].
      \end{equation}
      % (For example, $r: = m[\log n]^{3\lambda/(2c_1)}$). 
      Since $\frac{9}{10} r \in [m_1,m_2]$ (where $m_1$ and $m_2$ are as in the definition of $\mathscr{S}(n)$ above), $G$ has $(c_1,\lambda)$-polylog-plentiful radial tubes at scale $\frac{9}{10} r$. Let $\Gamma$ be a family of paths witnessing this fact (which cross the annulus from $S_{0.9r}$ to $S_{4\cdot (0.9r)}=S_{3.6r}$) and let $(T_\gamma)_{\gamma \in \Gamma}$ be the associated family of tubes given by $T_\gamma :=B(\gamma,\frac{9}{10} r[\log (\frac{9}{10}r)]^{-\lambda/c_1})$. Since each tube $T_\gamma$ has thickness at least $m$ and length $\op{len} \gamma \leq r(\log r)^{\lambda/c_1} \leq m(\log n)^{3\lambda/c_1}$, it follows from \eqref{eq:sufficient_tube_crossing_to_get_conclusion} that
      \[
            \p_{p_{7/4}}(S_{0.9 r} \xleftrightarrow{\,T_\gamma\,} S_{3.6 r } ) \geq (\log n)^{-1}
      \]
      for every $\gamma \in \Gamma$ and hence that there exists a constant $n_1=n_1(d,D,\lambda) \geq n_0$ such that if $c_1 \lambda  > 4$ and $n\geq n_1$ then
      % So by independence, the event $\mathcal E$ that $S_{0.9 r}$ is $\omega_{p+\delta/2}$-connected to $S_{3.6r}$ satisfies
      \begin{align}
            \p_{p_{7/4}}( S_{0.9 r} \leftrightarrow S_{3.6 r }) &\geq 1 - \prod_{\gamma\in \Gamma} \p_{p_{7/4}} ( S_{0.9 r} \nxleftrightarrow{T_i} S_{3.6r}  ) 
            \nonumber         \geq 1 - (1-(\log n)^{-1})^{(\log (n^{1/3}))^{c_1 \lambda}} 
            \\
            &\geq 1 - \exp\bigl( 3^{-c_1\lambda} (\log n)^{c_1 \lambda-1}\bigr)
            \geq 1 - \exp(-(\log n)^{3c_1 \lambda/4}),
      \end{align}
      where we used that $1-x\leq e^{-x}$ in the second line.

      \medskip

      We next use the fact that $G$ also has $(c_1,\lambda)$-polylog-plentiful \emph{annular} tubes at scale $r$. (We will no longer use the paths and tubes from the radial case that we defined in the previous paragraph, so it is not a problem to reuse the same notation for the annular tubes we consider in the rest of the proof.) We will work with the standard monotone coupling $(\omega_p)_{p\in [0,1]}$ of percolation at different parameters. To lighten notation, we write $\omega_{1}=\omega_{p_1}$, $\omega_{3/2}=\omega_{p_{3/2}}$, $\omega_{7/4}=\omega_{p_{7/4}}$, and $\omega_2=\omega_{p_2}$. Let $u,v \in S_r$ and consider the event $A_{u,v} =\{u \leftrightarrow S_{3r}$ and $v \leftrightarrow S_{3r}$ in the configuration $\omega_{7/4}\}$. Define $C_u := K_u(\omega_{7/4})$ and $C_v := K_v(\omega_{7/4})$, so that the event $A_{u,v}$ is entirely determined by the pair $(C_u,C_v)$. Whenever $A_{u,v}$ holds, let $\Gamma$ be a family of paths from $C_u$ to $C_v$ that is guaranteed to exist by the fact that $G$ has $(c_1,\lambda)$-polylog-plentiful annular tubes at scale $r$ (choosing these paths as a function of $(C_u,C_v)$), and let $(T_\gamma)_{\gamma\in \Gamma}$ be the associated family of tubes, noting that $\bigcup_{\gamma \in \Gamma} T_\gamma \subseteq B(2r[\log r]^{\lambda/c_1})$. Define the configurations 
      \[\alpha := \omega_{2} \cap (\partial_E C_u \cup \partial_E C_v) \cap B(2 r [\log r]^{\lambda/c_1}) \qquad \text{and} \qquad \beta := (\omega_{7/4} \backslash \overline {C_u \cup C_v})\cap B(2 r [\log r]^{\lambda/c_1}),\]
      where we recall that $\overline {C_u \cup C_v}$ denotes the set of edges with at least one endpoint in $C_u \cup C_v$. In order for $C_u$ to be connected to $C_v$ in the configuration $(\alpha \cup \beta) \cap T_\gamma$, it suffices that in at least one of the tubes $T_{\gamma}$, there is a $\beta$-path from an endpoints of an $\alpha$-open edge in $\partial_E C_u$ to an endpoint of an $\alpha$-open edge in $\partial_E C_v$. The estimate \eqref{eq:sufficient_tube_crossing_to_get_conclusion} together with the interval inclusion \eqref{eq:correct_choice_of_wiggled_parameter_for_tubes} therefore yield that
      \begin{align*}
            \p( C_u \xleftrightarrow{\,\alpha \cup \beta\,} C_v \mid C_u,C_v) &\geq \mathbf 1(A_{u,v}) \cdot \left[ 1 - \prod_{\gamma \in \Gamma} \p( C_u \nxleftrightarrow{T_i \cap (\alpha \cup \beta)} C_v \mid C_u,C_v )\right]\\
            &\geq \mathbf 1(A_{u,v}) \cdot \left[ 1 - \left(1 -  \left[\frac{c_{-1}\delta}{4}\right]^2 [\log n]^{-1} \right)^{[\log (n^{1/3})]^{c_{1} \lambda}} \right] \\
            &\geq \mathbf 1(A_{u,v}) \cdot \left[ 1 - \exp\left(-3^{-c_1\lambda}(\log n)^{c_1\lambda-2}\right) \right],
    \end{align*}
      where we used that $c_{-1} \delta \geq 4(\log n)^{-1/2}$ (which holds by definition of $\delta$ and $K_0$) in the final inequality. As such, there exist constants $\lambda_0=\lambda_0(d,D)$ and $n_2=n_2(d,D,\lambda) \geq n_1$ such that if $\lambda \geq \lambda_0$, $n \geq n_2$, and $c_1 \lambda \geq 4$ then
\begin{align*}
            \p( C_u \xleftrightarrow{\,\alpha \cup \beta\,} C_v \mid C_u,C_v) 
            &\geq \mathbf 1(A_{u,v}) \cdot \left[ 1- e^{-[\log n]^{3c_1\lambda/4}} \right].
    \end{align*}
      Since $\alpha \cup \beta \subseteq \omega_{2} \cap B(2r [\log r]^{\lambda/c_1})$, it follows that
      \[
            \p( C_u \xleftrightarrow{\, \omega_{2} \cap B(2r [\log r]^{\lambda/c_1})\,} C_v \mid A_{u,v}) \geq 1 - e^{-[\log n]^{3c_1\lambda/4}}
      \]
      under the same conditions.
      Letting $\mathcal U$ be the event that all $\omega_{7/4}$-clusters that intersect both $S_r$ and $S_{3r}$ are contained in a single $\omega_{2} \cap B(2r [\log r]^{\lambda/c_1})$-cluster, it follows by a union bound that there exists $\lambda_1=\lambda_1(d,D) \geq \lambda_0$ such that if $\lambda \geq \lambda_1$ and $n \geq n_2$ then
      \begin{align}
            \p( \mathcal U )  & \geq 1 - \sum_{u,v \in S_r} \p\Bigl(A_{u,v}\cap \{C_u \nxleftrightarrow{ \omega_{2} \cap B(2r [\log r]^{\lambda/c_1})} C_v\}\Bigr)
            \nonumber \\
            &\geq 1- \op{Gr}(r)^2 e^{-[\log n]^{3c_1\lambda/4}} 
            \geq 1 - e^{[\log n]^{2D}} e^{-[\log n]^{3c_1\lambda/4}} 
            \geq 1-e^{-[\log n]^{2c_1\lambda/3}}.
    \end{align}

      For each vertex $x$, let $\mathcal E_x$ be the event that $S_{0.9 r}(x)$ is $\omega_{7/4}$-connected to $S_{3.6r}(x)$ and let $\mathcal{U}_x$ be the event that  all $\omega_{7/4}$-clusters that intersect both $S_r(x)$ and $S_{3r}(x)$ are contained in a single $\omega_{2} \cap B(x,2r [\log r]^{\lambda/c_1})$-cluster.
      Observe that if $\zeta$ is a path in $G$ then we have the inclusion of events
      \[
      \{u \xleftrightarrow{\omega_{2} \cap B_{n}(\zeta)} v \} \supseteq \{u \xleftrightarrow{\omega_{7/4}} S_{n}(u)\}\cap \{v \xleftrightarrow{\omega_{7/4}} S_n(v)\} \cap \bigcap_{t=0}^{\op{len}\zeta} (\mathcal U_{\zeta_t} \cap \mathcal E_{\zeta_t}).
      \]
       % % $\mathcal E$ holds around $u$ rather than around $o$. (More formally, say $(\omega_q)_{q \in [0,1]} \in \mathcal E_u$ if and only if $(\omega_q \circ \phi)_{q \in [0,1]} \in \mathcal E$ for any (equivalently every) $\phi \in \op{Aut}G$ with $\phi(o) = u$.)
       %  Analogously define $\mathcal U_u$ with respect to $\mathcal U$. Let $\zeta$ be an arbitrary path with $\op{len} \zeta \leq e^{[\log n]^{c_1\lambda/4}}$, and call its start and end vertices $u$ and $v$. Notice that if $u \xleftrightarrow{\omega_{7/4}} S_{n}(u)$, $v \xleftrightarrow{\omega_{7/4}} S_n(v)$, and $\mathcal U_{\zeta_t} \cap \mathcal E_{\zeta_t}$ holds for every $t \in \llbracket  \op{len} \zeta \rrbracket$, then $u \xleftrightarrow{\omega_{2} \cap B_{n}(\zeta)} v$.
         Thus, it follows by the Harris-FKG inequality and a union bound that there exists a constant $n_3=n_3(d,D,\lambda) \geq n_2$ such that if
         $\lambda \geq \lambda_1$, $n \geq n_3$, and the path $\zeta$ satisfies  $\op{len} \zeta \leq e^{[\log n]^{c_1\lambda/4}}$ then
      \begin{align*}
            \p_{p_2}( u \xleftrightarrow{B_{n}(\zeta)} v ) &\geq \p_{p_{7/4}}(u \leftrightarrow S_{n}(u)) \cdot \p_{p_{7/4}}(v \leftrightarrow S_{n}(v)) \cdot \prod_{t=1}^{\op{len}\zeta} \p(\mathcal E_{\zeta_t}) - \sum_{t=1}^{\op{len} \zeta} \p( \mathcal U_{\zeta^t}^c )\\
            &\geq \left(e^{-[\log \log n]^{1/2}}\right)^2 \left[ 1 - \op{len} (\zeta) e^{-[\log n]^{3c_1 \lambda/4 }} \right] - \op{len}(\zeta) e^{-[\log n]^{3c_1\lambda/4}} \\
            &\geq e^{-3[\log \log n]^{1/2}}. 
      \end{align*}
      The claimed lower bound on the corridor function follows since $\zeta$ was an arbitrary path of length at most $e^{[\log n]^{c_1\lambda/4}}$.
\end{proof}

\subsection{Concluding from a negative assumption}

The next lemma explains how to use the negative information encapsulated in an \emph{upper bound} on the two-point zone $\op{tz}(m)$ to find a set of vertices for which point-to-point connection probabilities within a ball are uniformly small. This lemma plays an analogous role to that of Section 7.2 in \cite{contreras2022supercritical}, but our proof is completely different and relies on the machinery developed in \cref{sec:snowballing}. Note that this lemma does not use the plentiful tubes condition.
% maybe would could explain why their argument can't work in our setting? It was maybe something about not having Lipschitzness of the two-seed function without some strong a priori uniqueness zone

\begin{lem} \label{lem:existence_of_well-sep_set}
% Let $K_0=K_0(\eta)=2c_3^{-1}$.
 There exists a constant $n_0=n_0(d,D)$ such that if $n\geq n_0$ and $b \leq \frac{1}{8} n^{1/3}$ satisfy $\mathcal{A}_{D,K}(b,n)$ then
 for every $n^{1/3} \leq m \leq n$, there exists a subset $U \subseteq B_{\op{tz}(m)}$ with $\abs{U} \geq (\log n)^{c_3K}$ such that 
      \[
            \p_{p_1}( u \xleftrightarrow{B_{m/2}} v ) \leq (\log n)^{-c_3K}
      \]
      for all distinct $u,v \in U$.
\end{lem}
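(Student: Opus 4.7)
The plan is to argue by contradiction. Fix $n^{1/3} \leq m \leq n$, write $N:=\lceil (\log n)^{c_3 K}\rceil$, and suppose that no $U \subseteq B_{\op{tz}(m)}$ of size at least $N$ satisfies the stated property. Define the auxiliary graph $H$ on the vertex set $B_{\op{tz}(m)}$ in which $u \sim_H v$ precisely when $\p_{p_1}(u \xleftrightarrow{B_{m/2}} v) > (\log n)^{-c_3 K}$. Our contradiction assumption is that $H$ admits no $H$-independent set of size $N$.

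First I would establish a volume lower bound $|B_{\op{tz}(m)}| \geq (\log n)^{K}$. The assumption $\delta(b,n) \le 1$ forces $\log \op{Gr}(b) \geq K\log\log n$, so $|B_b| \geq (\log n)^K$, and combining the assumption on pivotal events with the full-space two-point bound (as already carried out in the proof of \cref{lem:can_always_cross_tz_tubes}) yields $\op{tz}(m) \geq 4b \geq b$. The no-independent-set hypothesis together with a greedy/Turán-type counting argument then produces a vertex $u_* \in B_{\op{tz}(m)}$ whose $H$-neighbourhood $V_{u_*} \subseteq B_{\op{tz}(m)}$ has $|V_{u_*}| \geq |B_{\op{tz}(m)}|/(2N) \geq (\log n)^{K-c_3 K}/2 \gg N$ (here we use that $c_3 \leq 1/2$ and $n$ is large). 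By the Harris--FKG inequality applied through $u_*$, any two distinct points $u,v \in V_{u_*}$ satisfy $\p_{p_1}(u \xleftrightarrow{B_{m/2}} v) \geq (\log n)^{-2 c_3 K}$, so $V_{u_*}$ is a near-clique in the sense that \emph{every} pair of its points is well-connected at $p_1$ inside $B_{m/2}$.

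Next I would apply the snowballing proposition (\cref{prop:snowballing}) with $\Lambda := B_{m/2}$, sprinkling $p_1 \to p_{3/2}$ (so $\delta_\mathrm{sb} = \delta(p_1,p_{3/2}) \geq \delta(b,n)/2$), ghost intensity $h := \op{Gr}(b)^{-1}$, and any $N$ singletons $A_i := \{u_i\}$ from $V_{u_*}$. The hypothesis $\tau_{p_1}^{\Lambda}(A_i \cup A_{i+1}) \geq 4 h^{c_2 \delta_\mathrm{sb}^4}$ follows from the near-clique bound once $K \geq K_0$ is large enough that $c_3 K \leq c_2 K/16 \cdot \delta_\mathrm{sb}^{-4}\log\op{Gr}(b)$; the condition $h^{c_2 \delta_\mathrm{sb}^3} \leq c_3/N$ reduces to $c_2 K \delta_\mathrm{sb}^{-1}/16 \geq c_3 K$, which holds by our choice $c_3 = 2^{-9} c_2$. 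The integer $r$ required by snowballing can be taken of order $m/4$ using \cref{prop:two_arm_bound_in_box} and the low-growth hypothesis $n \in \mathscr{L}(G,D)$, exactly as in the final paragraph of the proof of \cref{lem:can_always_cross_tz_tubes}. Since each $A_i$ is a singleton, $\tau_{p_1}^{\Lambda}(A_1) = \tau_{p_1}^{\Lambda}(A_N) = 1$, and the snowballing conclusion yields $\p_{p_{3/2}}(u_1 \xleftrightarrow{B_m} u_N) \geq c_2$ for \emph{any} choice of $u_1, u_N \in V_{u_*}$.

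The final step, which I expect to be the main obstacle, is to convert this strong uniform connection inside $V_{u_*}$ into a contradiction with the supremum definition of $\op{tz}(m)$. By definition of $\op{tz}(m)$, there exist $\hat{u},\hat{v} \in B_{\op{tz}(m)+1}$ with $\p_{p_{3/2}}(\hat{u} \xleftrightarrow{B_m} \hat{v}) < (\log n)^{-1}$. The plan is to extend the snowballing chain beyond $V_{u_*}$ to include the endpoints $\hat{u}$ and $\hat{v}$: each of $\hat{u},\hat{v}$ lies within distance $1$ of some vertex of $B_{\op{tz}(m)}$, and by Harris--FKG plus the trivial single-edge bound $\p_{p_1}(\text{edge open}) \geq 1/d$, any $\hat{u}$ is connected at $p_1$ inside $B_{m/2}$ to a constant fraction of $V_{u_*}$ with probability at least of order $1/d^2$. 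Using this as the first and last link in an extended chain of length $N+2$, snowballing still applies (the modified $\tau$-hypothesis is satisfied with plenty of slack), and we conclude $\p_{p_{3/2}}(\hat{u} \xleftrightarrow{B_m} \hat{v}) \geq c_2/C_d \gg (\log n)^{-1}$. This contradicts the choice of $(\hat{u},\hat{v})$ and hence the hypothesis that no $U$ of size $N$ exists. The delicate point here is checking that the endpoint-extension step is compatible with the hypotheses of \cref{prop:snowballing} and with the restriction to the domain $B_m$; this requires that the vertex $u_*$, and hence $V_{u_*}$, can be chosen (by a pigeonhole refinement of the greedy step) to have a constant fraction of its neighbours also adjacent to a prescribed neighbour of any given $\hat{u} \in S_{\op{tz}(m)+1}$, which in turn uses the transitive structure of $G$ together with the density bound $|V_{u_*}|/|B_{\op{tz}(m)}| \geq 1/(2N)$ available once $K$ is sufficiently large.
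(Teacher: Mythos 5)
Your overall strategy (contradiction, plus snowballing along a chain of well-connected sets inside $B_{m/2}$) is in the right spirit, but the way you organise the contradiction has a gap that I do not think can be repaired, and there is also a concrete technical error in your application of \cref{prop:snowballing}.

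The technical error first: you take the sets $A_i$ to be singletons $\{u_i\}$ with ghost intensity $h=\op{Gr}(b)^{-1}$. \cref{prop:snowballing} requires $h\geq (\min_i|A_i|)^{-1}$, which for singletons forces $h=1$; with $h=\op{Gr}(b)^{-1}\ll 1$ the ghost field on a singleton is empty with probability $1-h$ and the whole argument collapses. This is exactly why the paper's proof runs the chain through the balls $B_b(v_j)$ rather than through the points $v_j$ themselves, and why it needs the preliminary FKG computation $\tau_{p_1}^{B_{m/2}}(B_b(v_j)\cup B_b(v_{j+1}))\geq (\log n)^{-c_3K-3}$.

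The more serious gap is the final "endpoint extension". Your Tur\'an/greedy step produces a dense set $V_{u_*}$ whose points are pairwise well-connected, but the pair $(\hat u,\hat v)$ witnessing the definition of $\op{tz}(m)$ is an \emph{arbitrary} pair in $B_{\op{tz}(m)+1}$, and nothing in the hypothesis "$H$ has no independent set of size $N$" forces $\hat u$ or $\hat v$ (or their graph neighbours) to be $H$-adjacent to anything in $V_{u_*}$: the set $V_{u_*}$ has density only $(\log n)^{-c_3K}$ in $B_{\op{tz}(m)}$, and $H$ could, say, decompose into several large cliques with no edges between them, in which case $\hat u$'s clique and $V_{u_*}$ are mutually inaccessible. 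The single-edge bound $1/d$ only gets you from $\hat u$ to one vertex of $B_{\op{tz}(m)}$, not into $V_{u_*}$, and the "pigeonhole refinement" you invoke to fix this is not available. The paper's proof sidesteps the problem entirely by reversing the order of quantifiers: it first fixes the bad pair $(u,v)$ from the definition of $\op{tz}(m)$, takes a geodesic from $u$ to $v$ inside $B_{\op{tz}(m)}$, and greedily extracts chain points $v_0=u,v_1,\dots,v_k=v$ along it so that consecutive points are well-connected while the points $v_0,\dots,v_{k-1}$ are pairwise poorly connected. The contradiction hypothesis is then used only to bound $k\leq(\log n)^{c_3K}$, and snowballing along the balls $B_b(v_j)$ connects $u$ to $v$ at $p_{3/2}$ inside $B_m$, contradicting the choice of $(u,v)$. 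I would recommend restructuring your argument along these lines: build the chain between the two specific badly-connected endpoints, rather than building a well-connected cluster and then trying to attach the endpoints to it.
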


(Reminder: The implicit assumption $K \geq K_0$ remains in force.)

\begin{proof}[Proof of \cref{lem:existence_of_well-sep_set}]
Fix $n$ and $b\leq \frac{1}{8} n^{1/3}$ such that $\mathcal{A}_{D,K}(b,n)$ holds. We will assume that the claim is \emph{false} for some particular $n^{1/3} \leq m\leq n$, and show that this implies a contradiction when $n$ is sufficiently large. By definition of the two-point zone $\op{tz}(m)=\op{tz}(m,n)$, there exist vertices $u,v \in B_{\op{tz}(m)}$ such that $\p_{p_{3/2}}( u \xleftrightarrow{\,B_m\,} v ) < (\log n)^{-1}$. Let $\gamma$ be a geodesic in $B_{\op{tz}(m)}$ from $u$ to $v$. Recursively pick a sequence of indices $0=i_0 < i_1 < \ldots < i_k = \op{len} \gamma$ starting with $i_0 := 0$ and for each $j\geq 0$ with $i_j < \op{len} \gamma$ setting
\[
i_{j+1} = \max\Bigl\{\op{len} \gamma,\; 1 + \max\Bigl\{i \geq i_j: \p_{p_1}( u_{i_{j}} \xleftrightarrow{B_{m/2}} u_i ) \geq (\log n)^{-c_3K}\Bigr\}\Bigr\}.
\]
% 
% Look for an index $i \in \{ i_j+1,\ldots,\op{len} \gamma -1 \}$ such that $\p_{p-\delta}( u_{i_{\ell}} \xleftrightarrow{B_{m/2}} u_i ) \leq (\log n)^{-cK}$ for all $1 \leq \ell \leq j$. If such an index exists, then pick $i_{j+1}$ to be the minimal such index. Otherwise, set $i_{j+1}:= \op{len} \gamma$ and stop.
To lighten notation, define $v_j := u_{i_j}$ for every $0 \leq j \leq k$.  This sequence has the property that
$\p_{p_1}(v_j \leftrightarrow v_{j+1}) \geq p_1 (\log n)^{-c_3K}$ for every $0 \leq j <k$ and $\p_{p_1}(v_j \leftrightarrow v_\ell)\leq (\log n)^{-c_3K}$ for every distinct $0 \leq j ,\ell < k$. (The last connection probability, from $v_{k-1}$ to $v_k=v$, may be larger.) 
% By our assumption, we must therefore have that $k \leq (\log n)^{cK} + 1$. 
As such, our assumption guarantees that $k \leq  (\log n)^{c_3K}$.
%  and the Harris-FKG inequality implies that 
% % for all $j \in \llbracket k-1 \rrbracket$,
% \[\begin{split}
%       \p_{p_1} ( v_j \xleftrightarrow{B_{m/2}} v_{j+1} ) &\geq \p_{p_1} ( u_{i_j} \xleftrightarrow{B_{m/2}} u_{i_{j+1}-1} )  \p_{p_1} ( u_{i_{j+1}-1} \xleftrightarrow{B_{m/2}} u_{i_{j+1}} ) \\&\geq [\log n]^{-cK}  (p-\delta) \geq \eta [\log n]^{-cK}.
% \end{split}\]

\medskip

To conclude the proof, it suffices to show that $\p_{p_{3/2}}(u \leftrightarrow v) \geq (\log n)^{-1}$ when $n$ is sufficiently large. To this end, we would like to apply \cref{prop:snowballing} where the sets `$A_i$' are the balls $B_b(v_j)$, the superset `$\Lambda$' is the bigger ball $B_{m/2}$, the ghost field intensity `$h$' is $h:=\min\{\log n,\op{Gr}(b)\}^{-1}$, and the sprinkling amount `$\delta$' is $\delta/2$. To do this, we need to verify that 
\begin{equation}
\label{eq:well-sep-need-to-verify}
      h^{c_2 (\delta/2)^3} \leq c_2 k^{-1}
      \qquad \text{ and } \qquad
      \tau_{p_1}^{B_{m/2}}( B_{b}(v_j) \cup B_b(v_{j+1}) ) \geq 4 h^{-c_2 (\delta/2)^4}
\end{equation}
for every $0 \leq j \leq k-1$ when $n$ is sufficiently large.
We have by definition of $\delta$  (and the assumption $K \geq K_0$) that if $n$ is larger than some universal constant then
% $n\geq n_0 \vee e^{1/\eta} \geq e^{2}$ then 
\begin{equation}
      h^{c_2 (\delta/2)^3} \leq 4h^{c_2 (\delta/2)^4} \leq 4(\log n)^{-2 c_3 K} \leq c_2 \left[(\log n)^{-c_3K}+1\right].\label{eq:sep_verify2}
\end{equation}
Since $k\leq (\log n)^{c_3K}$, this is easily seen to imply that the first inequality of \eqref{eq:well-sep-need-to-verify} holds. Moreover, we have by the same calculation performed in \eqref{eq:good_connection_at_dist_z} that
\begin{equation}
\label{eq:good_connection_at_dist_z2}\tau_{p_1}^{B_{m/2}}(B_{b}) \geq \frac{1}{2}e^{-(\log \log n)^{1/2}} \geq (\log n)^{-1}\end{equation} for all $n$ larger than some universal constant, and it follows by the Harris-FKG inequality that if $n$ is larger than some constant depending only on $d$ then 
\begin{align}
      \tau_{p_1}^{B_{m/2}}( B_{b}(v_j) \cup B_b(v_{j+1}) ) &\geq \tau_{p_1}^{B_{m/2}}(B_{b}(v_j)) \cdot \p_{p_1}( v_j \xleftrightarrow{B_{m/2}} v_{j+1}  ) \cdot \tau_{p_1}^{B_{m/2}}(B_{b}(v_{j+1})) \nonumber\\
      &\geq (\log n)^{-1} \cdot \Bigl[ \frac{1}{2d} (\log n)^{-c_3K} \Bigr] \cdot (\log n)^{-1} \geq (\log n)^{-c_3K-3}
      \label{eq:sep_verify1}
\end{align}
for every $0 \leq j \leq k-1$.
The estimates \eqref{eq:sep_verify2} and \eqref{eq:sep_verify1} together yield that there exists a constant $n_0=n_0(d)\geq N$ such that the required estimates \eqref{eq:well-sep-need-to-verify} hold whenever $n \geq n_0$.
Thus, \cref{prop:snowballing} and \eqref{eq:good_connection_at_dist_z2} yield that there  exists a constant $n_1 =n_1(d) \geq n_0$ such that if $K \geq K_0$ and $n\geq n_1$ and we define $r$ to be the minimum positive integer such that $\p_{q}(\op{Piv}[1,r h]) < h$ for every $q \in [p_{1},p_{3/2}]$ then (since $p_{3/2} \geq \sprinkle(p_1;\delta/2)$)
\[
      \p_{p_{3/2}} \left( u \xleftrightarrow{B_{m/2+2r}} v \right) \geq c_2 \tau_{p_1}^{B_{m/2}}(B_b(u)) \cdot \tau_{p_1}^{B_{m/2}}(B_b(v)) 
      % \cdot \left[ 1- O(k) \op{Gr}(z)^{-c_2 \delta^4} \right] \\
      % &\geq e^{-2[\log \log n]^{1/2}}\left[ 1- O(1) [\log n]^{c_3K} [\log n]^{-c_2 K}  \right] \\
      \geq \frac{c_2}{4} e^{-2[\log \log n]^{1/2}}.
\]
Finally, the same argument as in the proof of \cref{lem:can_always_cross_tz_tubes} yields that $2r \leq m/2$ when $n$ is larger than some constant depending on $d$ and $D$, and it follows that there exists a constant $n_2 =n_2(d,D) \geq n_1$ such that if $n\geq n_2$ then $\p_{p_{3/2}}(u \xleftrightarrow{B_m} v) \geq (\log n)^{-1}$, a contradiction.
\end{proof}

We now use the existence of this set of poorly connected vertices (negative information) to prove that $S_{\op{tz}(m)}$ is very likely to be connected to the boundary of $S_{m/2}$ (positive information). This only works because we are working  under the positive hypothesis of a two-point lower bound at scale $n$. This step is essentially the same as Section 7 in \cite{contreras2022supercritical}, with our two-point lower bound at scale $n$ playing the role of `being in the supercritical regime' in their setting. 

\begin{lem} \label{lem:super_strong_connection_from_little_to_big_sphere}
 There exists a constant $n_0=n_0(d,D)$ such that if $n\geq n_0$ and $b \leq \frac{1}{8} n^{1/3}$ satisfy $\mathcal{A}_{D,K}(b,n)$ then
      \[
            \p_{p_{3/2}} ( S_{ \op{tz}(m)} \leftrightarrow S_{m/2} ) \geq 1 - e^{-(\log n)^{c_3K-1}}
      \]
for every $m \in \mathscr{S}(n)$.
\end{lem}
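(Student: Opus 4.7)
Here is my plan. The strategy is to use the well-separated set produced by \cref{lem:existence_of_well-sep_set} as a collection of ``nearly independent'' attempts to connect to $S_{m/2}$, and then boost the individual lower bound $e^{-(\log\log n)^{1/2}}$ into an exponentially strong upper bound on the complementary event.

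\medskip

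\textbf{Setup.} Apply \cref{lem:existence_of_well-sep_set} to obtain a set $U\subseteq B_{\op{tz}(m)}$ with $|U|\geq (\log n)^{c_3 K}$ and $\p_{p_1}(u\xleftrightarrow{B_{m/2}}v)\leq (\log n)^{-c_3 K}$ for all distinct $u,v\in U$. Since $B_{\op{tz}(m)}\subseteq B_{m/2}$ and every path from such a $u$ to a vertex outside $B_{m/2}$ must cross $S_{m/2}$, it suffices to prove
\[
\p_{p_{3/2}}\!\Bigl(\bigcap_{u\in U}\{u\nleftrightarrow S_{m/2}\}\Bigr)\leq e^{-(\log n)^{c_3 K-1}}.
\]
For any fixed $u\in U$, the hypothesis $\kappa_{p_1}(n,\infty)\geq e^{-(\log\log n)^{1/2}}$ together with the fact that $B_n\setminus B_{m/2}$ contains a vertex within distance $n$ of $u$ implies the single-point bound $\p_{p_{3/2}}(u\leftrightarrow S_{m/2})\geq e^{-(\log\log n)^{1/2}}$.

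\medskip

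\textbf{Sequential exploration.} Order $U=\{u_1,\dots,u_{|U|}\}$. For $k=1,2,\dots$, reveal the $p_{3/2}$-cluster $K_{u_k}$ (if $u_k$ is not already in some previously revealed cluster). Stop at the first $k$ for which $K_{u_k}$ meets $S_{m/2}$; this immediately yields $S_{\op{tz}(m)}\leftrightarrow S_{m/2}$. On the complementary event, every revealed cluster $K_{u_j}$ is contained in $B_{m/2}$, hence the union of revealed vertices $W_k:=\bigcup_{j<k}K_{u_j}$ satisfies $|W_k|\leq |B_{m/2}|\leq e^{(\log n)^D}$ by the quasi-polynomial growth assumption $n\in\mathscr{L}(G,D)$.

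\medskip

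\textbf{Conditional lower bound (main step).} The crux is the claim that there is a universal constant $c_5>0$ such that, conditional on the exploration history $\mathcal{F}_{k-1}$ and on all previous attempts failing, the probability that $u_k$ itself succeeds satisfies
\[
\p_{p_{3/2}}\!\Bigl(u_k\leftrightarrow S_{m/2}\,\Bigm|\,\mathcal{F}_{k-1},\;\bigcap_{j<k}\{u_j\nleftrightarrow S_{m/2}\}\Bigr)\;\geq\;\tfrac{1}{2}\,c_5\,e^{-(\log\log n)^{1/2}}
\]
on the (typical) event $u_k\notin W_k$. The idea is that conditioning on the previous failures forces only the boundary edges of the revealed clusters to be closed, leaving the edges outside $\overline{W_k}$ distributed as fresh $p_{3/2}$-Bernoulli percolation. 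The two-point lower bound applied to the fresh percolation in $G\setminus W_k$ still gives $u_k$ plenty of routes to $B_n\setminus B_{m/2}$, because $|W_k|\leq e^{(\log n)^D}$ is dwarfed by the reservoir $|B_n\setminus B_{m/2}|$ that the two-point lower bound ``fills''. Concretely, one can pick a target $v$ at distance $\leq n$ outside $B_{m/2}\cup W_k$ and run the same argument that gave the unconditional lower bound, now in the modified graph. The well-separation property at $p_1$ bounds the probability that $u_k\in W_k$ by $|U|\cdot(\log n)^{-c_3 K}=(\log n)^{-\Omega(1)}$ summed over $k$, so this can absorbed into the constant $c_5$ after halving the effective number of trials.

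\medskip

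\textbf{Iteration and conclusion.} Combining the conditional lower bound with the tower-product formula for conditional probabilities,
\[
\p_{p_{3/2}}\!\Bigl(\bigcap_{u\in U}\{u\nleftrightarrow S_{m/2}\}\Bigr)\leq\Bigl(1-\tfrac{1}{2}c_5 e^{-(\log\log n)^{1/2}}\Bigr)^{|U|/2}\leq \exp\!\Bigl(-\tfrac{c_5}{4}(\log n)^{c_3 K}\,e^{-(\log\log n)^{1/2}}\Bigr),
\]
and since $e^{-(\log\log n)^{1/2}}\geq (\log n)^{-1}$ for all sufficiently large $n$, the right-hand side is at most $\exp(-(\log n)^{c_3 K-1})$, as required.

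\medskip

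\textbf{Main obstacle.} The main technical difficulty is rigorously establishing the conditional lower bound: we must rule out the possibility that the forced-closed boundary edges of revealed clusters disconnect $u_k$ from $S_{m/2}$ in a pathological way. This is where the quasi-polynomial volume bound $|B_{m/2}|\leq e^{(\log n)^D}$ and the well-separation of $U$ at scale $p_1$ are decisive; together they guarantee that the revealed region $W_k$ is geometrically small, while the two-point lower bound guarantees a large ``reservoir'' of reachable vertices outside $B_{m/2}$, so the conditional connection probability can degrade by at most a factor of $2$.
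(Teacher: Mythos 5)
There is a genuine gap at your ``Conditional lower bound (main step)'', and it is not a technicality that can be patched: the whole sequential-exploration mechanism fails at a fixed parameter. The events $\{u_j \nleftrightarrow S_{m/2}\}$ are all decreasing, hence positively correlated by Harris--FKG, so conditioning on the previous attempts failing can only \emph{decrease} the probability that $u_k$ succeeds; FKG gives $\p\bigl(\bigcap_j\{u_j\nleftrightarrow S_{m/2}\}\bigr) \geq \prod_j \p(u_j\nleftrightarrow S_{m/2})$, which is exactly the wrong direction for your product bound. Concretely, after revealing that $K_{u_1},\dots,K_{u_{k-1}}$ all fail, the event $\{u_k\leftrightarrow S_{m/2}\}$ requires a path avoiding the revealed clusters (connecting to one of them would place $u_k$ in a failed cluster), i.e.\ a connection in the graph with all edges touching $W_k$ deleted. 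The hypothesis $\kappa_{p_1}(n,\infty)\geq e^{-(\log\log n)^{1/2}}$ is a statement about unconditioned percolation on all of $G$ and gives no control whatsoever over connection probabilities in $G$ with a set of vertices removed — the volume comparison $|W_k|\leq e^{(\log n)^D}$ versus the ``reservoir'' is irrelevant, since the two-point lower bound has no quantitative robustness under deletion of even a small, well-placed set. Your ``degrade by at most a factor of $2$'' claim is therefore unsupported, and I do not see how to support it.

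The paper's proof uses the same input (the well-separated set $U$ from \cref{lem:existence_of_well-sep_set} and the one-arm lower bound $\kappa_{p_1}(n,\infty)$) but a different mechanism that genuinely exploits the sprinkle from $p_1$ to $p_{3/2}$: it applies \cref{prop:hamming}, a Hamming-distance differential inequality in the spirit of Contreras--Martineau--Tassion. One takes a subset $A\subseteq U$ of size $\lceil(\log n)^{c_3K-1/2}\rceil$ so that $2|A|\max_{x\neq y}\p_{p_1}(x\xleftrightarrow{B_{m/2}}y)\leq e^{-(\log\log n)^{1/2}}=:\theta\leq \min_{x\in A}\p_{p_1}(x\leftrightarrow S_{m/2})$; the pairwise separation then forces the expected Hamming distance from a configuration in $\{A\nleftrightarrow S_{m/2}\}$ to its complement to be of order $\theta|A|$ (many distinct clusters must each be cut), and integrating the differential inequality over the sprinkling window of width $\delta/2\geq\frac12(\log n)^{-1/4}$ yields $\p_{p_{3/2}}(A\leftrightarrow S_{m/2})\geq 1-\exp(-\tfrac{\delta}{2}\theta|A|)\geq 1-e^{-(\log n)^{c_3K-1}}$. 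In short: the exponential gain comes from sprinkling plus the Hamming-distance inequality, not from conditional independence, and your argument needs to be replaced by (or reduced to) something of that form.
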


Note that we are not actually assuming a negative information assumption (such as $\neg \mathcal{B}(n,p)$) in the hypotheses of this lemma. The lemma holds without any such assumption, but is stronger when $\op{tz}(m)$ is small.
The proof of \cref{lem:super_strong_connection_from_little_to_big_sphere} will apply the following proposition.
 % from \cite{contreras2022supercritical}.

% {\color{red}[Need version for $\sprinkle(p;\delta)$. OK since required sprinkling is smaller (the log term from Talagrand isn't here).]}

\begin{prop} \label{prop:hamming}
      Let $G$ be a finite connected graph, let $p \in [0,1]$, and let $A,B \subseteq V(G)$. If $\theta\in (0,1)$ is such that
      \[
            \min_{x \in A} \p_p( x \leftrightarrow B) \geq \theta \geq 2 \abs{A} \max_{\substack{x,y \in A\\ x \not= y}} \p_p( x \leftrightarrow y ),
      \]
      then $\p_{q}(A \leftrightarrow B) \geq 1 - e^{-\delta(p,q) \theta \abs{A}}$ for every $q \in (p,1)$.
      % \[
            % \p_{p+\delta}(A \leftrightarrow B) \geq 1 - e^{-2\delta \theta \abs{A}}.
      % \]
\end{prop}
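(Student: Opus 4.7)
My plan is to prove Proposition~\ref{prop:hamming} via a log-derivative estimate in the natural sprinkling parameter, with a second-moment input coming from the two hypotheses. Writing $q_\delta := \sprinkle(p;\delta)$ and $f(\delta) := -\log \p_{q_\delta}(A \nleftrightarrow B)$, the desired bound is equivalent to $f(\delta(p,q)) \geq \theta|A|\,\delta(p,q)$, which, since $f(0)\geq 0$, follows by integration once I establish the uniform estimate $f'(\delta) \geq \theta|A|$ on $[0,\delta(p,q)]$.

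Using Russo's formula, the identity $\tfrac{d q_\delta}{d\delta} = (1-q_\delta)\log\tfrac{1}{1-q_\delta}$, the observation that every pivotal edge for $\{A\leftrightarrow B\}$ on the event $\{A\nleftrightarrow B\}$ must be closed, and the identity $(1-q)\p_q(e\text{ pivotal}) = \p_q(e\text{ closed pivotal})$, the log-derivative simplifies to
\[
f'(\delta) = \log\tfrac{1}{1-q_\delta}\cdot \mathbb{E}_{q_\delta}\!\bigl[\,\#\text{closed pivotals for }\{A\leftrightarrow B\} \,\bigm|\, A\nleftrightarrow B\,\bigr].
\]
So the task reduces to proving the conditional lower bound $\mathbb{E}_{q_\delta}[\#\text{closed pivotals}\mid A\nleftrightarrow B] \geq \theta|A|/\log\tfrac{1}{1-q_\delta}$ at every $\delta \in [0,\delta(p,q)]$.

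The input for this conditional lower bound comes from a Paley--Zygmund argument at parameter $p$. Letting $N := \#\{x\in A: x\leftrightarrow B \text{ in } \omega_p\}$, the min hypothesis gives $\mathbb{E}_p[N] \geq \theta|A|$. Applying the BK inequality to the decomposition $\{x\leftrightarrow B,\, y\leftrightarrow B\} \subseteq \bigl(\{x\leftrightarrow B\}\circ\{y\leftrightarrow B\}\bigr)\cup\{x\leftrightarrow y\}$ together with the pair hypothesis $\p_p(x\leftrightarrow y)\leq \theta/(2|A|)$ yields $\mathrm{Var}_p(N) \leq \mathbb{E}_p[N] + \theta|A|/2 \leq \tfrac{3}{2}\mathbb{E}_p[N]$, and hence by Paley--Zygmund $\p_p(A\leftrightarrow B) \geq \theta|A|/(\theta|A|+3/2)$. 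Combining this with the stochastic domination $\omega_{q_\delta}\supseteq \bigcup_{i=1}^{\lfloor e^\delta\rfloor}\omega_p^{(i)}$, where the $\omega_p^{(i)}$ are independent copies, handles the regime where $\delta$ is moderately large directly; in the small-$\delta$ regime I would use the coupling $\omega_{q_\delta}= \omega_p \cup \omega^{(\text{extra})}$ together with the Russo differential inequality to lower-bound the closed pivotal count on $\{A\nleftrightarrow B\}$ via a Menger-type cut argument on the clusters of $A$ in $\omega_p$.

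The main obstacle is this last step --- extracting the required number of closed pivotals on the conditional event $\{A\nleftrightarrow B\}$ at parameter $q_\delta$ from hypotheses stated only at parameter $p$. The pair condition does not transfer upward from $p$ to $q_\delta$, so Paley--Zygmund cannot be applied directly at $q_\delta$; instead, my plan is to condition on $\omega_p$, use the hypotheses there to identify a large set of vertices of $A$ whose $\omega_p$-clusters are ``close'' to $B$ in the sense of having many disjoint potential augmenting paths, and then argue that on $\{A\nleftrightarrow B\}$ at $q_\delta$ many of these paths must contain a closed pivotal edge.
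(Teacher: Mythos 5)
There is a genuine gap, and it sits exactly where you flagged it. Your reduction via Russo's formula leads to the quantity $\e_{q_\delta}\bigl[\#\text{closed pivotals for }\{A\leftrightarrow B\}\mid A\nleftrightarrow B\bigr]$, and the pointwise bound $f'(\delta)\geq\theta|A|$ you would need is not something the hypotheses can deliver: conditionally on $\{A\nleftrightarrow B\}$ at a sprinkled parameter, there is no reason for there to be many \emph{single} closed edges whose opening reconnects $A$ to $B$ (the conditioning pushes the configuration toward ones where $A$'s clusters are far from $B$, where the pivotal count can be zero). Moreover your fallback for moderately large $\delta$ does not close the gap quantitatively: the union of $\lfloor e^\delta\rfloor$ independent copies of $\omega_p$ combined with the Paley--Zygmund bound $\p_p(A\nleftrightarrow B)\leq \tfrac{3/2}{\theta|A|+3/2}$ gives $\p_{q_\delta}(A\nleftrightarrow B)\leq\bigl(\tfrac{3/2}{\theta|A|+3/2}\bigr)^{\lfloor e^\delta\rfloor}$, whose exponent is of order $e^\delta\log(\theta|A|)$, which is far smaller than the required $\delta\,\theta|A|$ once $\theta|A|$ is large.

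The fix is to replace Russo's formula by the Hamming-distance differential inequality for the \emph{decreasing} event $\mathcal{A}=\{A\nleftrightarrow B\}$ (this is what the paper does, citing Theorem 2.53 of Grimmett's book): $\tfrac{d}{dp}(-\log\p_p(\mathcal{A}))\geq\tfrac{1}{p(1-p)}\e_p[H]$, where $H(\omega)$ is the number of \emph{open} edges one must close to disconnect $A$ from $B$. By Menger's theorem $H$ equals the maximum number of edge-disjoint open $A$--$B$ paths, so $H\geq\#\{\text{open clusters meeting both }A\text{ and }B\}\geq N-P$, where $N=\#\{x\in A:x\leftrightarrow B\}$ and $P$ is the number of connected pairs in $A$; your first- and second-moment bookkeeping then gives $\e_p[H]\geq\theta|A|-\binom{|A|}{2}\tfrac{\theta}{2|A|}$, which is the lower bound of order $\theta|A|$ you want. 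Crucially, $H$ is an \emph{increasing} function of $\omega$, so $\e_{p'}[H]\geq\e_p[H]$ for all $p'\geq p$ --- this is precisely what resolves your ``main obstacle'' of transferring information from $p$ to the sprinkled parameter. Rewriting the inequality in the sprinkling coordinate (where the prefactor $\tfrac{1}{p(1-p)}$ becomes $\tfrac{\log(1/(1-\sprinkle(p;t)))}{\sprinkle(p;t)}\geq1$) and integrating over $t\in[0,\delta(p,q)]$ gives the claim. So you have all the probabilistic ingredients (first moment, the pair hypothesis, Menger, disjoint clusters), but you are feeding them into the wrong functional inequality; the conditional-pivotal route cannot be completed as stated.
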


\begin{proof}[Proof of \cref{prop:hamming}]
This proposition is essentially the same as \cite[Proposition 7.2]{contreras2022supercritical}, except that it is stated in terms of our sprinkling coordinates introduced in \cref{sec:induction_step} (which are natural from the perspective of Talagrand's inequality) and we get a factor $\delta$ rather than $2\delta$ in the exponential in the conclusion. Both versions of the proposition are elementary consequences of the differential inequality
\[
\frac{d}{dp} (-\log \p_p(A)) \geq \frac{1}{p(1-p)}\e_p \bigl[\text{Hamming distance from $\omega$ to $A$}\bigr],
\]
which holds for every finite graph and every decreasing event $A$ \cite[Theorem 2.53]{MR2243761}.  In our coordinates, this inequality reads
    \[
\frac{d}{dt} (-\log \p_{\sprinkle(p;t)}(A)) \geq \frac{\log1/(1-\sprinkle(p;t))}{\sprinkle(p;t)}\e_{\sprinkle(p;t)} \bigl[\text{Hamming distance from $\omega$ to $A$}\bigr].
\]  
In our case the prefactor $-(\log(1-\sprinkle(p;t)))/\sprinkle(p;t)$ is at least $1$ whereas in the original inequality the prefactor $1/(p(1-p))$ is at least $2$, leading to the difference between our two conclusions.{}
 % Since this bound does not have the $[\log 2/p(1-p)]^{-1}$ term that is present in Talagrand, it is \emph{stronger} than we need to reach the desired conclusion.
\end{proof}

% Check: \frac{d}{d\lambda}\spr(p;\lambda) = \log \frac{1}{1-p} e^\lambda (1-p)^{e^{\lambda}}
% = -(1-q) \log(1-q)

%  \frac{d}{d\lambda} P_{p(lambda)} = dp/dlambda * dP/dp \geq 
% log(1-q) / q

% 1-q = (1-p)^{e^\lambda} 
% log(1-q) =e^\lambda \log (1-p)

% (\log(x)/(1-x))' = (1-x)/x + log x 

\begin{proof}[Proof of \cref{lem:super_strong_connection_from_little_to_big_sphere}]
Let $n_0=n_0(d,D)$ be as in \cref{lem:existence_of_well-sep_set}, and suppose that  $n\geq n_0$ and $b\leq \frac{1}{8}n^{1/3}$ are such that $\mathcal{A}(b,n)$ holds.
      Let $U \subseteq B_{\op{tz}(m)}$ be the set of vertices guaranteed to exist by \cref{lem:existence_of_well-sep_set}, and let $A$ be a subset of $U$ with $\abs{A} = \lceil [\log n]^{c_3K-1/2} \rceil$. Since $\mathcal{A}(b,n)$ holds, we have that
      \[
            \min_{x \in A} \p_{p_1}( x \leftrightarrow S_{m/2} ) \geq \kappa_{p_1}(n,\infty) \geq e^{-[\log \log n]^{1/2}},
      \]
      while our choice of $A$ guarantees that
      \[
            2 \abs{A} \max_{\substack{x,y \in A\\ x \not= y}} \p_{p_1}( x \xleftrightarrow{B_{m/2}} y ) \leq 2 \lceil(\log n)^{c_3K-1/2 }\rceil (\log n)^{-c_3K} \leq e^{-(\log \log n)^{1/2}}
      \]
      whenever $n$ is larger than some constant $n_1=n_1(d,D) \geq n_0$. (This constant does not depend on $K$ since $c_3 K \geq c_3 K_0 \geq 2>1/2$.)
      Thus, applying \cref{prop:hamming} with $\theta = e^{-[\log \log n]^{1/2}}$ yields that (since $p_{3/2} \geq \sprinkle(p_1,\delta/2)$)
      \begin{equation}\label{eq:A_to_Sm/2}
           \p_{p_{3/2}}(B_{\op{tz}(m)}\leftrightarrow S_{m/2}) \geq \p_{p_{3/2}}( A \leftrightarrow S_{m/2} ) \geq 1 - \exp\left(-\frac{\delta}{2} e^{-[\log \log n]^{1/2}} \abs{A}\right).
      \end{equation}
      On the other hand, the definition of $\delta$ ensures that $\delta \geq [\log n]^{-1/4}$ and hence that there exists a constant $n_2=n_2(d,D) \geq n_1$ such that
      \[
             \frac{\delta}{2} e^{-[\log \log n]^{1/2}}  \abs{A} \geq \frac{1}{2} e^{-[\log \log n]^{1/2}} (\log n)^{c_3K-1/2-1/4} \geq (\log n)^{c_3K-1}
      \]
      whenever $n \geq n_2$, which implies the claim in conjunction with \eqref{eq:A_to_Sm/2}.
\end{proof}

The next lemma is completely elementary. It tells us that we can find two nearby reals $m$ and $m'$ where $\op{tz}(m)$ is close to $\op{tz}(m')$. 
% This follows from the pigeonhole principle and the fact that $\op{tz}(m) \leq m$ for every $m$.

\begin{lem} \label{lem:exists_m_where_tz_doesnt_jump_much}
Let $R \geq 1$. There exists a constant $n_0=n_0(d,D,R)$ such that if $n \geq n_0 \vee N$ then there exists $m \in \mathscr{S}(n)$ such that $m (\log n)^{-R} \in \mathscr{S}(n)$ and
      \[
            \frac{\op{tz}(m)}{\op{tz}(m(\log n)^{-R})} \leq (\log n)^{8R/c_1}.
      \]
\end{lem}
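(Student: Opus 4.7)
The plan is a pigeonhole argument on a logarithmically spaced chain of scales in $\mathscr{S}(n)$, relying only on the monotonicity of $\op{tz}(\cdot)$ in $m$ and the trivial bounds on its range. First I would record the monotonicity and range of $\op{tz}$: since $\tau_{p_{3/2}}^{B_m}(B_r)$ is non-decreasing in $m$ and vanishes once $B_r \not\subseteq B_m$, the function $m \mapsto \op{tz}(m)$ is non-decreasing and satisfies $\op{tz}(m) \leq m$. For the lower bound, any two neighbours of $o$ are connected through $o$ within $B_m$ with probability at least $p_{3/2}^2 \geq d^{-2} \geq (\log n)^{-1}$ as soon as $\log n \geq d^2$, so $\op{tz}(m) \geq 1$. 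Thus $1 \leq \op{tz}(m) \leq m \leq n$ for every $m \in \mathscr{S}(n)$ whenever $n$ is large enough in terms of $d$.

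Next I would exploit the logarithmic width of $\mathscr{S}(n)$. Since $\mathscr{S}(n) \subseteq [m_1^{1+c_1/4}, m_1^{1+3c_1/4}]$ with $m_1 \geq n^{1/3}$, we have $\log(m_1^{1+3c_1/4}/m_1^{1+c_1/4}) = (c_1/2) \log m_1 \geq (c_1/6) \log n$. Setting
\[
k := \left\lceil \frac{c_1 \log n}{7 R \log \log n} \right\rceil \qquad \text{and} \qquad \mu_i := \left\lfloor m_1^{1+3c_1/4} (\log n)^{-iR} \right\rfloor \quad (0 \leq i \leq k),
\]
a direct computation shows that for $n$ larger than a constant depending on $d, D, R$ every $\mu_i$ lies in $\mathscr{S}(n)$ (because $kR \log \log n \leq (c_1/6)\log n \leq \log(m_1^{1+3c_1/4}/m_1^{1+c_1/4})$) and $\mu_{i+1}$ agrees with $\mu_i (\log n)^{-R}$ up to a multiplicative factor of $1+o(1)$, so $\mu_i(\log n)^{-R}$ also belongs to $\mathscr{S}(n)$ (with the minor abuse of identifying $\mathscr{S}(n)$ with its defining real interval for this membership condition).

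The final step is the pigeonhole. Suppose for contradiction that $\op{tz}(\mu_i)/\op{tz}(\mu_{i+1}) > (\log n)^{8R/c_1}$ for every $0 \leq i < k$. Then by the monotonicity and trivial bounds on $\op{tz}$,
\[
n \;\geq\; \frac{\op{tz}(\mu_0)}{\op{tz}(\mu_k)} \;=\; \prod_{i=0}^{k-1} \frac{\op{tz}(\mu_i)}{\op{tz}(\mu_{i+1})} \;>\; (\log n)^{8Rk/c_1},
\]
which forces $k \leq c_1 \log n /(8R \log \log n)$, contradicting our choice $k \geq c_1 \log n /(7R\log\log n)$ once $n$ is large enough. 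Hence there exists some $0 \leq i < k$ with $\op{tz}(\mu_i)/\op{tz}(\mu_{i+1}) \leq (\log n)^{8R/c_1}$, and taking $m := \mu_i$ gives the lemma.

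The only delicate point is the integer rounding arising from the fact that $m(\log n)^{-R}$ is not in general an integer; the slack between the exponents $1/7$ and $1/8$ in the pigeonhole is comfortably enough to absorb this. Consequently, unlike the earlier lemmas in this section, this is essentially a one-page deterministic counting argument with no probabilistic input beyond the two bounds $\op{tz}(m) \geq 1$ and $\op{tz}(m) \leq m$.
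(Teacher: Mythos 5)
Your argument is correct and is essentially the paper's own proof: both run a pigeonhole over a chain of roughly $c_1\log n/(R\log\log n)$ scales spaced by factors of $(\log n)^R$ inside $\mathscr{S}(n)$, using only the monotonicity of $\op{tz}$ and the trivial bounds $1\leq \op{tz}(m)\leq n$, with the slack between the exponent $8$ and the chain-length denominator ($7$ for you, $6$ in the paper) absorbing the rounding. The extra care you take with integrality and with justifying $\op{tz}\geq 1$ is harmless and slightly more explicit than the paper's treatment.
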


\begin{proof}[Proof of \cref{lem:exists_m_where_tz_doesnt_jump_much}]
Let $n\geq N$ so that $\mathscr{S}(n)$ is defined.
Let $s$ and $t$ denote the left and right endpoints of $\mathscr{S}(n)$, and define
\[
      k := \left\lfloor \log_{[\log n ]^{R}}(t/s) \right \rfloor = \left \lfloor \frac{c_1 \log (m_1)}{2 R \log \log n} \right \rfloor \geq \left \lfloor \frac{c_1 \log n}{6 R \log \log n} \right \rfloor
      % \geq \frac{c_1 \log n}{7 R \log \log n},
\]
where $n^{1/3} \leq m_1 \leq n$ is as in the definition of $\mathscr{S}(n)$.  
If a suitable $m\in \mathscr{S}(n)$ does \emph{not} exist then, using the trivial inequalities $\op{tz}(t)\leq t \leq n$ and $\op{tz}(s)\geq 1$, we must have that
\begin{multline*}
      n \geq \op{tz}(t) \geq \op{tz}(s) \prod_{i=1}^k \frac{\op{tz}(s [\log n]^{iR}) }{ \op{tz}(s [\log n]^{(i-1)R} ) } 
      \\ \geq \op{tz}(s) \left([\log n]^{8R/c_1}\right)^{k} 
      \geq \exp\left( \frac{8R}{c_1}\left \lfloor \frac{c_1 \log n}{6 R \log \log n} \right \rfloor \cdot \log \log n \right).
\end{multline*}
Since $8>6$, this yields a contradiction when $n$ is larger than some constant $n_0=n_0(d,D,R)$ (allowing us to approximately remove the effect of rounding down).
% This is a contradiction because the definition of $\op{tz}$ immediately implies that $\op{tz}(t) \leq t$.
\end{proof}

We will now combine our lemmas to prove \cref{lem:complement_of_done_if_can_cross_tubes}. The idea here is inspired by the \emph{uniqueness via sprinkling} argument from \cite[Section 8]{contreras2022supercritical}, which itself used ideas from \cite{MR3634283}. Our approach is different because we do not know that exposed spheres are well-connected. Instead, we have the polylog-plentiful tubes condition. This is a much weaker geometric control because the tubes are not constrained to lie within narrow annuli. The main step is to use the strong connectivity bound from \cref{lem:super_strong_connection_from_little_to_big_sphere} to deduce that with high probability, every $\omega_{7/4}$-cluster crossing a thick annulus is contained in a single $\omega_{2}$-cluster. (As before we abbreviate $\omega_2=\omega_{p_2}$ and so on.) In \cite{contreras2022supercritical}, the analogous step was carried out by dividing the thick annulus into thinner annuli before showing that if two clusters cross multiple annuli, then, after sprinkling in those annuli, the clusters will merge with high probability. This works because in every thin annulus, there is some good probability that the clusters will merge after sprinkling in the annulus, thanks to the connectivity of exposed spheres. In our case, we also track how many clusters survive un-merged as they cross through multiple annuli. The difference is that we will have to sprinkle \emph{everywhere} each time we cross a thin annulus. Nevertheless, we will sprinkle so little at each stage that the net effect is to sprinkle by less than $\delta/4$, as required.

%comment how (a,b) \subseteq (c,d) implies k(a,b) \leq k(c,d). so k is monotone inclusion

\begin{proof}[Proof of \cref{lem:complement_of_done_if_can_cross_tubes}]
Let $K_1=K_1(d,D,\lambda)=\max\{K_0,40 \lambda (c_1 \vee 1)^{-2}c_3^{-1},4c_3^{-1}(D \vee 1)\}$ and suppose that $K\geq K_1$ and $n\geq N$. Fix $R := 5 \lambda /c_1$ and let $n_0=n_0(d,D,R)$ be the constant from \cref{lem:exists_m_where_tz_doesnt_jump_much}, which by our choice of $R$ depends only on $d$, $D$, and $\lambda$. 
% (The only properties of $R$ and $K$ we need for the proof to work is that $K\geq K_0$ and that $R$ is a constant in $(4 \lambda/c_1,c_3 c_1 K /8)$; taking $K\geq K_1$ ensures that this interval is nonempty.)
 We also let $n_1=n_1(d,D)$ and $c_4$ be the constants from \cref{lem:can_always_cross_tz_tubes}. 

\medskip

 Suppose that $n\geq n_2=n_2(d,D,\lambda)=n_0 \vee n_1 \vee N \vee e^{2^R}$ and $b\leq \frac{1}{8}n^{1/3}$ are such that $\mathcal{A}(b,n,p)$ holds, and let $m \in \mathscr{S}(n)$ be the element guaranteed to exist by \cref{lem:exists_m_where_tz_doesnt_jump_much} applied with this value of $R$. Taking $n_2 \geq e^{2^R}$ guarantees that $m/2 \geq m(\log n)^{-R}$ and hence that $m/2\in \mathscr{S}(n)$ when $n\geq n_2$. 
Since $n \geq n_1$, we have by \cref{lem:can_always_cross_tz_tubes} that
\[
      \kappa_{p_{7/4}} \left( \op{tz}(m (\log n)^{-R}) (\log n)^{c_3 K}, 2 m (\log n)^{-R} \right) \geq c_4 e^{-2 [\log \log n]^{1/2}}.
\]
On the other hand, our choice of $R$ and $m$ ensure that there exists a constant $n_3=n_3(d,D,\lambda) \geq n_2$ such that if $n\geq n_3$ then we have the inclusion of intervals
\[
      \Bigl( 2 m [\log n]^{-R} ,  \op{tz}\bigl(m [\log n]^{-R}\bigr) [\log n]^{c_3K} \Bigr) \supseteq \Bigl( \frac{m}{3} [\log m]^{-4\lambda/c_1} , 2\op{tz}(m/2) +2 \Bigr), 
\]
so that
% Therefore we have the following particular inequality, which is specifically what we will use:
\begin{equation} \label{eq:can_cross_tz_balls_well_in_final_proof_of_low_growth}
      \kappa_{p_{7/4}}\Bigl( 2\op{tz}(m/2)+2 ,  \frac{m}{3} [\log m]^{-4\lambda/c_1}  \Bigr) \geq c_4 e^{-2[\log \log n]^{1/2}}.
\end{equation}
Note that this estimate holds as a consequence of $\mathcal{A}(b,n,p)$ alone: we have not yet made us of the negative information $\neg \mathcal{B}(n,p)$.

\medskip

Now suppose that $\neg \mathcal{B}(n,p)$ holds. Since $K\geq K_1\geq 12 c_3^{-1} c_1^{-1}\lambda$, there exists a constant $n_3=n_3(d,D,\lambda) \geq n_3$ such that if $n\geq n_3$ then
\begin{equation} \label{eq:tz_is_small_in_final_proof_of_low_growth}
      \op{tz}(m/2) \leq m [\log n]^{3\lambda/c_1-c_3 K} \leq  m (\log n)^{-3c_3K/4} \leq \frac{m}{17}(\log m)^{-c_3K/2}
\end{equation}
for every $m\in \mathscr{S}(n)$.

\medskip

Our next goal is to prove a good upper bound on the probability of the non-uniqueness event $\op{Piv}_{p_{7/4},p_2}[ m/16,m/8 ]$, 
where $\op{Piv}_{p,q}[m,n]$ denotes the event that there are at least two distinct $\omega_p$-clusters that each intersect both $B_m$ and $S_n$ but that are not connected to each other by any path in $B_n \cap \omega_{q}$. We will do this using a variation on the ``orange peeling'' argument of \cite{contreras2022supercritical}, where we iteratively sprinkle and zoom in closer to $m/16$ over a number of steps.

\begin{figure}
\center
\includegraphics[width=0.9\textwidth]{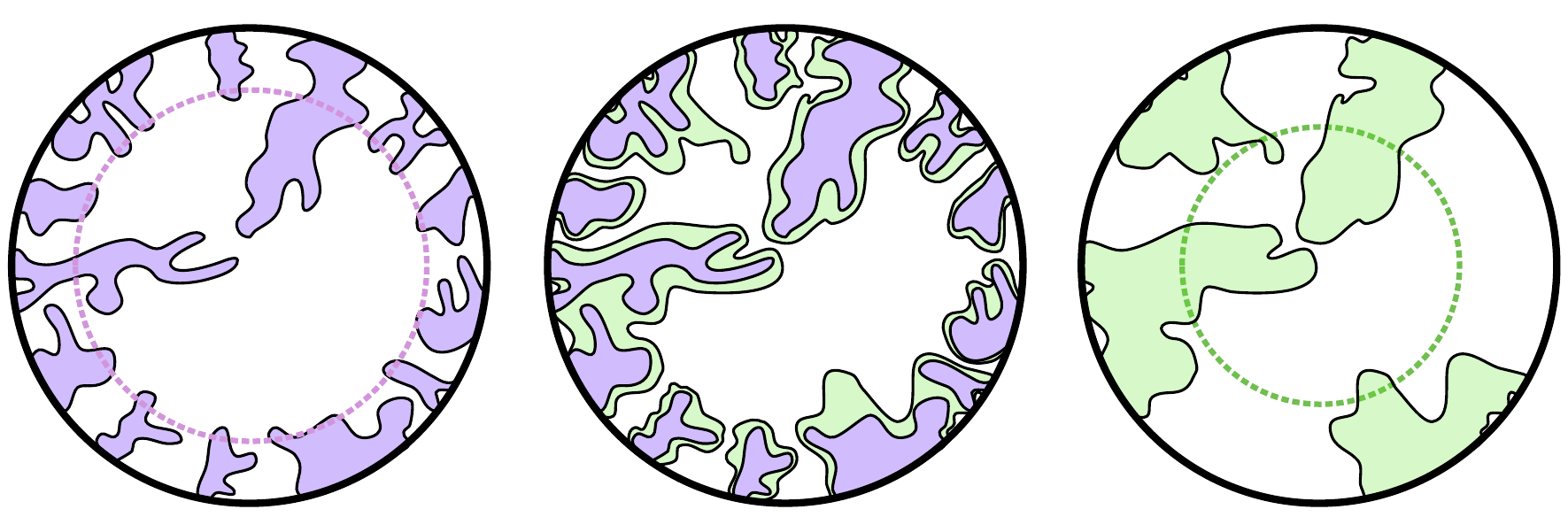} 
% \hspace{2cm} \includegraphics[height=6cm]{annular_tubes.pdf}
\caption{Schematic illustration of the construction of $\mathscr C_{i+1}$ from $\mathscr C_i$: The black, purple, and green circles represent the spheres $S_{r_0}$, $S_{r_{i}}$, and $S_{r_{i+1}}$ respectively. The purple regions in the left figure represent the clusters making up $\mathscr C_i$. By construction each of these is a $B_{r_0} \cap \omega_{(i)}$-cluster that connects $S_{r_0}$ to $S_{r_i}$. The green regions in the middle figure represent the $B_{r_0} \cap \omega_{(i+1)}$ clusters that contain some cluster in $\mathcal C_i$. Finally, the green regions remaining in the third figure represent the subset of these clusters that happen to intersect $S_{r_{i+1}}$; these make up $\mathscr C_{i+1}$.
}
\label{fig:orange_peeling}
\end{figure}

\medskip

 Let $k := 2 \lfloor (\log n)^D \rfloor$, $\eps := (\log n)^{-(D+1)}$, and for each $i \in \{0,...,k\}$ set
\[
      r_i := \frac{m}{8} - \frac{im}{40} [\log n]^{-D} \qquad \text{and} \qquad q_i := \sprinkle(p_{7/4}; i \eps).
\]
Note that $r_i \in [m/16,m/8]$ and $q_i \in [p_{7/4},p_2]$ for every $0\leq i\leq k$. We work with the standard monotone coupling $(\omega_q)_{q\in [0,1]}$, and write $\omega_{(i)}=\omega_{q_i}$ for each $i \in \{ 0, \ldots , k\}$. (Be careful not to confuse this with our previous notational shorthand $\omega_1=\omega_{p_1}$, $\omega_{2}=\omega_{p_2}$.)
Given the family of configurations $(\omega_q)_{q \in [0,1]}$, recursively define a set of $B_{m/8} \cap \omega_{(i)}$-clusters $\mathscr C_i$ for each $i \in \{0,...,k\}$ as follows: 
\begin{enumerate}
    \item
Let $\mathscr C_0$ be the set of all $B_{m/8} \cap \omega_{(0)}$-clusters that contain a vertex in $S_{r_0}$. 
\item Given $\mathscr C_i$ for some $i < k$, let $\mathscr C_{i+1}$ be the set of $B_{m/8} \cap \omega_{(i+1)}$-clusters $C$ such that there exists $C' \in \mathscr C_i$ with $C' \subseteq C$ and $C' \cap S_{r_{i+1}} \not= \emptyset$. 
\end{enumerate}
See \cref{fig:orange_peeling} for an illustration.
% {\color{red} [A figure would be helpful here.]} 
This definition ensures that the cardinality $|\mathscr{C}_i|$ is a decreasing function of $i$ and that we have the inclusion of events
\begin{equation}
\op{Piv}_{p_{7/4},p_2}[ m/16,m/8 ] \subseteq \bigl\{\mathscr C_k \in \{0,1\}\bigr\}.
\end{equation}
% where we recall that $\op{Piv}_{p,q}[m,n]$ denotes the event that there are at least two distinct $\omega_p$-clusters that each intersect both $B_m$ and $B_n$ but that are not connected to each other by any path in $B_n \cap \omega_{q}$.
Roughly speaking, our goal is to show that, for each $i$, if $\mathscr{C}_i$ is not a singleton then the cardinality $\abs{\mathscr C_{i+1}}$ is 
smaller than $\abs{\mathscr C_i}$ by a factor of roughly $1/2$  with high probability under $\p$.

\medskip

Ideally, we would like to show that, with high probability on the event $\{|\mathscr{C}_i|>1\}$, every cluster in $\mathscr C_i$ that intersects $S_{r_{i+1}}$ is $B_{m/8} \cap \omega_{(i+1)}$-connected to a distinct cluster in $\mathscr C_i$. This is what we will prove, except that we will allow one distinguished cluster to go un-merged. 
% The next paragraph makes this precise.
% 
% 
% \medskip
% 
For every non-empty set of clusters $\mathscr F$, permanently fix a choice of element $\op{min}(\mathscr F) \in \mathscr F$ such that $\op{dist}(o,\op{min}(\mathscr F)) = \op{dist}(o,\bigcup \mathscr F)$.
 % {\color{red}[Any rationale for the letter F here?]}
  For each $0\leq i \leq k-1$, consider the event 
\[\mathcal E_i = \{\mathscr C_i = \emptyset\} \cup \Bigl\{C \xleftrightarrow{B_{m/8} \,\cap\, \omega_{(i+1)}} \bigcup ( \mathscr C_i \backslash \{ C \} ) \text{ for every } C \in \mathscr C_i \backslash \{ \op{min}(\mathscr C_i) \}\text{ with }\op{dist}(o,C) \leq r_{i+1}\Bigr\}\]
% then for every $C \in \mathscr C_i \backslash \{ \op{min}(\mathscr C_i) \}$ with $\op{dist}(o,C) \leq r_{i+1}$, we have $C \xleftrightarrow{B_{m/8} \cap \omega_{(i+1)}} \bigcup ( \mathscr C_i \backslash \{ C \} )$.
 We will prove the following lemma at the end of the section after explaining how it may be used to conclude the proof of \cref{lem:complement_of_done_if_can_cross_tubes} (and hence of \cref{prop:low_growth_main}).

 \begin{lem}[Merging clusters]
\label{lem:merging}
There exists a constant $n_{11}=n_{11}(d,D,\lambda)\geq n_4$ such that if $n\geq n_{11}$ then
\begin{equation}\label{eq:merging}
      \p( \mathcal E_i ) \geq 1- e^{-[\log n]^{c_1\lambda/2}}.
\end{equation}
for every $0 \leq i \leq k-1$.
\end{lem}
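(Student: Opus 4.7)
The plan is to prove \cref{lem:merging} by conditioning on $\omega_{(i)}\cap B_{m/8}$ (which determines $\mathscr{C}_i$) and then reducing to a per-cluster merging estimate by a union bound, where the per-cluster bound itself is proved using the $(c_1,\lambda)$-polylog-plentiful annular tubes condition together with the ghost-field snowballing apparatus of \cref{sec:snowballing}. Writing $C_{\min}:=\op{min}(\mathscr{C}_i)$, the hypothesis $m\in\mathscr{L}(G,D)$ gives $|\mathscr{C}_i|\leq |B_{m/8}|\leq e^{(\log n)^D}$, so provided we can show that, for each non-minimal $C\in\mathscr{C}_i$ with $\op{dist}(o,C)\leq r_{i+1}$,
\[
\p\bigl(C\nleftrightarrow C_{\min}\text{ in }B_{m/8}\cap\omega_{(i+1)}\bigm|\omega_{(i)}\cap B_{m/8}\bigr)\leq e^{-(\log n)^{c_1\lambda}},
\]
a union bound produces $\p(\mathcal{E}_i^c)\leq e^{(\log n)^D}\cdot e^{-(\log n)^{c_1\lambda}}\leq e^{-(\log n)^{c_1\lambda/2}}$ once $\lambda$ is large enough relative to $D$ and $c_1$ (which is precisely what dictates the quantitative choice of $\lambda_0$ in the statement).

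To obtain the per-cluster bound, I would use that both $C$ and $C_{\min}$ are connected subsets of $B_{m/8}$ reaching from depth at most $r_{i+1}$ out to $S_{r_0}=S_{m/8}$, and hence, after passing to suitable sub-paths if necessary, contain $(s,3s)$-crossings for an appropriate scale $s$ lying in the range on which the polylog-plentiful annular tubes condition holds. The tubes condition then furnishes a family of $N\geq (\log s)^{c_1\lambda}$ pairwise-disjoint tubes $T_1,\ldots,T_N$, each of thickness at least $s(\log s)^{-\lambda/c_1}$ and length at most $s(\log s)^{\lambda/c_1}$, each starting at a vertex of $C$ and ending at a vertex of $C_{\min}$; by choosing $\lambda$ large and $s$ judiciously the tubes can be arranged to lie inside $B_{m/8}$. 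Within each tube $T_j$, the corridor-function lower bound \eqref{eq:can_cross_tz_balls_well_in_final_proof_of_low_growth}, which is available at parameter $p_{7/4}\leq q_i$, is combined with the sprinkling $q_{i+1}-q_i$ via \cref{prop:snowballing} applied with $\Lambda=T_j$ (or equivalently via the ghost-gluing \cref{lem:ghost_in_box_transitivity}) to produce a conditional merging probability of at least $\alpha\geq e^{-c(\log\log n)^{1/2}}$ in that tube.

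Because the tubes are disjoint, the $N$ merging events are conditionally independent given $\omega_{(i)}\cap B_{m/8}$, so the probability that none of the tubes joins $C$ to $C_{\min}$ is at most $(1-\alpha)^N\leq\exp(-\alpha N)\leq\exp\!\bigl(-(\log n)^{c_1\lambda-o(1)}\bigr)$ for $n$ large, which completes the per-cluster estimate. The hardest part is the per-tube step: the sprinkling increment $\varepsilon=(\log n)^{-(D+1)}$ is very small, so extracting the claimed lower bound $\alpha$ from the sharp-threshold / ghost-field machinery uniformly across all $N$ tubes and for arbitrary adversarial positions of $C\cap T_j$ and $C_{\min}\cap T_j$ inside $T_j$ requires careful bookkeeping, in particular controlling the two-arm contributions that appear as error terms in \cref{prop:snowballing}. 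A secondary subtlety is that $r_{i+1}\geq 3m/40$ exceeds $r_0/3=m/24$, so the clusters are not $(s,3s)$-crossings for the obvious choice $s=r_{i+1}$; one must instead apply the tubes condition to sub-paths of the clusters at a scale for which the resulting tubes still fit inside $B_{m/8}$, which is precisely what forces $\lambda$ to be taken large relative to $D$ in the statement of the lemma.
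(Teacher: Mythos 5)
Your top-level architecture matches the paper's: condition on $\mathscr C_i=\mathscr F$, union bound over the at most $e^{(\log n)^D}$ clusters, use the polylog-plentiful annular tubes to manufacture many spatially disjoint (hence conditionally independent) merging opportunities, and beat the entropy by taking $\lambda$ large relative to $D$. But the per-tube step -- the one you yourself flag as hardest -- does not work as described, and the fix is not bookkeeping but a missing idea. You propose to connect $C$ to $C_{\min}$ \emph{through the full length} of each tube (length up to $s(\log s)^{\lambda/c_1}\approx m(\log m)^{\lambda/c_1}$) by combining the corridor bound \eqref{eq:can_cross_tz_balls_well_in_final_proof_of_low_growth} with \cref{prop:snowballing} at sprinkling $\delta=\eps=(\log n)^{-(D+1)}$. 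The corridor bound only controls connections over distance $2\op{tz}(m/2)+2$, which under $\neg\mathcal B$ is smaller than $m(\log m)^{-3c_3K/4}$, i.e.\ vastly shorter than the tube; and chaining it up with \cref{prop:snowballing} requires $h^{c_1\delta^3}\lesssim (\text{chain length})^{-1}$ and $\tau\geq 4h^{c_1\delta^4}$ with $\delta=\eps$, which forces $\log(1/h)\gtrsim(\log n)^{4(D+1)}$ -- far beyond what the two-arm estimates or the available ball volumes provide. So the claimed per-tube success probability $\alpha\geq e^{-c(\log\log n)^{1/2}}$ is unobtainable from the tools you cite.

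The ingredient you are missing is the covering event $\Omega=\bigcap_{u\in B_{m/8}}\{S_{\op{tz}(m/2)}(u)\xleftrightarrow{\omega_{3/2}}S_{m/8}\}$, which holds with probability $1-e^{-(\log n)^{c_3K/2}}$ by \cref{lem:super_strong_connection_from_little_to_big_sphere} and a union bound, and which forces $\bigcup\mathscr F$ to be $\op{tz}(m/2)$-dense in $B_{m/8}$. Walking along each tube path from $C$ to the other crossing, this density produces a \emph{transition point} $u_\gamma$ at which a deterministic path of length at most $2\op{tz}(m/2)+2$ joins $C$ to $\bigcup(\mathscr F\setminus\{C\})$; the merging attempt is then localized to the small ball $B(u_\gamma;\tfrac m2(\log m)^{-4\lambda/c_1})$, where one pays $c_{-1}^2\eps^2$ for the two edges adjacent to $\bigcup\mathscr F$ (the only edges constrained by the conditioning) and uses the corridor bound at $p_{7/4}$ -- over its legitimate short range -- for the interior. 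Each attempt then succeeds only with probability $(\log m)^{-2D-3}$, not $e^{-O((\log\log n)^{1/2})}$, and the lemma is rescued because there are $\tfrac12(\log m)^{c_1\lambda}$ independent attempts with $\lambda\geq 8c_1^{-1}(2D+3)$. A secondary gap: when $C$ is far from the origin the annular tubes condition cannot be applied directly to $C$ and $C_{\min}$ as crossings of the annulus centered at $o$; the paper re-centers at the closest point of $C$ and applies the tubes condition to a path inside $C$ and a disjoint geodesic toward $o$, again relying on the density from $\Omega$ to locate the transition points.
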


(We name this constant $n_{11}$ to leave room for the constants $n_6$ through $n_{10}$ that will appear in the proof of this claim.)

\medskip

Let us now conclude the proof of \cref{lem:complement_of_done_if_can_cross_tubes} given \cref{lem:merging}. 
% Notice that for every $i \in \{0,\ldots,k-1\}$,
 On the event $\mathcal E_i$ we have that
\[
      \abs{\mathscr C_{i+1}} \leq \left\lfloor \frac{\abs{\mathscr C_i} - 1}{2} \right\rfloor + 1.
\]
Since $\abs{\mathscr C_0} \leq \abs{S_{r_0}} \leq e^{(\log n)^D}$ and $k := 2 \lfloor [\log n]^D \rfloor$, it follows that $|\mathscr C_k| \in \{0,1\}$ on the event $\bigcap_{i =0}^{k-1} \mathcal E_i$. It follows by a union bound that there exists a constant $n_6=n_6(d,D,\lambda) \geq n_5$ such that if $n \geq n_6$ then
\begin{equation}
\label{eq:nearly_done1}
      \p \left( \op{Piv}_{p_{7/4},p_2}[ m/16,m/8 ]  \right) 
      \leq \sum_{i=0}^{k-1} \p_p(\mathcal E_i^c) \leq 
        2(\log n)^D e^{- (\log n)^{c_1 \lambda/2} } \leq e^{-(\log n)^{c_1 \lambda/3}}.
\end{equation}
On the other hand, \eqref{eq:tz_is_small_in_final_proof_of_low_growth} ensures that if $n\geq n_6$ then $\op{tz}(m/2) \leq m/17$, and it follows from \cref{lem:super_strong_connection_from_little_to_big_sphere} applied to $m/2$ that
\begin{equation*}
      \p_{p_{7/4}} \left( S_{m/17} \leftrightarrow S_{m/4} \right) \geq 1 - e^{-(\log n)^{c_1 \lambda/3}}
\end{equation*}
if $n \geq n_6$. It follows in particular that there exists $n_7=n_7(d,D,\lambda)\geq n_6$ such that
\begin{equation}
\label{eq:nearly_done2}
\p_{p_{7/4}} \left( S_{m/17} \leftrightarrow S_{m/4} \right)^{e^{[\log n]^{c_1 \lambda /4}}} \geq \frac{1}{2}
\end{equation}
if $n\geq n_7$; this will be used to form a chain of connected annuli using the Harris-FKG inequality.

\medskip

We now apply \eqref{eq:nearly_done1} and \eqref{eq:nearly_done2} to bound the corridor function.
Let $\gamma$ be a path of length at most $e^{[\log n]^{c_1\lambda/4}}$, starting at some vertex $u$ and ending at some vertex $v$, and observe that we have the inclusion of events
\begin{multline*}
\bigl\{u \xleftrightarrow{B_n(\gamma) \cap \omega_{2}} v\bigr\} \subseteq
\bigl\{u \xleftrightarrow{\omega_{7/4}} S_n(u)\bigr\} \,\cap\, \bigr\{v \xleftrightarrow{\omega_{7/4}} S_n(v)\} \,\cap \\\bigcap_{t=1}^{\op{len} \gamma} \left(\bigl\{S_{m/17}(\gamma_t) \xleftrightarrow{\omega_{7/4}} S_{m/4}(\gamma_t)\bigr\}\,\cap\,\op{Piv}_{p_{7/4},p_2}[m/16,m/8](\gamma_t) \right).
\end{multline*}
 % with start and end vertices $u$ and $v$. 
 % If $u \xleftrightarrow{\omega_{p_{7/4}}} S_n(u)$, $v \xleftrightarrow{\omega_{p_{7/4}}} S_n(v)$, and for every $ t\in \{1,\ldots,\op{len} \gamma\}$, we have $\op{Piv}_{p_{7/4},p+\delta}[m/16,m/8](\gamma_t)$ and $S_{m/17}(\gamma_t) \xleftrightarrow{p_{7/4}} S_{m/4}(\gamma_t)$, then $u \xleftrightarrow{B_n(\gamma) \cap \omega_{p+\delta}} v$.
  Applying the Harris-FKG inequality and a union bound, we deduce that there exists a constant $n_8=n_8(d,D,\lambda)$ such that if $n \geq n_8$ then
\begin{align*}
      \p_{p_2}( u \xleftrightarrow{B_n(\gamma)} v ) &\geq \p_{p_{7/4}}(o \leftrightarrow S_n )^2 \cdot \p_{p_{7/4}} \left( S_{m/17} \leftrightarrow S_{m/4} \right)^{\op{len} (\gamma)} \\& \hspace{5cm} - \op{len}(\gamma) \cdot (1-\p\left( \op{Piv}_{p_{7/4},p_{2}}[ m/16,m/8 ])  \right) \\
      &\geq \frac{1}{2}e^{-2(\log \log n)^{1/2}} - e^{(\log n)^{c_1\lambda/4}} e^{-(\log n)^{c_1 \lambda /3}} \\
      &\geq e^{-3 [\log \log n]^{1/2}},
\end{align*}
where we used the assumption that $\mathcal{A}(b,n,p)$ holds to bound $\p_{p_{7/4}}(o \leftrightarrow S_n ) \geq \kappa_{p_1}(n,\infty) \geq e^{-(\log \log n)^{1/2}}$.
The claimed lower bound on the corridor function follows since $\gamma$ was an arbitrary path of length at most $e^{[\log n]^{c_1\lambda/4}}$.
\end{proof}

It remains only to prove \cref{lem:merging}.

\begin{proof}[Proof of \cref{lem:merging}] We continue to use the notation from the proof of \cref{lem:complement_of_done_if_can_cross_tubes}, and in particular will use the constants $K_1=K_1(d,D,\lambda)$ and $n_4=n_4(d,D,\lambda)$ defined in that proof.
Consider the event $\Omega$ defined by
\[\Omega := \bigcap_{u \in B_{m/8}} \{S_{\op{tz}(m/2)}(u) \xleftrightarrow{\omega_{3/2}} S_{m/8}\}.\]
It follows by \cref{lem:super_strong_connection_from_little_to_big_sphere} (applied to $m/2$) and a union bound that there exists a constant $n_5 =n_5(d,D,\lambda) \geq n_4$ such that if $n\geq n_5$ then 
\begin{align}
      \p(\Omega) &\geq 1 - \op{Gr}(m) \sup_{u\in B_{m/8}} \p_{p_{3/2}}(S_{\op{tz}(m/2)}(u) \leftrightarrow S_{m/8})  \nonumber
      \\&\geq 1 - \op{Gr}(m) \sup_{u\in B_{m/8}} \p_{p_{3/2}}(S_{\op{tz}(m/2)}(u) \leftrightarrow S_{m/4}(u))\nonumber
\\&\geq 1-e^{(\log m)^D} e^{-(\log n)^{c_3K-1}} \geq 1- e^{-(\log n)^{c_3K/2}},
       \label{eq:every_small_ball_is_connected_to_big_sphere}
\end{align}
where we used that $K \geq K_1 \geq 4 c_3^{-1} (D \vee 1)$ in the final inequality.

\medskip

For each $0\leq i \leq k-1$, let $\mathbb F_i$ be the set $\mathbb F_i=\{ \mathscr{F}:$ $\p( \mathscr C_i = \mathscr F \mid \Omega ) > 0\}$. (Note that $\mathbb F_i$ is a set of sets of sets of vertices.) It follows from the definitions that
\begin{equation}\label{eq:Omega_balls_cover}
d\Bigl(u, \bigcup_{C\in \mathscr{F}} C\Bigr) \leq \op{tz}(m/2) \text{ for every $u \in B_{m/8}$ and $\mathscr F \in \mathbb F_i$.}
\end{equation}
By \eqref{eq:every_small_ball_is_connected_to_big_sphere} and a union bound, it suffices to prove that there exists $n_{11}=n_{11}(d,D,\lambda) \geq n_5$ such that
\begin{equation}\label{eq:conditional_merging}
      \p ( \mathcal E_i \mid \mathscr C_i = \mathscr F ) \geq 1- \frac{1}{2} e^{-[\log n]^{c_1\lambda/2}}
\end{equation}
for every $0\leq i \leq k-1$ and every $\mathscr F \in \mathbb F_i$. 

\medskip

Before doing this, we will need to prove a purely geometric preliminary claim.
      Let $0\leq i \leq k-1$, let $\mathscr F \in \mathbb F_i$, and let $C \in \mathscr F \backslash \{ \min(\mathscr F) \}$. We claim that there exists a constant  $n_8 =n_8(d,D,\lambda)\geq n_5$  such that if $n\geq n_8$ and $\op{dist}(o,C) \leq r_{i+1}$ then there exists a set of vertices $U \subseteq B(r_{i+1} + m(\log m)^{-\lambda/c_1})$ with $\abs{U} \geq \frac{1}{2} (\log m)^{c_1\lambda}$ such that the following hold:
      \begin{enumerate}
      \item[(i)]
       $U$ is $m(\log m)^{-4\lambda/c_1}$-separated. That is, pairwise distances between distinct points in $U$ are at least $m(\log m)^{-4\lambda/c_1}$.
       \item[(ii)] For each $u\in U$, the ball
      % \begin{equation} \label{eq:vertex_is_a_bridge}
            $B_{\op{tz}(m/2)+1}(u)$ intersects both $C$ and $\bigcup ( \mathscr F \backslash \{ C \} ) $.
             % \cap C \not= \emptyset \quad \text{and} \quad B_{\op{tz}(m/2)+1}(u) \cap \bigcup ( \mathscr F \backslash \{ C \} ) \not= \emptyset.
      % \end{equation}
      % for every $u \in U$.
  \end{enumerate}
  (As before, we name this constant $n_8$ to leave room for the constants $n_6$ and $n_7$ that will appear in the proof of this claim.)
We let $r := m (\log m)^{-2\lambda/c_1}$ and split the proof of this claim into two cases according to whether $\op{dist}(o,C)$ is smaller or larger than $r$. 
% {\color{red}[Figure here (for the two cases)?]}

\medskip

\noindent \textbf{Case 1:} ($\op{dist}(o,C) \leq r$.)
% For the first case suppose that $\op{dist}(o,C) \leq r$.
 Since $C \not= \op{min}(\mathscr F)$, we have that $\op{dist}(o, \bigcup (\mathscr F \backslash \{C\})) \leq \op{dist}(o,C) \leq r$ also. Let $\Gamma$ be the family of paths from $B_{3r} \cap C$ to $B_{3r} \cap \bigcup ( \mathscr F \backslash \{C\} )$ that is guaranteed to exist by the fact that $G$ has $(c_1,\lambda)$-polylog-plentiful annular tubes at scale $r$. We now observe that for each $\gamma \in \Gamma$, there exists a vertex $u_\gamma$ on the path $\gamma$ satisfying the ball-intersection condition (ii):
 \begin{itemize}
 \item If $\max_t \op{dist}(C, \gamma_t) \leq \op{tz}(m/2)$ then we may take $u_\gamma$ to be the final vertex of $\gamma$. 
 \item Otherwise, if $\max_t \op{dist}(C, \gamma_t) >\op{tz}(m/2)$, we may take $u_\gamma= \gamma_{t_\gamma}$ where $t_\gamma$ is the maximum index such that $\op{dist}(C,\gamma_t) \leq \op{tz}(m/2)$. To see that this choice of $u_\gamma$ satisfies (ii), note that $\op{dist}(\gamma_{t_\gamma+1},C) > \op{tz}(m/2)$ and hence by \eqref{eq:Omega_balls_cover} that $\op{dist}(\gamma_{t_\gamma+1},\bigcup (\mathscr F \backslash \{C\})) \leq \op{tz}(m/2)$.
  % thanks to our hypothesis that $\p( \mathscr C_i = \mathscr F \mid \mathcal T ) > 0$.)
 \end{itemize} Now define $U := \{ u_\gamma : \gamma \in \Gamma\}$. Since the family $\Gamma$ is $2r(\log r)^{-\lambda/c_1}$-separated (and $r$ was defined to be $r=m(\log m)^{-2\lambda/c_1}$), it follows that there exists a constant $n_6 =n_6(d,D,\lambda) \geq n_5$ such that if $n\geq n_6$ then
  $U$ is $m(\log m)^{-4\lambda/c_1}$-separated and
\[
      \abs{U} = \abs{\Gamma} \geq (\log r)^{c_1\lambda} \geq \frac{1}{2} (\log m)^{c_1\lambda}.
\]
Moreover, since every path in $\Gamma$ was contained in $B(3r+r[\log r]^{\lambda/c_1})$, there exists a constant $n_7 =n_7(d,D,\lambda) \geq n_6$ such that if $n\geq n_7$ then $3r+r[\log r]^{\lambda/c_1} \leq m/16 \leq r_{i+1} + m(\log m)^{-\lambda/c_1}$ and hence $U \subseteq B(r_{i+1} + m(\log m)^{-\lambda/c_1})$.

\medskip

\noindent \textbf{Case 2:} ($\op{dist}(o,C) > r$.)  Let $v \in C$ be such that $\op{dist}(o,v) = \op{dist}(o,C)$, let $\gamma^a$ be a path in $C$ from $v$ to $S_r(v)$, and let $\gamma^b$ be the portion of a geodesic from $v$ to $o$ starting at a neighbour $u$ of $v$ with $\op{dist}(o,u)<\op{dist}(o,v)$ and ending at the first intersection with $S_r(v)$. These path are both finite, start in $S_1(v)$ and end in $S_{r}(v)$, and are contained in $C$ and disjoint from $C$ respectively. 
% 
% and $\gamma^b$ be finite paths that
 % both start in $S_1(v)$ and end in $S_{r}(v)$ such that every vertex in $\gamma^a$ is contained in $C$, and every vertex in $\gamma^b$ is not in $C$. For example, one can construct $\gamma^a$ by truncating any path from $v$ to $S_{m/8}$ in the subgraph of $G$ induced by $C$, and one can construct $\gamma^b$ by truncating any geodesic from $u$ to $o$. 
 Let $\Gamma$ be the family of paths from $\gamma^a$ to $\gamma^b$ that is guaranteed to exist by the fact that $G$ has $(c_1,\lambda)$-polylog-plentiful annular tubes at scale $r/3$. We can construct the desired set $U$ by picking a vertex $u_\gamma$ in $\gamma$ satisfying the condition (ii) for each $\gamma \in \Gamma$; the fact that such a vertex exists for each $\gamma$ follows by the same argument used in case 1 above. Moreover, it follows by the same argument used in the first case that there exists a constant $n_8=n_8(d,D,\lambda)\geq n_7$  such that if $n\geq n_8$ then the required bounds on the cardinality and separation of the set $U=\{u_\gamma:\gamma\in \Gamma\}$ hold, as well as the containment $U \subseteq B(r_{i+1} + m(\log m)^{-\lambda/c_1})$.
 % Moreover, $U$ is contained in $B_{r_{i+1} + m(\log m)^{-\lambda/c_1}}$ since each path $\gamma^i$ is contained in $B_{r/3+(r/3) [ \log (r/3) ]^{\lambda/c_1}}(v)$ and $r/3+(r/3) [ \log (r/3) ]^{\lambda/c_1} \leq m(\log m)^{-\lambda/c_1}$.
This concludes the proof of the geometric claim.

\medskip

We now use this geometric claim to establish the estimate \eqref{eq:conditional_merging}, which will complete the proof of the lemma. Let $i \in \{0,\ldots,k-1\}$ and let $\mathscr F \in \mathbb F_i$ be arbitrary. Consider also an arbitrary $C \in \mathscr F \backslash \{ \min(\mathscr F) \}$ with $\op{dist}(o,C) \leq r_{i+1}$, and let $U$ be the corresponding set of vertices guaranteed to exist by the geometric claim above. For each $u \in U$, let $\beta_u := B(u;\frac{m}{2} (\log m)^{-4 \lambda/c_1} ) \cap \omega_{(i+1)}$. Consider an arbitrary vertex $u\in U$. By construction of $U$, there exists a path $\gamma$ that starts in $C$, ends in $\bigcup( \mathscr F \backslash \{C\} )$, is contained in $B(u;\op{tz}(m/2) + 1)$, and has length at most $2 \op{tz}(m/2)+2$.
The estimate \cref{eq:tz_is_small_in_final_proof_of_low_growth} yields the existence of a constant $n_9 =n_9(d,D,\lambda)\geq n_8$ such that if $n\geq n_9$ then
\[
     [\op{tz}(m/2)+1] + \frac{m}{3} (\log m)^{-4\lambda/c_1} \leq \frac{m}{2} (\log m)^{-4\lambda/c_1},
\]
% Thus, if $n \geq n_9$, then every path $\gamma$ contained in $B_{\op{tz}(m/2) + 1}(u)$ has
 so that the tube $B(\gamma;\frac{m}{3} (\log m)^{-4\lambda/c_1})$ associated to this path $\gamma$ is contained in the ball $B(u;\frac{m}{2} (\log m)^{-4\lambda/c_1})$. Thus, if $n\geq n_9$, the estimate  \eqref{eq:can_cross_tz_balls_well_in_final_proof_of_low_growth} yields that
\[
      \p\left( C \xleftrightarrow{\,\beta_u\, } \bigcup ( \mathscr F \backslash \{C\} ) \right) \geq c_4 e^{-2[\log \log n]^{1/2}}.
\]
We stress that the $C$ and $\mathscr F$ appearing in this inequality are deterministic, and do not depend on the configuration $\beta_u$. Under $\p$, the conditional law of $\beta_u$ given that $\mathscr C_i = \mathscr F$ is simply (inhomogeneous) bond percolation on $B(u;\frac{m}{2} (\log m)^{-4 \lambda/c_1} )$ where every edge has probability at least $c_{-1} \eps$ of being open, and every edge that does not touch $\bigcup \mathscr F$ has probability $q_{i+1}$ of being open.
%{[\color{red} More explanation in next few sentences.]}
In particular, recalling that $\eps=\delta(q_{i+1},q_i) = (\log n)^{-(D+1)}$, it follows that there exists a constant $n_{10}=n_{10}(d,D,\lambda)\geq n_9$ such that if $n\geq n_{10}$ then
\[
      \p\left( C \xleftrightarrow{\,\beta_u\, } \bigcup ( \mathscr F \backslash \{C\}) \mid \mathscr C_i = \mathscr F \right) \geq c_{-1}^2 \eps^2 c_4 e^{-2[\log \log n]^{1/2}} \geq (\log m)^{-2D-3}.
\]
Notice that under $\p$, the configurations $(\beta_u)_{u \in U}$ are independent. Moreover, this still holds after conditioning on the event that $\mathscr C_i = \mathscr F$. So, by independence, there exist constants $\lambda_0=\lambda_0(d,D)=8c_1^{-1}(2D+3)$ and $n_{11}=n_{11}(d,D,\lambda)\geq n_{10}$ such that if $\lambda \geq \lambda_0$ and $n \geq n_{11}$ then
\[\begin{split}
      \p\left( C \xleftrightarrow{ B_{m/8} \cap \omega_{(i+1)}} \bigcup ( \mathscr F \backslash \{C\} ) \mid \mathscr C_i = \mathscr F \right) &\geq 1 - \prod_{u \in U} \p\left( C \nxleftrightarrow{\beta_u } \bigcup ( \mathscr F \backslash \{C\} ) \mid \mathscr C_i = \mathscr F \right) \\
      &\geq 1 - ( 1 - (\log m)^{-2D-3} )^{\frac{1}{2}(\log m)^{c_1 \lambda} } \\
      &\geq 1 - e^{-2(\log n)^{c_1\lambda/2}}.
\end{split}\]
Finally, since $\abs{\mathscr F} \leq \abs{S_{r_0}} \leq e^{(\log n)^{D}}$, we have by a union bound that if $\lambda \geq \lambda_0$ and $n\geq n_{11}$ then
\[\begin{split}
      \p(\mathcal E_i \mid \mathscr C_i = \mathscr F) 
      % &\geq 1 - \abs{\mathscr F} e^{-2(\log n)^{c_1 \lambda/2}}\\
      &\geq 1 - e^{(\log n)^{D}} e^{-2(\log n)^{c_1 \lambda/2}} \geq 1 - \frac{1}{2} e^{-(\log n)^{c_1 \lambda/2}}.
\end{split}\]
Since $\mathscr F$ was arbitrary, this implies the claimed bound \eqref{eq:conditional_merging}, completing the proof.
\end{proof}

\subsection{Completing the proof of the main theorem: The implications (\ref{implication:corridor0}) and (\ref{implication:corridor})}

In this section we apply \cref{prop:low_growth_main} to complete the proof of the \cref{prop:complicated_induction_statement} and hence of \cref{thm:main}. Given \cref{prop:implication_F}, what remains is to verify the implications (\ref{implication:corridor0}) and (\ref{implication:corridor}).

\medskip

\begin{proof}[Proof of \cref{prop:complicated_induction_statement}]
Let $D := 20$, and accordingly let $\lambda_0(d,D)$ and $c=c(d,D)$ be the constants from \cref{prop:low_growth_main} with this value of $D$. Let $\lambda := \max\{ \lambda_0 , 10/c \}$, and let $K(d,D,\lambda)$ and $M(d,D,\lambda)$ be the corresponding constants from \cref{prop:low_growth_main} that are there called $K_1$ and $n_0$. (We want to avoid reusing the label $n_0$.) We claim that if we define $\delta_0$ using this value of $K$, then the implications (\ref{implication:corridor0}) and (\ref{implication:corridor}) (for all $i \geq 1$) hold whenever $p_0 \geq 1/d$, $\delta_0 \leq 1$, and $n_0$ is sufficiently large with respect to $d$, which in particular guarantees that $n_0\geq \max\{16,M\}$. The implication (\ref{implication:corridor0}) is immediate (i.e., is a direct consequence of \cref{prop:low_growth_main} after unpacking the definitions), so we will just explain how to prove the implication (\ref{implication:corridor}).

\medskip
Fix $i \geq 1$ and assume that $\upsc{Full-space}(i)$ holds and that $\upsc{Corridor}(k)$ holds for all $1 \leq k \leq i$. Our goal is to establish that $\upsc{Corridor}(i+1)$ holds provided that $n_0$ is sufficiently large with respect to $d$. This follows immediately from \cref{prop:low_growth_main} if we can show that for every $n \in \mathscr{L}(G,20) \cap [n_{i-1},n_{i}]$ there exists some $b \leq \frac{1}{8}n^{1/3}$ such that
\begin{equation} \label{eq:conditions_to_apply_low_growth_sprinkle}
     \left(\frac{K \log \log n}{\min\{\log n, \log \op{Gr}(b)\}}\right)^{1/4} \leq \left( \log \log n_i \right)^{-1/2} \quad \text{and} \quad \p_{p_i}\left( \op{Piv}[4b,n^{1/3}] \right) \leq (\log n)^{-1}.
\end{equation}
Consider an arbitrary $n \in \mathscr{L}(G,20) \cap [n_{i-1},n_{i}]$ (assuming one exists; the claim is vacuous if not). We split into two cases according to whether $(\log n)^{2/3} \in \mathscr L(G,20)$. First suppose that $(\log n)^{2/3} \not\in \mathscr L(G,20)$, so in particular
\[
      \op{Gr}((\log n)^{2/3}) \geq e^{((\log n)^{2/9})^{20}}.
\]
By \cref{cor:trivial_a_priori_uniqueness_zone}, provided $n_0$ is sufficiently large with respect to $d$, we know that
\[
      \p_{p_i}\left( \op{Piv}[4 (\log n)^{2/3} ,n^{1/3}] \right) \leq (\log n)^{-1}.
\]
So in this case both conditions in \eqref{eq:conditions_to_apply_low_growth_sprinkle} are satisfied for $b = (\log n)^{2/3} \leq \frac{1}{8} n^{1/3}$ provided that $n_0$ is sufficiently large with respect to $d$.

\medskip

Now instead suppose that $(\log n)^{2/3} \in \mathscr L(G,20)$. Note that we can always find some $k \in \{ 1 , \ldots, i\}$ such that $(\log n)^{2/3} \in [n_{k-2},n_{k-1}]$. (This is why we took $n_{-1}=(\log n_0)^{1/2}$ as small as we did in the statement of the proposition.) So by our hypothesis that $\upsc{Corridor}(k)$ holds for this particular value of $k$, we have that
\begin{equation}
\label{eq:almost_done_kappa_bound}
      \kappa_{p_{k}}\left( e^{ [ \log ( [\log n]^{2/3} ) ]^{10} } , [\log n]^{2/3} \right) \geq e^{-(\log \log n_k)^{1/2}}.
\end{equation}
We now claim that $b := \frac{1}{5}\min\{e^{(\log \log n)^{9}}, \op{Gr}^{-1}(e^{(\log n)^{1/10}}) \} $ satisfies both conditions from \cref{eq:conditions_to_apply_low_growth_sprinkle} provided that $n_0$ is sufficiently large with respect to $d$. The inequality $b \leq \frac{1}{8}n^{1/2}$ again holds trivially when $n_0$ is large. The definition of $b$ ensures that $\log \op{Gr}(b)\geq \frac{1}{5} \log \op{Gr}(5b) \geq \frac{1}{5} (\log \log n)^{9}$, so that the first condition also trivially holds when $n_0$ is large. To see that the second condition holds, we apply \cref{lem:nearby_clusters_using_two_point} and \cref{prop:two_arm_bound_in_box} to obtain that there exists a constant $C$ such that
\begin{multline*}
      \p_{p_i}( \op{Piv}[4b,n^{1/3}] ) \leq \p_{p_i}( \op{Piv}[1,n^{1/3}/2] ) \cdot \frac{  \abs{S_{4b}}^2 \op{Gr}(5b) }{ \min_{a,b \in S_{4b}}(a \xleftrightarrow{B_{5b}} b ) } 
    \\\leq C\left(\frac{(\log n)^{20}}{n}\right)^{1/4} e^{3 (\log n)^{1/10}+(\log \log n_k)^{1/2}} \leq  (\log n)^{-1}
\end{multline*}
whenever $n_0$ is sufficiently large with respect to $d$, where we used the estimate \eqref{eq:almost_done_kappa_bound} and the fact that $4b \leq e^{ [ \log ( [\log n]^{2/3} ) ]^{10}}$ and $5b \geq (\log n)^{2/3}$ (when $n_0$ is sufficiently large) to bound the term $\min_{a,b \in S_{4b}}(a \xleftrightarrow{B_{5b}\,} b )$. This completes the proof.
\end{proof}

\section{Closing discussion and open problems} 

\subsection{Joint continuity of the supercritical infinite cluster density}
\label{sec:joint_continuity}

Recall that $\mathcal G^*$ is the space of all infinite, connected, transitive graphs that are not one-dimenional, which we endow with the local topology, and recall that for all $p \in (0,1)$ and $G \in \mathcal G^*$, the \textbf{infinite cluster density} is defined to be $\theta(G,p) := \p_{p}^G(o \leftrightarrow \infty)$. Consider a sequence $(G_n)_{n \geq 1}$ in $\mathcal G^*$ converging to some $G\in \cG^*$. The main result of the present paper \cref{thm:main} states that $p_c(G_n) \to p_c(G)$. One could ask the following more refined question: does $\theta^{G_n} \to \theta^{G}$ pointwise? (One can observe from the mean-field lower bound, say, that a positive answer to this question would imply our result that $p_c(G_n) \to p_c(G)$.) For $p<p_c(G)$, it follows immediately from the lower semi-continuity of $p_c$ that $\theta(G_n,p) \to \theta(G,p)=0$, so the only non-trivial cases are when $p = p_c(G)$ and $p > p_c(G)$. The case $p=p_c(G)$ appears to be hard. Indeed, if one could prove this result in the case of toroidal slabs $(\mathbb Z^{2} \times \mathbb Z/n\mathbb Z)_{n \geq 1} $ converging to the cubic grid $\mathbb Z^3$, then it would follow from the main result of \cite{MR3503025} that $\theta(\mathbb{Z}^3,p_c(\mathbb Z^3)) = 0$, a notorious open question. We have nothing interesting to say about this case. However, in our other work \cite{easo2021supercritical2} we prove the following theorem, which together with \cref{thm:main} completely resolves the case when $p > p_c(G)$.

\begin{thm}[\cite{easo2021supercritical2}] \label{thm:locality_of_density}
      Let $(G_n)_{n \geq 1}$ be a sequence in $\mathcal G^*$ that converges in the local topology to some $G \in \mathcal G^*$. Then $\theta(G_n,p) \to \theta(G,p)$ as $n\to\infty$ for every $p > \limsup_{n \to \infty} p_c(G_n)$.
\end{thm}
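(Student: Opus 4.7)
The plan is to split the theorem into upper and lower semi-continuity of $\theta(\cdot,p)$ at $G$. Upper semi-continuity is essentially free from local convergence alone: since $\theta(H,p)=\lim_{R\to\infty}\p_p^H(o\leftrightarrow S_R)$ by continuity of measure, and since local convergence yields $\p_p^{G_n}(o\leftrightarrow S_R)\to \p_p^G(o\leftrightarrow S_R)$ for each fixed $R$, the trivial bound $\theta(G_n,p)\le\p_p^{G_n}(o\leftrightarrow S_R)$ combined with $R\to\infty$ gives $\limsup_n\theta(G_n,p)\le\theta(G,p)$.

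The hard direction, $\liminf_n\theta(G_n,p)\ge\theta(G,p)$, will be handled using the snowballing machinery of \cref{sec:snowballing}. The plan is to establish a \emph{uniform sprinkled finite-size criterion}: for each $d\ge 1$, $\eta>0$, and $0<p_1<p<1$ there exists $R=R(d,\eta,p_1,p)$ such that
\[
\theta(H,p)\ge\p_{p_1}^H(o\leftrightarrow S_R)-\eta
\]
for every $H\in\mathcal U_d^*$ with $p_c(H)\le p_1$. Granted the criterion, the theorem follows by a standard extraction: fix $\delta>0$, use Schonmann's continuity of $\theta(G,\cdot)$ at $p>p_c(G)$ (valid by \cref{thm:main}) to pick $p_1\in(\limsup_n p_c(G_n),p)$ with $\theta(G,p_1)\ge\theta(G,p)-\delta$, and let $R:=R(d,\delta,p_1,p)$. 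Locality of $p_c$ ensures $p_c(G_n)\le p_1$ eventually, so the criterion applies at $H=G_n$; combining it with local convergence of $\p_{p_1}^{G_n}(o\leftrightarrow S_R)$ to $\p_{p_1}^G(o\leftrightarrow S_R)\ge\theta(G,p_1)$ yields $\theta(G_n,p)\ge\theta(G,p)-3\delta$ for all sufficiently large $n$.

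To prove the criterion, one runs a sharp-threshold argument in the spirit of \cref{lem:ghost_in_a_box_main} on the increasing event $\{o\leftrightarrow S_R\}$ in the sprinkling coordinates. The key observation is that an edge can only be a closed pivotal for the symmetric difference between $\{o\leftrightarrow S_R\}$ and $\{o\leftrightarrow\infty\}$ if its endpoints lie in two distinct clusters, one infinite and the other finite but reaching $S_R$ (hence of volume at least $R$); the two-ghost inequality of \cref{subsec:two_ghost} bounds this event by $O(R^{-1/2})$ uniformly in $H$. Feeding this influence bound into Talagrand's inequality and integrating the resulting differential inequality from $p_1$ to $p$ shows that $\p^H_p(o\leftrightarrow S_R)-\theta(H,p)$ falls below $\eta$ once $R$ is chosen as a function of $d,\eta,p_1,p$ only. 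The main obstacle will be making this infinite-volume sharp-threshold argument rigorous: unlike the finite-window setup of \cref{prop:snowballing}, the event $\{o\leftrightarrow\infty\}$ cannot simply be approximated inside a large box, so some care (for instance via wired boundary conditions, as hinted at in \cref{rk:we_dont_actually_prove_uniform}) is needed to truncate the argument. This is worked out in full in the companion paper \cite{easo2021supercritical2}, where it is in fact shown that $\theta$ is jointly continuous on the open set $\{(H,q):H\in\mathcal G^*,\,q>p_c(H)\}$.
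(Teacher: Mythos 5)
Your reduction is sound and is essentially the paper's: the upper bound is trivial from local convergence, and the lower bound reduces to a uniform quantitative statement about finite clusters in uniformly supercritical percolation (your ``sprinkled finite-size criterion'' is an immediate consequence of the uniform tail bound \eqref{eq:uniform_tail} that the paper isolates, and your deduction of the theorem from it via Schonmann continuity and locality of $p_c$ is correct). The gap is in your proof of the criterion itself.

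The sharp-threshold step as you describe it does not work, and there is a quick sanity check showing it cannot: if the maximal influence for the event $\{o\leftrightarrow S_R\}$ were $O(R^{-1/2})$ uniformly, Talagrand's inequality would force $\p_q(o\leftrightarrow S_R)$ to pass from $\eta$ to $1-\eta$ over a window of width $O(1/\log R)$, i.e.\ it would drive the crossing probability to $1-o(1)$ after sprinkling; but $\p_q(o\leftrightarrow S_R)\downarrow\theta(H,q)<1$, so no such threshold exists in the supercritical regime. Concretely, the influence bound fails because a closed edge $e$ is pivotal for $\{o\leftrightarrow S_R\}$ whenever one endpoint lies in $K_o$ and the other in a cluster reaching $S_R$, with \emph{no lower bound on $|K_o|$}: an edge adjacent to a nearly isolated origin is pivotal with probability bounded away from $0$. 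Your ``key observation'' only covers the pivotals that \emph{destroy} the difference event $\{o\leftrightarrow S_R\}\setminus\{o\leftrightarrow\infty\}$ (those are indeed of two-ghost type, controlled by \cref{thm:two_ghost}); the pivotals that \emph{create} it are the uncontrolled one-arm ones above, and since the difference event is not monotone you cannot feed it into Talagrand in any case. The fix is exactly the symmetrization the paper uses: replace the point-to-sphere event by an event in which \emph{both} endpoints of any pivotal edge must lie in large clusters — either two vertices each conditioned to have volume $\geq m$ as in \cref{lem:nearby_clusters_pigeonhole}, or two low-intensity ghost fields as in \cref{lem:ghost_in_a_box_main,lem:ghost_in_box_transitivity}. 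The paper's sketch runs precisely this way: assuming $\p_p(m\leq|K_o|<\infty)\geq\eta$, it produces $u,v$ with $u$ in an infinite $\omega_{p-\eps/2}$-cluster and $v$ in a large finite $\omega_p$-cluster, disconnected, and applies \cref{lem:ghost_in_box_transitivity} with $A=V$ (the hypothesis $\tau_{p-\eps/2}(V)\gtrsim\eps^2$ coming from the mean-field lower bound and Harris) to force $m=O_{\eps,d,\eta}(1)$. If you reroute your key step through that lemma, the rest of your argument goes through; the wired-connectivity modification you mention is then only needed for the nonamenable case, as the paper notes.
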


Note that the main theorem of \cite{easo2021supercritical2} is much more general than this and also establishes a form of locality for the density of the \emph{giant cluster} on \emph{finite} transitive graphs that may have divergent degree.
Together with \cref{thm:main} this theorem yields the following elementary corollary:

\begin{cor}
      $\theta(G,p)$ is continuous on the open set $\{(G,p):G\in \cG^*, p > p_c(G)\}$. 
      Moreover, if $(G_n)_{n \geq 1}$ is a sequence in $\mathcal G^*$ that converges in the local topology to some $G \in \mathcal G^*$ then $\theta(p,G_n)\to \theta(p,G)$ as $n\to\infty$ for each $p \in [0,1] \backslash \{ p_c(G)\}$.
\end{cor}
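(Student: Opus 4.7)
The plan is to deduce the corollary directly from the two main results now at our disposal: the continuity of $p_c$ on $\cG^*$ established as \cref{thm:main}, and the pointwise locality of $\theta$ in the supercritical regime stated as \cref{thm:locality_of_density}. Openness of $\{(G,p) : G\in \cG^*,\; p>p_c(G)\}$ comes essentially for free: the map $(G,p)\mapsto p-p_c(G)$ is continuous as a composition of continuous maps (using \cref{thm:main} for the $p_c$ factor), and the supercritical set is its preimage of the open half-line $(0,\infty)$.

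For joint continuity at a point $(G_0,p_0)$ with $p_0>p_c(G_0)$, I will take an arbitrary sequence $(G_n,p_n)\to(G_0,p_0)$ and apply a monotone sandwich. Given $\varepsilon>0$, pick $\delta>0$ small enough that $p_0-\delta>p_c(G_0)$ and, invoking Schonmann's theorem that $\theta(G_0,\,\cdot\,)$ is continuous away from $p_c(G_0)$, that $|\theta(G_0,p_0\pm\delta)-\theta(G_0,p_0)|<\varepsilon$. For all sufficiently large $n$ we have $p_0-\delta\leq p_n\leq p_0+\delta$, and monotonicity of $\theta$ in $p$ yields
\[
\theta(G_n,p_0-\delta)\;\leq\;\theta(G_n,p_n)\;\leq\;\theta(G_n,p_0+\delta).
\]
Both of the fixed parameters $p_0\pm\delta$ exceed $\limsup_n p_c(G_n)=p_c(G_0)$ by \cref{thm:main}, so \cref{thm:locality_of_density} applies at each and lets me pass to the limit in the two bounds to squeeze $\theta(G_n,p_n)$ into the interval $[\theta(G_0,p_0)-\varepsilon,\theta(G_0,p_0)+\varepsilon]$ eventually, as required.

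The ``moreover'' clause then splits at $p_c(G)$. When $p<p_c(G)$, continuity of $p_c$ forces $p<p_c(G_n)$ for all sufficiently large $n$, so $\theta(G_n,p)=0=\theta(G,p)$ trivially; the case $p>p_c(G)$ is immediate from \cref{thm:locality_of_density} since $\limsup_n p_c(G_n)=p_c(G)<p$ by \cref{thm:main}. No substantial obstacle stands in the way given these two black boxes; the only mildly delicate point is that \cref{thm:locality_of_density} is stated at a fixed parameter rather than jointly, which is precisely why one must bracket $p_n$ between two deterministic values $p_0\pm\delta$ and invoke Schonmann's continuity \emph{at the limit graph} $G_0$ (where no locality input is needed) to absorb the parameter drift.
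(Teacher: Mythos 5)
Your argument is correct and is exactly the elementary deduction the paper leaves implicit (the paper states the corollary as an immediate consequence of \cref{thm:main} and \cref{thm:locality_of_density} without writing out a proof): openness of the supercritical set from continuity of $p_c$, joint continuity via the monotone sandwich $\theta(G_n,p_0-\delta)\leq\theta(G_n,p_n)\leq\theta(G_n,p_0+\delta)$ combined with \cref{thm:locality_of_density} at the fixed parameters $p_0\pm\delta$ and Schonmann's continuity of $\theta(G_0,\cdot)$ off $p_c(G_0)$, and the dichotomy at $p_c(G)$ for the pointwise statement. No gaps; the identification $\limsup_n p_c(G_n)=p_c(G_0)$ needed to invoke \cref{thm:locality_of_density} is correctly supplied by \cref{thm:main}.
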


Let us roughly indicate how the tools built in \cref{sec:snowballing} could be used to give an alternative proof of \cref{thm:locality_of_density}. This alternative proof is less general and (arguably) more involved and  than the one given in \cite{easo2021supercritical2}, but the result is quantitatively stronger: it can be used to prove that $\theta(G,p)$ is not just continuous but even locally H\"older continuous on the supercritical set (with the power in the definition of H\"older continuity possibly degenerating near the boundary of the set). 

\medskip

Let  $\mathcal G_d^*$ denote the set of infinite transitive graphs with vertex degree exactly $d$ that are not one-dimensional. 
As explained in detail in \cite{easo2021supercritical2}, it suffices to prove a tail estimate on the size of finite clusters in supercritical percolation that is uniform over $\mathcal G_d^*$ for each $d$, i.e.\! it suffices to prove that
\begin{equation} \label{eq:uniform_tail}
      \lim_{m \to \infty} \sup_{G \in \mathcal G_d^*} \sup_{p \geq p_c(G) + \eps} \p_{p}^{G}( m \leq \abs{K_o} < \infty ) = 0 \quad \text{for all $\eps >0$ and $d \geq 1$,}
\end{equation}
where we recall that $K_o$ denotes the cluster of the root vertex $o$. We will focus on proving an estimate of this form for the set of unimodular graphs $\cU_d^*$ instead of $\cG_d^*$; in the nonunimodular case much stronger results (with optimal dependence on $p-p_c$ and $m$) can be proven by invoking the results of \cite{hutchcroft2020nonuniqueness,MR4500202} as explained in detail in \cite{easo2021supercritical2}.
Our proof will yield quantitatively that
for every $\eps>0$ and $d\geq 1$ there exist constants $C$ and $c$ such that
\[
\sup_{G \in \mathcal U_d^*} \sup_{p \geq p_c(G) + \eps} \p_{p}^{G}( m \leq \abs{K_o} < \infty ) \leq Cm^{-c};
\]
running the proof of continuity with this quantitative estimate yields the aforementioned local H\"older continuity of $\theta(G,p)$.  This bound is quantitatively much better than the bound coming from the proof in \cite{easo2021supercritical2}. On the other hand, it is also much worse than the conjectured optimal bounds, which are stretched exponential in $m$ (see Section 5.3 of \cite{MR4243018}). Having any \emph{superpolynomial} tail bound would imply that $\theta(G,p)$ is a smooth function of $p\in (p_c(G),1]$ for each fixed $G$; for $\mathbb{Z}^d$ and for nonamenable graphs it is known that the density is not just smooth but real analytic on this set \cite{MR4243018,MR4631964}.

\medskip

Let $\eps >0$, $d \geq 1$, $G \in \mathcal U_d^*$, $p \geq p_c(G) + \eps$, $m \geq 1$, and $\eta >0$ be arbitrary, and suppose that $\p_{p}^{G}( m \leq \abs{K_o} < \infty ) \geq \eta$. It suffices to prove that $m$ is necessarily bounded above by some constant $M(\eps,d,\eta) < \infty$. Let $\p$ denote the canonical monotone coupling $(\omega_q)_{q \in [0,1]}$ of the percolation measures $(\p_q^G)_{q \in [0,1]}$. By the mean-field lower bound and transitivity, one can find vertices $u,v \in V(G)$ such that
\[
      \p\Bigl( \abs{K_u(\omega_{p-\eps/2})} \geq m \text{ and } \abs{K_v(\omega_{p})} \geq m \text{ but } u \nleftrightarrow v \Bigr) \geq \frac{\eta \eps}{2},
\]
where $K_u(\omega_{p-\eps/2})$ denotes the cluster of $u$ in $\omega_{p-\eps/2}$.
In particular, writing $\mathbb G_{1/m}$ for the law of a ghost-field $\mathscr G$ of intensity $1/m$ on the whole vertex set $V(G)$,
\begin{equation} \label{eq:assume_large_cluster_pair}
      \mathbb G_{\frac{1}{m}} \otimes \p ( u \xleftrightarrow{\omega_{p-\eps/2}} \mathscr G \xleftrightarrow{ \omega_{p} } v \text{ but } u \nxleftrightarrow{\omega_{p}} v ) \geq \bra{1-\frac{1}{e}}^2\frac{\eta \eps}{2} \geq \frac{\eta\eps}{8}.
\end{equation}
Assume for now that $G$ is amenable so that there is at most one infinite cluster $\p_q^G$-almost surely for every $q \in [p-\eps/2,p]$. Then by \cref{lem:ghost_in_box_transitivity} with $(X,A,Y):= ( \{ u\}, V(G) , \{v\} )$, one can deduce that for some constants $c_3(\eps,d) > 0$ and $C(\eps,d) < \infty$,
\begin{equation} \label{eq:no_large_cluster_pair}
      \mathbb G_{\frac{1}{m}} \otimes \p ( u \xleftrightarrow{\omega_{p-\eps/2}} \mathscr G \xleftrightarrow{ \omega_{p} } v \text{ but } u \nxleftrightarrow{\omega_{p}} v ) \leq C m^{-c_3}.
\end{equation}
By combining \eqref{eq:assume_large_cluster_pair} and \eqref{eq:no_large_cluster_pair}, we deduce that $m \leq M(\eps,d,\eta) := (8C/(\eta \eps))^{1/c_3}$, as required.

% Let us mention again that the polynomial-in-$m$ bound on $\eta$ we obtain this way is quantitatively much better than the bound coming from the proof in \cite{easo2021supercritical2}. On the other hand, this bound is much worse than the conjectured bounds which are stretched exponential in $m$ (see Section 5.3 of \cite{MR4243018}. Note that having a superpolynomial tail bound would imply that $\theta(G,p)$ is a smooth function of $p\in (p_c(G),1]$ for each fixed $G$)

\medskip

 Finally, to handle the case when $G$ is nonamenable (but still unimodular), one can still run essentially the same argument as above but with some technical modifications to handle the possible existence of multiple infinite clusters. The key difference is that instead of tracking connections $u \leftrightarrow v$ as usual, we instead track $\emph{wired}$ connections $u \xleftrightarrow{\,\text{wired}\,} v$ where
\[
      \{u \xleftrightarrow{\,\text{wired}\,} v\} := \{ u \leftrightarrow v \} \cup \{ u \leftrightarrow \infty \text{ and } v \leftrightarrow \infty \}.
\]
One can verify that the proof of \cref{lem:ghost_in_box_transitivity} works just as well with this alternative notion of connnectivity, without requiring the hypothesis about the uniqueness of the infinite cluster. The rest of the argument explained above can then be adapted to work with this wired notion of connectivity also.

% Alternatively, one can invoke case (ii) of \cref{lem:ghost_in_a_box_main} to prove that
% \[
%       \lim_{m \to \infty} \sup_{G \in \mathcal G_d^*} \sup_{p \geq p_c(G) + \eps} \p_{p}^{G}( B_m(o) \leftrightarrow \infty ) = 1 \quad \text{for all $\eps >0$ and $d \geq 1$}
% \]
% then apply a short sprinkling argument as in \cite{easo2021supercritical2} to convert this into the required statement \cref{eq:uniform_tail}.

\medskip

As explained in \cite{easo2021supercritical2}, the estimate \eqref{eq:uniform_tail} has the following nice interpretation. Consider the space $(0,1)\times \mathcal G^* \to (0,1)$ with the product topology, and consider the function $\theta : (0,1)\times \mathcal G^* \to (0,1)$ mapping $(p,G) \mapsto \theta^G(p)$. It is natural to ask whether $\theta$ is continuous as a function of two variables. One can show that a priori, $\theta$ is continuous if and only if the conclusion of \cref{thm:main} holds (locality of $p_c$), the estimate \eqref{eq:uniform_tail} holds (i.e., there is a uniform tail bound on supercritical finite clusters), and $\theta(G,p_c(G)) = 0$ for every $G \in \mathcal G^*$ (continuity of the phase transition), the last statement being one of the most important open conjectures in the general study of percolation in $\mathcal G^*$. (In this decomposition, \eqref{eq:uniform_tail} handles the interior of the supercritical region $\mathcal S := \{ (p,G) : p > p_c(G) \}$, \cref{thm:main} implies that this region is open, and the continuity conjecture handles the boundary values.)
% That is, \eqref{eq:uniform_tail} is \emph{equivalent} to the fact that $\theta$ is continuous on the interior of $\mathcal S$. 

\medskip

\noindent \textbf{Finite graphs.} As mentioned above, in \cite{easo2021supercritical2}  we prove versions of \cref{thm:locality_of_density} and \cref{eq:uniform_tail} that also apply to families of bounded-degree \emph{finite} transitive graphs. The above sketches work just as well in this context too; we have stated things in terms of infinite graphs purely for simplicity. 
% (In \cite{easo2021supercritical2} we also prove versions of these theorems for families of finite transitive graphs with divergent degrees, for which the methods of this paper are much less suitable.)
% These finite-graph statements are intimately related to the density of the supercritical giant cluster. 
Moreover, the above sketch can be used to give an alternative proof that for supercritical percolation on bounded-degree finite transitive graphs, the giant cluster is unique and has concentrated density, recovering the results of our two papers \cite{easo2021supercritical,easo2021supercritical2} in this case. Note however that all of the tools from \cref{sec:snowballing} break down rather badly when working with families of finite graphs that have large vertex degrees (e.g.\! vertex degrees that grow at least as a power of the total number of vertices), partly because for such graphs the emergence of a giant cluster can occur around values of $p$ close to $0$. To handle this more general setting of arbitrary finite transitive graphs, we know of no alternative proofs of the uniqueness or concentration of the supercritical giant cluster to those we give in \cite{easo2021supercritical,easo2021supercritical2}.

\label{sec:continuity_of_density}

\subsection{The $p_c$ gap and its witnesses}
\label{sec:pc_gap}

Since $\cG_d^*$ is compact for each $d\geq 1$, it is a consequence of \cref{thm:main} that $p_c$ attains its maximum on $\cG_d^*$ for each $d\geq 1$.
In \cite{panagiotis2021gap}, Panagiotis and Severo improved upon the results of \cite{MR4181032,hutchcroft2021nontriviality} to establish that there exists a \emph{universal} $\eps>0$ (independent of the degree) such that every Cayley graph with $p_c<1$ has $p_c\leq 1-\eps$; Lyons, Mann, Tessera, and Tointon \cite{MR4529894} give the explicit bound $\eps \geq \exp(-\exp(17\exp(100\cdot 8^{100})))$. Presumably a similar result holds for transitive graphs that are not Cayley. The following natural conjecture would strengthen this result, and would also imply by \cref{thm:main} that $p_c$ attains its global maximum on $\cG^*$.

\begin{conj}
\label{conj:pc_high_degree}
There exists a universal constant $C$ such that if $G$ is an infinite, connected, transitive, \emph{simple} graph of vertex degree $d$ that is not one-dimensional then $p_c(G) \leq C/d$.
\end{conj}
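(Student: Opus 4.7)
The plan is to combine the locality theorem \cref{thm:main} with a girth dichotomy. Since $\mathcal{G}_d^*$ is compact and $p_c$ is continuous on it, the supremum $M(d):=\sup_{G\in\mathcal{G}_d^*}p_c(G)$ is attained, so the conjecture reduces to showing $M(d)=O(1/d)$ as $d\to\infty$. Suppose for contradiction that $d_k M(d_k)\to\infty$ along some $d_k\to\infty$, and let $G_k\in\mathcal{G}_{d_k}^*$ be extremising graphs. By simplicity, the ball of radius $\lfloor (g(G_k)-1)/2\rfloor$ about $o$ in $G_k$ is a truncated $(d_k-1)$-regular tree (with root degree $d_k$), where $g$ denotes the girth; the argument then naturally splits into the cases ``$g(G_k)$ unbounded'' and ``$g(G_k)$ bounded''.

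First I would handle the unbounded-girth case. Passing to a subsequence with $g(G_k)\to\infty$ and fixing a small $\eta>0$, percolation at parameter $p=(1+\eta)/(d_k-1)$ restricted to the truncated tree above dominates a supercritical Galton--Watson process of mean $1+\eta$, so with a constant probability $c(\eta)>0$ the cluster of $o$ reaches $S_{g/2}$ with $\Omega((1+\eta/2)^{g/2})$ open paths. Coupled with transitivity, this gives uniform two-point lower bounds at scales well below the girth. A sprinkled renormalisation in the spirit of \cref{prop:snowballing} and the implication \eqref{implication:full-space}---which is particularly efficient in this nearly tree-like regime, since the growth $(d_k-1)^{g/2}$ is very large---would then propagate the ``good connection at scale $g/2$'' to an infinite cluster after paying $O(\eta)$ additional sprinkling. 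This yields $p_c(G_k)\leq (1+O(\eta))/(d_k-1)$ and contradicts $d_k p_c(G_k)\to\infty$ once $\eta$ is chosen sufficiently small.

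The main obstacle is the bounded-girth case, where the truncated-tree argument fails outright. Here one would invoke the finitary structure theory of \cite{breuillard2011structure,MR4253426} to identify either a nonamenable expander-like substructure (which should yield $p_c\leq C/d$ via spectral and two-ghost methods as in \cref{thm:two_ghost}) or a virtually nilpotent quotient of bounded step and rank on a large subgraph. In the latter case the conjecture reduces to a $d$-uniform strengthening of Kesten's asymptotic $p_c(\mathbb{Z}^d)\sim 1/(2d-1)$ for arbitrary nilpotent Cayley graphs with $d$ generators. This reduction is the heart of the difficulty: the constants produced by \cref{thm:tessera-tointon} and \cref{thm:induced_subgroup_doesnt_keep_changing} depend on the degree, so a fully uniform bound would require a degree-independent version of the structure theory, and Kesten's argument does not straightforwardly extend from the standard lattice generating set to arbitrary large generating sets. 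Any proof of the conjecture thus seems to require either a $d$-uniform refinement of approximate group theory or a genuinely new tool that bypasses the structural case analysis.
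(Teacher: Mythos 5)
You are attempting to prove something the paper does not prove: \cref{conj:pc_high_degree} is stated in \cref{sec:pc_gap} as an open \emph{conjecture}, offered alongside the example of \cref{fig:macromolecular} (which shows the constant cannot be taken to be $1+o(1)$) and the related open problems about maximizers of $p_c$. There is no proof in the paper to compare against, and your proposal is, by your own admission in its final paragraph, not a proof either: the bounded-girth case is left as a reduction to ``a $d$-uniform refinement of approximate group theory or a genuinely new tool,'' which is precisely the open content of the conjecture.

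Beyond the admitted gap, the high-girth half of your sketch also fails concretely. The base case $\upsc{Full-Space}(0)$ of the paper's induction, and more generally the snowballing machinery of \cref{prop:snowballing}, require \emph{point-to-point} connection probability lower bounds of the form $\p_p(u\leftrightarrow v)\geq \exp[-(\log\log n_0)^{1/2}]$ for all $u,v\in B_{n_0}$. Inside a ball that is isomorphic to a truncated $(d-1)$-regular tree, any two vertices are joined by a unique path, so at $p=(1+\eta)/(d_k-1)$ the two-point function is exactly $p^{d(u,v)}$, which decays exponentially in the distance; this is the opposite of the input the renormalization needs, and the paper itself flags this obstruction (the footnote noting that on the $3$-regular tree point-to-point connection probabilities decay even in the supercritical regime is why Reformulation~3 is nontrivial). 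Having many open paths from $o$ to $S_{g/2}$ bounds the one-arm probability, not the two-point function. Separately, the induction can only be started at a scale $n_0\geq N(d)$ with $N(d)$ depending on the degree, so girth merely tending to infinity along a subsequence does not let you run the argument; you would need girth growing faster than the (unspecified, degree-dependent) constants of \cref{prop:complicated_induction_statement}. For graphs of large girth the correct route to $p_c\leq(1+o(1))/(d-1)$ is the Benjamini--Nachmias--Peres comparison with a Galton--Watson tree for the \emph{radius} tail together with a nonamenability or expansion hypothesis, not the two-point-function induction of this paper; and in any case none of this touches the bounded-girth, low-dimensional regime where the conjecture actually lives.
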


For many natural families of high-degree graphs we have the stronger statement that $p_c\sim 1/\deg$ as the degree diverges. For example, this holds for $\Z^d$ as $d\to\infty$ by a theorem of 
Kesten \cite{kesten1990asymptotics} (see also \cite{alon2004percolation}). Moreover, this is not just a high-dimensional phenomenon:
Penrose \cite{penrose1993spread} proved that a similar-estimate holds for the ``spread-out'' $d$-dimensional lattice, in which $x,y\in \Z^d$ are connected by an edge whenever $\|x-y\|\leq R$, when $d$ is fixed and $R\to \infty$. On the other hand, the example illustrated in \cref{fig:macromolecular} shows that high-degree transitive graphs do not always have $p_c\sim 1/\deg$ even when they are not one-dimensional in any sense.

\begin{figure}[t!]
\centering
\includegraphics[width=0.7\textwidth]{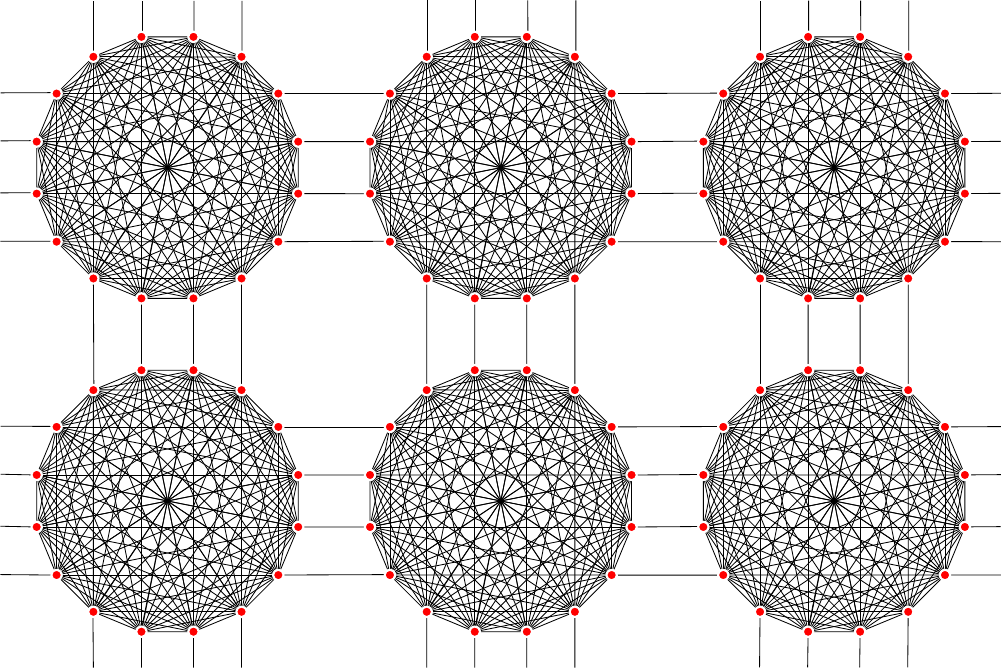}
\caption{The transitive graph formed by laying out copies of $K_{4n}$ in an infinite square grid as above has $p_c \geq (4 \log 2-o(1))/\deg$  as $n\to\infty$ as can be seen by coupling with bond percolation on $\Z^2$ and using a Poisson approximation.  Since $4\log 2\approx 1.2>1$, this shows that the asymptotic estimate $p_c\sim 1/\deg$ can fail for high-degree vertex-transitive graphs even when these graphs are not one-dimensional in any sense. An exact asymptotic estimate $p_c\sim C/\deg$  can be proven with a little further work. (Indeed, the constant  $C\approx 3.095$ is the unique solution to the equation $C(1+C^{-1}W[-e^{-C}C])^2 = 4 \log 2$ where $W$ is the Lambert $W$ function.)}
% 
% 
% expressed exactly in terms of certain special functions and is approximately equal to $3.202$) can be obtained with further work.}
\label{fig:macromolecular}
\end{figure}

Once one knows that $p_c$ attains a maximum (either globally on $\cG^*$ or on $\cG^*_d$), it becomes interesting to understand which graphs attain this maximum.  Martineau and Severo \cite{martineau2019strict} proved that $p_c$ is strictly increasing under quotients, so that any maximal graph must have no non-trivial quotients in~$\cG^*$. It seems reasonable to believe that the maximal graph would be a lattice of low degree and in low dimension. Consulting tables of numerical values of $p_c$  for these lattices (as can be found on \url{https://en.wikipedia.org/wiki/Percolation_threshold}) leads to the highly speculative conjecture that $p_c$ is maximized by the so-called super-kagome lattice (a.k.a.\ 3-12 lattice); see \cref{fig:superKagome} for an illustration.

\begin{problem} Investigate the transitive graphs in $\cG^*_d$ that maximize $p_c$ for each degree $d$, as well as the global maximum in $\cG^*$ if this maximum exists. Are these maxima uniquely attained? Does 
$p_c:\mathcal{G}^* \to [0,1]$ attain its maximum uniquely at the $3$-$12$ lattice (a.k.a.\ super-kagome lattice), which has $p_c\approx 0.7404207$? When restricted to edge-transitive graphs, does $p_c$ attain its unique maximum at the hexagonal lattice, which has $p_c=0.65270\ldots=1-2 \sin (\pi/18)$?
\end{problem}

One may wish to restrict attention to simple graphs. Similar questions have been investigated for self-avoiding walk by Grimmett and Li \cite{grimmett2020cubic}.

\begin{figure}[t!]
\centering
\includegraphics[width=\textwidth]{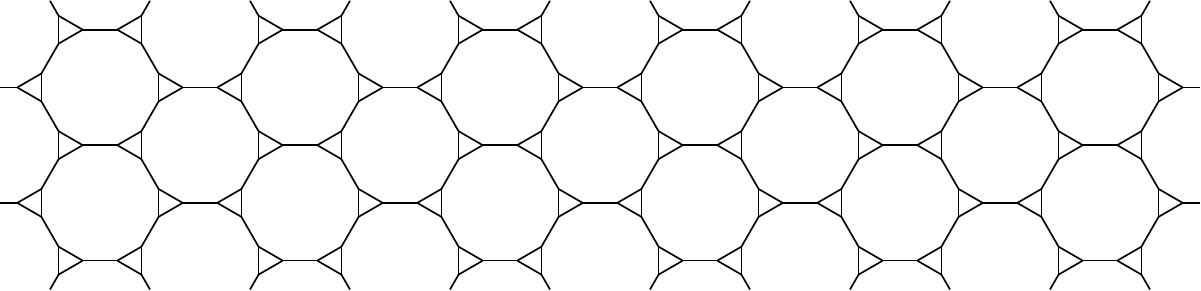}
\caption{The 3-12 (a.k.a. super-kagome) lattice is the current best candidate for the transitive graph with the highest non-trivial value of $p_c$ for bond percolation. Its critical value  $p_c\approx 0.7404\ldots$ has been estimated to great precision numerically in \cite{scullard2020bond}. The transitive graph with the next highest value of $p_c$ to have been investigated numerically is the truncated trihexagonal lattice, which has $p_c \approx 0.6937$.}
\label{fig:superKagome}
\end{figure}

\subsection*{Acknowledgments}
This work was supported by NSF grant DMS-2246494. TH thanks Ariel Yadin for discussions on the history of the ``cool inequality'' and thanks Vincent Tassion for discussions on the plausibility of \cref{conj:pc_high_degree}. Both authors thank Geoffrey Grimmett,  Russ Lyons, and S\'ebastien Martineau for helpful comments on an earlier version of the manuscript.

\addcontentsline{toc}{section}{References}

\setstretch{1}

\footnotesize{
\printbibliography
}
% 
  % \bibliographystyle{abbrv}
  % \bibliography{sources}
  % }
\end{document}